\numberwithin{equation}{section}
\newtheorem{theorem}{Theorem}[section]
\newtheorem{proposition}[theorem]{Proposition}
\newtheorem{lemma}[theorem]{Lemma}
\newtheorem{corollary}[theorem]{Corollary}
\newtheorem{definition}[theorem]{Definition}
\theoremstyle{remark}
\newtheorem{remark}[theorem]{Remark}
\def\l@section{\@tocline{1}{0pt}{1pc}{}{}}
\def\l@subsection{\@tocline{2}{0pt}{1pc}{4.6em}{}}
\def\l@subsubsection{\@tocline{3}{0pt}{1pc}{7.6em}{}}
\renewcommand{\tocsection}[3]{%
  \indentlabel{\@ifnotempty{#2}{\makebox[2.3em][l]{%
    \ignorespaces#1 #2.\hfill}}}#3}
\renewcommand{\tocsubsection}[3]{%
  \indentlabel{\@ifnotempty{#2}{\hspace*{2.3em}\makebox[2.3em][l]{%
    \ignorespaces#1 #2.\hfill}}}#3}
\def\e{\epsilon}
\def\II{{\rm I\kern-0.5exI}}
\def\III{{\rm I\kern-0.5exI\kern-0.5exI}}
\newcommand{\R}{{\mathord{\mathbb R}}}
\newcommand{\Rd}{{\mathord{\mathbb R}^d}}
\newcommand{\id}{{\mathop{\rm \mathbf{id} }}}
\newcommand{\grad}{\nabla}
\def\P{{\mathcal P}}
\def\epsilon{\varepsilon}
\newcommand{\argmin}{\operatornamewithlimits{argmin}}
\newcommand{\argmax}{\operatornamewithlimits{argmax}}
\newcommand{\bt}{\mathbf{t}}
\newcommand{\supp}{{\rm supp \ }}
\newcommand{\PR}{{\mathcal{P}_2(\mathbb{R}^d)}}
\newcommand{\bN}{\mathbf{N}}
\newcommand{\tomega}{\tilde{\omega}}
\definecolor{purple}{rgb}{0.65, 0, 1}
\begin{document}
\title{Congested aggregation via Newtonian interaction}
\date{February 25, 2016}

\author{ Katy Craig}
\address{Department of Mathematics\\University of California, Santa Barbara}
\thanks{Katy Craig is supported by a UC President's Postdoctoral Fellowship and by the NSF grant DMS-1401867}

 \author{Inwon Kim}
\address{Department of Mathematics\\ University of California, 
Los Angeles}
\thanks{Inwon Kim is supported by the NSF grant DMS-1300445}

\author{ Yao Yao}
\address{School of Mathematics\\Georgia Institute of Technology}
\thanks{Yao Yao is supported by the NSF grant DMS-1565480}
\date{}

\keywords{Wasserstein metric, gradient flow, discrete gradient flow, JKO scheme, Keller-Segel equation, aggregation equation, crowd motion, Hele-Shaw free boundary problem}  

\maketitle
\begin{abstract}
We consider a \emph{congested aggregation} model that describes the evolution of a density through the competing effects of nonlocal Newtonian attraction and a hard height constraint. This provides a counterpoint to existing literature on repulsive-attractive nonlocal interaction models, where the repulsive effects instead arise from an interaction kernel or the addition of diffusion. We formulate our model as the \emph{Wasserstein gradient flow} of an interaction energy, with a penalization to enforce the constraint on the height of the density. From this perspective, the problem can be seen as a singular limit of the Keller-Segel equation with degenerate diffusion. Two key properties distinguish our problem from previous work on height constrained equations: nonconvexity of the interaction kernel (which places the model outside the scope of classical gradient flow theory) and nonlocal dependence of the velocity field on the density (which causes the problem to lack a comparison principle). To overcome these obstacles, we combine recent results on gradient flows of nonconvex energies with viscosity solution theory. We characterize the dynamics of patch solutions in terms of a Hele-Shaw type free boundary problem and, using this characterization, show that in two dimensions patch solutions converge to a characteristic function of a disk in the long-time limit, with explicit rate of convergence. We believe that a key contribution of the present work is our blended approach, combining energy methods with viscosity solution theory.
 \end{abstract}

\tableofcontents

\section{Introduction} \label{intro}

In recent years, there has been significant interest in physical and biological models with nonlocal interactions. These models describe the pairwise interactions of a large number of individual agents, for which, in the continuum limit, the nonnegative density $\rho(x,t)$ satisfies the \emph{aggregation equation with degenerate diffusion}
\begin{align}  \label{aggdegdiff}
\rho_t = \nabla \cdot(\rho \nabla \mathcal{N} *\rho) + 
 \Delta \rho^m ,
\end{align}
for an interaction kernel $\mathcal{N}: \Rd \to \R$ and 
$m \geq 1$. 
This equation is mass-preserving and, provided that $\mathcal{N}(x)$ possesses sufficient convexity and regularity, it is a \emph{Wasserstein gradient flow} of the energy
\begin{align*} E_m(\rho) =  W(\rho) + 
  S_m(\rho) , 
\end{align*}
where the \emph{interaction energy} $W(\rho)$ and \emph{R\'enyi entropy} $S_m(\rho)$ are given by
\begin{align*} W(\rho) = \frac{1}{2} \int (\mathcal{N}*\rho)(x) \rho(x) dx  \quad \text{ and } \quad S_m(\rho) :=
\frac{1}{m-1} \int \rho(x)^m dx  .
\end{align*}
 See section \ref{Wasserstein intro section} for further background on this gradient flow structure, including Remark \ref{mass not one} for the case when $\int \rho \neq 1$.

Depending on the choice of interaction kernel and diffusion parameter, equations similar to (\ref{aggdegdiff}) arises in a range applications in physics and biology, including models of granular media  \cite{BenedettoCagliotiCarrilloPulvirenti, CarrilloMcCannVillani}, biological swarming \cite{TopazBertozziLewis, BurgerFetecauHuang}, robotic swarming  \cite{PereaGomezElosegui,ChuangHuangDorsognaBertozzi}, molecular self-assembly \cite{DoyeWalesBerry, Wales, RechtsmanStillingerTorquato}, and the evolution of vortex densities in superconductors  \cite{AmbrosioSerfaty, LinZhang, MasmoudiZhang, Poupaud}. 
Of particular interest are kernels and diffusion parameters for which the model exhibits competing repulsive and attractive effects, causing solutions to blow up in finite time or form rich patterns in the asymptotic limit (c.f. \cite{BertozziCarrilloLaurent, BalagueCarrilloYao, BalagueCarrilloLaurentRaoul, Bertozzietal_RingPatterns,BertozziLaurentLeger,FellnerRaoul2, FetecauHuangKolokolnikov,FetecauHuang,SunUminskyBertozzi}). For example, with 
$m \geq 1$ and the interaction is given by the Newtonian interaction kernel
\begin{align} \label{Newtonian kernel}\mathcal{N}(x) = \begin{cases}\frac{1}{2\pi}\log|x| & \text{ for }d=2,\\ \frac{-1}{d(d-2)\alpha_d} |x|^{2-d} & \text{ for } d \neq 2,
\end{cases}  \quad \text{ with }\alpha_d \text{ the volume of the unit ball in } \Rd  , 
\end{align}
equation (\ref{aggdegdiff}) corresponds to the Keller-Segel model for biological chemotaxis \cite{Blanchet, BlanchetCarlenCarrillo, KellerSegel}
\begin{align}  \label{KellerSegel}
\rho_t = \nabla \cdot(\rho \nabla (\mathcal{N} *\rho)) +  \Delta \rho^m .
\end{align}
 In this case, the interaction kernel is purely attractive and competes with the repulsion induced by the degenerate diffusion. If $m > 2 - 2/d$, diffusion dominates at large density, and bounded solutions exist globally in time \cite{Sugiyama}. Otherwise, depending on the choice of initial data, solutions with bounded initial data may blow up in finite time.

In the present work, we consider a diffusion-aggregation model similar to the Keller-Segel equation, but with the role of diffusion instead played by a hard height constraint on the density.  Heuristically, the evolution of $\rho(x,t)$ is given by  the \emph{congested aggregation equation} 
\begin{align}  \label{congagg} \begin{cases} \rho_t = \nabla \cdot(\rho \nabla \bN \rho) \text{ if } \rho(x,t) \leq 1 , \\
\rho(x,t) \leq 1 \text{ always,} \end{cases} 
\end{align}
where $\bN \rho := \mathcal{N} * \rho$ denotes the Newtonian potential of $\rho$. Informally, solutions of \ref{congagg} seek to evolve according to the ``desired velocity field'' $\nabla \bN \rho$, subject to a hard height constraint.
More precisely, we define $\rho(x,t)$ as the Wasserstein gradient flow of the  \emph{constrained interaction energy}
\begin{align} \label{Edef} E_\infty(\rho) := \begin{cases}
 \frac{1}{2}\int \mathbf{N}\rho(x) \rho(x) dx &\text{if $\| \rho\|_\infty \leq 1$,} \\ +\infty & \text{otherwise} .\end{cases}
\end{align}

Our choice of hard height constraint is inspired by the work of Maury, Roudneff-Chupin, Samtambrogio, and Venel \cite{MRS, MRSV}, who introduced such a constraint in their model of pedestrian crowd motion. They considered a \emph{congested drift equation}
\begin{align}  \label{congtrans} \begin{cases} \rho_t = \nabla \cdot(\rho \nabla V) \text{ if } \rho(x,t) \leq 1 , \\
\rho(x,t) \leq 1 \text{ always,} \end{cases} 
\end{align}
for a local drift $V: \Rd \to \R$, where $\grad V$ is the ``desired velocity field'' of the density. As in the present work, they rigorously defined the evolution of the density as the Wasserstein gradient flow of the \emph{constrained potential energy}
\begin{align*} V_\infty(\rho) := \begin{cases}
 \frac{1}{2}\int V(x) \rho(x) dx &\text{if $\| \rho\|_\infty \leq 1$,} \\ +\infty & \text{otherwise} .\end{cases}
\end{align*}
They then showed that this gradient flow satisfies a formulation of the continuity equation, where the velocity field is given by the $L^2$ projection of $\grad V$ onto the set of \emph{admissible velocities} that do not increase the density in the saturated zone, $\{ \rho = 1 \}$ \cite{MRS}. Furthermore, when $V(x)$ is semiconvex (e.g. when $\nabla^2 V(x)$ is bounded below---see section \ref{Wasserstein intro section}) the energy $V_\infty$ is likewise semiconvex and Wasserstein gradient flow theory ensures that this evolution is unique.

Building upon this work, Alexander, Kim, and Yao \cite{AKY} showed  that solutions of the congested drift equation could be approximated by solutions to a corresponding nonlinear diffusion equation
\begin{align} \label{driftdiffintro}
\rho_t = \nabla \cdot( \rho \nabla V) + \Delta \rho^m
\end{align}
as $m \to +\infty$, which are gradient flows of the energy
\begin{align*}
V_m(\rho):=  \int V(x) \rho(x) dx + \frac{1}{m-1}\int \rho(x)^m dx ,
\end{align*}
(Note that, for a fixed $\rho$, $V_\infty(\rho)$ is the limit of $V_m(\rho)$ as $m\to\infty$.)
 They then applied this result to characterize the dynamics of the congested drift equation: given a velocity field satisfying $\Delta V > 0$ and initial data that is a characteristic function on a patch, $\rho(x, 0) = \chi_{\Omega_0}(x)$ for
 \[ \chi_{\Omega_0}(x) := \begin{cases} 1 &\text{ if } x \in \Omega_0, \\0  &\text{ otherwise,} \end{cases} \] the solution remains a characteristic function, and the evolution of the patch is given by a Hele-Shaw type free boundary problem.

In spite of the similarities between our congested aggregation equation (\ref{congagg}) and the congested drift equation (\ref{congtrans}), two key differences prevent its analysis by the same methods.
First, unlike $V_\infty$, the energy $E_\infty$ does not satisfy the semiconvexity assumptions of classical gradient flow theory that ensure uniqueness. This lack of convexity also makes the equation inaccessible by classical approximation methods---specifically, quantitative approximation by the \emph{discrete gradient flow} or \emph{JKO scheme} for semiconvex energies---which was a key tool in Alexander, Kim, and Yao's result on the convergence of the nonlinear diffusion equation (\ref{driftdiffintro}) as $m \to +\infty$ to the congested drift equation. The second major difference between the congested aggregation and congested drift equations is that the velocity field of the former depends nonlocally on the density. This prevents a direct adaptation of Maury, Roudneff-Chupin, and Santambrogio's characterization of solutions in terms of a continuity equation, since their argument relies upon an Euler-Lagrange equation for the discrete gradient flow sequence, the proof of which strongly leverages the local nature of the drift. Finally, the nonlocal nature of the velocity field causes there to be no comparison principle, an important element in Alexander, Kim, and Yao's analysis of the patch dynamics.

To overcome these difficulties, we combine new results on the Wasserstein gradient flow of non-semiconvex energies with a refined approximation of the congested aggregation equation by nonlinear diffusion equations to characterize the dynamics of patch solutions and study their asymptotic behavior. 
To address the lack of convexity, we appeal to recent work by the first author, inspired by the present problem, that proves well-posedness of Wasserstein gradient flows for energies that are merely \emph{$\omega$-convex} and provides quantitative estimates on the convergence of the \emph{discrete gradient flow}.  (See section \ref{Wasserstein intro section}). We apply these results to conclude that if the initial data $\rho_0$ satisfies $\|\rho_0\|_\infty \leq 1$, then there exists a unique Wasserstein gradient flow $\rho_\infty$ of the constrained interaction energy $E_\infty$. 
However, due to the low regularity of $E_\infty$, gradient flow theory doesn't provide a characterization of its evolution in terms of a partial differential equation.

Our goal in this paper is to study the dynamics and asymptotic behavior of $\rho_\infty$.  We focus on the case when the initial data $\rho_0$ is a \emph{patch}, i.e. $\rho_0 = \chi_{\Omega_0}$, where $\Omega_0\subseteq \mathbb{R}^d$ is a bounded domain with Lipschitz boundary, and we seek to answer the following questions:
\begin{enumerate}
\item[1.] If $\rho_0$ is a patch, does $\rho_\infty(\cdot,t)$ remain a patch $\chi_{\Omega(t)}$ for all $t\geq 0$?
\item[2.] If so, what partial differential equation determines the evolution of the set $\Omega(t)$?
\item[3.] What is the asymptotic behavior of $\Omega(t)$ as $t\to\infty$?
\end{enumerate}

To answer these questions, we blend the gradient flow approach with viscosity solution theory. 
 Due to the attractive nature of the Newtonian kernel (\ref{Newtonian kernel}), we show that the solution of the congested aggregation equation $\rho_\infty(x,t)$ indeed remains a patch: $\rho_\infty(x,t) = \chi_{\Omega(t)}(x)$ for a time dependent domain $\Omega(t)$. We then show that $\Omega(t)$ evolves with  outward normal velocity $V = V(x,t)$ satisfying
\begin{align*}
V = -\nu\cdot(\nabla p + \nabla\mathbf{N}\rho_{\infty}) \quad\hbox{ at } x\in\partial\Omega(t),
\end{align*}
where $\nu=\nu(x,t)$ is the outward unit normal at $x\in \partial \Omega(t)$ and, for each $t>0$, $p=p(x,t)$ solves\begin{align*}
-\Delta p(\cdot,t) = 1 \hbox { in } \Omega(t),\quad p(\cdot, t) = 0 \hbox{ outside of } \Omega(t).
\end{align*}
Since, $\Omega(t) = \{p(\cdot,t)>0\}$, this gives a Hele-Shaw type free boundary problem for the pressure variable $p$,
\begin{align} \tag*{(P)} \label{P}
\left\{\begin{array}{lll}
-\Delta p(\cdot,t) = 1 &\hbox{ in } & \ \ \{p>0\};\\
V= -\nu\cdot( \nabla p + \nabla \Phi) &\hbox{ on } &\partial\{p>0\};\\
 \Phi= \mathbf{N} \chi_{\{p>0\}}.
 \end{array}\right.
\end{align}
Provided that $p$ is sufficiently regular (for example, $p\in L^1([0,\infty);H^1(\Rd)$), this would imply that the solution of the congested aggregation is a weak solution of the continuity equation
\begin{equation}\label{transport111}
\rho_t = \nabla\cdot(\rho(  \nabla \bN \rho+ \nabla p)) ,
\end{equation}
where $\nabla p$ is the pressure generated by the height constraint that modifies the ``desired velocity field'' $\nabla \bN \rho$.
  In terms of $p$, 
 $\nu = -\nabla p/|\nabla p|$ and $V = p_t/|\nabla p|$, so in the smooth setting the second condition in \ref{P} can be written as 
 $$
 p_t = |\nabla p|^2 + \nabla p \cdot \nabla \Phi \hbox{ on } \partial\{p>0\}.
  $$

Even if $\Omega_0$ has smooth boundary, the evolving set $\Omega(t)=\{p(\cdot,t)>0\}$ may undergo topological changes such as merging.  Consequently, to describe the evolution of $\Omega(t)$, we require a notion of weak solution for \ref{P}. While viscosity solutions are a natural choice, given their utility in free boundary problems, because of the nonlocal dependence of the outward normal velocity $V$ on $p$ itself, \ref{P} lacks a comparison principle. Instead, we consider an auxillary problem for a \emph{fixed}, nonnegative function $\rho(x,t)\in L^{\infty}(\R^d\times (0,\infty))$,
\begin{align} \tag*{(P)$_\infty$} \label{Pinfty}
\left\{\begin{array}{lll}
-\Delta p(\cdot,t) = 1 &\hbox{ in }& \{p>0\};\\
V= -\nu\cdot( \nabla p + \nabla\Phi) &\hbox{ on }&\partial\{p>0\};\\
 \Phi= \mathbf{N}\rho .
 \end{array}\right.
\end{align}
 We show that the comparison comparison principle holds for \ref{Pinfty}, hence viscosity solution theory applies. We then define $p$ to be a solution of \ref{P} if it is a weak viscosity solution of  \ref{Pinfty} with $\rho = \chi_{\{p>0\}}$ almost everywhere.
 
We now state our first main result, which follows from Theorems \ref{existenceofsolntoP} and \ref{convergence}.
\begin{theorem}[Characterization of dynamics of aggregation patches] \label{first main theorem}
~\\[-0.4cm]
\begin{itemize}

\item[(a)]
Let $\Omega_0 \subseteq \R^d$ be a bounded domain with Lipschitz boundary, and let $\rho_{\infty}(\cdot,t) \in L^{\infty}(\R^n)$ be the gradient flow of $E_{\infty}$ with initial data $\chi_{\Omega_0}$. Consider the free boundary problem \ref{Pinfty} with $\rho$ replaced by $\rho_\infty$, and the initial data $p_0$ given by
\begin{equation}
\label{initial}
-\Delta p_0(\cdot,0) = 1 \hbox { in } \Omega_0,\quad p_0(\cdot, 0) = 0 \hbox{ outside of } \Omega_0.
\end{equation}
Then there is a unique minimal viscosity solution $p(x,t)$ of \ref{Pinfty} with initial data $p_0$. 
\item[(b)] Let $\Omega(t) = \{p(\cdot, t)>0\}$. Then $\rho_\infty(\cdot, t)$ remains a patch for all times, and
$$
\rho_{\infty}(\cdot, t)= \chi_{\Omega(t)} \text{ a.e.} \text{ for all }t\geq 0.
$$
\item[(c)] Therefore, $p$ is a weak solution of \ref{P} in the sense of Definition~\ref{weak_def}.
\end{itemize}

\label{main}

\end{theorem}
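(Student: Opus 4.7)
\medskip
\noindent\textbf{Proof proposal.} The plan is to use an approximation by solutions of the Keller--Segel equation \eqref{KellerSegel} with finite $m$ as the unifying thread that ties the gradient flow perspective (which gives $\rho_\infty$) to the viscosity-solution perspective (which gives $p$). More precisely, for each $m$ let $\rho_m$ be the Wasserstein gradient flow of $E_m$ with initial datum $\chi_{\Omega_0}$, and let $p_m := \tfrac{m}{m-1}\rho_m^{m-1}$ be the associated pressure. Each $\rho_m$ satisfies a regularized version of the transport equation \eqref{transport111}, and the pair $(\rho_m,p_m)$ satisfies, in the classical sense, the interior condition $-\Delta p_m = \rho_m + \nabla \mathbf N \rho_m \cdot \nabla p_m/p_m$-type identity plus the obvious regularized boundary behavior. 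These approximations are the natural tool because the degenerate diffusion plays the role of a penalization of the constraint $\{\rho\le 1\}$, and the Keller--Segel equation admits a comparison principle for the pressure once the drift is frozen.

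\medskip
\noindent For part (a), I would construct the minimal viscosity solution of \ref{Pinfty} with $\rho$ replaced by $\rho_\infty$ by Perron's method, using the comparison principle for the frozen-drift problem (which is asserted to hold in the paragraph preceding the statement). A natural barrier from above is built from the pressure $p_m$, which solves the $m$-problem with $\rho = \rho_m$, after passing to the half-relaxed upper limit as $m\to\infty$; a natural barrier from below is the zero function. Perron's construction then yields a unique \emph{minimal} viscosity solution $p$ with initial datum $p_0$ defined by \eqref{initial}.

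\medskip
\noindent For part (b), the key is the convergence $\rho_m \to \rho_\infty$ in Wasserstein distance as $m\to\infty$. This is exactly where the recent $\omega$-convex gradient flow theory enters: $E_\infty$ and $E_m$ are both $\omega$-convex along generalized geodesics with a \emph{common} modulus of convexity determined by the Newtonian kernel, and $E_m \Gamma$-converges to $E_\infty$, so the quantitative stability results for gradient flows of $\omega$-convex energies yield $\rho_m(\cdot,t)\to \rho_\infty(\cdot,t)$ in $W_2$ uniformly on compact time intervals. On the viscosity side, the half-relaxed limits of $p_m$ are sandwiched between the minimal and maximal viscosity sub/supersolutions of \ref{Pinfty}; combined with the upper envelope in (a) this pins them to the minimal solution $p$. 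Finally, one needs the patch identification $\rho_\infty = \chi_{\{p>0\}}$ a.e. The inclusion $\rho_\infty \le \chi_{\{p>0\}}$ follows from monotone convergence of $\rho_m$ on $\{p=0\}$ (where $p_m\to 0$ forces $\rho_m\to 0$ in $L^\infty_{\mathrm{loc}}$ by the explicit formula $\rho_m = (\tfrac{m-1}{m}p_m)^{1/(m-1)}$). The reverse inclusion uses the attractive nature of $\mathcal N$: on $\{p>0\}$ the elliptic condition $-\Delta p = 1$ combined with the mass-preservation of $\rho_\infty$ and the constraint $\rho_\infty \le 1$ forces $\rho_\infty = 1$, for otherwise one could construct a density with strictly lower $E_\infty$ by displacing mass into the deficit region, contradicting that $\rho_\infty$ is the $W_2$-steepest descent.

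\medskip
\noindent Part (c) is then a direct consequence: since $p$ solves \ref{Pinfty} with frozen drift $\rho = \rho_\infty$ and since $\rho_\infty = \chi_{\{p>0\}}$ a.e. by (b), the pair $(p,\rho_\infty)$ satisfies the self-consistent system \ref{P} in the sense of Definition~\ref{weak_def}. The hardest step will be the patch identification in part (b): one must combine the $\omega$-convex gradient flow estimates, which control $\rho_m$ only in $W_2$, with the viscosity-solution convergence of $p_m$, which is pointwise and one-sided, and reconcile the two through the nonlocal, non-monotone coupling $\Phi = \mathbf N\rho$. The attractivity of $\mathcal N$ (i.e., $-\Delta \mathbf N = \delta_0$) is the crucial structural input making this reconciliation possible.
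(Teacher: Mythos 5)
Your overall plan---approximate by nonlinear diffusion with pressure variable $p_m$, pass to half-relaxed limits, and use comparison to pin down the limit---is the right general shape, but your choice of approximating sequence is the one the paper explicitly rules out, and this is a fatal gap.

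You propose to take $\rho_m$ to be the gradient flow of the Keller--Segel energy $E_{KS}$ (i.e., equation \eqref{KellerSegel} with self-consistent drift $\nabla\mathcal{N}*\rho_m$). But $E_{KS}$ is \emph{not} $\omega$-convex along generalized geodesics, so the quantitative discrete-gradient-flow estimates you invoke ("a common modulus of convexity determined by the Newtonian kernel", "$\Gamma$-convergence") do not apply to it. The paper says this in so many words in the introduction and again in the remark at the start of Section \ref{convviscsolsec}: proving $W_2(\rho_m(t),\rho_\infty(t))\to 0$ for the genuine Keller--Segel approximation "seems to require a uniform $L^\infty$ bound on the gradient flow solutions\ldots which is an open question at the moment." The paper's actual device is a bootstrapping trick: because well-posedness of $\rho_\infty$ is already secured by $\omega$-convexity of $E_\infty$ alone, one may \emph{freeze} the drift as the known, time-dependent, \emph{local} potential $\Phi_{1/m}(x,t)=\psi_{1/m}*\mathbf{N}\rho_\infty(x,t)$, define $E_m(\cdot;\mu)$ with an external (not self-consistent) potential, and approximate $\rho_\infty$ by gradient flows of these time-varying local energies (solutions of (PME-D)$_m$, not of Keller--Segel). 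These energies \emph{are} $\omega$-convex uniformly in $m$, and the entire contraction machinery (Lemma \ref{W2 one step} through Theorem \ref{thm:conv_w2}) is built around them, passing through the intermediate energy $\tilde E_\infty$ to compare the three discrete sequences. Without this substitution your $W_2$-convergence claim has no proof.

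A secondary but also substantial gap is in the patch identification. You argue $\rho_\infty=1$ on $\{p>0\}$ by a heuristic "displace mass into the deficit region" variational argument, but this does not survive scrutiny: the $W_2$ gradient flow is a \emph{curve} of measures, not a pointwise minimizer at each $t$, and the claimed variation does not in fact show $E_\infty$ decreases while respecting the constraint and the displacement geometry. The paper's proof of $\rho_\infty=\chi_{\Omega(t)}$ a.e.\ (Theorem \ref{convergence}, steps 3--4) is a delicate sandwich: it introduces inner/outer perturbed initial data $p_0^{n,\pm}$, modifies $(P)_m$ by a source term for the outer family, proves a comparison chain $u_1^{n,-}\le U\le u_2\le u_1\le u_2^{n,+}$ (using a modified free boundary problem ($\tilde P)_\infty$), and uses the $L^1$ contraction for (PME-D)$_m$ plus Gr\"onwall to force $|\supp u_2^{n,+}\setminus\supp u_1^{n,-}|\to 0$. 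None of this appears in your sketch, and the conclusion cannot be reached by the soft arguments you suggest, particularly because the boundary $\partial\Omega(t)$ can be irregular and the one-phase problem lacks stability with respect to perturbing the initial data. Also, your remark that "$(\rho_m,p_m)$ satisfies, in the classical sense, the interior condition $-\Delta p_m=\rho_m+\cdots$" is incorrect: $p_m$ solves the \emph{parabolic} equation \ref{Pm}, and the elliptic condition $-\Delta p=1$ emerges only in the $m\to\infty$ limit.

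Part (a) via Perron's method is essentially the paper's route (Theorem \ref{existenceofsolntoP} citing Kim), and part (c) is indeed a formal consequence of (a) and (b), so those pieces of your outline are fine. The decisive missing idea is the replacement of the nonlocal self-consistent drift by the mollified frozen drift $\Phi_{1/m}=\psi_{1/m}*\mathbf N\rho_\infty$, and the careful measure-theoretic argument identifying $\rho_\infty$ with $\chi_{\{p>0\}}$.
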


Next, we consider the asymptotic behavior of patch solutions as $t \to +\infty$. For any given mass and any dimension, the Riesz rearrangement inequality \cite[Theorem 3.7]{LiebLoss} immediately gives that the global minimizer of the constrained interaction energy \eqref{Edef} must be a characteristic function of a ball. However, this does not guarantee that the gradient flow $\rho_\infty(t)$ of the constrained interaction energy always converges to a translation of the global minimizer as $t\to +\infty$. In particular, the main obstacle is to show the mass of $\rho_\infty(t)$ cannot escape to infinity in the long time limit, which requires us to obtain some compactness estimates on $\rho_\infty(t)$ uniformly in time.

For the Keller-Segel equation \eqref{KellerSegel} with subcritical power $m>2-2/d$, the situation is very similar. Again,  there exists a unique (up to a translation) global minimizer of $E_m$ for any given mass \cite{Lions, LiebYau}, but it is unknown whether solutions converge to it as $t\to +\infty$. In dimension two, convergence has been recently shown by Carrillo, Hittmeir, Volzone, and the third author \cite{chvy}, where compactness is obtained via a uniform in time bound of the second moment, though no explicit convergence rate towards the global minimizer is given. For $d\geq 3$, the only available convergence result towards the global minimizer is work by the second two authors on radial solutions \cite{KimYao}.

In our work, for dimension two, we not only prove convergence of solutions towards the global minimizer of the constrained interaction energy \eqref{Edef}, but also provide explicit estimates on the rate of convergence. We accomplish this by again applying a blended approach, combining the gradient flow structure of the problem with viscosity solution theory and the characterization of patch dynamics from Theorem \ref{first main theorem}. We begin by using a rearrangement inequality of Talenti  \cite{Talenti} to show that the second moment of $\rho_\infty(t)$ is non-increasing in time and is strictly decreasing at time $t$ unless $\Omega(t)$ is a disk.  Then, applying a quantitative version of the isoperimetric inequality due to Fusco, Maggi, and Pratelli \cite{fmp} and our characterization of patch dynamics, Theorem \ref{first main theorem}, we provide explicit estimates on the rate that the second moment is decreasing, in terms of the symmetric difference between $\Omega(t)$ and a disk. Finally, using the gradient flow structure of the problem, we show that as $t\to+\infty$, $\rho_\infty(t)$ strongly converges to a characteristic function of a disk in $L^q$ for any $1\leq q<\infty$, and its energy $E_\infty(\rho_\infty)$ converges to its global minimizer with an explicit rate. This gives our second main result, which combines Theorems \ref{thm:E_conv} and \ref{long_time}. 

\begin{theorem}[Long time behavior in two dimensions] \label{main2} 
Assume $d=2$. Let $\Omega_0 \subseteq \R^2$ be a bounded domain with Lipschitz boundary, and let $\rho_{\infty}$ be the gradient flow of $E_{\infty}$ with initial data $\chi_{\Omega_0}$. Then as $t\to +\infty$, $\rho_{\infty}(\cdot,t)$ converges to $\chi_{B_0}$ in $L^q$ for any $1\leq q<\infty$,  where $B_0$ is the unique disk with the same area and center of mass as those of $\Omega_0$.  Furthermore, we have the following rate of convergence in terms of the free energy,
$$
0\leq E_\infty(\rho_{\infty}(\cdot,t)) - E_\infty(\chi_{B_0}) \leq C(|\Omega_0|, M_2[\Omega_0]) t^{-1/6}.
$$
\end{theorem}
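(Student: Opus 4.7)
The plan rests on four ingredients: the patch characterization of Theorem~\ref{first main theorem}, the Riesz rearrangement inequality, Talenti's comparison principle, and the quantitative isoperimetric inequality of Fusco--Maggi--Pratelli. By Theorem~\ref{first main theorem}(b), $\rho_\infty(\cdot,t) = \chi_{\Omega(t)}$ with $\Omega(t) = \{p(\cdot,t)>0\}$ and $p$ solving \ref{Pinfty}. Testing (\ref{transport111}) against $\varphi \equiv 1$ gives mass conservation $|\Omega(t)| = |\Omega_0|$, and against $\varphi(x) = x$ gives conservation of center of mass: the Newtonian contribution vanishes by antisymmetry of $\nabla\mathcal{N}(x-y)$ under swapping $x\leftrightarrow y$, and the pressure contribution vanishes since $p=0$ on $\partial\Omega(t)$. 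Thus by Riesz rearrangement, $\chi_{B_0}$ is the unique minimizer of $E_\infty$ within the class of admissible densities of the given mass and center of mass.

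The second step is a second-moment dissipation identity. Centering coordinates at the common center of mass of $\Omega(t)$ and $B_0$, I test (\ref{transport111}) with $\varphi(x) = |x|^2$ and integrate by parts. The pressure contribution reduces to $-2\int_{\Omega(t)} p\,dx$ (boundary term vanishes because $p=0$ on $\partial\Omega(t)$), and the Newtonian contribution, after symmetrization in $(x,y)$, reduces to $M^2/(4\pi)$ via the pointwise identity $(x-y)\cdot\nabla\mathcal{N}(x-y) \equiv 1/(2\pi)$ valid in $d=2$. This yields
\[
\frac{d}{dt}M_2[\Omega(t)] \;=\; 4\int_{\Omega(t)} p\,dx \;-\; \frac{M^2}{2\pi}.
\]
An explicit radial computation on $B_0$ gives $\int_{B_0} p^*\,dx = M^2/(8\pi)$, so Talenti's inequality $\int_{\Omega(t)} p\,dx \leq \int_{B_0} p^*\,dx$ yields $\frac{d}{dt}M_2[\Omega(t)] \leq 0$, with equality if and only if $\Omega(t)$ is a disk.

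To turn the strict inequality into a quantitative decay, set $\alpha(t) := |\Omega(t)\triangle B_0|/M$. I will combine the Fusco--Maggi--Pratelli quantitative isoperimetric inequality (which controls $\alpha$ by the square root of the isoperimetric deficit) with a quantitative Talenti bound at the level of superlevel sets of $p$ via the coarea formula to obtain an inequality of the form
\[
\frac{M^2}{8\pi} \;-\; \int_{\Omega(t)} p\,dx \;\geq\; c\,\alpha(t)^\beta
\]
for an explicit exponent $\beta$. This yields $\frac{d}{dt}M_2[\Omega(t)] \leq -c'\alpha(t)^\beta$; since $M_2[\Omega(t)]\geq M_2[B_0]$, integrating in time forces $\alpha(t) \to 0$. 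Together with mass and center-of-mass conservation, this gives $\chi_{\Omega(t)} \to \chi_{B_0}$ in $L^1$, hence in $L^q$ for every $q<\infty$ because both functions are bounded by $1$.

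For the energy rate, I use that $E_\infty(\rho_\infty(\cdot,t))$ is non-increasing in $t$ (a consequence of the $\omega$-convex gradient flow structure cited in Section~\ref{Wasserstein intro section}, which along the patch flow also follows from the dissipation $\frac{d}{dt}E_\infty = -\int|\nabla p + \nabla\mathbf{N}\rho_\infty|^2\rho_\infty\,dx$), combined with a stability estimate $E_\infty(\chi_\Omega) - E_\infty(\chi_{B_0}) \leq C\,|\Omega\triangle B_0|^\gamma$ (derived from the $L^1$-continuity of $\rho\mapsto\tfrac12\int \mathbf{N}\rho\cdot\rho$ together with control of the asymmetry). Feeding in the polynomial bound on $\alpha(t)$ from the previous step and tracking the exponents carefully produces the announced $t^{-1/6}$ rate. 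The main obstacle will be to establish the quantitative Talenti inequality with an exponent $\beta$ that, when composed with the stability exponent $\gamma$, reproduces exactly $1/6$; a secondary subtlety is to justify the pointwise-in-$t$ differentiation of $M_2$ and $E_\infty$ along the free boundary flow despite possible topological changes of $\Omega(t)$, which I would handle by approximating with the smooth $m\to\infty$ nonlinear diffusion flow and passing to the limit using the convergence machinery behind Theorem~\ref{first main theorem}.
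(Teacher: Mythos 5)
Your strategy is essentially the same as the paper's: patch characterization from Theorem~\ref{first main theorem}, conservation of mass and center of mass, second-moment dissipation with the pressure term controlled by Talenti's comparison principle, the Fusco--Maggi--Pratelli quantitative isoperimetric inequality to make the Talenti bound quantitative in the Fraenkel asymmetry $A(\Omega(t))$, an $L^1$--stability estimate for the interaction energy, and monotonicity of $E_\infty$ along the gradient flow. Your displayed identity $\frac{d}{dt}M_2[\Omega(t)] = 4\int_{\Omega(t)} p\,dx - M^2/(2\pi)$ is exactly what the paper obtains (in time-integrated, one-sided form with $u_1$ in place of $p$; see Propositions~\ref{prop:dm2_dt} and~\ref{prop:m2_a}), and you correctly note the need to justify the computation by passing to the $m\to\infty$ limit. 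The intermediate claims ``pressure contribution $-2\int p$'' and ``Newtonian contribution $M^2/(4\pi)$'' are off (correctly they are $+4\int p$ and $-M^2/(2\pi)$), but your final formula is right, so these look like slips.

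There is, however, a genuine gap at the step ``since $M_2[\Omega(t)]\geq M_2[B_0]$, integrating in time forces $\alpha(t)\to 0$.'' What the second-moment inequality actually gives, once you feed in the quantitative Talenti bound with exponent $\beta=3$, is only the time-integrated estimate $\int_0^T A(\Omega(t))^3\,dt \leq M_2[\Omega_0]/(c_0|\Omega_0|^2)$. This does \emph{not} imply $A(\Omega(t))\to 0$ pointwise; it implies only that for every $T$ there exists \emph{some} $t_0\in(0,T)$ with $A(\Omega(t_0))\leq C\,T^{-1/3}$ (Corollary~\ref{cor:m2}). The paper's crucial move is then to exploit the monotonicity of $E_\infty$ to propagate the energy smallness from such a good time $t_0 < T$ to time $T$ itself: $E_\infty(\rho_\infty(T)) - E_\infty(\chi_{B_0}) \leq E_\infty(\rho_\infty(t_0)) - E_\infty(\chi_{B_0}) \leq C\sqrt{A(\Omega(t_0))} \leq C\,T^{-1/6}$. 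You mention energy monotonicity but you do not connect it to the integrated (rather than pointwise) asymmetry bound, which is where the $t^{-1/6}$ rate actually comes from. Relatedly, your plan for the $L^q$ convergence (via $\alpha(t)\to 0$ together with mass and center-of-mass conservation) rests on the unestablished pointwise decay; the paper instead obtains $L^1$ convergence by a compactness argument (uniform second-moment bound, Prokhorov, lower semicontinuity of $E_\infty$ under weak-$*$ convergence, and uniqueness of the minimizer up to translation pinned down by the center of mass). You would need either to add that compactness argument, or to supply a stability estimate in the other direction (small energy gap implies small asymmetry), which is a nontrivial ``Riesz inequality deficit'' result that the paper avoids invoking.
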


\begin{remark}
Let us point out that our control for the second moment relies on the particular structure for the 2D Newtonian kernel, and we are unable to obtain similar compactness estimates for higher dimensions. For $d\geq 3$, whether $\rho_\infty(t)$ converges to a ball as $t\to\infty$ remains an interesting open question.
\end{remark}

We now describe the key ingredients in our characterization of the dynamics of the congested aggregation equation. At the heart of our analysis is an approximation of this equation as the singular limit of a sequence of nonlinear diffusion equations. This provides the bridge between the gradient flow and viscosity solution approach. In particular, while the gradient flow of $E_\infty$ is merely a curve in the space of  measures, approximating it by a sequence of solutions to nonlinear diffusion equations allows us to bring to bear the tools of viscosity solution theory in the limit.

Following the analogy with Alexander, Kim, and Yao's previous work,  one might hope to approximate the congested aggregation equation by the Keller-Segel equation (\ref{KellerSegel}), which also has a gradient flow structure corresponding to the energy
\begin{align} \label{Keller Segel energy}
E_{KS}(\rho)= \iint \rho(x) \mathbf{N}\rho(x) dx + \frac{1}{m-1}\int \rho(x)^m dx 
\end{align}
Note that for a fixed $\rho$, $E_\infty(\rho)$ is the limit of $E_{KS}(\rho)$ as $m\to\infty$.
However, $E_{KS}$
satisfies neither the classical assumptions for semiconvexity nor the weaker assumptions for $\omega$-convexity. Consequently, we lack the quantitative estimates on the rate of convergence of the discrete gradient flow that are an essential element of our approach. Instead, we replace the nonlocal potential $\bN \rho$ in $E_{KS}$ with a local potential $\Phi_{1/m}(x,t)$ that depends on time, the initial data $\rho(x,0)$, and the diffusion parameter $m \geq 1$. (See Definition \ref{drift def} 
 for a precise definition of this potential.) This leads to the energy
\begin{align*}
 E_{m,t}(\rho) :=
  \int \rho(x) \Phi_{1/m}(x,t) dx +\frac{1}{m-1} \int \rho(x)^m dx ,
\end{align*}
which we can show is \emph{$\omega$-convex}.
We then prove that the (time dependent) gradient flow of this energy, which corresponds to a solution of
\begin{align} \tag*{(PME-D)$_{m}$}\label{pme}
\rho_t = \nabla\cdot(\rho\nabla\Phi_{1/m}) + \Delta \rho^m , 
\end{align}
converges as $m \to +\infty$ to a solution of the congested aggregation equation. (See section \ref{convgradflowsec} for our construction of this time dependent gradient flow.) Then, rewriting \ref{pme} in the form 
\begin{align} \label{pme pressure}
\rho_t = \nabla\cdot(\rho (\nabla\Phi_{1/m} + \nabla p_m)) , \quad \hbox{ for } p_m= \frac{m-1}{m}\rho^{m-1}, 
\end{align}
we use viscosity solution theory to show that, as $m \to +\infty$, $p_m$ converges to a solution of the free boundary problem \ref{P}. By uniqueness of the limit, we conclude the characterization of dynamics of patch solutions of the congested aggregation equation, as stated in Theorem \ref{main}.

Our paper is organized as follows.  In section \ref{convgradflowsec}, we prove that the solutions of the nonlinear diffusion equations \ref{pme} converge as $m \to +\infty$ to the gradient flow of $E_{\infty}$ with an explicit rate depending on $m$. We also provide background on Wasserstein gradient flow, including recent results by the first author on the gradient flows of $\omega$-convex energies. In section \ref{convviscsolsec}, we show that the pressure $p_m$ corresponding to the nonlinear diffusion equations, given in equation (\ref{pme pressure}), converges as $m \to +\infty$ to a solution of \ref{Pinfty}. Combining these results, we show that the gradient flow of $E_\infty$ is a characteristic function of the evolving set $\Omega(t)$ and that $\Omega(t)$ can be obtained from the viscosity solution of $(P)_{\infty}$. 
In section \ref{long time section}, we consider the asymptotic behavior of $\rho_\infty$ in two dimensions, proving that it converges to a disk with explicit rate.  Let us remark that the characterization of $\rho_{\infty}$ by the pressure variable $p$ plays a crucial role in the proof of this asymptotic result.
 Finally, we conclude with an appendix section \ref{appendix section}, which contains proofs of several lemmas from section \ref{convgradflowsec}, as well as definitions of viscosity solutions for the limiting free boundary problem $(P)_{\infty}$.
 
 There are several directions for future work. First, our analysis only addresses solutions that are initially a patch. Results for more general initial data could leverage recent work by Kim and Pozar \cite{KimPozar} and Mellet, Perthame, and Quiros \cite{MelletPerthameQuiros}. Second, in the light of work by Maury, Roudneff-Chupin, and Santambrogio \cite{MRS}, it would be interesting if one could characterize the modified velocity $\nabla p + \nabla \mathbf{N}\rho$ in \eqref{transport111} as the projection of the original velocity $\nabla \mathbf{N}\rho$ onto the space of admissible velocities under the height constraint. At the moment, this appears to be a difficult question, due to the highly nonlinear nature of the projection and its dependence on the solution. A third direction for future work would be to pursue to what extent our analysis extends to nonlocal velocity field generated by kernels aside from the Newtonian $\mathcal{N}$, which arise in a range of biological and physical applications. While our result on the singular limit of the nonlinear diffusion equations extends to a range of kernels (see Remark \ref{choice of kernel remark}), our analysis of the free boundary problem strongly leverages the structure of the Newtonian kernel. A final direction for future work would be to make rigorous the link between the congested aggregation equation and the Keller-Segel equation \eqref{KellerSegel} as $m \to +\infty$, completing the analogy with previous work by Alexander, Kim, and Yao that found that the hard height constraint may be obtained as the limit of slow diffusion.

\section{Convergence of gradient flows: drift diffusion to height constrained interaction} \label{convgradflowsec}
In this section, we show that the gradient flow of the height constrained interaction energy $E_\infty$, defined in equation (\ref{Edef}), may be approximated by solutions of the nonlinear diffusions equations \ref{pme} as $m \to +\infty$. This provides a link between the abstract Wasserstein gradient flow of $E_\infty$, which in general is merely a curve in the space of probability measures, and solutions to partial differential equations.

\begin{remark}[Choice of interaction kernel] \label{choice of kernel remark}
For the sake of continuity with sections \ref{convviscsolsec} and \ref{long time section}, we assume that the interaction kernel $\mathcal{N}$ is Newtonian (\ref{Newtonian kernel}). However, our results in this section may be extended to any kernels that satisfy \cite[Assumption 4.1]{CraigOmega} and the estimates of Proposition~\ref{N mu bounds}. In particular, this includes many repulsive-attractive potentials of interest in the literature.
\end{remark}

\subsection{Preliminary results} \label{Wasserstein intro section}

We begin by collecting some results on the Wasserstein gradient flow of $\omega$-convex energies that will be useful in what follows. For further background on the Wasserstein metric and gradient flows of semiconvex energies, we refer the reader to the books by Ambrosio, Gigli, and Savar\'e \cite{AGS} and Villani \cite{Villani}. For more details on gradient flows of $\omega$-convex energies, see recent work by the first author \cite{CraigOmega}.

Let $\P_2(\Rd)$ denote the set of probability measures on $\Rd$ with finite second moment, i.e. $\int |x|^2 d\mu < +\infty$. If a measure $\mu \in \P_2(\Rd)$ is absolutely continuous with respect to Lebesgue measure ($\mu  \ll \mathcal{L}^d $), we will identify $\mu$ with its density, i.e. $d \mu(x) = \mu(x) dx$. In particular, we write $\|\mu\|_{L^\infty} < +\infty$ if $d\mu(x) = \mu(x) dx$ and $\mu(x) \in L^\infty(\Rd)$.

Given $\mu, \nu \in \P_2(\Rd)$, a measurable function $\bt: \Rd \to \Rd$ \emph{transports $\mu$ onto $\nu$} in case $\int f(\bt(x)) d \mu = \int f(y) d \nu$ for all $f \in L^1(d \nu)$. We then call $\nu$ the \emph{push-forward of $\mu$ under $\bt$} and write $\nu = \bt \# \mu$. If $\mu$ is absolutely continuous with respect to Lebesgue measure (as will be the case for all the measures we consider), then the \emph{Wasserstein distance} from $\mu$ to $\nu$ is given by 
\begin{equation}
\label{eq:def_w2}
 W_2(\mu,\nu) = \inf \left\{ \left( \int |\bt - \id|^2 d \mu \right)^{1/2} : \bt \# \mu = \nu \right\} , 
 \end{equation}
where $\id(x) = x$. Furthermore, the infimum is attained by an \emph{optimal transport map} $\bt = \bt_\mu^\nu$, which is unique $\mu$-almost everywhere.

The metric space $(\P_2(\Rd), W_2)$ is complete, and convergence can be characterized as
\begin{center}
	\begin{tabular}{lcl}
		$W_2(\mu_n,\mu)\to 0$ &  $\iff$  & $\int fd\mu_n \to \int f d\mu$ for all $f\in C(\Rd)$ such that \\[0.25cm]
		&   & $\exists C>0,x_0 \in \Rd$ so that $|f(x)| \leq C(1+ |x-x_0|^2)$.
	\end{tabular}
\end{center}
We will refer to such $f$ as \emph{continuous functions with at most quadratic growth.} Furthermore, for any $f \in C^1(\Rd)$ with uniformly bounded gradient, we can quantify the difference between the integral of $f$ against $\mu$ and the integral of $f$ against $\nu$ using the following elementary lemma.
\begin{lemma} \label{drift continuity estimate0}
For $f \in C^1(\Rd)$ and $\mu,\nu \in \P_2(\Rd)$,
\[ \left| \int  f d \mu - \int f d \nu  \right| \leq \| \grad f \|_\infty W_2(\mu, \nu) .  \]
\end{lemma}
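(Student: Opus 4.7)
The plan is to realize $\int f\,d\nu$ as an integral against $\mu$ by pushing forward along the optimal transport map, and then apply an elementary $C^1$ bound together with the Cauchy--Schwarz inequality. There is essentially no obstacle here: the estimate is a textbook consequence of the definition of $W_2$, and my outline is included only for concreteness.

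First, I would observe that if $\|\grad f\|_\infty = +\infty$ the claimed inequality is vacuous, so I may assume $\grad f \in L^\infty(\Rd)$. Let $\bt = \bt_\mu^\nu$ denote the optimal transport map pushing $\mu$ onto $\nu$, whose existence and $\mu$-a.e.\ uniqueness are recalled just before the lemma. The defining property of the push-forward applied to $f \in L^1(d\nu)$ (which holds since $f$ has at most linear growth and $\nu$ has finite second moment) then gives
\[ \int f\,d\nu \;=\; \int f(\bt(x))\,d\mu(x), \]
so that
\[ \int f\,d\mu - \int f\,d\nu \;=\; \int \bigl(f(x) - f(\bt(x))\bigr)\,d\mu(x). \]

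Next, I would apply the fundamental theorem of calculus along the straight segment from $x$ to $\bt(x)$ to obtain the pointwise Lipschitz-type bound
\[ |f(x) - f(\bt(x))| \;=\; \left| \int_0^1 \grad f\bigl((1-s)x + s\,\bt(x)\bigr) \cdot (\bt(x)-x)\,ds \right| \;\leq\; \|\grad f\|_\infty\, |x - \bt(x)|. \]
Integrating against $\mu$ and using Cauchy--Schwarz, which is available because $\mu$ is a probability measure and hence has unit total mass, yields
\[ \int |x - \bt(x)|\,d\mu(x) \;\leq\; \left( \int |x - \bt(x)|^2\,d\mu(x) \right)^{1/2} \;=\; W_2(\mu,\nu), \]
where the final equality is the optimality of $\bt$ in \eqref{eq:def_w2}. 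Chaining these three estimates gives the claim.

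The only genuine subtlety is the book-keeping observation that the inequality is vacuous when $f$ has unbounded gradient; everything else is immediate from the definition of the Wasserstein distance and the existence of an optimal transport map.
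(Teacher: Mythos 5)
Your proof is correct and essentially identical to the paper's: both push forward along the optimal transport map, bound the pointwise difference by $\|\grad f\|_\infty\,|x - \bt(x)|$, and finish with Jensen/Cauchy--Schwarz against the probability measure $\mu$. The only difference is that the paper explicitly notes that for general $\mu \in \P_2(\Rd)$ (not necessarily absolutely continuous) one should use optimal transport plans rather than maps; you omit this caveat, but it does not affect the substance.
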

\begin{proof}
For simplicity, suppose that $\mu \ll \mathcal{L}^d$, so there exists an optimal transport map  $\bt_\mu^\nu$. (The proof is identical in the general case, using optimal transport plans.) By Jensen's inequality,
\begin{align*}
\left| \int f  d \mu- \int  f d \nu \right| &\leq \int \left|f - f\circ\bt_\mu^\nu \right| d \mu \leq \|\grad f\|_\infty \left( \int |\bt_\mu^\nu - \id|^2 d \mu \right)^{1/2} = \|\grad f\|_\infty W_2(\mu,\nu) .
\end{align*}
\end{proof}

Along with its metric structure, $(\P_2(\Rd), W_2)$ is a \emph{geodesic space}, since any two measures $\mu_0, \mu_1 \in \P_2(\Rd)$ are connected by a \emph{geodesic} $\mu_\alpha \in \P_2(\Rd)$, $\alpha \in [0,1]$, satisfying
\[ W_2(\mu_\alpha, \mu_\beta) = |\beta - \alpha| W_2(\mu_0, \mu_1) \text{ for all } \alpha, \beta \in [0,1] . \]
If $\mu_0 \ll \mathcal{L}^d$, then the geodesic from $\mu_0$ to any $\mu_1 \in \P_2(\Rd)$ is unique and of the form 
\[ \mu_\alpha = ((1-\alpha) \id + \alpha \bt_{\mu_0}^{\mu_1}) \# \mu_0 . \]
 Unlike a square Hilbertian norm, the square Wasserstein distance is not convex along geodesics ($\alpha \mapsto W_2^2(\nu,\mu_\alpha)$ is not convex) \cite[Example 9.1.5]{AGS}. Consequently, Ambrosio, Gigli, and Savar\'e introduced an expanded class of curves known as \emph{generalized geodesics}, so that, between any two measures, there is always at least one curve along which the square distance is convex \cite[Lemma 9.2.1, Definition 9.2.2]{AGS}. Given $\mu_0,\mu_1,\nu\in \P_2(\Rd)$ with $\nu \ll \mathcal{L}^d$, the \emph{generalized geodesic from $\mu_0$ to $\mu_1$ with base $\nu$} is
 \[ \mu_\alpha = ((1-\alpha) \bt_\nu^{\mu_0} + \alpha \bt_\nu^{\mu_1}) \# \nu , \]
 and along such a curve we have
 \[ W_2^2(\nu,\mu_\alpha) = (1-\alpha) W_2^2(\nu, \mu_0) + \alpha W_2^2(\nu,\mu_1) - \alpha (1-\alpha) \|\bt_{\nu}^{\mu_0} - \bt_\nu^{\mu_1} \|_{L^2(d \nu)}^2 . \]
An additional class of curves along which the square Wasserstein metric is convex are \emph{linear interpolations} of measures,
\[ \mu_\alpha := (1-\alpha) \mu_0 + \alpha \mu_1 . \]
 For any $\mu_0,\mu_1,\nu\in \P_2(\Rd)$, we have
 \begin{equation}
 \label{W2 convex}
 W_2^2(\nu, \mu_\alpha) \leq (1-\alpha)W_2^2(\nu,\mu_0) + \alpha W_2^2(\nu,\mu_1).
 \end{equation}
(See, for example, \cite[Proposition 7.19]{Santambrogio}.)


Due to the fact that $(\P_2(\Rd),W_2)$ is a geodesic space, it induces a natural notion of convexity on energy functionals $E: \P_2(\Rd) \to \R \cup \{ +\infty \}$, i.e. given a geodesic $\mu_\alpha$, the function $\alpha \mapsto E(\mu_\alpha)$ is convex. We recall both this standard notion of convexity, as well as two generalizations: semiconvexity and $\omega$-convexity.
\begin{enumerate}[label = (\roman*)]
\item $E$ is \emph{convex} along $\mu_\alpha$ if $E(\mu_\alpha) \leq (1-\alpha) E(\mu_0) + \alpha E(\mu_1)$.
\item $E$ is \emph{semiconvex} along $\mu_\alpha$ if there exists $\lambda \in \R$ so that \\ $E(\mu_\alpha) \leq (1-\alpha) E(\mu_0) + \alpha E(\mu_1) - \alpha (1-\alpha) \frac{\lambda}{2} W_2^2(\mu_0,\mu_1)$.
\item $E$ is \emph{$\omega$-convex} along $\mu_\alpha$ if there exists $\lambda_\omega \in \R$ and a continuous, nondecreasing function $\omega:[0, +\infty) \to [0,+\infty)$, which vanishes only at $x=0$, so that \\ $E(\mu_\alpha) \leq (1-\alpha) E(\mu_0) + \alpha E(\mu_1) - \frac{\lambda_\omega}{2} [ (1-\alpha) \omega(\alpha^2 W^2_2(\mu_0,\mu_1)) + \alpha \omega((1-\alpha) W_2^2(\mu_0,\mu_1))]$.
\end{enumerate}
If, for any $\mu_0, \mu_1 \in \P_2(\Rd)$, there exists a geodesic $\mu_\alpha$ from $\mu_0$ to $\mu_1$ along which $E$ satisfies one of the above inequalities, we say $E$ is \emph{convex/semiconvex/$\omega$-convex along geodesics}. On the other hand, if for any $\mu_0, \mu_1, \nu \in \P_2(\Rd)$, there exists a generalized geodesic $\mu_\alpha$ from $\mu_0$ to $\mu_1$ with base $\nu$ along which $E$ satisfies one of the above inequalities (replacing $W_2(\mu_0,\mu_1)$ on the right hand side with $\|\bt_\nu^{\mu_0} - \bt_\nu^{\mu_1}\|_{L^2(\nu)}^2$), we say $E$ is \emph{convex/semiconvex/$\omega$-convex along generalized geodesics}. We will also say that $E$ is \emph{proper} if the \emph{domain} of the energy $D(E) = \{ \mu : E(\mu)<+\infty\}$ is nonempty.

A key element of our analysis is that the height constrained interaction energy $E_\infty$ defined in equation (\ref{Edef}) is $\omega$-convex along generalized geodesics. This follows from the following estimates on the Newtonian potential of a bounded, integrable function.
\begin{proposition}[{c.f.\cite[Theorem 2.7]{Loeper}}]\ \label{N mu bounds}
Suppose $\rho, \mu,\nu \in \P_2(\Rd)$ with $\|\rho\|_\infty , \|\mu\|_\infty \leq 1$. Then there exists $C_d \geq 1$, depending only on the dimension, so that 
\begin{align*}  &\|\grad \bN \rho\|_\infty \leq C_d , \ \  \| \Delta \bN \rho \|_\infty \leq 1, \  \ \int \bN \rho d \nu \geq -C_d , \\
  & |\grad \bN \rho(x) - \grad  \bN \rho(y)| \leq C_d \sigma(|x-y|) , \text{ and }  \|\grad \bN \rho - \grad \bN \mu \|_{L^2(\Rd)} \leq W_2(\rho, \mu) .
 \end{align*}
where
\begin{align} \label{sigmadef}
\sigma(x) &:= \begin{cases} 2x |\log x| & \text{ if } 0\leq x \leq e^{(-1-\sqrt{2})/2} , \\ \sqrt{x^2+ 2(1+\sqrt{2})e^{-1-\sqrt{2}}} & \text{ if }x > e^{(-1-\sqrt{2})/2} . \end{cases}
\end{align}
\end{proposition}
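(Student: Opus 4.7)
The plan is to reduce each estimate to an analysis of the convolution $\bN\rho(x) = \int \mathcal{N}(x-y)\rho(y)dy$ and its derivatives, exploiting two complementary controls on $\rho$: the bound $\|\rho\|_\infty \leq 1$ tames the local mass near the singularity of $\mathcal{N}$, while the normalization $\int \rho = 1$ controls behavior at infinity. A near/far splitting of the integration domain is the workhorse for the first four pointwise bounds, and an argument in the spirit of Loeper handles the final $L^2$ estimate.

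For the gradient bound (1), I would split
\[
|\grad \bN \rho(x)| \leq \int_{|x-y|\leq 1}|\grad \mathcal{N}(x-y)|\rho(y)dy + \int_{|x-y|>1}|\grad \mathcal{N}(x-y)|\rho(y)dy,
\]
controlling the first integral via $|\grad \mathcal{N}(z)|\lesssim |z|^{1-d}$ and $\|\rho\|_\infty \leq 1$ (local integrability of $|z|^{1-d}$ on bounded sets yielding a dimensional constant), and the second via $|\grad \mathcal{N}(z)|\leq C_d$ on $\{|z|>1\}$ together with $\int \rho = 1$. The Laplacian identity (2) is immediate from the distributional formula $-\Delta \mathcal{N} = \delta_0$, which gives $\Delta \bN \rho = -\rho$ a.e.\ and hence $\|\Delta \bN \rho\|_\infty \leq 1$. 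The lower bound (3) follows from a uniform pointwise estimate $\bN \rho(x)\geq -C_d$, obtained by the same near/far split applied to $\mathcal{N}$ itself: in $d\geq 3$, $\mathcal{N}<0$ with $|\mathcal{N}(z)|\lesssim |z|^{2-d}$ locally integrable; in $d=2$, only the near contribution $\int_{|x-y|<1}(2\pi)^{-1}\log|x-y|\rho(y)dy$ is negative, and its modulus is controlled by $\|\rho\|_\infty\int_{|z|<1}|\log|z||dz$. Integration against the probability measure $\nu$ then preserves this lower bound.

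The modulus of continuity (4) is the most delicate of the five, and the specific form of $\sigma$ in \eqref{sigmadef}---in particular the threshold $e^{(-1-\sqrt 2)/2}$---signals that careful constant optimization will be required. My plan is to decompose
\[
\grad \bN \rho(x) - \grad \bN \rho(y) = \int[\grad \mathcal{N}(x-z)-\grad \mathcal{N}(y-z)]\rho(z)dz
\]
into a near region $\{|z-x|\leq 2|x-y|\}$, where cancellation is dropped and each of the two terms is bounded separately by $\int_{|u|\leq 2|x-y|}|u|^{1-d}du \lesssim |x-y|$ using $\|\rho\|_\infty\leq 1$, and a far region $\{|z-x|> 2|x-y|\}$, where the mean value theorem together with $|\grad^2\mathcal{N}(\xi)|\lesssim |\xi|^{-d}$ yields a contribution of order $|x-y|\bigl[\int_{2|x-y|<|z-x|<R}|z-x|^{-d}dz + R^{-d}\bigr]$ after splitting again by total mass. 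The first summand contributes a $|x-y|\log(R/|x-y|)$ factor, which for small $|x-y|$ with $R$ of order one produces the $2x|\log x|$ branch of $\sigma$. For larger $|x-y|$, the crude bound from estimate (1) combines with a linear-in-$|x-y|$ term to yield the $\sqrt{x^2+\text{const}}$ branch. The main obstacle will be tracking constants precisely enough to match $\sigma$ at the crossover $x = e^{(-1-\sqrt 2)/2}$, which requires solving a transcendental equation setting the two branches equal at this threshold.

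Finally, for the $L^2$ estimate (5), I would set $u = \bN\rho - \bN\mu$, so that $-\Delta u = \rho - \mu$ and integration by parts gives $\|\grad u\|_{L^2}^2 = \int u\,d(\rho-\mu)$. Letting $T$ denote the optimal transport map from $\rho$ to $\mu$ with interpolation $S_\alpha = (1-\alpha)\id + \alpha T$ and $\rho_\alpha = S_\alpha\#\rho$, the identity $u - u\circ T = -\int_0^1 \grad u(S_\alpha)\cdot(T-\id)d\alpha$ combined with Cauchy--Schwarz yields
\[
\|\grad u\|_{L^2}^2 \leq W_2(\rho,\mu)\int_0^1 \|\grad u\|_{L^2(\rho_\alpha)}d\alpha.
\]
The key ingredient is then the uniform bound $\|\rho_\alpha\|_\infty\leq 1$, which follows from McCann's displacement convexity applied to the functional $F(\rho) = \int U(\rho)dx$ with $U(r)=0$ for $r\leq 1$ and $U(r)=+\infty$ otherwise; the associated function $s\mapsto s^dU(s^{-d})$ is convex and nonincreasing on $(0,\infty)$, so McCann's criterion applies. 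Since $F(\rho) = F(\mu) = 0$, displacement convexity forces $F(\rho_\alpha)= 0$ for all $\alpha \in [0,1]$, giving $\|\grad u\|_{L^2(\rho_\alpha)}\leq \|\grad u\|_{L^2}$ and hence $\|\grad u\|_{L^2}\leq W_2(\rho,\mu)$ as claimed.
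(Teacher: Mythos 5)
Your proposal follows the paper's route: the paper proves the first three bounds by exactly the near/far split you describe (phrased compactly as $\|\grad\mathcal{N}\|_{L^1(B)}\|\rho\|_\infty + \|\grad\mathcal{N}\|_{L^\infty(B^c)}\|\rho\|_1$ with $B=B_1(0)$), and defers the log-Lipschitz modulus and the $L^2$ estimate to references (Yudovich-type potential theory and Loeper, respectively) whose content---the near/far decomposition with a second-derivative estimate on the far region, and the displacement-interpolation identity together with the $L^\infty$ bound on $\rho_\alpha$---is what you reproduce. Two minor notes: with the paper's normalization one has $\Delta\mathcal{N}=+\delta_0$, hence $\Delta\bN\rho=\rho$ rather than $-\rho$ (the magnitude bound is unaffected), and the ``transcendental matching'' you worry about at the crossover of $\sigma$ is a non-issue, since the dimensional prefactor $C_d$ absorbs constants and the breakpoint $e^{(-1-\sqrt{2})/2}$ is simply chosen so that the two branches of $\sigma$ agree there.
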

\noindent We defer the proof of this proposition to the appendix in Section \ref{further properties of energies}.

By the above estimates and \cite[Theorem 4.3, Proposition 4.4]{CraigOmega}, $E_\infty$ is $\omega$-convex along generalized geodesics with $\lambda_\omega = -C_d$ and $\omega(x)$ a log-Lipschitz modulus of convexity 
\begin{align} \label{omega def}
\omega(x) = \begin{cases} x |\log x| & \text{ if } 0 \leq x \leq e^{-1-\sqrt{2}} , \\ \sqrt{x^2+ 2(1+\sqrt{2})e^{-1-\sqrt{2}}x} & \text{ if }x > e^{-1-\sqrt{2}}.  \end{cases}
\end{align}

The $\omega$-convexity of $E_\infty$ then leads to the following result on the well-posedness of the gradient flow:
\begin{theorem}[{\cite[Theorem 4.3, Proposition 4.4]{CraigOmega}}]
\label{grad_flow_well_posed}
For any $\rho_0\in D(E_\infty)$ (that is, $\rho_0\in \mathcal{P}_2(\mathbb{R}^d)$ with $\|\rho_0\|_\infty \leq 1$), the gradient flow $\rho_\infty(t)$ of $E_\infty$ with initial data $\rho_0$ is well-posed. Specifically $\rho_\infty: (0,+\infty) \to \P_{2}(\Rd)$ is the unique curve that is locally absolutely continuous in time, with $\rho_\infty(t) \xrightarrow{t \to 0} \rho_0$ and
\begin{align} \label{continuous evi}
\frac{1}{2} \frac{d}{dt} W_2^2(\rho_\infty(t), \nu) + \frac{\lambda_\omega}{2} \omega( W_2^2( \rho_\infty(t), \nu)) \leq E(\nu) - E( \rho_\infty(t))  ,  \ \forall \nu \in D(E_\infty) , \text{ a.e. }  t >0 .
\end{align}
\end{theorem}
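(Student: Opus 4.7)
The plan is to verify that $E_\infty$ satisfies the hypotheses of the abstract well-posedness theorem for gradient flows of $\omega$-convex energies in \cite{CraigOmega}, so that both existence and uniqueness follow directly. The main work is in checking the three ingredients needed: properness, lower semicontinuity, and $\omega$-convexity along generalized geodesics (with a quantitative $\lambda_\omega$ and modulus $\omega$).

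First I would check the basic hypotheses. Properness is immediate: any measure of the form $\rho_0 = |B|^{-1}\chi_B$ (normalized so that $\int \rho_0 = 1$) with $|B|\geq 1$ lies in $D(E_\infty)$, and $E_\infty(\rho_0)$ is finite by Proposition~\ref{N mu bounds}. For lower semicontinuity along $W_2$-convergent sequences $\mu_n \to \mu$ with $\|\mu_n\|_\infty \leq 1$, the $L^\infty$ bound passes to the weak-$*$ limit so $\|\mu\|_\infty \leq 1$, and the quadratic term $\tfrac{1}{2}\int \bN\mu\,d\mu$ passes to the limit using the uniform Lipschitz bound $\|\grad \bN\mu\|_\infty \leq C_d$ together with Lemma~\ref{drift continuity estimate0}, combined with the uniform $L^2$-continuity estimate $\|\grad\bN\rho - \grad\bN\mu\|_{L^2} \leq W_2(\rho,\mu)$. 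Finally, the lower bound $\int \bN\rho\,d\nu \geq -C_d$ from Proposition~\ref{N mu bounds} guarantees $E_\infty$ is bounded below on its domain.

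The core step is $\omega$-convexity along generalized geodesics. This has in fact already been indicated in the excerpt: the estimate
\[
|\grad \bN \rho(x) - \grad \bN \rho(y)| \leq C_d \,\sigma(|x-y|)
\]
from Proposition~\ref{N mu bounds}, together with the $L^2$-continuity estimate, are exactly the input hypotheses of \cite[Theorem 4.3, Proposition 4.4]{CraigOmega}, which show that an interaction energy whose kernel's gradient has modulus of continuity $\sigma$ is $\omega$-convex along generalized geodesics with constant $\lambda_\omega = -C_d$ and modulus $\omega$ as defined in \eqref{omega def}. I would simply invoke that general result, noting that the height constraint $\|\rho\|_\infty \leq 1$ is convex along generalized geodesics (the push-forward of a common base measure under an affine combination of two transport maps remains bounded by $1$ in $L^\infty$ by a standard Jacobian estimate, since the Jacobian of a convex combination of optimal maps dominates the geometric mean), so adding the indicator of this constraint preserves $\omega$-convexity.

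With these three hypotheses verified, the well-posedness theorem from \cite{CraigOmega} applies and yields the unique locally absolutely continuous curve $\rho_\infty:(0,\infty)\to \P_2(\Rd)$ satisfying the EVI~\eqref{continuous evi} and $\rho_\infty(t)\to \rho_0$ as $t\to 0$. I expect the subtlest point to be the verification that the hard height constraint $\{\|\rho\|_\infty\leq 1\}$ is preserved along generalized geodesics, since one needs to combine the concavity of the Jacobian determinant along displacement interpolations with the base-measure construction; however, this is a classical displacement convexity argument in the spirit of McCann and fits directly into the framework of \cite[Sect.~4]{CraigOmega}. Once this is in place, everything else is a citation.
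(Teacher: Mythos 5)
Your proposal is correct and follows essentially the same route as the paper: Proposition~\ref{N mu bounds} supplies the modulus of continuity and $L^2$-stability for $\grad\bN\rho$, which \cite[Theorem 4.3, Proposition 4.4]{CraigOmega} converts into $\omega$-convexity of $E_\infty$ along generalized geodesics (the paper also cites \cite[inequality (60)]{CraigOmega} for the fact that the $L^\infty$ constraint is preserved along generalized geodesics, which is exactly your Minkowski-determinant/geometric-mean observation), and the abstract well-posedness theorem there then yields the unique EVI curve. The paper simply records these two citations rather than writing out the verification of properness and lower semicontinuity, which you supply in a bit more detail.
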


In order to provide a PDE characterization of $\rho_\infty(x,t)$ in Section \ref{convviscsolsec}, we use the following  higher regularity of $\rho_\infty(x,t)$ and $\grad \bN \rho_\infty(x,t)$, which we prove in appendix Section \ref{further properties of energies}.

\begin{proposition}[time regularity of the gradient flow of $E_\infty$] \label{W2 time lipschitz}
Suppose $\rho_\infty(x,t)$, with initial data $\rho_\infty(x, 0) \in D(E_\infty)$, is a gradient flow of $E_\infty$. Then $W_2(\rho_\infty(t), \rho_\infty(s)) \leq 2C_d |t-s|$, where $C_d > 0$ is as in Proposition \ref{N mu bounds}.
\end{proposition}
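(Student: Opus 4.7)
The plan is to deduce the stated Lipschitz regularity from a uniform pointwise bound on the metric slope $|\partial E_\infty|$ along the flow, combined with the energy--dissipation identity that is characteristic of $\omega$-convex gradient flows. The key structural input is the sup-norm bound on $\nabla \mathbf{N}\rho$ provided by Proposition \ref{N mu bounds}, which controls the driving force of the flow in a way that is insensitive to the height constraint.

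First I would show that $|\partial E_\infty|(\rho)\leq C_d$ for every $\rho\in D(E_\infty)$. For the smooth interaction part $W(\rho)=\frac{1}{2}\int \mathbf{N}\rho\, d\rho$, a standard first-variation computation (using that any $\rho\in D(E_\infty)$ is absolutely continuous, so the optimal map $\bt_\rho^{\tilde\rho}$ exists) gives
\[
W(\rho)-W(\tilde\rho)\leq -\int \nabla \mathbf{N}\rho\cdot(\bt_\rho^{\tilde\rho}-\id)\,d\rho + r(W_2(\rho,\tilde\rho)),
\]
where the remainder satisfies $r(\delta)/\delta\to 0$ as $\delta\to 0$, controlled via the log-Lipschitz estimate on $\nabla \mathbf{N}\rho$ from Proposition \ref{N mu bounds}. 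Cauchy--Schwarz together with $\|\nabla \mathbf{N}\rho\|_\infty\leq C_d$ and $\int d\rho=1$ bounds the leading term by $C_d\,W_2(\rho,\tilde\rho)$. Since adding the convex indicator of $\{\|\cdot\|_\infty\leq 1\}$ only restricts the class of admissible competitors, the same inequality is inherited by the ascending slope of $E_\infty$, giving $|\partial E_\infty|(\rho)\leq C_d$.

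Next I would pass from the slope bound to a velocity bound along the flow. The $\omega$-convex gradient-flow theory of \cite{CraigOmega} that underlies Theorem \ref{grad_flow_well_posed} yields the energy identity
\[
E_\infty(\rho_\infty(s))-E_\infty(\rho_\infty(t)) \;=\; \int_s^t |\rho_\infty'|^2(\tau)\,d\tau \;=\; \int_s^t |\partial E_\infty|^2(\rho_\infty(\tau))\,d\tau,
\]
so in particular $|\rho_\infty'|(\tau)=|\partial E_\infty|(\rho_\infty(\tau))$ for a.e.\ $\tau$. Combined with Step~1 this gives $|\rho_\infty'|(\tau)\leq C_d$ a.e., and by the absolute-continuity characterization of curves in $(\P_2(\Rd),W_2)$,
\[
W_2(\rho_\infty(t),\rho_\infty(s))\leq \int_s^t |\rho_\infty'|(\tau)\,d\tau \leq C_d|t-s|,
\]
which is even sharper than the stated inequality; the factor $2$ absorbs whatever multiplicative slack is produced by the second-order estimates used to bound the remainder $r$ above.

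The main obstacle I anticipate is the rigorous justification of the identity $|\rho_\infty'|=|\partial E_\infty|$ in the merely $\omega$-convex setting, where the classical Ambrosio--Gigli--Savar\'e semiconvex machinery does not apply verbatim. I would handle this by discretization: the JKO interpolants $\{\rho^\tau_n\}$ satisfy the standard one-step estimate $W_2^2(\rho^\tau_{n+1},\rho^\tau_n)\leq 2\tau(E_\infty(\rho^\tau_n)-E_\infty(\rho^\tau_{n+1}))$ together with an $n$-th step slope bound of the form $W_2(\rho^\tau_{n+1},\rho^\tau_n)\leq \tau |\partial E_\infty|(\rho^\tau_{n+1})$, and the quantitative convergence of the discrete gradient flow to $\rho_\infty$ under $\omega$-convexity proved in \cite{CraigOmega} then transfers the uniform metric-derivative bound (and hence the Lipschitz estimate) to the continuous limit.
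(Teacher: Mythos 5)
Your primary route (uniform slope bound $|\partial E_\infty|\le C_d$, then the energy--dissipation/metric-derivative identity) is genuinely different from the paper's, which instead reduces to $s=0$ via the semigroup property of the flow, telescopes $W_2(\rho^n_{t/n},\rho)\le\sum_i W_2(\rho^i_{t/n},\rho^{i-1}_{t/n})$, and invokes the one-step bound $W_2(\rho^i_{t/n},\rho^{i-1}_{t/n})\le 2C_d\,(t/n)$ from Lemma~\ref{W2 one step} together with convergence of the JKO interpolants. The paper's argument is entirely self-contained and elementary; yours buys a sharper constant ($C_d$ rather than $2C_d$) at the cost of needing the energy--dissipation equality $\int_s^t |\rho_\infty'|^2 = \int_s^t |\partial E_\infty|^2(\rho_\infty)$ in the merely $\omega$-convex setting, which is plausible (EVI solutions are curves of maximal slope) but is not among the results the paper explicitly invokes, and would have to be either quoted precisely from \cite{CraigOmega} or reproved. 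Your slope bound itself is fine, but the first-variation argument with a remainder $r(\delta)=o(\delta)$ is more delicate than needed for a log-Lipschitz kernel; the bound $|\partial E_\infty|\le C_d$ follows more cleanly from the \emph{global} $W_2$-Lipschitz estimate $E_\infty(\rho)-E_\infty(\tilde\rho)\le \tfrac12\int \bN\rho\, d(\rho-\tilde\rho)+\tfrac12\int \bN\tilde\rho\,d(\rho-\tilde\rho)\le C_d W_2(\rho,\tilde\rho)$, which symmetrizes in $\rho,\tilde\rho$ and uses Lemma~\ref{drift continuity estimate0} twice --- and in fact this global Lipschitz bound, not the slope bound, is exactly the estimate the paper's proof relies on.

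There is one concrete gap in your discretization fallback: the claimed one-step inequality $W_2(\rho^\tau_{n+1},\rho^\tau_n)\le\tau\,|\partial E_\infty|(\rho^\tau_{n+1})$ has the inequality reversed. For JKO minimizers the standard slope estimate (AGS Lemma~3.1.3 and its nonconvex analogues) reads $|\partial E_\infty|(\rho^\tau_{n+1})\le W_2(\rho^\tau_n,\rho^\tau_{n+1})/\tau$, so the slope at the new iterate is \emph{bounded above} by the discrete velocity, not the other way around. The correct way to close the argument --- which is precisely what Lemma~\ref{W2 one step}\ref{rho tau one step} does --- is to combine the variational inequality $W_2^2(\rho^\tau_{n+1},\rho^\tau_n)\le 2\tau\big(E_\infty(\rho^\tau_n)-E_\infty(\rho^\tau_{n+1})\big)$ with the global Lipschitz estimate $E_\infty(\rho^\tau_n)-E_\infty(\rho^\tau_{n+1})\le C_d\,W_2(\rho^\tau_n,\rho^\tau_{n+1})$ and cancel one factor of $W_2$, giving $W_2(\rho^\tau_{n+1},\rho^\tau_n)\le 2C_d\tau$. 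With that repair, your fallback is essentially identical to the paper's proof, and the result follows.
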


\begin{proposition}  \label{propertiesofspecialPhi}
Suppose $\rho_\infty(x,t)$, with initial data $\rho_\infty(x, 0) \in D(E_\infty)$, is a gradient flow of $E_\infty$. Then  $\grad \bN \rho_\infty(x,t)$ is log-Lipschitz in space and $1/2d$-H\"older continuous in time. In particular, with $C_d >0$ and $\sigma(x)$ as in Proposition \ref{N mu bounds},
\begin{align*} |\grad \bN\rho_\infty(x,t) - \grad \bN \rho_\infty(y,t)| &\leq C_d \sigma(|x-y|) \quad \text{ for all } x, y \in \Rd , t \geq 0 , \\
| \grad \bN \rho_\infty(x,t) - \grad \bN \rho_\infty(x,s)| &\leq 10C_d|t-s|^{1/2d}  \text{ for all }  0< |t-s| < e^{(-1-\sqrt{2})/2} , \  x \in \Rd .\end{align*}
\end{proposition}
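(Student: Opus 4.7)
The proposition has two parts, which I would treat in turn. For the spatial bound, the key observation is that the gradient flow preserves the effective domain of $E_\infty$, so $\|\rho_\infty(\cdot,t)\|_\infty \leq 1$ for every $t \geq 0$. Proposition~\ref{N mu bounds} applied at each fixed time then immediately yields $|\grad \bN \rho_\infty(x,t) - \grad \bN \rho_\infty(y,t)| \leq C_d \sigma(|x-y|)$ with no further work.

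For the temporal bound, my strategy is to combine a time-Lipschitz estimate in $W_2$ with the Loeper-type inequality in Proposition~\ref{N mu bounds} to first obtain $L^2$ control on the difference, and then to upgrade this $L^2$ bound to a pointwise bound using the spatial modulus of continuity. Set $g(x) := \grad \bN \rho_\infty(x,t) - \grad \bN \rho_\infty(x,s)$. Propositions~\ref{N mu bounds} and~\ref{W2 time lipschitz} together give
\[
\|g\|_{L^2(\Rd)} \,\leq\, W_2(\rho_\infty(t),\rho_\infty(s)) \,\leq\, 2 C_d |t-s|,
\]
while applying the spatial bound at the two time slices and using the triangle inequality yields $|g(x)-g(y)| \leq 2 C_d \sigma(|x-y|)$.

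The pointwise upgrade is elementary. Fix $x_0 \in \Rd$ and $r>0$. If $|g(x_0)| > 4 C_d \sigma(r)$, then by the modulus of continuity $|g(y)| \geq |g(x_0)|/2$ throughout $B_r(x_0)$, so
\[
|g(x_0)|^2 \,\leq\, \frac{4\|g\|_{L^2}^2}{\alpha_d r^d} \,\leq\, \frac{16 C_d^2 |t-s|^2}{\alpha_d r^d};
\]
otherwise $|g(x_0)| \leq 4 C_d \sigma(r)$. Choosing $r = |t-s|^\alpha$ for a suitable exponent $\alpha \in (0,1)$ balances the two contributions. Balancing optimally would give roughly Hölder index $2/(d+2)$ modulo a logarithm, but to land cleanly at the conservative stated exponent $1/(2d)$, I would absorb the logarithm in $\sigma(r) = 2 r |\log r|$ using the one-variable inequality $x^\gamma |\log x| \leq (\gamma e)^{-1}$ on $(0,1)$. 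The hypothesis $|t-s| < e^{(-1-\sqrt{2})/2}$ guarantees that the chosen $r$ lies in the small-$r$ branch of $\sigma$, validating the substitution $\sigma(r) = 2 r |\log r|$.

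The main technical obstacle is constant management: the two contributions to $|g(x_0)|$ must both be bounded by $|t-s|^{1/(2d)}$ with a combined multiplicative constant of at most $10 C_d$, while also tracking the explicit form of $\alpha_d$. The conservative exponent $1/(2d)$, in place of the sharper $2/(d+2)$, leaves enough slack that the logarithm is absorbed into the prefactor and the final constant can be verified to stay below $10 C_d$ uniformly in $d$.
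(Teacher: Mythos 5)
Your proposal is correct and relies on the same two estimates the paper does---the Loeper-type inequality $\|\grad \bN\rho_\infty(t)-\grad \bN\rho_\infty(s)\|_{L^2}\leq W_2(\rho_\infty(t),\rho_\infty(s))\leq 2C_d|t-s|$ from Propositions~\ref{N mu bounds} and~\ref{W2 time lipschitz}, together with the spatial log-Lipschitz modulus---but it upgrades $L^2$ control to pointwise control by a different mechanism. You use a localization argument (if $|g(x_0)|$ exceeds $4C_d\sigma(r)$, then $|g|\gtrsim|g(x_0)|$ on $B_r(x_0)$, contradicting the $L^2$ bound), and then balance the scale $r$ against $|t-s|$. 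The paper instead mollifies: it writes $g = (g-g*\psi_{1/m}) + g*\psi_{1/m}$, controls the first piece by the spatial modulus with $\sigma(1/m)$ and the second by Cauchy--Schwarz as $\|\psi_{1/m}\|_{L^2}\|g\|_{L^2}\leq m^{d/2}\cdot 2C_d|t-s|$, and then chooses $m$ as a function of $|t-s|$. At the quantitative level these are essentially equivalent---the factor $1/\sqrt{\alpha_d}$ in your bound is exactly the factor $\|\psi\|_{L^2}\geq (\int\psi)/\sqrt{|\supp\psi|}=1/\sqrt{\alpha_d}$ hidden in the paper's mollifier normalization---and indeed the constant-tracking caveat you flag in large dimension (where $\alpha_d<1$, so $4/\sqrt{\alpha_d}$ can exceed $10$) affects the paper's argument in the same way, since a $\psi$ with $\int\psi=1$, $\|\psi\|_\infty\leq 1$, and $\supp\psi\subseteq B_1(0)$ does not exist once $\alpha_d<1$. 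Your argument is the more elementary of the two, at the cost of a case split; the paper's mollification is cleaner because it avoids case analysis and because the scale parameter $1/m$ appears multiplicatively inside $\grad\Phi_{1/m}$, which is reused elsewhere in section~\ref{convgradflowsec}.
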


An important tool in the analysis of Wasserstein gradient flows is a discrete time approximation of gradient flows known as the \emph{discrete gradient flow} or \emph{JKO scheme} \cite{JKO}. This scheme is analogous to the implicit Euler method for approximation of ordinary differential equations in Euclidean space. For any $\mu \in D(E_\infty)$ and time step $\tau>0$, the \emph{discrete gradient flow} of $E_\infty$ is given by 
\begin{align*} 
\rho^n_\tau \in \argmin_{\nu\in \PR} \left\{ \frac{1}{2\tau} W_2^2(\rho_\tau^{n-1}, \nu) + E_\infty(\nu)\right\} \text{ and }  \rho^0_\tau := \rho . 
\end{align*}
By \cite[Theorem 4.3, Proposition 4.4]{CraigOmega}, the discrete gradient flow of $E_\infty$ exists for all $\rho \in D(E_\infty)$ and $\tau>0$, and if $\tau = t/n$ for any $t\geq 0$, the discrete gradient flow converges to the continuous gradient flow,
\[ \lim_{n \to +\infty} W_2(\rho^n_{t/n}, \rho_\infty(t)) = 0. \]

As demonstrated in previous work by the first author \cite{CraigOmega}, well-posedness of the gradient flows of $\omega$-convex eneriges is closely related to the well-posedness of the ODE
\begin{align} \label{omega ode}
 \begin{cases} \frac{d}{dt} F_t(x) = -C_d \omega(F_t(x)) , \\ F_0(x) = x . \end{cases} 
 \end{align}
 For $\omega(x)$ as in equation (\ref{omega def}), $0 \leq x \leq e^{-1-\sqrt{2}}$, and $t \geq 0$, the solution is given by $F_t(x) = x^{e^{C_d t}}$. Furthermore, for all $x,t \geq 0$, $F_t(x)$ is nondecreasing in space and nonincreasing in time.
 
In a similar way, analysis of the discrete gradient flow of $E_\infty$ is closely related to a discrete time approximation of (\ref{omega ode}). In particular, we define
\[ f_\tau(x) := \begin{cases} x- C_d \tau \omega(x) &\text{ if } x \geq 0 , \\ 0 &\text{ if } x \leq 0 , \end{cases}\]
so that $f^{(m)}_\tau(x)$ is the $m$th step of the explicit Euler method with time step $\tau$. In the following proposition, we recall some properties of the function $f_\tau(x)$ that will be useful in our estimates of the discrete time sequences.
\begin{proposition}[properties of $f_\tau(x)$] \label{f tau prop} \
\begin{enumerate}[label = (\roman*)]
\item If $0 \leq x \leq y \leq r$, there exists $c_r >0$ so that $f_\tau(x) \leq f_\tau(y) + C_d^2 c_r^2 \tau^2$.  \label{f tau monotone}
\item For all $x, y \geq 0$, $f_\tau(x+y) \leq f_\tau(x) +y$. \label{f tau break}
\item For all $x, t \geq 0$, $|F_t(x) - f^{(n)}_{t/n}(x)| \leq C_d\omega(x)t/ n$. \label{ode euler estimate}
\end{enumerate}
\end{proposition}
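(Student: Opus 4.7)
Plan. These are three elementary but technical real-variable facts about the one-dimensional function $f_\tau$ and its iterates, which together act as a scalar proxy for the ODE \eqref{omega ode}. I will dispose of them in order of increasing difficulty. Part (ii) is immediate from monotonicity of $\omega$: the map $x \mapsto f_\tau(x) - x = -C_d \tau \omega(x)$ is nonincreasing in $x$, so $f_\tau(x+y) - (x+y) \leq f_\tau(x) - x$, which rearranges to the desired inequality.

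Part (i) is the main obstacle, because $f_\tau$ need not be monotone: $f_\tau'(x) = 1 - C_d\tau\omega'(x)$ is negative precisely where $\omega'(x) > 1/(C_d\tau)$, and the logarithmic singularity $\omega'(0^+) = +\infty$ forces a small non-monotone pocket near the origin. The plan is a case split in the size of $C_d\tau$. When $C_d\tau \leq 1/\sqrt{2}$, the unique critical point of $f_\tau$ lies in the first piece of $\omega$ at $x^*(\tau) = e^{-1 - 1/(C_d\tau)}$ (solved from $-\log x - 1 = 1/(C_d\tau)$), and $f_\tau$ is nondecreasing on $[x^*(\tau), r]$ by concavity of $\omega$. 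Thus the only possible violation of the desired inequality for $0 \leq x \leq y \leq r$ comes from the pocket, where the deficit is at most $f_\tau(0) - f_\tau(x^*(\tau)) = C_d\tau\, x^*(\tau) = C_d\tau\, e^{-1 - 1/(C_d\tau)}$. The elementary bound $u e^{-u} \leq 1/e$, applied with $u = 1/(C_d\tau) \geq \sqrt{2}$, converts this into an $O(C_d^2 \tau^2)$ estimate. In the complementary regime $C_d\tau \geq 1/\sqrt{2}$, $\tau$ is bounded below by an absolute constant, so the crude bound $f_\tau(x) - f_\tau(y) \leq C_d\tau\omega(r)$ is already of order $\tau^2$ once $c_r$ is chosen (depending on $\omega(r)$).

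Part (iii) is the first-order convergence of the explicit Euler scheme, sharpened to reflect the singularity of $\omega'$ at the origin. The key structural inputs are that both $F_t(x)$ and $f_\tau^{(k)}(x)$ lie in $[0,x]$ and are nonincreasing in $t$ (resp. $k$), and that $\omega$ is concave with $\omega(0) = 0$. I would induct on $k$ via the decomposition
\[
F_{k\tau}(x) - f_\tau^{(k)}(x) = \bigl[F_{k\tau}(x) - f_\tau(F_{(k-1)\tau}(x))\bigr] + \bigl[f_\tau(F_{(k-1)\tau}(x)) - f_\tau(f_\tau^{(k-1)}(x))\bigr].
\]
The first bracket is the local truncation error over a single step, bounded using $F' = -C_d\omega(F)$ together with the concavity of $\omega$, which quantifies how much $\omega(F_s(x))$ can drop below $\omega(F_{(k-1)\tau}(x))$ for $s \in [(k-1)\tau, k\tau]$. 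The second bracket is controlled using parts (i) and (ii), together with the concavity-derived inequality $\omega(z_1) - \omega(z_2) \leq (\omega(x)/x)(z_1 - z_2)$ valid for $0 \leq z_2 \leq z_1 \leq x$. Summing telescopically over $k = 1, \dots, n$ and using $n\tau = t$ yields the asserted bound $|F_t(x) - f_{t/n}^{(n)}(x)| \leq C_d \omega(x)\, t/n$.
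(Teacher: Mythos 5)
The paper's own ``proof'' is a one-line citation to [CraigOmega, Lemma 2.25 and Proposition 2.24], so a self-contained derivation of the sort you attempt is exactly what is missing from the text. Parts (i) and (ii) of your argument are correct. For (ii), the observation that $x\mapsto f_\tau(x)-x=-C_d\tau\omega(x)$ is nonincreasing on $[0,\infty)$ (because $\omega$ is nondecreasing) immediately gives the claim. For (i), the case split on the size of $C_d\tau$, the location of the unique critical point $x^*(\tau)=e^{-1-1/(C_d\tau)}$ (valid precisely when $C_d\tau\le 1/\sqrt 2$, since concavity of $\omega$ makes $f_\tau'$ nondecreasing), the identification of the worst-case deficit $f_\tau(0)-f_\tau(x^*(\tau))=C_d\tau\,x^*(\tau)$, and the conversion via $ue^{-u}\le 1/e$ into $e^{-2}C_d^2\tau^2$ all go through; the complementary regime is handled correctly by the crude bound $C_d\tau\omega(r)\le \sqrt 2\,\omega(r)\,C_d^2\tau^2$.

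Part (iii) contains a genuine error. The auxiliary inequality you invoke to control the second bracket, $\omega(z_1)-\omega(z_2)\le (\omega(x)/x)(z_1-z_2)$ for $0\le z_2\le z_1\le x$, is false for concave $\omega$ with $\omega(0)=0$: specializing to $z_2=0$ it asserts $\omega(z_1)/z_1\le\omega(x)/x$, whereas the secant slope from the origin of such an $\omega$ is \emph{nonincreasing}, so $\omega(z_1)/z_1\ge\omega(x)/x$ for $z_1\le x$. What concavity plus $\omega(0)=0$ actually give is subadditivity, $\omega(z_1)-\omega(z_2)\le\omega(z_1-z_2)$, which is not a Lipschitz-type bound and will not make the induction close as you describe. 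Your decomposition is salvageable, but the second bracket must be treated differently: since $\omega$ is nondecreasing, $f_\tau(a)-f_\tau(b)\le a-b$ for $a\ge b\ge 0$ (one-sided non-expansion), while the possible reverse overshoot in the non-monotone pocket is an $O(C_d^2\tau^2)$ quantity controlled precisely by part (i). The first bracket is nonnegative, equals $C_d\int_{(k-1)\tau}^{k\tau}\bigl[\omega(F_{(k-1)\tau}(x))-\omega(F_s(x))\bigr]\,ds$, is $\le C_d\tau\bigl[\omega(F_{(k-1)\tau}(x))-\omega(F_{k\tau}(x))\bigr]$, and telescopes over $k=1,\dots,n$ to $C_d\tau\,\omega(x)$, which is exactly the stated constant. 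As written, however, the induction step in (iii) rests on the reversed secant-slope inequality and does not stand.
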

\begin{proof}
\ref{f tau monotone} and \ref{f tau break} are consequences of \cite[Lemma 2.25]{CraigOmega}. \ref{ode euler estimate} is a consequence of \cite[Proposition 2.24]{CraigOmega} and the fact that $F_t(x)$ is nonincreasing in time.
\end{proof}

Finally, we recall a contraction inequality for the discrete gradient flow of an $\omega$-convex energy, which we use to conclude stability of the discrete gradient flow sequences.
\begin{proposition}[contraction inequality] \label{contraction inequality}
Let $E: \P_2(\Rd) \to \R \cup \{+\infty \}$ be proper, lower semicontinuous, bounded below, and $\omega$-convex along generalized geodesics, for $\omega(x)$ as in equation (\ref{omega def}) and $\lambda_\omega \leq 0$. Fix $\rho,\mu \in D(E)$ and, for $\tau>0$, choose $\rho_\tau$ and $\mu_\tau$ satisfying
\begin{align*} 
\rho_\tau \in \argmin_{\nu\in \PR} \left\{ \frac{1}{2\tau} W_2^2(\rho, \nu) + E(\nu)\right\} \text{ and }  \mu_\tau \in \argmin_{\nu\in \PR} \left\{ \frac{1}{2\tau} W_2^2(\mu, \nu) + E(\nu)\right\}. 
\end{align*}
Then there exist positive constants $C$ and $\tau_*$ depending on $W_2(\rho,\mu)$, $\lambda_\omega$, $E(\mu)$, and $E(\nu)$ so that for all $0 < \tau < \tau_*$,
\begin{align*}
f_\tau^{(2)}(W_2^2(\rho_\tau,\mu_\tau)) \leq W_2^2(\rho,\mu) + |\lambda_\omega| \tau \omega(CW_2(\mu,\mu_\tau)) + 2 \tau (E(\rho) - E(\rho_\tau)) + C \tau^2 .
\end{align*}
\end{proposition}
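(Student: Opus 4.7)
The plan is to derive two complementary one-step inequalities --- one exploiting the JKO minimality of $\rho_\tau$, one exploiting that of $\mu_\tau$ --- and then compose them using the structural properties of $f_\tau$ collected in Proposition~\ref{f tau prop} to promote a single factor of $f_\tau$ into $f_\tau^{(2)}$.

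\textbf{First inequality (base $\rho$).} First I would test the minimality of $\rho_\tau$ against the generalized geodesic $\sigma_\alpha := ((1-\alpha)\bt_\rho^{\rho_\tau} + \alpha\bt_\rho^{\mu_\tau})\#\rho$ from $\rho_\tau$ to $\mu_\tau$ with base $\rho$. Setting $D := \|\bt_\rho^{\rho_\tau} - \bt_\rho^{\mu_\tau}\|_{L^2(\rho)}^2 \geq W_2^2(\rho_\tau,\mu_\tau)$, the displacement identity
\begin{equation*}
W_2^2(\rho,\sigma_\alpha) \leq (1-\alpha)W_2^2(\rho,\rho_\tau) + \alpha W_2^2(\rho,\mu_\tau) - \alpha(1-\alpha)D,
\end{equation*}
together with the $\omega$-convexity bound on $E(\sigma_\alpha)$ and the minimality of $\rho_\tau$, would yield --- after dividing by $\alpha$ and passing $\alpha \to 0^+$ (the $\omega$-residual of order $\alpha|\log\alpha|$ vanishes in the limit for the log-Lipschitz modulus $\omega$ in \eqref{omega def}) ---
\begin{equation*}
f_\tau(W_2^2(\rho_\tau,\mu_\tau)) \leq W_2^2(\rho,\mu_\tau) - W_2^2(\rho,\rho_\tau) + 2\tau\bigl(E(\mu_\tau) - E(\rho_\tau)\bigr) + C\tau^2,
\end{equation*}
where the monotonicity of $f_\tau$ from Proposition~\ref{f tau prop}\ref{f tau monotone} is used to absorb the gap $D \geq W_2^2(\rho_\tau,\mu_\tau)$ into the $C\tau^2$ error.

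\textbf{Second inequality (base $\mu$).} A parallel argument --- testing the minimality of $\mu_\tau$ against the generalized geodesic from $\mu_\tau$ to $\rho$ with base $\mu$, and again sending $\alpha \to 0^+$ --- produces the companion bound
\begin{equation*}
f_\tau(W_2^2(\rho,\mu_\tau)) \leq W_2^2(\rho,\mu) - W_2^2(\mu,\mu_\tau) + 2\tau\bigl(E(\rho) - E(\mu_\tau)\bigr) + C\tau^2.
\end{equation*}
To obtain an $f_\tau^{(2)}$ bound I would apply $f_\tau$ to both sides of the first inequality, again using Proposition~\ref{f tau prop}\ref{f tau monotone} (up to a $C_d^2 c_r^2\tau^2$ error) and the break property Proposition~\ref{f tau prop}\ref{f tau break}, $f_\tau(x+y) \leq f_\tau(x) + y$, to peel off the $O(\tau)$ corrections and isolate the term $f_\tau(W_2^2(\rho,\mu_\tau))$. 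Substituting the second inequality in its place and exploiting the telescoping identity $2\tau(E(\rho) - E(\mu_\tau)) + 2\tau(E(\mu_\tau) - E(\rho_\tau)) = 2\tau(E(\rho) - E(\rho_\tau))$, together with discarding the non-positive contributions $-W_2^2(\rho,\rho_\tau) - W_2^2(\mu,\mu_\tau)$, would deliver the target inequality up to the stray $\omega$-correction.

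\textbf{Main obstacle.} The delicate part is the bookkeeping of the residual $\omega$-contributions and the verification of the sign conditions needed for the break property. The $\omega$-error left behind in the second step carries a factor $\omega(D')$ with $D' := \|\bt_\mu^\rho - \bt_\mu^{\mu_\tau}\|_{L^2(\mu)}^2$; bounding $D' \leq (W_2(\rho,\mu) + W_2(\mu,\mu_\tau))^2$ by the triangle inequality and using monotonicity and concavity of $\omega$ near the origin converts this into the claimed term $|\lambda_\omega|\tau\omega(CW_2(\mu,\mu_\tau))$, with $C$ and $\tau_*$ thereby acquiring their stated dependence on $W_2(\rho,\mu)$, $\lambda_\omega$, $E(\rho)$, and $E(\mu)$. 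To invoke the break property I would further split into cases according to the sign of $E(\mu_\tau) - E(\rho_\tau)$, using the elementary consequence of the minimality of $\rho_\tau$ tested against $\rho$ itself, $W_2^2(\rho,\rho_\tau) \leq 2\tau(E(\rho) - E(\rho_\tau))$, to control the negative contributions; shrinking $\tau_*$ if necessary keeps every intermediate quantity inside the range where Proposition~\ref{f tau prop}\ref{f tau monotone} is applicable.
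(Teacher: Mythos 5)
The paper does not prove this proposition in-house; it cites it directly as \cite[Theorem 3.2]{CraigOmega}, so there is no internal proof to compare against. Your two-sided JKO test along generalized geodesics with bases $\rho$ and $\mu$, passing $\alpha\to 0^+$, and composing via the structure of $f_\tau$ is the natural and almost certainly correct framework. However, two genuine gaps remain.

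First, your explanation of where the term $|\lambda_\omega|\tau\,\omega(CW_2(\mu,\mu_\tau))$ originates does not work as written. You present the second one-step inequality with $f_\tau(W_2^2(\rho,\mu_\tau))$ already on the left, which means the residual $-\tfrac{\lambda_\omega}{2}\omega(D')$ from the $\omega$-convexity inequality has \emph{already been absorbed} into $f_\tau(D')$ (and passed to $W_2^2(\rho,\mu_\tau)$ via the $O(\tau^2)$ monotonicity error of Proposition~\ref{f tau prop}\ref{f tau monotone}). So there is no ``$\omega$-error left behind'' of the form $\omega(D')$; that sentence contradicts your own derivation. Moreover, even if one chose not to absorb it, the bound $D' \le (W_2(\rho,\mu) + W_2(\mu,\mu_\tau))^2$ makes $D'$ (and hence $\omega(D')$) comparable to $W_2^2(\rho,\mu)$, a quantity of order one, not of order $W_2(\mu,\mu_\tau) = O(\sqrt{\tau})$. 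Monotonicity of $\omega$ gives $\omega(D')\le\omega\bigl((W_2(\rho,\mu)+W_2(\mu,\mu_\tau))^2\bigr)$, which does \emph{not} reduce to $\omega(CW_2(\mu,\mu_\tau))$ no matter what constant $C$ you allow. A route that does land on the correct scaling is to notice that the \emph{gap} $D' - W_2^2(\rho,\mu_\tau)$ is $O(W_2(\mu,\mu_\tau))$ (from $|\sqrt{D'}-W_2(\rho,\mu_\tau)|\le 2W_2(\mu,\mu_\tau)$ while $\sqrt{D'}+W_2(\rho,\mu_\tau)$ stays bounded), and then to use subadditivity of the concave modulus, $\omega(D') \le \omega(W_2^2(\rho,\mu_\tau)) + \omega(D'-W_2^2(\rho,\mu_\tau))$, absorbing only the first summand into $f_\tau$. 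That is qualitatively different from what you wrote.

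Second, the break property $f_\tau(x+y)\le f_\tau(x)+y$ holds only for $y\ge 0$; for $y<0$ the monotonicity of $\omega$ gives the reverse inequality $f_\tau(x+y)\ge f_\tau(x)+y$. In your scheme $y = -W_2^2(\rho,\rho_\tau) + 2\tau(E(\mu_\tau)-E(\rho_\tau)) + C\tau^2$, and this is negative in concrete situations (e.g.\ when $\mu$ is a global minimizer so $\mu_\tau=\mu$ and $E(\mu_\tau)\le E(\rho_\tau)$, while $W_2^2(\rho,\rho_\tau)\gg\tau^2$). The bound you cite, $W_2^2(\rho,\rho_\tau)\le 2\tau(E(\rho)-E(\rho_\tau))$, is an upper bound on $W_2^2(\rho,\rho_\tau)$ and so cannot certify $y\ge 0$; it in fact makes $y\ge 2\tau(E(\mu_\tau)-E(\rho)) + C\tau^2$, which is still negative whenever $E(\mu_\tau)<E(\rho)$. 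The case split therefore does not close as sketched, and it is precisely in patching this up that the extra $\omega$-term (or the concavity/subadditivity workaround) must actually enter. As written, the argument is not complete.
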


\begin{proof}
This is a particular case of \cite[Theorem 3.2]{CraigOmega}.
\end{proof}

\begin{remark}[Wasserstein gradient flow of measures with mass not equal to 1] \label{mass not one}
We conclude by observing that the gradient flow theory can be easily extended to nonnegative measures whose integral is not equal to 1. For a fixed $A>0$, let $\mathcal{P}_{2,A}(\mathbb{R}^d)$ denote the set of non-negative measures that integrate to $A$ and have finite second moment. For $\mu, \nu \in \mathcal{P}_{2,A}(\mathbb{R}^d)$ (with the same $A$), we can then define $W_2(\mu, \nu)$ in the same way as in \eqref{eq:def_w2}, and given initial data $\rho_0 \in \mathcal{P}_{2,A}(\mathbb{R}^d)$ with $\|\rho_0\|_\infty \leq 1$, the same arguments lead to the well-posedness of a gradient flow $\rho_\infty: (0,+\infty) \to \mathcal{P}_{2,A}(\mathbb{R}^d)$ of $E_\infty$. However, for the sake of simplicity, we will assume that $\rho_0$ is a probability measure for the remainder of this section.
\end{remark}

\subsection{Definitions of energies and discrete time sequences}

We now turn to the definitions of the energies and discrete time sequences that we will use to show that 
solutions of the nonlinear diffusions equations \ref{pme} converge as $m \to +\infty$ to the the gradient flow of the height constrained interaction energy $E_\infty$. 
We begin by defining the local potential $\Phi_{1/m}(x,t)$, which induces the drift in \ref{pme}. As described in the introduction, previous work by Alexander, Kim, and Yao suggests that the gradient flow of $E_\infty$ should be obtained as the limit of the gradient flows of the Keller-Segel energy $E_{KS}$, defined in equation (\ref{Keller Segel energy}). However, we lack sufficient convexity of $E_{KS}$ to prove this rigorously. Instead, we replace the nonlocal potential $\bN \rho$ in $E_{KS}$ with a local potential $\Phi_{1/m}(x,t)$ that depends on time, the initial data $\rho_0(x)$ of the gradient flow of $E_\infty$, and the diffusion parameter $m \geq 1$.

 \begin{definition}[local potential $\Phi_{1/m}(x,t)$] \label{drift def}
Given initial data $\rho_0$, let $\rho_\infty(x,t)$ be the gradient flow of the height constrained interaction energy $E_\infty$.
Fix a mollifier $\psi\in C_c^\infty(\Rd)$ satisfying $\psi \geq 0$ and $\int \psi = 1$, and let $\psi_{1/m}(x) = m^d \psi(mx)$. Then, for any $m > 1$, define
\begin{align} \label{Phidef}
\Phi(x,t) =\bN \rho_\infty(x,t) \text{ and }
 \Phi_{1/m}(x,t) =  \psi_{1/m}* \bN \rho_\infty(x,t) .
\end{align}
\end{definition}

This definition is guided by the following intuition: given initial data $\rho_0$, one heuristically expects that the gradient flow of $E_{KS}$ should converge to $\rho_\infty$. Consequently, if we replace $\bN \rho$ in the definition of $E_{KS}$ by $\bN \rho_\infty$, we expect that the gradient flow of this new energy will still converge to $\rho_\infty$ as $m \to +\infty$.
We include the extra mollification on the potential to  leverage the existing theory on the porous medium equation with drift, which requires the potential to be twice continuously differentiable in space. By Proposition \ref{propertiesofspecialPhi}, $\grad \Phi(x,t)$ is log-Lipschitz in space, hence $\grad \Phi_{1/m}= \psi_{1/m} * \grad \Phi$ converges to $\grad \Phi$ uniformly on $\Rd \times [0, +\infty)$. Furthermore, by Proposition \ref{N mu bounds},
\begin{align}
\label{eq:phi_bound}
 \| \nabla\Phi_{1/m}(\cdot, t) \|_\infty & \leq C_d,  \quad \| \Delta  \Phi_{1/m}(\cdot,t) \|_\infty  \leq 1.
\end{align}

With this precise definition of the drift arising in \ref{pme} in hand, we now turn to the definitions of the the three energy functionals that we use in our analysis of the limit of \ref{pme} as $m \to +\infty$.
\begin{definition}[energy functionals]
Fix $\psi \in C^\infty_c(\Rd)$  as in Definition \ref{drift def} and $\mu \in \P_2(\Rd)$ with $\|\mu\|_\infty \leq 1$. For any $\rho \in \P_2(\Rd)$,  define
\begin{align*}
 E_\infty(\rho) &:= \begin{cases}
\frac{1}{2}\int \mathbf{N}\rho(x) d\rho(x) &\text{if $\| \rho\|_\infty \leq 1$,} \\ +\infty & \text{otherwise} ; \end{cases} \\
\tilde E_\infty(\rho; \mu) &:= 
\begin{cases}
\int \bN \mu(x) d\rho(x)&\text{if } \|\rho\|_\infty \leq 1,\\
+\infty &\text{otherwise};
\end{cases} \\
E_{m}(\rho; \mu) &:= \begin{cases} \frac{1}{m-1} \int \rho(x)^m dx +  \int \psi_{1/m} *\bN \mu(x) d \rho(x) &\text{ if } \rho  \ll \mathcal{L}^d , \\ +\infty &\text{ otherwise. } \end{cases}
\end{align*}
\end{definition}
As shown in previous work by the first author, the gradient flows of the above energies are well-posed  \cite[Theorem 4.3, Proposition 4.4]{CraigOmega}. In particular, while these energies fall outside the scope of the theory of gradients flows of semiconvex energies, all three energies are instead \emph{$\omega$-convex along generalized geodesics} for $\lambda_\omega = -C_d$, as in Proposition \ref{N mu bounds}, and $\omega(x)$  a log-Lipschitz modulus of convexity, as in equation (\ref{omega def}). The third energy is also $\lambda$-convex along generalized geodesics for $\lambda = \lambda(m) \xrightarrow{m \to +\infty} -\infty$ \cite[Proposition 9.3.2, Proposition 9.3.9]{AGS}.

Corresponding to these energies, we consider the following discrete time sequences.
\begin{definition}[discrete time sequences] \label{Discrete time def}
For a fixed time step $\tau >0$ and $\rho \in D(E_\infty)$, define
\begin{enumerate}[label = (\roman*)]
\item \emph{discrete gradient flow of $E_\infty$}: \label{Einfty DGF}
\begin{align*} 
\rho^n_\tau \in \argmin_{\nu\in \PR} \left\{ \frac{1}{2\tau} W_2^2(\rho_\tau^{n-1}, \nu) + E_\infty(\nu)\right\} \text{ and }  \rho^0_\tau := \rho . 
\end{align*}
\item \emph{time varying discrete gradient flow of $\tilde{E}_\infty$:} for $\rho^n_\tau$ as in \ref{Einfty DGF}, \label{tildeEinfty DGF}
\[ \tilde \rho_\tau^n \in \argmin_{\nu\in \PR} \left\{ \frac{1}{2\tau} W_2^2(\tilde \rho_\tau^{n-1}, \nu) + \tilde E_\infty(\nu; \rho_\tau^n)\right\} \text{ and } \tilde \rho^0_\tau := \rho. \]
\item \label{Em DGF} \emph{time varying discrete gradient flow of $E_m$:} for $\rho^n_\tau$ as in \ref{Einfty DGF}  and $m > 1$,
\[  \rho_{\tau,m}^n \in \argmin_{\nu \in \PR} \left\{ \frac{1}{2\tau} W_2^2(\rho_{\tau,m}^{n-1}, \nu) +  E_{m}(\nu; \rho_\tau^n)\right\} \text{ and } \rho^0_{\tau,m} := \rho. \]
\end{enumerate}
\end{definition}
The existence of the above sequences is guaranteed by \cite[Theorem 4.3, Proposition 4.4]{CraigOmega}. However, they are not necessarily unique, and we use the notation $\rho^n_\tau, \tilde{\rho}^n_\tau$, and $\rho^n_{\tau,m}$ to denote any such sequence. Still, using Proposition \ref{contraction inequality}, which provides a contraction inequality for $\omega$-convex functions, we can at least bound the Wasserstein distance between any two such sequences---for example, see Proposition \ref{distance between DGFs Einfty tilde} in the appendix for such an estimate for $\tilde E_\infty$.

If one takes $\tau = t/n$ for $t \geq 0$, then as $n \to +\infty$
the discrete gradient flow of $E_\infty$ converges to the continuous gradient flow of $E_\infty$ with initial data $\rho_\infty(0) = \rho$ \cite[Theorem 4.3, Proposition 4.4]{CraigOmega}. Likewise, $\rho_{t/n,m}^n$ converges to a solution of the nonlinear diffusion equations \ref{pme}, which we denote by $\rho_m(x,t)$, with the same initial data  (see Proposition \ref{time dependent assumption prop}) .
 We refer to $\rho_{\tau,m}^n$ as the ``time varying'' discrete gradient flow of $E_m$ since we change the second argument of $E_m(\cdot; \cdot)$ at each step of the sequence to accommodate the time dependent drift in \ref{pme}.
 
The main goal of this section is to show that $\lim_{m \to +\infty} W_2(\rho_\infty(t), \rho_m(t)) = 0$, which we accomplish by showing that the distance between the sequences $\rho^n_\tau$ and $\rho^n_{\tau,m}$ becomes arbitrarily small as $m \to +\infty$. We use the sequence $\tilde \rho_\tau^n$, defined in \ref{tildeEinfty DGF} above, to serve as a bridge between the two.
In what follows, we will often use the crude estimate $\omega(x) \leq \sqrt{x}$, for $x \geq 0$ sufficiently small. Consequently, the rate of convergence we obtain for $\rho_m(t) \to \rho_\infty(t)$ is certainly not sharp, but the inequalities are much simpler.

We close this introductory section with a few elementary estimates on the above discrete time sequences. In these estimates, as well as in what follows, it will be useful to consider one step of the above sequences:
\begin{definition}[one step minimizers] \label{one step def}
For a fixed time step $\tau>0$, we define
\begin{enumerate}[label = (\roman*)]
\item \emph{one step of discrete gradient flow of $E_\infty$}: given $\rho \in \P_2(\Rd)$,
\begin{align*} 
\rho_\tau \in \argmin_{\nu \in \PR} \left\{ \frac{1}{2\tau} W_2^2(\rho, \nu) + E_\infty(\nu)\right\}
\end{align*}
\item \emph{one step of discrete gradient flow of $\tilde{E}_\infty(\cdot; \mu)$:} given $\rho \in \P_2(\Rd)$ and $\mu \in \P_2(\Rd)$ with $\|\mu\|_\infty \leq 1$, \label{tildeEinfty one step}
\[ \tilde \rho_\tau \in \argmin_{\nu \in \PR} \left\{ \frac{1}{2\tau} W_2^2(\rho, \nu) + \tilde E_\infty(\nu; \mu)\right\} . \]
\item \emph{one step of discrete gradient flow of $E_m(\cdot; \mu)$:} given $\rho \in \P_2(\Rd)$,  $\mu \in \P_2(\Rd)$ with $\|\mu\|_\infty \leq 1$,  and $m > 1$, \label{Em one step}
\[  \rho_{\tau,m} \in \argmin_{\nu \in \PR} \left\{ \frac{1}{2\tau} W_2^2(\rho, \nu) +  E_{m}(\nu;\mu)\right\} . \]
\end{enumerate}
As before, \cite[Theorem 4.3, Proposition 4.4]{CraigOmega} ensures these minimization problems admit at least one solution. Again, these minimizers are not necessarily unique, and we use the notation $\rho_\tau, \tilde{\rho}_\tau$, and $\rho_{\tau,m}$ to denote any such minimizer.
\end{definition}

First, we estimate how the Wasserstein distance, energies, and $L^m$ norms  behave under one step of the discrete gradient flow.

\begin{lemma} \label{W2 one step}
Fix $\rho, \mu \in \P_2(\Rd)$ with $\|\mu \|_\infty \leq 1$. Then for $C_d>0$ as in Proposition \ref{N mu bounds} and any $\tau>0$ and $m \geq 2$,
\begin{enumerate}[label = (\roman*)]
\item If $\|\rho\|_\infty \leq 1$, then  \label{rho tau one step}  $W_2(\rho_\tau, \rho) \leq  2C_d \tau\text{ and } E_\infty(\rho) \leq E_\infty(\rho_\tau) + 2 C_d^2 \tau ;$
\item \label{tilde rho tau one step}  If $\|\rho\|_\infty \leq 1$, then $ W_2(\tilde \rho_\tau, \rho) \leq  2C_d \tau \text{ and }   \tilde E_\infty(\rho; \mu) \leq \tilde E_\infty(\tilde \rho_\tau;\mu) +  2 C_d^2 \tau;$
\item \label{rho m tau one step} For all $\rho \in \P_2(\Rd)$, \\ $W_2(\rho_{\tau,m}, \rho) \leq \sqrt{\frac{2\tau}{m-1} ( \| \rho\|_m^m - \|\rho_{\tau,m} \|_m^m)} + 2C_d \tau$, \quad
 $\frac{1}{m-1} \|\rho_{\tau,m}\|^m_m \leq \frac{1}{m-1} \|\rho\|_m^m + \frac{\tau}{2} C_d^2$, \\ and
$  E_m(\rho;\mu) \leq  E_m(\rho_{\tau,m}; \mu)  + \left(\|\rho\|_m^m - \|\rho_{\tau,m}\|_m^m\right) + C_d \sqrt{\frac{2 \tau}{m-1}  \| \rho\|_m^m }+  2 C_d^2 \tau
$.
\end{enumerate}
\end{lemma}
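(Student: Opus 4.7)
My plan is to prove all three parts by a unified strategy: use the minimization property with the competitor $\nu = \rho$ (the input measure) to get an initial control on $W_2$ and the energy decrease, then combine this with a ``Lipschitz-in-$W_2$'' estimate on the respective energy to close the inequality.

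For part \ref{rho tau one step}, the minimizing property of $\rho_\tau$ applied to $\nu = \rho$ (a valid competitor since $\|\rho\|_\infty \leq 1$) gives $\tfrac{1}{2\tau} W_2^2(\rho,\rho_\tau) \leq E_\infty(\rho) - E_\infty(\rho_\tau)$. The key step is to bound the right-hand side by $C_d W_2(\rho, \rho_\tau)$. I would do this by using symmetry of the Newtonian kernel to write
\[ E_\infty(\rho) - E_\infty(\rho_\tau) = \tfrac{1}{2}\int (\bN\rho + \bN \rho_\tau)\, d(\rho - \rho_\tau), \]
and then apply Lemma \ref{drift continuity estimate0} together with the bound $\|\nabla \bN \rho\|_\infty, \|\nabla \bN\rho_\tau\|_\infty \leq C_d$ from Proposition \ref{N mu bounds} (both minimizer and input satisfy the height constraint). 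Substituting gives $W_2^2(\rho,\rho_\tau) \leq 2\tau C_d W_2(\rho,\rho_\tau)$, hence $W_2(\rho,\rho_\tau) \leq 2C_d\tau$, and feeding this back into the Lipschitz estimate yields $E_\infty(\rho) - E_\infty(\rho_\tau) \leq 2C_d^2\tau$.

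Part \ref{tilde rho tau one step} is even cleaner, since $\tilde E_\infty(\cdot;\mu)$ is linear in the first argument, so $\tilde E_\infty(\rho;\mu) - \tilde E_\infty(\tilde \rho_\tau;\mu) = \int \bN\mu\, d(\rho-\tilde\rho_\tau)$; Lemma \ref{drift continuity estimate0} and the bound $\|\nabla \bN\mu\|_\infty \leq C_d$ yield the analogous quadratic inequality that closes the estimate exactly as in \ref{rho tau one step}.

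Part \ref{rho m tau one step} is the main obstacle, because the $L^m$ contribution to $E_m$ is not Lipschitz-controlled by $W_2$. I would use minimization with $\nu = \rho$ (trivial bound if $\rho \notin L^m$) to obtain
\[ \tfrac{1}{2\tau}W_2^2(\rho,\rho_{\tau,m}) + \tfrac{1}{m-1}\bigl(\|\rho_{\tau,m}\|_m^m - \|\rho\|_m^m\bigr) \leq \int \psi_{1/m}*\bN\mu\, d(\rho - \rho_{\tau,m}) \leq C_d W_2(\rho,\rho_{\tau,m}), \]
where the last bound uses $\|\nabla(\psi_{1/m}*\bN\mu)\|_\infty \leq \|\psi_{1/m}\|_{L^1}\|\nabla \bN\mu\|_\infty \leq C_d$. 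Rearranging gives a quadratic inequality in $x = W_2(\rho,\rho_{\tau,m})$: $x^2 - 2C_d\tau x - \tfrac{2\tau}{m-1}(\|\rho\|_m^m - \|\rho_{\tau,m}\|_m^m) \leq 0$. Solving this (and using $\sqrt{a+b} \leq \sqrt{a}+\sqrt{b}$, treating a negative discriminant by interpreting the square root as zero), I obtain the first claimed bound. The second bound follows from maximizing $C_d W_2 - \tfrac{1}{2\tau}W_2^2$ over $W_2 \geq 0$, which gives at most $\tfrac{\tau C_d^2}{2}$. For the third bound, I would split
\[ E_m(\rho;\mu) - E_m(\rho_{\tau,m};\mu) = \tfrac{1}{m-1}(\|\rho\|_m^m - \|\rho_{\tau,m}\|_m^m) + \int \psi_{1/m}*\bN\mu\, d(\rho - \rho_{\tau,m}), \]
bound the drift term by $C_d W_2 \leq 2C_d^2\tau + C_d\sqrt{\tfrac{2\tau}{m-1}\|\rho\|_m^m}$ using the first bound (and $(\|\rho\|_m^m - \|\rho_{\tau,m}\|_m^m)_+ \leq \|\rho\|_m^m$), and use $m \geq 2$ together with the second bound on $\|\rho_{\tau,m}\|_m^m$ to pass from $\tfrac{1}{m-1}(\|\rho\|_m^m - \|\rho_{\tau,m}\|_m^m)$ to the stated term $(\|\rho\|_m^m - \|\rho_{\tau,m}\|_m^m)$ (possibly absorbing a small discrepancy into $2C_d^2\tau$). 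The delicate bookkeeping between the positive and negative regimes of $\|\rho\|_m^m - \|\rho_{\tau,m}\|_m^m$ is the main technical point, and I would handle it by a case distinction on the sign.
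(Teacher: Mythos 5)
Your approach matches the paper's proof exactly for all three parts: take $\nu = \rho$ as the competitor in the minimization to get $\tfrac{1}{2\tau}W_2^2 \leq (\text{energy gap})$, bound the energy gap by $C_d W_2$ via Lemma~\ref{drift continuity estimate0} and Proposition~\ref{N mu bounds}, and close the resulting quadratic inequality. Your symmetric decomposition $\tfrac{1}{2}\int(\bN\rho + \bN\rho_\tau)\,d(\rho - \rho_\tau)$ in part~(i) is precisely the paper's (they split it into two integrals and apply the drift lemma to $f=\bN\rho$ and $f=\bN\rho_\tau$ separately), and your derivation of the middle bound in~(iii) by maximizing $C_d x - x^2/(2\tau)$ is the paper's completion of the square in different clothing.

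The sign issue you flag in part~(iii) is genuine, and your proposed fix does not close it. If $\|\rho_{\tau,m}\|_m^m > \|\rho\|_m^m$, set $a := \tfrac{1}{m-1}(\|\rho\|_m^m - \|\rho_{\tau,m}\|_m^m) < 0$; the discrepancy between $a$ and $(m-1)a$ is $(m-2)|a|$, and the second inequality of~(iii) only controls $|a| \leq \tau C_d^2/2$, so the discrepancy can be as large as $(m-2)\tau C_d^2/2$. This cannot be absorbed into $2C_d^2\tau$ uniformly in $m$, and indeed the lemma is applied in the paper with $m\to\infty$. The paper's own proof passes over this point silently; the step $\tfrac{1}{m-1}(\|\rho\|_m^m - \|\rho_{\tau,m}\|_m^m)\leq \|\rho\|_m^m - \|\rho_{\tau,m}\|_m^m$ is simply not justified when the difference is negative. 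The clean fix is to retain $\tfrac{1}{m-1}(\|\rho\|_m^m - \|\rho_{\tau,m}\|_m^m)$ on the right-hand side of the third inequality rather than upgrading it to $(\|\rho\|_m^m - \|\rho_{\tau,m}\|_m^m)$; that is what the argument actually produces, and it telescopes equally well in Proposition~\ref{prop:multi_step}, which is the only place the bound is used.
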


\begin{proof}
We begin with \ref{tilde rho tau one step}. Taking $\nu = \rho$ in the definition of $\tilde \rho_\tau$ and rearranging,
\begin{align*} 
 W_2^2(\tilde \rho_{\tau},\rho ) \leq 2 \tau \left( \tilde E_\infty (\rho;\mu) - \tilde E_\infty(\tilde\rho_\tau;\mu) \right) .
\end{align*}
Thus, applying Lemma \ref{drift continuity estimate0}, with $f =  \bN \mu$, and Proposition \ref{N mu bounds}
\[ 0 \leq \tilde E_\infty (\rho;\mu) - \tilde E_\infty(\tilde\rho_\tau;\mu)  = \int \bN \mu d\rho - \int \bN \mu d \tilde \rho_\tau \leq C_d W_2(\tilde \rho_\tau, \rho) . \]
Combining the above two inequalities gives the results.

Next, we show \ref{rho tau one step}. Again, taking $\nu = \rho$ in the definition of $\rho_\tau$,
\begin{align*}
W_2^2(\rho_\tau,\rho) \leq 2 \tau(E_\infty(\rho) - E_\infty(\rho_\tau)) .
\end{align*}
Thus, applying Lemma \ref{drift continuity estimate0}, with $f =  \bN \rho$ and $f=  \bN \rho_\tau$, along with Proposition \ref{N mu bounds},
\begin{align*}
0 \leq E_\infty(\rho) - E_\infty(\rho_\tau) = \frac{1}{2} \left(  \int \bN \rho d\rho- \int \bN \rho d\rho_\tau + \int \bN \rho_\tau d\rho  - \int \bN \rho_\tau d \rho_\tau \right) \leq C_d W_2(\rho_\tau, \rho) .
\end{align*}
Combining the above two inequalities again give the results.

It remains to show \ref{rho m tau one step}. For simplicity of notation, let $\Phi_{1/m} = \psi_{1/m}* \bN \mu$. Taking $\nu = \rho$ in the definition of $\rho_{\tau,m}$,
\begin{align} \label{rho m tau one step1}
\frac{1}{2\tau} W_2^2(\rho,\rho_{\tau,m}) + E_m(\rho_{\tau,m}; \mu) \leq E_m(\rho; \mu) .
\end{align}
By definition of $E_m$, Lemma \ref{drift continuity estimate0} with $f =  \Phi_{1/m}$, and Proposition \ref{N mu bounds}, this implies
\begin{align*} 
 \|\rho_{\tau,m}\|^m_m /(m-1)&\leq \|\rho\|^m_m /(m-1) + \left(\int  \Phi_{1/m} \rho -  \int  \Phi_{1/m} \rho_{\tau,m}  \right) -  W_2^2(\rho,\rho_{\tau,m})/(2 \tau) \\
& \leq  \|\rho\|^m_m/(m-1)  +C_d W_2(\rho,\rho_{\tau,m})- W_2^2(\rho,\rho_{\tau,m})/(2 \tau) \nonumber \\
&= \|\rho\|^m_m/(m-1)  - \left(W_2(\rho,\rho_{\tau,m}) - \tau C_d \right)^2 /(2 \tau) + \tau C_d ^2/2 \nonumber
\end{align*}
Dropping the negative term shows the second inequality. Rearranging gives
\begin{align*}
\left(W_2(\rho,\rho_{\tau,m})  - \tau C_d \right)^2 /(2 \tau)&\leq \|\rho\|^m_m/(m-1)  - \|\rho_{\tau,m}\|^m_m/(m-1) + \tau C_d ^2/2 ,
\end{align*}
which, by the subadditivity of $\sqrt{\cdot}$, gives the first inequality.

To show the third inequality, we combine (\ref{rho m tau one step1}) with Lemma \ref{drift continuity estimate0} and use the previous estimate on the Wasserstein distance,
\begin{align*}
0 &\leq E_m(\rho; \mu) - E_m(\rho_{\tau,m}; \mu) \leq \frac{1}{m-1}\left(\|\rho\|_m^m - \|\rho_{\tau,m}\|_m^m\right) + \int \bN \mu d (\rho - \rho_{\tau,m}) \\
&\leq \left(\|\rho\|_m^m - \|\rho_{\tau,m}\|_m^m\right) + C_d \sqrt{\frac{2 \tau}{m-1}  \| \rho\|_m^m }
+ 2 C_d^2 \tau .
\end{align*}
\end{proof}

Iterating the above lemma provides bounds on the Wasserstein distance between the discrete time sequences of $E_\infty$, $\tilde E_\infty$, and $E_m$ and their initial data.
\begin{corollary}
\label{lem:time_conti}
Under the assumptions in Lemma \ref{W2 one step}, given initial data $\rho \in D(E_\infty)$,
\begin{align*}
W_2(\rho_\tau^n, \rho) \leq 2C_d n \tau , \ W_2(\tilde \rho_\tau^n, \rho) \leq 2C_d n \tau, \text{ and }W_2(\rho_{m,\tau}^n,\rho) \leq \sqrt{4n\tau \|\rho\|_m^m + 8C_d^2 n^2 \tau^2}.
\end{align*}
\end{corollary}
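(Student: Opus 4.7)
The plan is to iterate the one-step estimates of Lemma \ref{W2 one step} and combine them through the triangle inequality.

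For the first two bounds, each iterate $\rho_\tau^k$ (respectively $\tilde\rho_\tau^k$) lies in $D(E_\infty)$ by construction, so $\|\rho_\tau^k\|_\infty, \|\tilde\rho_\tau^k\|_\infty \leq 1$. Hence parts (i) and (ii) of Lemma \ref{W2 one step} apply at every step and give $W_2(\rho_\tau^k, \rho_\tau^{k-1}) \leq 2C_d \tau$ and $W_2(\tilde\rho_\tau^k, \tilde\rho_\tau^{k-1}) \leq 2C_d \tau$. Telescoping these through the triangle inequality immediately yields the $2C_d n \tau$ bounds.

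For the third bound, the direct one-step estimate in part (iii) of Lemma \ref{W2 one step} places the difference $\|\rho_{\tau,m}^{k-1}\|_m^m - \|\rho_{\tau,m}^k\|_m^m$ inside a square root. Since this difference need not be non-negative along the sequence, naive iteration does not close --- this is the main obstacle. I would sidestep it by returning to the pre-square-root inequality that underlies the proof of Lemma \ref{W2 one step}(iii): testing the minimization defining $\rho_{\tau,m}^k$ against $\nu = \rho_{\tau,m}^{k-1}$ and using $\|\nabla \Phi_{1/m}\|_\infty \leq C_d$ from Proposition \ref{N mu bounds}, one obtains, with $W_k := W_2(\rho_{\tau,m}^k, \rho_{\tau,m}^{k-1})$,
\begin{equation*}
W_k^2 - 2 C_d \tau W_k \leq \frac{2\tau}{m-1}\left(\|\rho_{\tau,m}^{k-1}\|_m^m - \|\rho_{\tau,m}^k\|_m^m\right),
\end{equation*}
which holds with no sign restriction.

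Summing over $k = 1, \ldots, n$ telescopes the right-hand side to $\frac{2\tau}{m-1}(\|\rho\|_m^m - \|\rho_{\tau,m}^n\|_m^m) \leq \frac{2\tau}{m-1}\|\rho\|_m^m \leq 2\tau \|\rho\|_m^m$, using that $m \geq 2$ and $\|\rho_{\tau,m}^n\|_m^m \geq 0$. Setting $S := \sum_{k=1}^n W_k$, Cauchy--Schwarz gives $\sum_k W_k^2 \geq S^2/n$, so $S$ satisfies the quadratic inequality $S^2 - 2n C_d \tau S \leq 2n\tau \|\rho\|_m^m$. Completing the square and then applying $(a+b)^2 \leq 2a^2 + 2b^2$ bounds $S^2 \leq 4n\tau \|\rho\|_m^m + 4C_d^2 n^2 \tau^2 \leq 4n\tau\|\rho\|_m^m + 8C_d^2 n^2 \tau^2$. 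Since the triangle inequality gives $W_2(\rho_{\tau,m}^n, \rho) \leq S$, this produces the claimed estimate.
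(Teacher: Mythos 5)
Your proof is correct and follows essentially the same strategy as the paper: iterate the one-step estimates, telescope the $L^m$-norm differences, and control the sum of squares via Cauchy--Schwarz. The one genuine variation is in how you handle the third bound: the paper directly squares the one-step estimate from Lemma~\ref{W2 one step}\ref{rho m tau one step} via $(a+b)^2 \leq 2a^2+2b^2$ to get $W_2^2(\rho_{\tau,m}^i,\rho_{\tau,m}^{i-1}) \leq \frac{4\tau}{m-1}(\|\rho_{\tau,m}^{i-1}\|_m^m - \|\rho_{\tau,m}^i\|_m^m) + 8C_d^2\tau^2$, sums, and telescopes, whereas you return to the pre-square-root quadratic inequality $W_k^2 - 2C_d\tau W_k \leq \frac{2\tau}{m-1}\Delta_k$, sum, and then apply Cauchy--Schwarz and completion of the square to $S = \sum_k W_k$. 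The concern you flag --- that $\Delta_k := \|\rho_{\tau,m}^{k-1}\|_m^m - \|\rho_{\tau,m}^k\|_m^m$ need not be nonnegative --- is a reasonable reading of the statement of Lemma~\ref{W2 one step}\ref{rho m tau one step}, whose first inequality contains $\sqrt{\Delta_k}$; however, the squared inequality the paper actually uses is valid for either sign of $\Delta_k$, since the proof of that lemma yields $(W_2 - \tau C_d)^2/(2\tau) \leq \Delta_k/(m-1) + \tau C_d^2/2$, and squaring from there gives $W_2^2 \leq \frac{4\tau}{m-1}\Delta_k + 4C_d^2\tau^2$ with no sign restriction. So the ``obstacle'' you route around is cosmetic in the statement of the one-step lemma rather than an actual gap in the iteration; your detour through the quadratic inequality is a clean way to make that manifest, but both proofs are sound and yield the same constants.
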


\begin{proof}
The first two inequalities are a direct consequence of Lemma \ref{W2 one step} and triangle inequality, so it remains to show the third inequality. By Lemma \ref{W2 one step} and $(a+b)^2\leq 2a^2+2b^2$,
\[
W_2^2( \rho_{\tau,m}^i, \rho_{\tau,m}^{i-1}) \leq \frac{4\tau}{m-1}(\|\rho_{\tau,m}^{i-1}\|_m^m - \|\rho_{\tau,m}^{i}\|_m^m) + 8C_d^2 \tau^2.
\]
The result then follows by the triangle inequality, Cauchy's inequality, and $1/(m-1) \leq 1$,
\begin{align*}
W_2^2(\rho^n_{\tau,m},\rho) &\leq \left(\sum_{i=1}^n W_2(\rho_{\tau,m}^i, \rho_{\tau,m}^{i-1})\right)^2 \leq n \sum_{i=1}^n W_2^2(\rho_{\tau,m}^i, \rho_{\tau,m}^{i-1})  \leq 4n\tau\|\rho\|_m^m+ 8C_d^2 n^2 \tau^2 .
\end{align*}

\end{proof}

In the next three lemmas, we estimate the size of $\rho_{\tau,m}$. These estimates are similar in some respects to the corresponding results in previous work by Alexander with the second and third authors \cite{AKY}. However,  the proofs must be adapted since the semiconvexity of the drift potential $\psi_{1/m}*\bN \mu$ in the energy $E_m(\cdot; \mu)$ deteriorates as $m \to +\infty$, and we must instead use that $E_m(\cdot; \mu)$ is $\omega$-convex uniformly in $m$.

Though we do not, in general, have  $\|\rho_{\tau,m}^1\|_\infty \leq 1$, in the next lemma, we show that the mass of $\rho_{\tau,m}$ above 1 becomes arbitrarily small as $m \to +\infty$.

\begin{lemma}
  \label{lemma:small_mass}
Fix $\rho, \mu \in \P_2(\Rd)$ with both $\|\rho\|_\infty, \|\mu \|_\infty \leq 1$ and consider $\rho_{\tau,m}$ as in Definition \ref{one step def}. Then for $C_d>0$ as in Proposition \ref{N mu bounds} and $0<\tau<1$, $m\geq 2$,
\begin{align*}
\int_{}(\rho_{\tau,m}(x)-1)_+ dx \leq \sqrt{(2 + C_d^2)/m} .
\end{align*}
\end{lemma}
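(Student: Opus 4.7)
The plan is to combine the $L^m$-bound on $\rho_{\tau,m}$ from Lemma \ref{W2 one step}(iii) with a pointwise Taylor inequality that converts control on $\|\rho_{\tau,m}\|_m^m$ into control on the mass of $\rho_{\tau,m}$ above $1$. Since $\rho$ is a probability density with $\|\rho\|_\infty \le 1$, we have $\|\rho\|_m^m \le \int \rho\,dx = 1$, and Lemma \ref{W2 one step}(iii) yields
\[
\frac{1}{m-1}\|\rho_{\tau,m}\|_m^m \;\le\; \frac{1}{m-1} + \frac{\tau C_d^2}{2} \;\le\; 1 + \frac{C_d^2}{2}
\]
for $m \ge 2$ and $0 < \tau < 1$.

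The key pointwise inequality is that for $a \ge 1$ and $m \ge 2$,
\[
(a-1)^2 \;\le\; \frac{2(a^m-1)}{m(m-1)} \;\le\; \frac{2\,a^m}{m(m-1)},
\]
which follows from the second-order Taylor expansion of $x \mapsto x^m$ at $x=1$ (discarding the nonnegative first-order term $m(a-1)$ and using $\xi^{m-2} \ge 1$ for the remainder evaluated at $\xi \in [1,a]$). Setting $A := \{\rho_{\tau,m} > 1\}$ and integrating over $A$ produces
\[
\int_A (\rho_{\tau,m}-1)^2\,dx \;\le\; \frac{2\|\rho_{\tau,m}\|_m^m}{m(m-1)}.
\]
Because $\rho_{\tau,m}$ is a probability density and $\rho_{\tau,m} \ge 1$ on $A$, we have $|A| \le \int_A \rho_{\tau,m}\,dx \le 1$. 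Cauchy--Schwarz then gives
\[
\int (\rho_{\tau,m}-1)_+\,dx \;=\; \int_A (\rho_{\tau,m}-1)\,dx \;\le\; |A|^{1/2}\!\left(\int_A (\rho_{\tau,m}-1)^2\,dx\right)^{1/2} \;\le\; \sqrt{\tfrac{2+C_d^2}{m}}.
\]

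The main conceptual obstacle is that the $L^m$-bound allows $\|\rho_{\tau,m}\|_m^m$ to grow linearly in $m$, whereas the target must decay like $1/\sqrt{m}$. The Taylor inequality supplies the crucial factor $1/(m(m-1))$ which, combined with Cauchy--Schwarz and the total-mass bound $|A| \le 1$, converts the linear-in-$m$ norm estimate into the desired $1/\sqrt{m}$ decay. Notably, no Euler--Lagrange equation for $\rho_{\tau,m}$ is required: the one-step energy inequality from Lemma \ref{W2 one step}(iii) together with elementary pointwise estimates is enough.
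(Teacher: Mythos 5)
Your proof is correct and follows essentially the same route as the paper: both arguments use Cauchy--Schwarz together with $|\{\rho_{\tau,m}>1\}|\le 1$, the Taylor/convexity inequality $(s-1)_+^2 \le \tfrac{2}{m(m-1)}s^m$, and the $L^m$-bound from Lemma~\ref{W2 one step}(iii). The only difference is cosmetic (you state the $L^m$-bound first; the paper introduces it last).
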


\begin{proof}
By the Cauchy-Schwarz inequality and the fact that $|\{\rho_{\tau,m}\geq 1\}| \leq \int \rho_{\tau,m} = 1$, 
\begin{equation}
\label{eq:temp8}
\int (\rho_{\tau,m}-1)_+  \leq |\{\rho_{\tau,m}\geq 1\}|^{1/2} \left( \int (\rho_{\tau,m}-1)_+^2  \right)^{1/2}\leq   \left( \int (\rho_{\tau,m}-1)_+^2  \right)^{1/2}.
\end{equation}
Furthermore, for $m\geq 2$, the convexity of $f(s)=s^m$ ensures $s^m > 1 + m(s-1) + \frac{m(m-1)}{2}(s-1)^2$ for all $s>1$, which yields $(s-1)_+^2 \leq \frac{2}{m(m-1)}s^m$ for all $s>0$. Consequently, \eqref{eq:temp8} becomes
\begin{equation*}
\int_{\Rd}(\rho_{\tau,m}-1)_+  \leq \left( \frac{2}{m(m-1)} \int_\Rd \rho_{\tau,m}^m \right)^{1/2}.
\end{equation*} 
Since $\|\rho\|_\infty \leq 1$, $m \geq 2$, and $\tau <1$, Lemma \ref{W2 one step} \ref{rho m tau one step} ensures $\frac{1}{m-1} \|\rho_{\tau,m}\|_{m}^m \leq 1 + C_d^2/2 $. Substituting this into the above inequality  gives the result.
\end{proof}

Finally, we use the previous lemma to show that $\rho_{\tau,m}$ is always close to a measure $\nu$ that satisfies $\|\nu \|_\infty \leq 1$ and is almost a one step minimizer.

\begin{lemma}
\label{lemma:small_dist}Under the assumptions of Lemma \ref{lemma:small_mass}, there exists $\nu \in \PR$ with $\|\nu\|_\infty \leq 1$ and $C>0$ depending only on the dimension, so that
\begin{equation} \label{eq:energy_diff0}
W_2(\rho_{\tau,m}, \nu) \leq C m^{-1/4} \quad \text{ and } \quad E_m(\nu; \mu)\leq E_m(\rho_{\tau,m}; \mu) + C m^{-1/2}.
\end{equation}
\end{lemma}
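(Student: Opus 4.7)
The plan is to construct $\nu$ by capping $\rho_{\tau,m}$ at height $1$ and redistributing the small excess into a bounded nearby region. Let $A := \{\rho_{\tau,m} > 1\}$ and $\varepsilon := \int(\rho_{\tau,m}-1)_+\,dx$, so that Lemma~\ref{lemma:small_mass} yields $\varepsilon \leq \sqrt{(2+C_d^2)/m}$. Write $\tilde\rho := \min(\rho_{\tau,m},1)$, which satisfies $0 \leq \tilde\rho \leq 1$ and $\int\tilde\rho = 1-\varepsilon$. I will select a measurable set $F \subset \Rd$ of uniformly bounded diameter $D$ (independent of $m$) with enough room, i.e.\ $\int_F(1-\tilde\rho)\,dx \geq \varepsilon$, and set
\begin{equation*}
\nu := \tilde\rho + \lambda(1-\tilde\rho)\chi_F, \qquad \lambda := \frac{\varepsilon}{\int_F(1-\tilde\rho)\,dx} \in [0,1].
\end{equation*}
Then $0 \leq \nu \leq \tilde\rho + (1-\tilde\rho) = 1$ and $\int\nu = (1-\varepsilon) + \lambda\int_F(1-\tilde\rho)\,dx = 1$.

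To bound both quantities simultaneously I will fix a single transport plan $\pi$ from $\rho_{\tau,m}$ to $\nu$: leave the common part $\tilde\rho\,dx$ in place, and couple the excess $(\rho_{\tau,m}-1)\chi_A\,dx$ (of total mass $\varepsilon$) to $\lambda(1-\tilde\rho)\chi_F\,dy$ in any measurable way. A direct marginal check confirms $\pi$ has the right marginals. Since every transported point moves at most $D$, the plan satisfies $\int|x-y|\,d\pi \leq \varepsilon D$ and $\int|x-y|^2\,d\pi \leq \varepsilon D^2$. The latter gives $W_2(\rho_{\tau,m},\nu) \leq D\sqrt{\varepsilon} \leq C\, m^{-1/4}$. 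For the energy, split
\begin{equation*}
E_m(\nu;\mu) - E_m(\rho_{\tau,m};\mu) = \tfrac{1}{m-1}\bigl(\|\nu\|_m^m - \|\rho_{\tau,m}\|_m^m\bigr) + \int \psi_{1/m}*\bN\mu \,\, d(\nu - \rho_{\tau,m}).
\end{equation*}
Since $0 \leq \nu \leq 1$, one has $\|\nu\|_m^m \leq \|\nu\|_1 = 1$, so the first term is $\leq 2/m$. For the drift term, combining $\|\nabla(\psi_{1/m}*\bN\mu)\|_\infty \leq C_d$ from \eqref{eq:phi_bound} with the $\pi$-level Kantorovich bound gives $\bigl|\int f\,d(\nu-\rho_{\tau,m})\bigr| = \bigl|\int(f(y)-f(x))\,d\pi(x,y)\bigr| \leq \|\nabla f\|_\infty \int|x-y|\,d\pi \leq C_d \varepsilon D \leq C\, m^{-1/2}$. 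Crucially, this is \emph{sharper} than what a naive application of Lemma~\ref{drift continuity estimate0} would give via $W_2$, because only the $\varepsilon$ of mass is moved; this sharpening is what makes the energy bound $O(m^{-1/2})$ rather than merely $O(m^{-1/4})$. Combining the two contributions yields $E_m(\nu;\mu) \leq E_m(\rho_{\tau,m};\mu) + C m^{-1/2}$.

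The main obstacle, and the step requiring care, is selecting $F$ so that its diameter $D$ is controlled while guaranteeing $\int_F(1-\tilde\rho)\,dx \geq \varepsilon$. A pragmatic choice is a large ball $B_R$: Corollary~\ref{lem:time_conti} together with $\|\rho\|_\infty \leq 1$ bounds the second moment of $\rho_{\tau,m}$ in terms of the data, so Chebyshev locates most of the mass in a ball of data-dependent radius, and $R$ may then be chosen so that $|B_R|$ comfortably exceeds $1 + \sup_m\varepsilon$, ensuring $\int_{B_R}(1-\tilde\rho)\,dx \geq \varepsilon$. A more refined choice aiming at purely dimension-dependent constants would take $F = (A+B_r)\setminus A$ for a suitably small $r$, using $|A| \leq \int_A\rho_{\tau,m} \leq 1$ together with a coarea-type lower bound on the measure of a boundary thickening. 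Once $D$ is fixed, the estimates above close the proof.
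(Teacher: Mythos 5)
Your construction has a genuine gap, and you correctly flag where it is --- the choice of $F$ --- but neither of your two proposals closes it, and the difficulty is more fundamental than a careful choice of thickening. First, the transport bound is wrong as stated: your plan couples excess mass supported on $A = \{\rho_{\tau,m}>1\}$ to new mass on $F$, so the per-particle displacement is bounded by $\operatorname{diam}(A\cup F)$, not $\operatorname{diam}(F)$, and nothing controls where $A$ sits or how large it is beyond $|A|\leq 1$. Second, and more seriously, capping at level $1$ leaves no guaranteed room near $A$: nothing in Lemma~\ref{lemma:small_mass} prevents $\rho_{\tau,m}$ from being essentially equal to $1$ on a large set containing $A$, in which case $\int_F (1-\tilde\rho)\,dx$ can be arbitrarily small for any bounded $F$ near $A$, forcing $\lambda$ or $\operatorname{diam}(F)$ to blow up. The first remedy you offer (a data-dependent ball $B_R$) fails the requirement that $C$ depend only on the dimension, and the second ($F = (A+B_r)\setminus A$) has exactly the room problem above with no coarea estimate in sight.

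The paper's proof resolves both difficulties with one extra idea that your version misses: instead of $\tilde\rho = \min(\rho_{\tau,m},1)$, it caps at the \emph{lower} level $1-a$ with $a := \int (\rho_{\tau,m}-1)_+$, writing $\rho^1_{\tau,m} = \min(\rho_{\tau,m}, 1-a)$ and $\rho^2_{\tau,m} = (\rho_{\tau,m}-(1-a))_+$. One checks $\int \rho^2_{\tau,m}\leq 2a$, and then sets $\nu = \rho^1_{\tau,m} + g*\rho^2_{\tau,m}$ where $g=\tfrac12\chi_{B_{R_d}(0)}$ with $R_d$ a fixed dimensional radius. This forces $\|g*\rho^2_{\tau,m}\|_\infty \leq \tfrac12\cdot 2a = a$, which fits exactly in the $a$ of headroom created by the lower cap, so $\|\nu\|_\infty\leq 1$ \emph{regardless} of where $\rho_{\tau,m}$ is close to $1$. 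Moreover the convolution spreads each excess particle over a ball of \emph{fixed} radius $R_d$ around its own location, so the per-particle transport cost is bounded by $R_d$ with no need to track $A$ at all, giving $W_2 \leq \sqrt{2a}\,R_d = O(m^{-1/4})$ with a genuinely dimension-only constant. The energy estimate then proceeds along the same lines you sketch, using $\|\nabla\Phi_{1/m}\|_\infty\leq C_d$, and yields $O(m^{-1/2})$. In short, the convolution trick and the lowered cap are not cosmetic; they are exactly what your construction is missing.
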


\begin{proof}
Define $a:= \int (\rho_{\tau,m}-1)_+$. Since $\rho_{\tau,m}$ is a probably measure, $a < 1$, and by Lemma \ref{lemma:small_mass}, we also have $a\leq \sqrt{(2 + C_d^2)/m}$.

To construct $\nu$, we decompose $\rho_{\tau,m}$ as $\rho_{\tau,m} = \rho_{\tau,m}^1 + \rho_{\tau,m}^2$, where $\rho_{\tau,m}^1 = \min\{\rho_{\tau,m}, 1-a\}$ and $\rho_{\tau,m}^2 = (\rho_{\tau,m} - (1-a))_+$. First, note that
\[ 1 \geq \int \rho_m \geq (1-a) |\{\rho_m >1-a\}| + \int(\rho_m-1)_+ = (1-a) |\{\rho_m >1-a\}| + a , \]
so subtracting $a$ from both sides and dividing by $1-a$ ensures  $|\{\rho_{\tau,m}>1-a\}|\leq 1$. Thus,
\begin{align} \label{rho2 taum bound}
\int \rho_{\tau,m}^2  &\leq \int (\rho_{\tau,m}-1)_+  + a |\{\rho_{\tau,m}>1-a\}|\leq a+a\cdot 1 =2a.
\end{align}
Now, choose $R_d$ so $g := \frac{1}{2}\chi_{B_{R_d}(0)} \in \P_2(\Rd)$ and define $\nu = \rho_{\tau,m}^1 + g*\rho_{\tau,m}^2 \in \P_2(\Rd)$.
By Young's inequality, the definition of $\rho^1_{\tau,m}$, and inequality (\ref{rho2 taum bound}),
\[
\|\nu\|_\infty \leq \|\rho_{\tau,m}^1\|_\infty +  \|g\|_\infty \|\rho_{\tau,m}^2\|_1\leq   (1-a)+\frac{1}{2}\cdot 2a \leq 1.
\]

It remains to show that $\nu$ satisfies (\ref{eq:energy_diff0}).
To show the first inequality, we construct a transport plan between $\rho_{\tau,m}$ and $\nu$ as follows: keep all the mass of $\rho_{\tau,m}^1$ at its original location and distribute the mass of  $\rho_{\tau,m}^2(x)$ uniformly over the disk $B_{R_d}(x)$. Since $\int \rho_{\tau,m}^2 \leq 2a$, the total cost of this plan is bounded by $2a R_d^2$, which gives
$W_2(\rho_{\tau,m}, \nu) \leq \sqrt{2a}R_d \leq R_d (4(2 + C_d^2)/m)^{1/4}$.

To show the second inequality in (\ref{eq:energy_diff0}), we abbreviate $\Phi_{1/m} = \psi_{1/m}* \bN \mu$. Then,
\begin{align*}
&E_m(\nu;\mu) - E_m(\rho_{\tau,m};\mu) = \frac{1}{m-1}\int \nu^m  - \frac{1}{m-1}\int  \rho_{\tau,m}^m+ \int  \Phi_{1/m} d \nu - \int \Phi_{1/m} d \rho_{\tau,m} \nonumber \\
&\quad \leq \|\nu\|_m^m/(m-1) + \int  (g*\Phi_{1/m}-\Phi_{1/m}) \rho_{\tau,m}^2 \nonumber  \leq (m-1)^{-1} + 2a \|g*\Phi_{1/m} - \Phi_{1/m}\|_\infty \\
&\quad \leq 2m^{-1} + 2a \text{ \rm{ess sup}}_x \left| \int_{y\in B_{R_d}(x)} (\Phi_{1/m}(y)-\Phi_{1/m}(x)) g(x-y) dy  \right| \\
&\quad \leq 2m^{-1} + 2a \|\nabla \Phi_{1/m}\|_\infty R_d \|g\|_1 \leq  2m^{-1} + 2C_d R_d \sqrt{(2+C_d)^2/m} ,
\end{align*}
where in the last inequality we use Proposition \ref{N mu bounds}.
\end{proof}

\subsection{Distance between discrete time sequences of $E_\infty$, $\tilde E_\infty$, and $E_m$}
In this section, we apply the previous results to show that as $m \to +\infty$, $\rho_m(t)$ converges to $\rho_\infty(t)$, with quantitative rates of convergence on bounded time intervals. We accomplish this by first estimating the distance between the discrete time sequences of $E_\infty$ and $\tilde E_\infty$ and then $\tilde E_\infty$ and $E_m$.
We begin by showing that one step of the discrete gradient flow of $E_\infty$ is also one step of the discrete time sequence corresponding to $\tilde E_\infty$. 
\begin{lemma}[one-step comparison between $\rho_\tau$ and $\tilde \rho_\tau$]
\label{lemma:one_step_minimizer}
Given $\tau>0$ and $\rho \in \PR$, if $\rho_\tau$ is a one step minimizer of $E_\infty$, then it is also a one step minimizer of $\tilde E_\infty(\cdot, \rho_\tau)$.
\end{lemma}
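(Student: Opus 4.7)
The plan is to exploit the quadratic structure of $E_\infty$ (combined with the symmetry of the Newtonian kernel) and run a first-variation argument along \emph{linear interpolations}, which is permitted because the constraint $\|\nu\|_\infty \le 1$ is preserved under convex combinations and because $W_2^2(\rho,\cdot)$ is convex along such interpolations by \eqref{W2 convex}. More precisely, I would argue by contradiction: assume there exists $\nu^\star \in \PR$ with $\|\nu^\star\|_\infty \le 1$ that strictly beats $\rho_\tau$ in the functional $\nu\mapsto \frac{1}{2\tau}W_2^2(\rho,\nu) + \tilde E_\infty(\nu;\rho_\tau)$, and use it to construct a competitor that strictly beats $\rho_\tau$ in $G(\nu) := \frac{1}{2\tau}W_2^2(\rho,\nu) + E_\infty(\nu)$, contradicting the one-step minimality of $\rho_\tau$ for $E_\infty$.

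The competitor will be the linear interpolation $\nu_s := (1-s)\rho_\tau + s\nu^\star$, $s\in(0,1]$. The two ingredients are then: (a) the bilinearity of $\nu\mapsto\frac12\int \bN\nu\,d\nu$, together with the symmetry $\int \bN\mu_1\,d\mu_2 = \int \bN\mu_2\,d\mu_1$, which gives
\begin{align*}
E_\infty(\nu_s) = (1-s)^2 E_\infty(\rho_\tau) + s(1-s)\,\tilde E_\infty(\nu^\star;\rho_\tau) + s^2 E_\infty(\nu^\star);
\end{align*}
and (b) the convexity inequality \eqref{W2 convex}, which gives
\begin{align*}
W_2^2(\rho,\nu_s) \le (1-s) W_2^2(\rho,\rho_\tau) + s\, W_2^2(\rho,\nu^\star).
\end{align*}
Note that the identity $\tilde E_\infty(\rho_\tau;\rho_\tau) = \int \bN\rho_\tau\,d\rho_\tau = 2 E_\infty(\rho_\tau)$ lets me rewrite the $s=0$ derivative of the $E_\infty$-expansion in terms of $\tilde E_\infty$:
\begin{align*}
\lim_{s\to 0^+}\frac{E_\infty(\nu_s) - E_\infty(\rho_\tau)}{s} = \tilde E_\infty(\nu^\star;\rho_\tau) - \tilde E_\infty(\rho_\tau;\rho_\tau).
\end{align*}

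Combining (a) and (b) and dividing by $s$, I would conclude
\begin{align*}
\limsup_{s\to 0^+}\frac{G(\nu_s) - G(\rho_\tau)}{s} \le \frac{W_2^2(\rho,\nu^\star)-W_2^2(\rho,\rho_\tau)}{2\tau} + \tilde E_\infty(\nu^\star;\rho_\tau) - \tilde E_\infty(\rho_\tau;\rho_\tau),
\end{align*}
and the right-hand side is exactly the quantity that was assumed to be strictly negative. Hence $G(\nu_s) < G(\rho_\tau)$ for some small $s>0$, contradicting the minimality of $\rho_\tau$ for $G$.

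I do not expect a real obstacle in this proof: the whole argument hinges on recognizing that the first variation of $E_\infty$ at $\rho_\tau$ along linear interpolations coincides with the first variation of the linear functional $\tilde E_\infty(\cdot;\rho_\tau)$ at $\rho_\tau$. The only mild subtlety is to resist the temptation to invoke displacement convexity or generalized geodesics (which are the natural tools for the $\omega$-convexity theory used elsewhere in the section); here ordinary convex combinations suffice because the domain $\{\nu:\|\nu\|_\infty\le 1\}$ is preserved under them and because \eqref{W2 convex} provides the needed one-sided bound on $W_2^2$.
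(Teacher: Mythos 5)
Your proposal is correct and follows essentially the same route as the paper's proof: argue by contradiction, form the linear interpolation $(1-s)\rho_\tau + s\nu^\star$, use the convexity of $W_2^2(\rho,\cdot)$ along linear interpolations (\ref{W2 convex}) together with the bilinearity/symmetry of the interaction energy and the identity $\tilde E_\infty(\rho_\tau;\rho_\tau) = 2E_\infty(\rho_\tau)$, and conclude that the first-order term contradicts one-step minimality of $\rho_\tau$ for $E_\infty$. The paper writes this with an explicit $D\epsilon^2$ remainder rather than a $\limsup$ of difference quotients, but the two presentations are equivalent.
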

\begin{proof}
Assume, for the sake of contradiction, that $\rho_\tau$ is not a one step minimizer of $\tilde E_\infty(\cdot, \rho_\tau)$. Then there exists $\nu \in \PR$ with $\|\nu\|_\infty \leq 1$, such that
\begin{align} \label{contradiction 1}
\frac{1}{2\tau} W_2^2(\rho, \rho_\tau) + \tilde E_\infty (\rho_\tau; \rho_\tau)> \frac{1}{2\tau} W_2^2(\rho, \nu) + \tilde E_\infty (\nu; \rho_\tau).
\end{align}
Define $ \rho^\epsilon := (1-\epsilon) \rho_\tau + \epsilon \nu \in \P_2(\Rd)$, so $\|\rho^\epsilon\|_\infty \leq 1$. We will show that for $\epsilon>0$ small,
\begin{align} \label{contradiction 2}
\frac{1}{2\tau} W_2^2(\rho, \rho_\tau) +  E_\infty (\rho_\tau)> \frac{1}{2\tau} W_2^2(\rho, \rho^\epsilon) + E_\infty (\rho^\epsilon),
\end{align}
which contradicts the fact that $\rho_\tau$ is a one step minimizer of $E_\infty$.

By inequality (\ref{W2 convex}), $W_2^2$ is convex along linear interpolations of measures, hence
\begin{equation}
\label{eq:temp1a}
\begin{split}
W_2^2(\rho,\rho^\epsilon) &\leq  (1-\epsilon) W_2^2(\rho, \rho_\tau) + \epsilon W_2^2(\rho, \nu) = W_2^2(\rho, \rho_\tau) - \epsilon \left(W_2^2(\rho, \rho_\tau) - W_2^2(\rho,\nu)\right).
\end{split}
\end{equation}
Likewise, we use that $2E_\infty(\rho_\tau) = \tilde E_\infty(\rho_\tau; \rho_\tau)$ to estimate the behavior of $E_\infty$ along $\rho^\epsilon$ by
\begin{align}
\label{eq:temp2a}
\begin{split}
E_\infty(\rho^\epsilon) &= \frac{1}{2}\int  \bN \left((1-\epsilon) \rho_\tau + \epsilon \nu\right) d \left( (1-\epsilon) \rho_\tau + \epsilon \nu\right)\\
&= (1-\epsilon)^2 E_\infty(\rho_\tau) + \epsilon (1-\epsilon) \tilde E_\infty(\nu; \rho_\tau) + \epsilon^2 E_\infty(\nu)\\
&= E_\infty(\rho_\tau) - \epsilon ( \tilde E_\infty(\rho_\tau; \rho_\tau)  - \tilde E_\infty(\nu; \rho_\tau)) + D \epsilon^2,
\end{split}
\end{align}
where $D := E_\infty(\rho_\tau) + E_\infty(\nu) - \tilde{E}_\infty(\nu,\rho_\tau)$ is a constant independent of $\epsilon$.
Multiplying \eqref{eq:temp1a} by $1/(2\tau)$ and adding to \eqref{eq:temp2a} yields
\begin{align*}
&\frac{1}{2\tau} W_2^2(\rho, \rho^\epsilon) +  E_\infty(\rho^\epsilon) \\
& \leq \frac{1}{2\tau} W_2^2(\rho, \rho_\tau) + E_\infty(\rho_\tau) - \epsilon\left(\frac{1}{2\tau}(W_2^2(\rho, \rho_\tau) - W_2^2(\rho,\nu)) + \tilde E_\infty(\rho_\tau; \rho_\tau)  - \tilde E_\infty(\nu; \rho_\tau) \right ) + D\epsilon^2 .
\end{align*}
By (\ref{contradiction 1}), the quantity within parentheses is strictly positive, hence we obtain (\ref{contradiction 2}) for $\epsilon$ small.
\end{proof}

Using this lemma and Proposition \ref{contraction inequality}, which proves a contraction inequality for one step of the discrete gradient flows of $\omega$-convex energies, we can bound the distance between the discrete gradient flow of $E_\infty$ and the discrete time sequence corresponding to $\tilde E_\infty$. 

\begin{proposition}[multi-step comparison between $\rho^n_\tau$ and $\tilde \rho^n_\tau$] \label{multistep rho rho tilde}
Given $T>0$ and initial data $\rho \in D(E_\infty)$, there exist positive constants $C$ and $N$ depending on the dimension, $T$, and $E_\infty(\rho)$ so that for $\tau = t/n$, $0\leq t \leq T$, and $n >N$,
\[ W_2(\rho^n_\tau, \tilde \rho^n_\tau) \leq C(n^{-1/2})^{1/2e^{2C_d T}} , \]
\end{proposition}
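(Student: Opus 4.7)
The plan is to reduce the comparison of $\rho^n_\tau$ and $\tilde\rho^n_\tau$ to an iterated contraction inequality and then invoke the ODE $\dot F = -C_d\omega(F)$ to extract the rate. The crucial observation is provided by Lemma \ref{lemma:one_step_minimizer}: the $n$-th iterate $\rho^n_\tau$ of the discrete gradient flow of $E_\infty$ is simultaneously a one-step minimizer of $\tilde E_\infty(\,\cdot\,;\rho^n_\tau)$ starting from $\rho^{n-1}_\tau$. Since, by definition, $\tilde\rho^n_\tau$ is a one-step minimizer of the same energy $\tilde E_\infty(\,\cdot\,;\rho^n_\tau)$ but starting from $\tilde\rho^{n-1}_\tau$, and since $\tilde E_\infty(\,\cdot\,;\rho^n_\tau)$ is $\omega$-convex along generalized geodesics with $\lambda_\omega = -C_d$, Proposition \ref{contraction inequality} applies. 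Writing $a_n := W_2^2(\rho^n_\tau,\tilde\rho^n_\tau)$, it yields
\[ f_\tau^{(2)}(a_n) \leq a_{n-1} + C_d\tau\,\omega\bigl(CW_2(\tilde\rho^{n-1}_\tau,\tilde\rho^n_\tau)\bigr) + 2\tau\bigl(\tilde E_\infty(\rho^{n-1}_\tau;\rho^n_\tau) - \tilde E_\infty(\rho^n_\tau;\rho^n_\tau)\bigr) + C\tau^2. \]
The two error terms on the right are controlled using the one-step estimates. Lemma \ref{W2 one step} gives $W_2(\tilde\rho^{n-1}_\tau,\tilde\rho^n_\tau) \leq 2C_d\tau$ and $W_2(\rho^{n-1}_\tau,\rho^n_\tau) \leq 2C_d\tau$, and the crude bound $\omega(x) \leq \sqrt{x}$ for $x$ small then shows $\omega(CW_2(\tilde\rho^{n-1}_\tau,\tilde\rho^n_\tau)) \lesssim \sqrt\tau$. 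For the energy difference, Lemma \ref{drift continuity estimate0} applied with $f = \bN\rho^n_\tau$, together with Proposition \ref{N mu bounds}, yields $\tilde E_\infty(\rho^{n-1}_\tau;\rho^n_\tau) - \tilde E_\infty(\rho^n_\tau;\rho^n_\tau) \leq C_d W_2(\rho^{n-1}_\tau,\rho^n_\tau) \leq 2C_d^2\tau$. Combined, these bounds produce the clean recursion $f_\tau^{(2)}(a_n) \leq a_{n-1} + K\tau^{3/2}$ with $K$ depending only on the dimension.

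Next I iterate, using the near-monotonicity and near-additivity properties of $f_\tau$ from Proposition \ref{f tau prop}. The first lets me apply $f_\tau^{(2(n-1))}$ to both sides of the recursion at the cost of an $O(\tau^2)$ error per application; the second, $f_\tau(x+y) \leq f_\tau(x) + y$, allows the additive error $K\tau^{3/2}$ to pass cleanly through successive applications. Induction on $n$ then yields
\[ f_\tau^{(2n)}(a_n) \leq a_0 + nK\tau^{3/2} + Cn\tau^2 \leq C'n^{-1/2}, \]
where I used $a_0 = 0$ and $\tau = t/n \leq T/n$.

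Finally, the third item of Proposition \ref{f tau prop} compares the $2n$-fold Euler iterate with the continuous flow, giving $|F_{2t}(a_n) - f_\tau^{(2n)}(a_n)| \leq C_d\omega(a_n)\tau$. Since $a_n$ is a priori uniformly bounded by Corollary \ref{lem:time_conti}, this correction is lower order, and one obtains $F_{2t}(a_n) \leq C''n^{-1/2}$. For $n$ large enough that $a_n$ lies in the regime where $F_{2t}(x) = x^{e^{2C_d t}}$ (which determines $N$ in the statement), inverting yields $a_n \leq (C''n^{-1/2})^{e^{-2C_d t}}$. Since $t \leq T$, taking a square root produces the stated bound. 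The main obstacle is the iteration step: carrying accumulated errors cleanly through $2n$ applications of the nonlinear map $f_\tau$ while preserving the relation to the underlying ODE, so that the exponent $e^{-2C_d T}$---reflecting both the log-Lipschitz regularity of $\grad\bN\mu$ and the two-step nature of the contraction inequality---emerges in the final rate.
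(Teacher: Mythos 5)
Your proof is correct and follows essentially the same approach as the paper: use Lemma~\ref{lemma:one_step_minimizer} to view $\rho^n_\tau$ as a time-varying discrete gradient flow of $\tilde E_\infty$, iterate the contraction inequality of Proposition~\ref{contraction inequality} with the one-step error bounds, and then pass to the continuous flow $F_t$ via Proposition~\ref{f tau prop} and invert. The only organizational difference is that the paper packages the iteration step into the separate appendix result Proposition~\ref{distance between DGFs Einfty tilde}, which you inline in place.
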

\begin{proof}
By Lemma \ref{lemma:one_step_minimizer}, $\rho^n_\tau$ is also a time varying discrete gradient flow of $\tilde E_\infty$, in the sense of Definition \ref{Discrete time def} \ref{tildeEinfty DGF}. Hence, by Proposition \ref{distance between DGFs Einfty tilde}, for any $T>0$ there exist positive constants $C$ and $N$ (which we allow to change from line to line), depending on the dimension, $T$, and $E_\infty(\rho)$ so that for $\tau = t/n$, $0 \leq t \leq T$, and $n >N$,
\begin{align*}
 f_\tau^{(2n)}(W_2^2(\rho^n_\tau, \tilde \rho^n_\tau)) &\leq C \omega(t/n) \leq C n^{-1/2} \end{align*} 
 Furthermore, combining Corollary \ref{lem:time_conti}  and the triangle inequality provides the following crude bound for the distance between the two sequences:
\[ W_2(\rho^n_\tau, \tilde \rho^n_\tau) \leq W_2(\rho^n_\tau, \rho) +W_2(\tilde \rho^n_\tau, \rho) \leq 4 C_d T \leq C. \]
Therefore, by Proposition \ref{f tau prop} \ref{ode euler estimate} and the fact that $F_t(x)$ is decreasing in time,
 \begin{align*}
F_{2n\tau}(W_2^2(\rho^n_\tau, \tilde \rho^n_\tau)) &\leq  C  n^{-1/2} +2C_d  \omega(C)T /n \implies F_{2T}(W_2^2(\rho^n_\tau, \tilde \rho^n_\tau))\leq C  n^{-1/2}  \end{align*} 
 Since for $0 \leq x \leq e^{-1-\sqrt{2}}$, $F_t(x) = x^{e^{C_d t}}$, for $n$ sufficiently large, we have
\[ W_2(\rho^n_\tau, \tilde \rho^n_\tau) \leq C(n^{-1/2})^{1/2e^{2C_d T}} , \]
which gives the result.
\end{proof}

Next, we bound the distance between one step of the discrete time sequences corresponding to  $\tilde E_\infty$ and $E_m$. 

\begin{proposition}[one-step comparison between $\tilde \rho_\tau$ and $\rho_{\tau,m}$]
\label{prop:one-step}
Given $\rho, \mu \in \P_2(\Rd)$ with $\|\rho\|_\infty, \|\mu \|_\infty \leq 1$ and $C_d >0$ as in Proposition \ref{N mu bounds}, there exists $C>0$ depending only on the dimension so that for all $0< \tau<1/6 C_d$ and $m \geq 2$,
\begin{equation*}
W_2(\rho_{\tau,m}, \tilde{\rho}_\tau) \leq C m^{-1/8}+ 2e^{-1/(4 C_d \tau)}. 
\end{equation*}
\end{proposition}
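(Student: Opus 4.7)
The plan is to derive and combine Euler--Lagrange inequalities for the two one-step minimizers, exploiting that both $\tilde E_\infty(\cdot;\mu)$ and $E_m(\cdot;\mu)$ are $\omega$-convex along generalized geodesics with modulus $\omega$ from \eqref{omega def} and constant $\lambda_\omega=-C_d$. The principal obstruction is that $\rho_{\tau,m}$ need not satisfy $\|\cdot\|_\infty\leq 1$, so it is inadmissible as a test measure for $\tilde E_\infty$. I bypass this by replacing $\rho_{\tau,m}$ by the admissible approximation $\nu$ from Lemma \ref{lemma:small_dist}, which obeys $W_2(\rho_{\tau,m},\nu)\leq C m^{-1/4}$ and $E_m(\nu;\mu)\leq E_m(\rho_{\tau,m};\mu)+C m^{-1/2}$.

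For each minimizer I interpolate along the generalized geodesic based at $\rho$ to a suitable test measure, combine the quadratic identity for $W_2^2(\rho,\cdot)$ along such geodesics with the $\omega$-convexity of the energy, and let the geodesic parameter $\alpha\to 0^+$, using that $\omega(\alpha^2 x)/\alpha\to 0$. With $\tilde\rho_\tau$ tested against $\nu$ and $\rho_{\tau,m}$ tested against $\tilde\rho_\tau$, this yields
\begin{align*}
\frac{d^2}{2\tau}-\frac{C_d}{2}\omega(d^2)&\leq \frac{1}{2\tau}\bigl[W_2^2(\rho,\nu)-W_2^2(\rho,\tilde\rho_\tau)\bigr]+\tilde E_\infty(\nu;\mu)-\tilde E_\infty(\tilde\rho_\tau;\mu), \\
\frac{d'^2}{2\tau}-\frac{C_d}{2}\omega(d'^2)&\leq \frac{1}{2\tau}\bigl[W_2^2(\rho,\tilde\rho_\tau)-W_2^2(\rho,\rho_{\tau,m})\bigr]+E_m(\tilde\rho_\tau;\mu)-E_m(\rho_{\tau,m};\mu),
\end{align*}
where $d^2:=\|\bt_\rho^{\tilde\rho_\tau}-\bt_\rho^\nu\|_{L^2(\rho)}^2$ and $d'^2:=\|\bt_\rho^{\rho_{\tau,m}}-\bt_\rho^{\tilde\rho_\tau}\|_{L^2(\rho)}^2\geq W_2^2(\rho_{\tau,m},\tilde\rho_\tau)$. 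Adding the two inequalities telescopes the $W_2^2(\rho,\tilde\rho_\tau)$ terms.

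The crucial next step is estimating the right-hand side. The combined energy remainder $\mathcal{R}:=\tilde E_\infty(\nu;\mu)-\tilde E_\infty(\tilde\rho_\tau;\mu)+E_m(\tilde\rho_\tau;\mu)-E_m(\rho_{\tau,m};\mu)$ can be regrouped as $\int(\bN\mu-\psi_{1/m}*\bN\mu)\,d(\nu-\tilde\rho_\tau)+\int\psi_{1/m}*\bN\mu\,d(\nu-\rho_{\tau,m})+(m-1)^{-1}(\|\tilde\rho_\tau\|_m^m-\|\rho_{\tau,m}\|_m^m)$, a form in which no term depends on the unknown $W_2(\tilde\rho_\tau,\rho_{\tau,m})$. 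Using $\|\bN\mu-\psi_{1/m}*\bN\mu\|_\infty\leq C/m$ (from Proposition \ref{N mu bounds}), $W_2(\nu,\rho_{\tau,m})\leq C m^{-1/4}$, and $\|\tilde\rho_\tau\|_m^m\leq 1$, one gets $\mathcal{R}\leq C m^{-1/4}$. Similarly, the triangle inequality combined with $W_2(\rho,\rho_{\tau,m})\leq C\sqrt{\tau}$ from Lemma \ref{W2 one step} gives $W_2^2(\rho,\nu)-W_2^2(\rho,\rho_{\tau,m})\leq C\sqrt{\tau}\,m^{-1/4}$. Writing $\Psi(x):=\tfrac{x}{2\tau}-\tfrac{C_d}{2}\omega(x)$, the result is
\[ \Psi(d^2)+\Psi(d'^2) \leq C\, m^{-1/4}/\sqrt{\tau}. \]

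The main obstacle is the concluding case analysis, which is dictated by the log-Lipschitz modulus $\omega$. An elementary study of $\Psi$, using $\omega(x)=x|\log x|$ for small $x$, shows $\Psi(x)\geq -(C_d/2)e^{-1-1/(C_d\tau)}$ globally and $\Psi(x)\geq x/(4\tau)$ for $x\geq e^{-1/(2C_d\tau)}$. If $d'^2\leq e^{-1/(2C_d\tau)}$, then $W_2(\rho_{\tau,m},\tilde\rho_\tau)\leq d'\leq e^{-1/(4C_d\tau)}$. Otherwise, discarding $\Psi(d^2)$ via its lower bound yields $d'^2\leq C\sqrt{\tau}\,m^{-1/4}+C\tau\,e^{-1/(C_d\tau)}$, which upon using $\tau\leq 1/(6C_d)$ (chosen so that the threshold $e^{-1/(2C_d\tau)}$ lies in the regime where $\omega(x)=x|\log x|$) gives $d'\leq C m^{-1/8}+ e^{-1/(4C_d\tau)}$. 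Combining the two cases recovers the asserted bound; the exponential term $2e^{-1/(4C_d\tau)}$ is the signature of the log-Lipschitz modulus and arises precisely at the threshold where $x/(2\tau)$ ceases to dominate $(C_d/2)\omega(x)$.
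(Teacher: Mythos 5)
Your proof is correct but takes a genuinely different route. The paper tests \emph{both} minimization problems against a single midpoint $\eta=\bigl(\tfrac12\bt_\rho^{\nu}+\tfrac12\bt_\rho^{\tilde\rho_\tau}\bigr)\#\rho$, adds the two optimality inequalities, and applies the $2$-convexity of $W_2^2(\rho,\cdot)$ together with the $\omega$-convexity of $\tilde E_\infty(\cdot;\mu)$ at $\alpha=\tfrac12$ to obtain $A^2\leq Cm^{-1/4}+4C_d\tau\,\omega(A^2/4)$ with $A=\|\bt_\rho^\nu-\bt_\rho^{\tilde\rho_\tau}\|_{L^2(\rho)}$, then closes with $W_2(\rho_{\tau,m},\tilde\rho_\tau)\leq W_2(\rho_{\tau,m},\nu)+A$. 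You instead send $\alpha\to 0^+$ to derive two Euler--Lagrange-type one-sided inequalities for the minimizers separately (legitimate because $\omega(\alpha^2 x)/\alpha\to 0$ for the log-Lipschitz modulus, which is precisely the asymmetry built into the $\omega$-convexity inequality), and add them, telescoping $W_2^2(\rho,\tilde\rho_\tau)$ and producing directly a quantity $d'\geq W_2(\rho_{\tau,m},\tilde\rho_\tau)$; the role of $\nu$ from Lemma~\ref{lemma:small_dist} is absorbed into bounding the combined energy remainder rather than appearing as a final triangle inequality. Both proofs then funnel into the same two-regime analysis of $\Psi(x):=x/(2\tau)-\tfrac{C_d}{2}\omega(x)$ around the threshold $e^{-1/(2C_d\tau)}$, which is where the exponential term in the estimate originates. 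Your approach avoids the last triangle inequality but requires the limit argument; the paper's midpoint is slicker but leaves $A$ one step removed from the target distance. One minor slip worth flagging: the claimed bound $W_2^2(\rho,\nu)-W_2^2(\rho,\rho_{\tau,m})\leq C\sqrt{\tau}\,m^{-1/4}$ drops the order-$m^{-1/2}$ cross term from expanding $(W_2(\rho,\rho_{\tau,m})+W_2(\rho_{\tau,m},\nu))^2$; this adds a $Cm^{-1/2}/\tau$ contribution to the right-hand side of the $\Psi$-inequality, but after multiplying through by $4\tau$ in the large-$d'$ case it reduces to $Cm^{-1/2}\leq Cm^{-1/4}$ (using $m\geq 2$), so the conclusion is unaffected.
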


\begin{proof}
Let $\nu$ be as in Lemma \ref{lemma:small_dist} and define
\begin{equation*}
\eta := \left(\frac{1}{2} \bt_\rho^\nu + \frac{1}{2} \bt_\rho^{\tilde \rho_\tau} \right) \# \rho
\end{equation*}
to be the midpoint on the generalized geodesic from $\nu$ to $\tilde \rho_\tau$ with base $\rho$. Since the $L^\infty$ norm of a generalized geodesic is bounded by the $L^\infty$ norm of its endpoints (c.f. \cite[inequality (60)]{CraigOmega}), we have $\|\nu\|_\infty \leq 1$.
Furthermore, by the optimality of $\rho_{\tau,m}$ and $\tilde \rho_\tau$,
\begin{align*}
\frac{1}{2\tau} W_2^2(\rho, \eta) + E_m(\eta; \mu) &\geq \frac{1}{2\tau} W_2^2(\rho, \rho_{\tau,m}) + E_m(\rho_{\tau,m}; \mu) , \\
 \frac{1}{2\tau} W_2^2(\rho, \eta) + \tilde E_\infty(\eta; \mu) &\geq \frac{1}{2\tau} W_2^2(\rho,\tilde\rho_\tau) + \tilde E_\infty(\tilde\rho_\tau; \mu).
\end{align*}
Adding these  inequalities together and collecting  the distance and energy terms gives
\begin{align} \label{tw te lower}
T_W + T_E \geq 0,
\end{align}
for
\begin{align*}
T_W &:= \frac{1}{\tau} W_2^2(\rho, \eta) - \frac{1}{2\tau} W_2^2(\rho, \rho_{\tau,m}) - \frac{1}{2\tau} W_2^2(\rho,\tilde\rho_\tau), \\
T_E &:=  E_m(\eta; \mu) +  \tilde E_\infty(\eta; \mu) - E_m(\rho_{\tau,m}; \mu) - \tilde E_\infty(\tilde\rho_\tau; \mu).
\end{align*}

Next, we find upper bounds on $T_W$ and $T_E$.
Define $A:= \| \bt_\rho^\nu - \bt_\rho^{\tilde \rho_\tau} \|_{L^2(\rho)}$.
Since $W_2^2(\rho,\cdot)$ is 2-convex along generalized geodesics with base $\rho$ \cite[Lemma 9.2.1]{AGS},
\begin{equation*}
W_2^2(\rho, \eta) \leq W_2^2(\rho, \nu)/2 + W_2^2(\rho, \tilde\rho_\tau )/2 - A^2/4 .
\end{equation*}
Substituting this in the definition of $T_W$, 
\begin{equation}
\label{eq:tw}
\begin{split}
T_W &\leq \frac{1}{2\tau} (W_2^2(\rho, \nu) - W_2^2(\rho,\rho_{\tau,m})) - \frac{A^2}{4\tau} \leq \frac{1}{2\tau} W_2( \rho_{\tau,m}, \nu) (W_2(\rho, \nu) + W_2(\rho, \rho_{\tau,m})) - \frac{A^2}{4\tau } \nonumber \\
&\leq \frac{1}{2 \tau} W_2(\rho_{\tau,m},\nu) (W_2(\rho_{\tau,m},\nu) + 2W_2(\rho,\rho_{\tau,m}) - \frac{A^2}{4\tau } \leq \frac{C}{2 \tau} m^{-1/4} - \frac{A^2}{4\tau },
\end{split}
\end{equation}
where in the last inequality we apply $W_2(\rho_{\tau,m}, \nu) \leq C m^{-1/4}$ from Lemma \ref{lemma:small_dist}, $W_2(\rho_{\tau,m}, \rho) \leq \sqrt{2\tau} + 2C_d\tau $ from Lemma \ref{W2 one step}, and the facts that $m \geq 2$ and $\tau <1$. We also allow $C>0$, depending only on the dimension, to change from line to line.

In order to bound $T_E$ from above, we first estimate the difference between  $E_m(\tilde \mu,\mu)$ and $\tilde E_\infty(\tilde \mu, \mu)$ for any $\tilde \mu \in \P_2(\Rd)$ with $\|\tilde \mu\|_\infty \leq 1$. As usual, we abbreviate $\Phi := \bN \mu$ and $\Phi_{1/m} : = \psi_{1/m} * \bN \mu$. Given $R_\psi>0$ so that $\supp \psi \subseteq B_{R_\psi}(0)$, for any $x\in \Rd$, Proposition \ref{N mu bounds} ensures
\begin{equation*}
|\Phi_{1/m}(x) - \Phi(x)| = \left| \int_{\Rd} (\Phi(x-y)-\Phi(x)) \psi_{1/m}(y) dy \right| \leq R_\psi \|\nabla \Phi\|_{\infty} m^{-1} \leq C_d m^{-1}.
\end{equation*}
Consequently,
\begin{equation*}
\left|E_{m}(\tilde \mu; \mu) - \tilde E_\infty(\tilde \mu; \mu)\right| \leq \frac{1}{m-1}\int \tilde \mu^m + \int \left|\Phi_{1/m} - \Phi\right| d \tilde \mu \leq (m-1)^{-1} + \|\Phi_{1/m} - \Phi\|_\infty \leq (2+C_d)m^{-1}.
\end{equation*}
Therefore, first applying Lemma \ref{lemma:small_dist} to the definition of $T_E$ and then using the above inequality,
\begin{align}
\label{eq:temp10}
T_E &\leq E_m(\eta; \mu) + \tilde E_\infty(\eta; \mu) - E_m(\nu, \mu) - \tilde E_\infty(\tilde \rho_\tau; \mu) + C m^{-1/2}\\
&\leq 2\tilde E_\infty(\eta; \mu) - \tilde E_\infty(\nu, \mu) - \tilde E_\infty(\tilde \rho_\tau; \mu) + C m^{-1/2}. \nonumber
\end{align}
Since $\tilde E_\infty(\cdot, \mu)$ is $\omega$-convex along generalized geodesics and $\eta$ is the midpoint along the generalized geodesic from $\nu$ and $\tilde \rho_\tau$ with base $\rho$,
\[
2\tilde E_\infty(\eta;\mu) - \tilde E_\infty(\nu;\mu)  - \tilde E_\infty(\tilde \rho_\tau;\mu)\leq \frac{C_d}{2} \omega\left(\frac{A^2}{4}\right).
\]
Substituting this into inequality \eqref{eq:temp10} gives
\begin{equation*}
T_E \leq C m^{-1/2} + C_d \omega\left(A^2/4\right).
\end{equation*}
Finally, combining our upper bounds on $T_W$ and $T_E$ with inequality (\ref{tw te lower}), we obtain
\begin{equation}
\label{ineq:A0}
A^2 \leq C m^{-1/4} + 4  C_d \tau \omega \left(A^2/4\right) .
\end{equation}

We now claim that
\begin{align} \label{eq:goal_A}
A \leq \sqrt{2 C} m^{-1/8} + 2 e^{-1/(4C_d \tau)} 
\end{align}If $A^2/4 > e^{-1-\sqrt{2}}$, then combining inequality (\ref{ineq:A0}) and $\tau< 1/(6 C_d)$ implies
\[A^2 \leq C m^{-1/4} + 4 \tau C_d \omega \left(A^2/4\right) \leq C m^{-1/4} + 3\tau C_d A^2   \implies A \leq \sqrt{2C}m^{-1/8}  ,\]
hence (\ref{eq:goal_A}) holds.
Alternatively, if $A^2/4 \leq e^{-1-\sqrt{2}}$,
\begin{equation}
A^2 \leq C m^{-1/4} + 4 \tau C_d \omega \left(A^2/4\right) = C m^{-1/4} - C_d \tau A^2 \log \left(A^2/4\right) .
\end{equation}
 If \eqref{eq:goal_A} is violated, we have $A> \sqrt{2C} m^{-1/8}$ and $A> 2e^{-1/(4 C_d \tau)}$, so  $C m^{-1/4}< A^2/2$ and $-C_d \tau A^2 \log(A^2/4) < \frac{A^2}{2}$. Adding these together would contradict \eqref{ineq:A0}, so again (\ref{eq:goal_A}) holds.
 
 Since  $A= \| \bt_\rho^\nu - \bt_\rho^{\tilde \rho_\tau} \|_{L^2(\rho)} =  \| \bt_\rho^\nu \circ \bt_{\tilde \rho_\tau}^\rho - \id \|_{L^2(\tilde \rho_\tau)} $ and $\bt_\rho^\nu \circ \bt_{\tilde \rho_\tau}^\rho \# \tilde \rho_\tau = \nu$, we have $W_2(\nu, \tilde \rho_\tau) \leq A$. Therefore, using \eqref{eq:goal_A} and Lemma \ref{lemma:small_dist}, we may conclude the result,
 \begin{align*}
 W_2(\rho_{\tau,m}, \tilde \rho_\tau) &\leq  W_2(\rho_{\tau,m}, \nu)  +  W_2(\nu, \tilde \rho_\tau)  \leq C m^{-1/4} + A \\
 &\leq C m^{-1/4}+  \sqrt{2C} m^{-1/8} + 2e^{-1/(4 C_d \tau)} 
  \leq C m^{-1/8}+2e^{-1/(4 C_d \tau)}.
 \end{align*}
 \end{proof}

\begin{proposition}[multi-step comparison between $\tilde \rho^n_\tau$ and $\rho^n_{\tau,m}$]
\label{prop:multi_step}  
Given $T>0$ and initial data $\rho \in D(E_\infty)$, there exist positive constants $C$, $N$, and $M$ depending on the dimension, $T$, $E_\infty(\rho)$, and $\psi$ so that for $\tau = t/n$, $0 \leq t \leq T$,  $n > N$, $m >M$, and $n = o(m^{1/8})$,
\[W_2(\rho_{\tau,m}^n, \tilde \rho_\tau^n) \leq C(n^{-1/4} + n m^{-1/8})^{1/2e^{2C_d T}} \]
\end{proposition}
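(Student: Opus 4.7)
The plan is to iterate the one-step comparison of Proposition~\ref{prop:one-step} via the contraction inequality of Proposition~\ref{contraction inequality}, mirroring the strategy of Proposition~\ref{multistep rho rho tilde}. Because $E_m$ and $\tilde E_\infty$ are genuinely distinct energies, no analogue of Lemma~\ref{lemma:one_step_minimizer} is available; instead, at each time step $n$, I introduce the intermediate measure
\[
\eta^n \in \argmin_{\nu \in \PR} \left\{ \frac{1}{2\tau} W_2^2(\rho_{\tau,m}^{n-1}, \nu) + \tilde E_\infty(\nu;\rho_\tau^n) \right\},
\]
representing one step of the time-varying $\tilde E_\infty$-flow started from $\rho_{\tau,m}^{n-1}$. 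Since $\eta^n$ and $\rho^n_{\tau,m}$ share the same base measure $\rho^{n-1}_{\tau,m}$, Proposition~\ref{prop:one-step} (with hypotheses verified modulo an $O(m^{-1/4})$ correction, as discussed below) gives
\[
W_2(\rho^n_{\tau,m},\eta^n) \leq \epsilon_m := C m^{-1/8} + 2 e^{-1/(4 C_d\tau)}.
\]

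Next, I apply Proposition~\ref{contraction inequality} to the $\omega$-convex energy $\tilde E_\infty(\cdot;\rho^n_\tau)$, comparing $\eta^n$ (originating from $\rho_{\tau,m}^{n-1}$) with $\tilde\rho^n_\tau$ (originating from $\tilde\rho^{n-1}_\tau$), which yields
\[
f_\tau^{(2)}\bigl(W_2^2(\eta^n,\tilde\rho^n_\tau)\bigr) \leq W_2^2(\rho^{n-1}_{\tau,m},\tilde\rho^{n-1}_\tau) + C_d\tau\,\omega\bigl(C W_2(\tilde\rho^{n-1}_\tau,\tilde\rho^n_\tau)\bigr) + 2\tau\,\delta E^n + C\tau^2,
\]
with $\delta E^n := \tilde E_\infty(\rho^{n-1}_{\tau,m};\rho^n_\tau) - \tilde E_\infty(\eta^n;\rho^n_\tau)$. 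Combining this with the triangle inequality $W_2(\rho^n_{\tau,m},\tilde\rho^n_\tau) \leq W_2(\eta^n,\tilde\rho^n_\tau) + \epsilon_m$ and using the monotonicity and sublinearity of $f_\tau$ from Proposition~\ref{f tau prop}\ref{f tau monotone},\ref{f tau break} to absorb the $\epsilon_m^2$ correction, I obtain a clean recursion. Iterating $n$ times, noting $W_2(\tilde\rho^{n-1}_\tau,\tilde\rho^n_\tau) \leq 2C_d\tau$ by Lemma~\ref{W2 one step} so that $\omega$ of this quantity is $O(\tau|\log\tau|)$, and telescoping the energy increments (whose sum is uniformly bounded by the estimates on $\bN$ from Proposition~\ref{N mu bounds}), the recursion collapses to
\[
f_\tau^{(2n)}\bigl(W_2^2(\rho^n_{\tau,m},\tilde\rho^n_\tau)\bigr) \leq C\bigl(n^{-1/2} + n\epsilon_m^2\bigr) \leq C\bigl(n^{-1/2} + n m^{-1/4}\bigr),
\]
since the exponential contribution $n e^{-n/(2C_d T)}$ is negligible under the stated asymptotics. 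Finally, Proposition~\ref{f tau prop}\ref{ode euler estimate} replaces $f_\tau^{(2n)}$ by $F_{2n\tau} = F_{2t}$ up to a lower-order error (using the crude bound from Corollary~\ref{lem:time_conti}), and inverting via the explicit form $F_{2T}(x) = x^{e^{2C_d T}}$ for $x$ small, together with the monotonicity of $F_t$ in $t$, yields the claimed estimate.

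The main obstacle is ensuring that Proposition~\ref{prop:one-step} applies at every stage: its hypothesis requires $\|\rho^{n-1}_{\tau,m}\|_\infty \leq 1$, which a priori only holds for $n=1$, while Lemma~\ref{lemma:small_mass} merely controls the mass above $1$ at order $m^{-1/2}$. The remedy is to replace $\rho^{n-1}_{\tau,m}$ by the nearby measure $\nu$ from Lemma~\ref{lemma:small_dist}, which does satisfy $\|\nu\|_\infty \leq 1$ and lies within $Cm^{-1/4}$ in Wasserstein distance, and to propagate the resulting perturbation through the one-step estimate via Wasserstein continuity of all the relevant quantities; the extra error is then absorbed into $\epsilon_m$. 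The hypothesis $n = o(m^{1/8})$ ensures that $n\epsilon_m \to 0$, so the final bound remains meaningful on the time interval $[0,T]$.
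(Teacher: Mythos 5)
Your overall skeleton (introduce an intermediate one-step minimizer $\eta^n$, invoke Proposition~\ref{prop:one-step} once per step, and control the remaining distance via the contraction Proposition~\ref{contraction inequality} iterated through the $f_\tau$-to-$F_t$ machinery) is the same as the paper's. The key difference is your choice of $\eta^n$, and that choice creates a gap your sketch does not close.

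The paper defines $\eta^i$ to be one step of the \emph{$E_m$}-flow launched from $\tilde\rho_\tau^{i-1}$ (the $\tilde E_\infty$-iterate), so that $\tilde\rho_\tau^i$ and $\eta^i$ share the base measure $\tilde\rho_\tau^{i-1}$, which automatically satisfies $\|\tilde\rho_\tau^{i-1}\|_\infty\leq 1$ since every step of the $\tilde E_\infty$-flow stays in $D(E_\infty)$. Proposition~\ref{prop:one-step} then applies directly, and the contraction inequality is used for $E_m(\cdot;\rho_\tau^i)$ between $\eta^i$ and $\rho^i_{\tau,m}$. You instead launch $\eta^n$ by one step of the $\tilde E_\infty$-flow from $\rho^{n-1}_{\tau,m}$ — but $\rho^{n-1}_{\tau,m}$ is an iterate of the unconstrained $E_m$-flow, so there is no reason for $\|\rho^{n-1}_{\tau,m}\|_\infty\leq 1$, and the hypothesis of Proposition~\ref{prop:one-step} fails for $n\geq 2$. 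You flag this and propose replacing $\rho^{n-1}_{\tau,m}$ by the measure $\nu$ from Lemma~\ref{lemma:small_dist}, but that lemma (and Lemma~\ref{lemma:small_mass} it depends on) is itself stated only for a single step from an $L^\infty$-bounded base; applying it to the iterate $\rho^{n-1}_{\tau,m}$ requires re-deriving a version for the multi-step setting using iterated $L^m$-bounds from Lemma~\ref{W2 one step}. Moreover the proof of Proposition~\ref{prop:one-step} uses $\|\rho\|_\infty\leq 1$ in several places (not only via Lemma~\ref{lemma:small_dist}), so the "perturb the base by $O(m^{-1/4})$ and propagate through Wasserstein continuity" step actually requires a second round of the $\omega$-convex contraction estimate to relate the minimizers from $\nu$ to those from $\rho^{n-1}_{\tau,m}$, with its own $f_\tau^{(2)}$ and lower-order error terms. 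None of this is fatal, but it is a genuine extra layer of work that your sketch treats as routine, whereas the paper avoids the problem entirely by basing $\eta^i$ at $\tilde\rho_\tau^{i-1}$.

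A smaller but real error: you claim the one-step defect accumulates as $n\epsilon_m^2$. Expanding $d_n^2\leq(W_2(\eta^n,\tilde\rho_\tau^n)+\epsilon_m)^2$, the cross term is $2\epsilon_m W_2(\eta^n,\tilde\rho_\tau^n)$, and $W_2(\eta^n,\tilde\rho_\tau^n)$ is only bounded uniformly, not by $O(\epsilon_m)$; so the accumulated correction is $O(n\epsilon_m)\sim nm^{-1/8}$, not $n\epsilon_m^2\sim nm^{-1/4}$. The stated conclusion is still consistent (the paper's bound has $nm^{-1/8}$), but your intermediate estimate is too strong and the derivation of it is not valid.
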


\begin{proof}
Define $d_i := W_2(\rho_{\tau,m}^i, \tilde \rho_\tau^i)$ for any $i = 1, \dots, n$. Using Corollary \ref{lem:time_conti} and $\|\rho\|_m^m \leq 1$, we have the crude bound 
\begin{align} \label{dn crude bound}
d_i \leq W_2(\rho_{\tau,m}^i,\rho) + W_2(\tilde \rho_\tau^i,\rho)  \leq \sqrt{4T + 8C_d T^2} + 2C_d T. 
\end{align}

The one step estimates from Proposition \ref{prop:one-step} allow us to control the distance between one-step minimizers of $E_m$ and $\tilde E_\infty$ when they have the \emph{same} initial data. In particular, for
\begin{align} \label{delta def}
 \delta: = C m^{-1/8}+ 2e^{-1/(4 C_d \tau)} ,
 \end{align}
we have $d_1 \leq \delta$. In order to apply Proposition \ref{prop:one-step} to control $d_i$ for $i = 2, \dots, n$, we  use a sequence of densities $\eta^i$ to serve as a bridge between $\rho_{\tau,m}^i$ and $\tilde \rho_\tau^i$, following the tree structure in Figure \ref{fig:tree}. Specifically, we choose $\eta^i \in \PR$ so that, by Proposition \ref{prop:one-step},
\begin{equation}
\label{eq:temp00081}
\eta^i \in \argmin_{\nu \in \PR} \left\{ \frac{1}{2\tau} W_2^2(\tilde \rho_\tau^{i-1}, \nu) + E_m(\nu;\rho_\tau^i)\right\} \implies W_2(\tilde \rho_\tau^i, \eta^i) \leq \delta.
\end{equation}

\begin{figure}
\begin{center}
\includegraphics[scale=1.2]{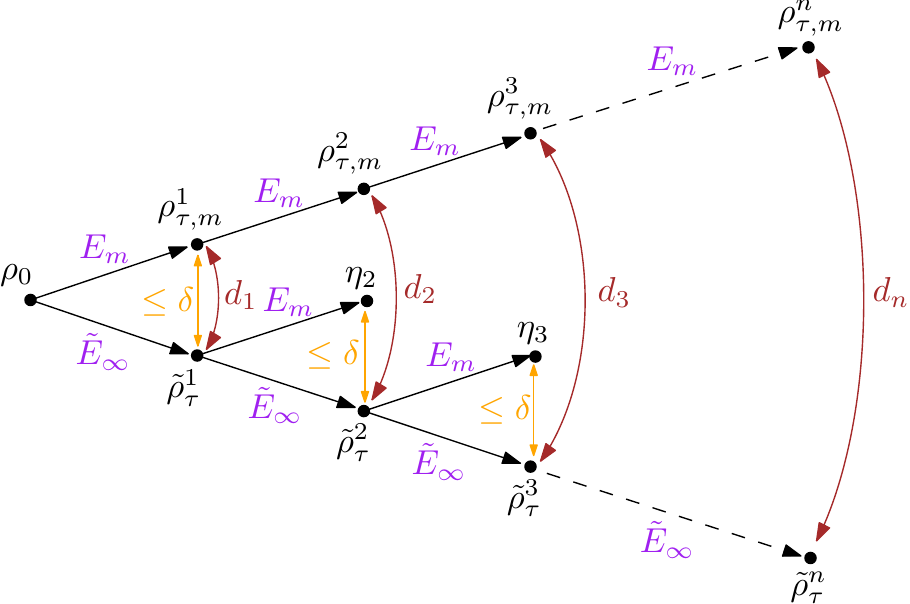}
\end{center}
\caption{An illustration of the tree structure used in the multi-step comparison between $\tilde \rho^n_\tau$ and $\rho^n_{\tau,m}$ \label{fig:tree}}
\end{figure}

Since $\eta^i$ and $\rho_{\tau,m}^i$ are one-step minimizers of the same energy  $E_m(\cdot, \rho_\tau^i)$ with different initial data ($\tilde \rho_\tau^{i-1}$ and $\rho_{\tau,m}^{i-1}$ respectively), we may control their distance using Proposition \ref{contraction inequality}.

First, we obtain a few elementary bounds on how the energy changes along the discrete time sequence.
Combining Lemma \ref{drift continuity estimate0}, Proposition \ref{N mu bounds}, Lemma \ref{W2 one step}, and the definition of $\rho^{i-1}_{\tau,m}$ as a minimizer,
\begin{align*}
  E_m( \rho^{i-1}_{\tau,m};  \rho^{i}_\tau) &=  E_m( \rho^{i-1}_{\tau,m};  \rho^{i-1}_\tau) + E_m( \rho^{i-1}_{\tau,m};  \rho^{i}_\tau) - E_m( \rho^{i-1}_{\tau,m};  \rho^{i-1}_\tau) \\
 &=E_m( \rho^{i-1}_{\tau,m};  \rho^{i-1}_\tau)  +  \int \psi_{1/m}* \bN  \rho^{i-1}_{\tau,m} d (\rho^{i}_\tau - \rho^{i-1}_\tau)  \leq  E_m( \rho^{i-2}_{\tau,m};  \rho^{i-1}_\tau) + C_d W_2(\rho^i_\tau, \rho^{i-1}_\tau) \\
 & \leq E_m( \rho^{i-2}_{\tau,m};  \rho^{i-1}_\tau) +2 C_d^2 \tau \leq \dots \leq E_m(\rho; \rho_\tau^1) + 2 C_d^2 T  .
\end{align*}
Likewise, we may control the first term on the right hand side of the last inequality by
\begin{align*}
  E_m (\rho; \rho_\tau^1)  &= \|\rho\|_m^m/(m-1) + \int \psi_{1/m}* \bN \rho_\tau^1 d \rho  \leq 1+ 2 E_\infty(\rho) + \int \bN \rho d (\psi_{1/m}*\rho^{1}_\tau - \rho) \\
  &\leq 1+2 E_\infty(\rho) + C_d W_2(\psi_{1/m}*\rho^1_\tau, \rho)  \leq 1+ 2 E_\infty(\rho) + C_d (W_2(\psi_{1/m}* \rho^1_\tau, \rho^1_\tau) + W_2(\rho^1_\tau, \rho)) \\
  & \leq 1+2 E_\infty(\rho)  + C_d ((1/m)M_\psi + 2 C_d \tau)
  \end{align*}
  where, in the last step, we apply \cite[Lemma 7.1.10]{AGS}, which ensures
  \begin{align} \label{measure conv}
W_2(\mu* \psi_{1/m}, \mu) \leq \frac{1}{m} \left( \int |x|^2 \psi(x) dx \right)^{1/2} =: \frac{1}{m} M_\psi .
\end{align}
Combining the above two inequalities, we conclude that there exists constant $C>0$ (which we allow to change from line to line) depending on the dimension, $T$, $E_\infty(\rho)$, and $\psi$ so that
  \[ E_m( \rho^{i-1}_{\tau,m};  \rho^{i}_\tau) \leq C \text{ for all } i = 1, \dots, n. \]
  Furthermore, by Proposition \ref{N mu bounds}, we also have that $E_m(\cdot;\cdot)$ is uniformly bounded below.
  
  Using these estimates on the energy, we may now apply Proposition \ref{contraction inequality} to conclude that there exist positive constants $C$ and $N$ depending on the dimension, $T$, $E_\infty(\rho)$, and $\psi$, which we allow to change from line to line, so that for $\tau = t/n$, $0 \leq t \leq T$, and $n > N$,
\begin{equation*}
 f_\tau^{(2)} (W_2^2(\eta^i, \rho_{\tau,m}^i)) \leq d_{i-1}^2 +C_d \tau \omega( C W_2(\eta^i,\tilde \rho_\tau^{i-1} )) + 2 \tau (E_m(\rho_{\tau,m}^{i-1}; \rho_\tau^i) - E_m(\rho_{\tau,m}^i; \rho_\tau^i) ) +  C \tau^2.
\end{equation*}
By Lemma \ref{W2 one step} \ref{rho m tau one step}, we have the following bounds for two quantities on the right hand side:
\begin{align*}
 W_2( \eta^i, \tilde \rho_\tau^{i-1}) &\leq \sqrt{\frac{2 \tau}{m-1}\|\tilde \rho_\tau^{i-1}\|_m^m} + 2C_d \tau \leq C\sqrt{\tau} , \\
 E_m(\rho_{\tau,m}^{i-1}; \rho_\tau^i) - E_m(\rho_{\tau,m}^i; \rho_\tau^i) &\leq \left( \|\rho_{\tau,m}^{i-1}\|_m^m - \|\rho_{\tau,m}^{i}\|_m^m \right) + C \sqrt{\tau} .
 \end{align*}
Therefore,
\begin{align} \label{simplified contraction 1}
 f_\tau^{(2)} (W_2^2(\eta^i, \rho_{\tau,m}^i)) \leq d_{i-1}^2 + 2 \tau \left( \|\rho_{\tau,m}^{i-1}\|_m^m - \|\rho_{\tau,m}^{i}\|_m^m \right) + C \tau^{5/4}
\end{align}

We now use this estimate to bound $d_i=W_2(\rho_{\tau,m}^i, \tilde \rho_\tau^i)$. By the triangle inequality and  \eqref{eq:temp00081},
\begin{align} \label{multi m pre contraction}
d_i^2 &\leq \left(W_2(\eta^i, \rho_{\tau,m}^i) + W_2(\tilde \rho^i_\tau, \eta^i) \right)^2 \leq W_2^2(\eta^i, \rho_{\tau,m}^i) + \left(2W_2(\eta^i, \rho_{\tau,m}^i)   + \delta\right)\delta
\end{align}
Furthermore, by Lemma \ref{W2 one step} \ref{rho m tau one step}, inequality (\ref{dn crude bound}), and equation (\ref{delta def})
\begin{align*}
W_2(\eta^i, \rho_{\tau,m}^i) \leq W_2(\eta^i, \tilde \rho_\tau^{i-1} ) +d_{i-1} +\delta \leq C,
\end{align*}
Thus, by Proposition \ref{f tau prop}, we may apply $f_\tau^{(2)}$ to both sides of the (\ref{multi m pre contraction}) to obtain
\[ f_\tau^{(2)}(d_i^2) \leq f_\tau^{(2)}(W_2^2(\eta^i,\rho^i_{\tau,m})) +C \delta+C \tau^2.  \]
Combining this with \eqref{simplified contraction 1} gives, for all $i=1, \dots, n$,
\begin{align} \label{multi step base case}
f_\tau^{(2)}(d_i^2 )&\leq d_{i-1}^2 + 2 \tau \left( \|\rho_{\tau,m}^{i-1}\|_m^m - \|\rho_{\tau,m}^{i}\|_m^m \right) + C \tau^{5/4}+  C \delta.
\end{align}

We claim that the result will follow if we can show that, for all $j = 1, \dots, n$,
\begin{align} \label{multi step induction}
 f_\tau^{(2j)}(d_{n}^2) \leq  d_{n-j}^2 + 2 \tau (\|\rho^{n-j}_{\tau,m} \|_m^m - \|\rho^n_{\tau,m}\|_m^m ) + 2C \tau^{5/4} j + C \delta j .
\end{align}
In particular, if this holds, then taking $j=n$ and using that $e^{-1/4C_d \tau} = O(\tau^{5/4})$ gives
\[ f_\tau^{(2n)}(d_{n}^2) \leq   2 \tau \|\rho \|_m^m + 2CT \tau^{1/4}  + C \delta n \leq C (n^{-1/4} + n m^{-1/8} ).\]
By Proposition \ref{f tau prop} \ref{ode euler estimate} and the fact that $F_{t}(x)$ is decreasing in $t$,
\[ F_{2n\tau}(d_n^2) \leq C(n^{-1/4} + n m^{-1/8}) + 2C_d \omega(C) T/n \implies F_{2T}(d_n^2) \leq C(n^{-1/4} + n m^{-1/8}) . \]
For $0 \leq x \leq e^{-1-\sqrt{2}}$, we have $F_t(x) = x^{e^{C_d t}}$. Thus, for $n$ and $m$ sufficiently large, depending on the dimension, $T$, $E_\infty(\rho)$, and $\psi$, and with $n = o(m^{1/8})$, we have
\[ d_n \leq C(n^{-1/4} + n m^{-1/8})^{1/2e^{2C_d T}} , \]
which gives the result.

It remains to show (\ref{multi step induction}). We proceed by induction. The base case for $j=1$ follows from (\ref{multi step base case}), so we assume the result holds for $j-1$,
\begin{align*}
 f_\tau^{(2(j-1))}(d_{n}^2) \leq  d_{n-j+1}^2 + 2 \tau (\|\rho^{n-j+1}_{\tau,m} \|_m^m - \|\rho^n_{\tau,m}\|_m^m ) + 2C \tau^{5/4} (j-1) + C \delta (j-1) .
\end{align*}
For any $j =1, \dots, n$, the right hand side is bounded by a constant depending on the dimension, $T$, $E_\infty(\rho)$, and $\psi$. Thus, by Proposition \ref{f tau prop}, we may apply $f_\tau^{(2)}$ to both sides to conclude\begin{align*}
 f_\tau^{(2j))}(d_{n}^2) &\leq f_\tau^{(2)}( d_{n-j+1}^2) + 2 \tau (\|\rho^{n-j+1}_{\tau,m} \|_m^m - \|\rho^n_{\tau,m}\|_m^m ) + 2C \tau^{5/4} (j-1) +C \delta (j-1) +C^2 \tau^2 \\
 &\leq d_{n-j}^2 + 2 \tau(\|\rho^{n-j}_{\tau,m} \|_m^m - \|\rho^n_{\tau,m}\|_m^m ) +2 C \tau^{5/4}j + C\delta j 
\end{align*}
where, in the second inequality, we apply (\ref{multi step base case}) and the fact that $C^2 \tau^2 \leq C \tau \sqrt{\tau}$.
\end{proof}

Combining the previous propositions, we obtain our main result.

\begin{theorem}[convergence of $\rho_m(t)$ to $\rho_\infty(t)$]
\label{thm:conv_w2}
Given $T>0$ and initial data $\rho \in D(E_\infty)$, there exist positive constants $C$ and $M$ depending on $d$, $T$, $E_\infty(\rho)$, and $\psi$ so that for all $0 \leq t \leq T$ and $m \geq M$,
\[ W_2(\rho_m(t), \rho_\infty(t)) \leq C m^{-1/144e^{4C_d T}} . \]
\end{theorem}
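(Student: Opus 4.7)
The plan is to connect $\rho_m(t)$ and $\rho_\infty(t)$ through the three discrete-time sequences of Definition~\ref{Discrete time def}, using that $\rho^n_{t/n}\to\rho_\infty(t)$ and $\rho^n_{t/n,m}\to\rho_m(t)$ as $n\to\infty$ for each fixed $m$. Setting $\tau=t/n$, the triangle inequality yields
\begin{align*}
W_2(\rho_m(t),\rho_\infty(t))
  &\le W_2(\rho_m(t),\rho^n_{\tau,m}) + W_2(\rho^n_{\tau,m},\tilde\rho^n_\tau) \\
  &\quad + W_2(\tilde\rho^n_\tau,\rho^n_\tau) + W_2(\rho^n_\tau,\rho_\infty(t)).
\end{align*}
The two middle terms are bounded directly by Propositions~\ref{multistep rho rho tilde} and~\ref{prop:multi_step}, which together give
\[
W_2(\tilde\rho^n_\tau,\rho^n_\tau) + W_2(\rho^n_{\tau,m},\tilde\rho^n_\tau)
  \le C\bigl(n^{-1/2}\bigr)^{1/2e^{2C_d T}} + C\bigl(n^{-1/4}+n m^{-1/8}\bigr)^{1/2e^{2C_d T}},
\]
valid so long as $n$ satisfies the constraint $n = o(m^{1/8})$ required in Proposition~\ref{prop:multi_step}. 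The two outer terms are JKO-to-continuous discretization errors that vanish as $n\to\infty$ for each fixed $m$, by the convergence of the discrete gradient flows established in Section~\ref{convgradflowsec}.

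The next step is to optimize the scaling $n = n(m)$ that balances the two competing pieces inside Proposition~\ref{prop:multi_step}. Setting $n^{-1/4}\sim n m^{-1/8}$ suggests $n\sim m^{1/10}$, for which both terms are of order $m^{-1/40}$, and the composite middle-term bound becomes of order $m^{-c/e^{2C_d T}}$ for an explicit $c>0$. Taking instead a slightly smaller exponent $n\sim m^\alpha$ with $\alpha\le 1/18$ (so that the $o(m^{1/8})$ condition is safely satisfied and the JKO-to-continuous errors, which decay quantitatively via the $\omega$-convex framework of \cite{CraigOmega}, can be absorbed) yields the stated rate $C m^{-1/(144 e^{4C_d T})}$; the doubled factor $e^{4 C_d T}$ in place of $e^{2C_d T}$ arises from compounding the two $\omega$-convex estimates across the intermediate sequence $\tilde\rho^n_\tau$.

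The main obstacle is reconciling the two competing limits: the outer discretization errors vanish only as $n\to\infty$ for fixed $m$, but the middle-term bound in Proposition~\ref{prop:multi_step} blows up if $n/m^{1/8}\to\infty$. I would resolve this with a diagonal argument, choosing $n_m=\lfloor m^\alpha\rfloor$ for an appropriate $\alpha<1/8$ so that $n_m\to\infty$ and $n_m/m^{1/8}\to 0$ simultaneously as $m\to\infty$. Along this diagonal, the middle-term bound retains its polynomial rate in $m$, while the outer terms either vanish quantitatively (using the rate of convergence of the JKO scheme for $\omega$-convex energies from \cite{CraigOmega}) or can be removed by passing to the $\liminf$ in $n$ restricted to the admissible range $n\le c m^{1/8}$. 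Together, these yield the claimed inequality.
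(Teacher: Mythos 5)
Your decomposition via the three discrete-time sequences is exactly the route the paper takes, so the overall architecture is right. However, there are two substantive issues with how you handle the outer terms.

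First, the ``liminf restricted to the admissible range'' fallback you propose does not work. Once you have fixed a diagonal $n = n(m)$ so that the middle terms retain a polynomial rate, you cannot simultaneously send $n \to \infty$ at fixed $m$ to dispose of the outer terms: $n$ and $m$ are now coupled. The outer terms $W_2(\rho^n_\tau,\rho_\infty(t))$ and $W_2(\rho^n_{\tau,m},\rho_m(t))$ must be bounded \emph{quantitatively} in $n$ (hence in $m$) so they can enter the same balancing. You do gesture at this alternative in your final paragraph, but you cannot have it both ways; the quantitative bound is the only option.

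Second --- and this is the real gap --- the bound for $W_2(\rho^n_{\tau,m},\rho_m(t))$ is not a direct application of \cite[Theorem 3.8]{CraigOmega}, because $\rho^n_{\tau,m}$ is a \emph{time-varying} discrete gradient flow: the drift in $E_m(\cdot;\rho^n_\tau)$ changes at each step. This is exactly why the paper needs Proposition~\ref{time dependent assumption prop}, which extends the $\omega$-convex JKO convergence theory to time-dependent energies and produces the rate
\[
W_2(\rho^n_{t/n,m},\rho_m(t)) \leq C\,n^{-1/16 e^{4 C_d T}}.
\]
This is where the factor $e^{4C_dT}$ actually enters, not from ``compounding estimates across $\tilde\rho^n_\tau$'' as you heuristically suggest. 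Without this proposition, your diagonal argument has no handle on the fourth term and cannot close.

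Finally, a smaller point: your scaling $n \sim m^{\alpha}$ with $\alpha \le 1/18$ would yield a strictly weaker rate than the one claimed (you would get $m^{-1/(288 e^{4C_dT})}$ or worse). The paper takes $n = m^{1/9}$, under which the two dominant pieces $n^{-1/16 e^{4C_dT}}$ and $n^{1/2 e^{2C_dT}} m^{-1/16 e^{2C_dT}}$ both become $m^{-1/(144 e^{\cdot})}$, with the first (slower) piece $m^{-1/(144 e^{4C_dT})}$ dominating. You should track this balance explicitly rather than by heuristic.
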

\begin{proof}
Combining Proposition \ref{multistep rho rho tilde}, Proposition \ref{prop:multi_step}, \cite[Theorem 3.8]{CraigOmega}, and Proposition \ref{time dependent assumption prop}, there exist positive constants $C$ and $N$ depending on $d, T, E_\infty(\rho),$ and $\psi$ so that for $\tau = t/n$ and all $n \geq N$, $m \geq d+1$, $0 \leq t \leq T$, and $n = o(m^{1/8})$,
\begin{align*}
W_2(\rho^n_\tau, \tilde \rho^n_\tau) &\leq Cn^{-1/4e^{2C_d T}} , \quad &
W_2(\rho_{\tau,m}^n, \tilde \rho_\tau^n) &\leq C(n^{-1/4} + n m^{-1/8})^{1/2e^{2C_d T}} \\
W_2(\rho^n_{\tau},\rho_\infty(t)) &\leq  C n^{-1/16 e^{2C_d T}} , \quad &
W_2(\rho^n_{t/n,m}, \rho_m(t)) &\leq C  n^{-1/16e^{4C_dT}}.
\end{align*}
Hence,
\[ W_2(\rho_m(t), \rho_\infty(t)) \leq C ( n^{-1/16e^{4C_dT}}+n^{1/2e^{2C_dT}}m^{-1/16e^{2C_dT}})   \]
Taking $n = m^{1/9}$ gives the result.
\end{proof}

\section{Convergence of viscosity solutions: drift diffusion pressure to free boundary problem} \label{convviscsolsec}

 In the previous section, we showed that the gradient flow of the height constrained interaction energy $E_\infty$, which is merely a curve in the space of measures, may be approximated by solutions of the nonlinear diffusion equations \ref{pme} as $m \to +\infty$. This approximation provides the bridge by which we are able to unite the energy methods approach with viscosity solution approach. In the present section, we use this approximation to characterize the dynamics of patch solutions in terms of a Hele-Shaw type free boundary problem. We accomplish this by considering the nonlinear diffusion equations in terms of their pressure variables: given $\rho_m$ a weak solution of \ref{pme}, the pressure variable $p_m:= \frac{m}{m-1}(\rho_m)^{m-1}$ uniquely solves
\begin{align}  \tag*{\ref{P}$_m$} \label{Pm}
  (p_m)_t - (m-1)p_m(\Delta p_m + \Delta\Phi_{1/m}) -\nabla p_m \cdot(\nabla p_m + \nabla\Phi_{1/m})=0 .
\end{align}
For initial data given by \eqref{initial}, we show that as $m \to +\infty$ the half-relaxed limits of viscosity solutions of \ref{Pm} satisfy sub- and supersolution properties of \ref{Pinfty}.  The comparison principle of \ref{Pinfty} then yields that these half-relaxed limits are ordered with respect to the viscosity solution $p$ of \ref{Pinfty} with the same initial data.  In terms of the density variable, we show that  $\rho_m$ uniformly converges to $\chi_{\Omega(t)}$ away from $\partial\Omega(t)$, where $\Omega(t) = \{p(\cdot,t)>0\}$.
It follows that $\rho_{\infty} = \chi_{\Omega(t)}$ almost everywhere, and thus \ref{Pinfty} identifies with \ref{P}.  Due to the fact that the link between $(P)_{\infty}$ and $(P)$ lacks a priori stability estimates as the initial data varies, we must introduce additional perturbations and approximations into our proof of this final result.  

\begin{remark}

The lack of the comparison principle for the original problem \ref{P} is not the main reason we consider \ref{pme}. We could have considered  the drift term given by $\Phi:= \mathcal{N} * \rho_m$, and thus proved the convergence of the Keller-Segel equation  to our problem, if we had known that the corresponding solutions $\rho_m$ converged to $\rho_{\infty}$ as $m\to +\infty$. Obtaining such convergence seems to require a uniform $L^\infty$ bound on the gradient flow solutions of \ref{pme}, which is an open question at the moment.

\end{remark}

\subsection{Basic properties of viscosity solutions of \ref{Pm} and \ref{Pinfty}}
We refer the reader to Alexander, Kim, and Yao \cite[Section 3]{AKY} and Kim and Lei \cite[Section 2.1]{KimLei} for the definitions of classical and viscosity solutions of \ref{Pm}, and we refer the reader to appendix section \ref{Pinftyviscdef} for the definition of viscosity solutions of \ref{Pinfty}.  To clarify our notion of weak solutions for the original free boundary problem $(P)$, we make the following definition:

\begin{definition}\label{weak_def}
$p$ is a weak solution of $(P)$ if it is a viscosity solution of $(P)_{\infty}$ with initial data $p_0$ and $\rho_{\infty} = \chi_{\{p>0\}}$ almost everywhere.
\end{definition}

We now recall the several results on well-posedness of viscosity solutions of \ref{Pm} and the $L^1$ contraction of the corresponding density variable. 
\begin{lemma}\label{PMEDwellposed}
Consider the porous medium equation with drift and source terms,
\begin{equation}\label{pme_source}
\rho_t = \nabla \cdot(\rho\nabla \Phi_{1/m}) +\Delta \rho^m + \rho f,
\end{equation}
with $f \in L^1$ and bounded initial data.
\begin{enumerate}[label = (\alph*)]
\item If $\rho_1$ and $\rho_2$ are weak solutions of \eqref{pme_source} with source terms $f_1$ and $f_2$, then for all $t \geq 0$, \label{PMEDL1contraction}
$$
\| \rho_1(\cdot, t) - \rho_2(\cdot,t) \|_{L^1(\Rd)} \leq \|\rho_1(\cdot,0)- \rho_2(\cdot,0)\|_{L^1(\Rd)}  + \int_0^t\int_{\Rd} |\rho_1f_1 - \rho_2f_2| .
$$
 \item
Let $\rho$ be a weak solution of \eqref{pme_source} for any continuous, compactly supported initial data $\rho_0$ and continuous function $f$. Then the pressure variable $p_m:= \frac{m-1}{m}\rho^{m-1}$ is a viscosity solution to
$$
(p_m)_t - (m-1)p_m(\Delta p_m + \Delta\Phi_{1/m}+f) -\nabla p_m \cdot(\nabla p_m + \nabla\Phi_{1/m})=0.
$$ 
\end{enumerate}
\end{lemma}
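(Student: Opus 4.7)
Part (a) plan: I would run the classical Kružkov--Bénilan $L^1$-contraction argument, taking care to verify that, in spite of the drift, no Grönwall factor appears. Pick a smooth nondecreasing approximation $\eta_\varepsilon$ of $\operatorname{sgn}$ with $|\eta_\varepsilon|\le 1$ and $\eta_\varepsilon'\ge 0$, and set $G_\varepsilon(s):=\int_0^s\eta_\varepsilon$ and $H_\varepsilon(s):= s\eta_\varepsilon(s)-G_\varepsilon(s)$, so that $G_\varepsilon(s)\to|s|$ and $H_\varepsilon(s)\to 0$ as $\varepsilon\to 0$. After a standard time mollification to justify the computation at the weak level, subtract the equations for $\rho_1$ and $\rho_2$ and pair with $\eta_\varepsilon(\rho_1-\rho_2)$. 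The time derivative contributes $\frac{d}{dt}\int G_\varepsilon(\rho_1-\rho_2)$. The diffusion term becomes $-\int\nabla(\rho_1^m-\rho_2^m)\cdot\nabla(\rho_1-\rho_2)\,\eta_\varepsilon'(\rho_1-\rho_2)\le 0$ by monotonicity of $x\mapsto x^m$. The drift term, after two integrations by parts, collapses to $\int H_\varepsilon(\rho_1-\rho_2)\,\Delta\Phi_{1/m}$, which vanishes as $\varepsilon\to 0$ by dominated convergence using $\|\Delta\Phi_{1/m}\|_\infty\le 1$ from \eqref{eq:phi_bound}. The source contribution is bounded pointwise by $|\rho_1 f_1-\rho_2 f_2|$. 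Sending $\varepsilon\to 0$ and integrating in time yields the stated inequality.

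Part (b) plan: The formal chain rule, valid wherever $\rho$ is $C^{2,1}$ and strictly positive, shows directly that $p_m$ satisfies the target PDE classically; the issue is to justify the viscosity property across the degenerate set $\{\rho=0\}$ and for merely weak $\rho$. I would use an approximation argument. Regularize the initial data as $\rho_0^\delta:=\rho_0+\delta\psi$ with $\psi\in C_c^\infty$ equal to $1$ on a large ball $B_R\supset\operatorname{supp}\rho_0$. The corresponding solutions $\rho^\delta$ of \eqref{pme_source} then stay bounded away from zero on $B_R\times[0,T]$ (by a straightforward comparison/barrier argument exploiting the smoothness of $\Phi_{1/m}$ and continuity of $f$) and are classical by standard uniformly-parabolic theory. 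Consequently each pressure $p_m^\delta$ is a classical, and hence viscosity, solution of the target equation. Passing $\delta\to 0$: part (a) gives $\rho^\delta\to\rho$ in $L^1_{t,x}$, and combining this with uniform Hölder estimates for the pressure of PME-type equations with bounded smooth drift (DiBenedetto--Friedman, Caffarelli--Vázquez, and their extensions to drift-diffusion by Kim--Lei and Kim--Požár) gives precompactness of $\{p_m^\delta\}$ in $C^0_{\mathrm{loc}}$. The limit must be $p_m$, so $p_m^\delta\to p_m$ locally uniformly, and standard stability of viscosity sub/supersolutions under locally uniform convergence (our PDE has continuous coefficients, since $\Phi_{1/m}$ is smooth in $x$ and Hölder in $t$ by Proposition~\ref{propertiesofspecialPhi}) transfers the viscosity property to $p_m$.

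The main obstacle is the compactness input in the second half of part (b). The $L^1$ contraction is essentially immediate once one verifies that the drift drops out in the $\varepsilon\to 0$ limit; the delicate step is upgrading $L^1$ convergence of densities to the locally uniform convergence of pressures demanded by viscosity stability. This requires $\delta$-uniform Hölder bounds on $p_m^\delta$. Such estimates are available in the PME-with-smooth-drift literature — and our setting matches their hypotheses because the mollification in Definition~\ref{drift def} makes $\Phi_{1/m}$ $C^\infty$ in space — but invoking rather than reproving them is where the bulk of the technical machinery is deferred.
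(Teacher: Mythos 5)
The paper's own ``proof'' of this lemma is a pair of citations: part~(a) to V\'azquez \cite[Section~3.2.2]{Vazquez} and part~(b) to Kim--Lei \cite[Corollary~2.11]{KimLei}. Your proposal is a correct and faithful reconstruction of exactly those arguments: part~(a) is the Kru\v{z}kov--B\'enilan doubling-of-sign computation, and you correctly observe that the drift contributes $\int H_\varepsilon(\rho_1-\rho_2)\,\Delta\Phi_{1/m}$ after two integrations by parts, which vanishes under dominated convergence (using $|H_\varepsilon(s)|\le 2|s|$, $\rho_1-\rho_2\in L^1$, and $\|\Delta\Phi_{1/m}\|_\infty\le 1$) so that no Gr\"onwall factor is needed; part~(b) is the standard positivity-regularization $\to$ classical pressure solution $\to$ uniform H\"older estimates $\to$ viscosity stability scheme. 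One small caveat on your regularization: $\rho_0^\delta:=\rho_0+\delta\psi$ with $\psi$ compactly supported still produces a compactly supported $\rho^\delta$, so the equation remains degenerate at the free boundary outside $B_R$ and ``classical by uniformly-parabolic theory'' holds only inside the positivity set; the cleaner variant used in the cited references is to impose $\rho_\delta\equiv\delta$ on the boundary of a large bounded domain (so $\rho_\delta\ge\delta$ everywhere by the maximum principle) and then exhaust $\R^d$. This does not change the strategy and is absorbed by the same compactness inputs you already defer to the literature.
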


\begin{proof}
(a) is due to \cite[Section 3.2.2]{Vazquez}, and (b) follows from \cite[Corollary 2.11]{KimLei}.
\end{proof}

We now turn to the following estimates on the size and support of solutions to \ref{Pm}, which are uniform in $m$. The first ensures that if the initial data is bounded uniformly in $m$, it remains so on bounded time intervals. The second ensures that if the support of the initial data is bounded uniformly in $m$, it likewise remains so on bounded time intervals.

\begin{lemma}[Estimates on size and support of solutions to \ref{Pm}]\label{compact_spt} 
Let $p_m$ be a viscosity solution of \ref{Pm} with continuous, compactly supported initial data $p_m(\cdot, 0)$. Suppose that there exists $R_0 \geq 1$ sufficiently large so that $\{ p_m(\cdot, 0) >0 \} \subseteq B_{R_0/2}(0)$ and $p_m(\cdot, 0) \leq R_0^2/4d$. Define $R(t) :=  (R_0+\frac{C_d}{d})e^{t/d} - \frac{C_d}{d}$, with $C_d>0$ as in \eqref{eq:phi_bound}. Then,
\begin{enumerate}[label = (\alph*)]
\item $\{p_{m}(\cdot,t)>0\} \subseteq B_{R(t)}(0)$ for all $t \in [0,T]$; \label{cpt spt part}
\item $p_m(x,t) \leq R(t)^2/2d$ for all $t>0$. \label{pm unif bdd}
\end{enumerate}
\end{lemma}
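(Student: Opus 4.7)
The plan is to derive both (a) and (b) simultaneously from a single comparison estimate against a radial paraboloid supersolution of \ref{Pm}. Specifically, I would show $p_m(\cdot,t) \leq P(\cdot,t)$ on $[0,T]$, where
\[
P(x,t) := \frac{(R(t)^2 - |x|^2)_+}{2d}
\]
has support $\overline{B_{R(t)}(0)}$ and maximum value $R(t)^2/(2d)$. Once this pointwise bound is established, (a) follows from the support of $P$ and (b) follows from its maximum value.

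For the initial comparison $P(\cdot,0) \geq p_m(\cdot,0)$: on $|x| \leq R_0/2$ one has $P(x,0) = (R_0^2 - |x|^2)/(2d) \geq 3R_0^2/(8d) \geq R_0^2/(4d) \geq p_m(x,0)$ by hypothesis, while outside $B_{R_0/2}(0)$ the initial data $p_m(\cdot,0)$ vanishes. For the supersolution check, I would substitute $P_t = RR'/d$, $\nabla P = -x/d$, $\Delta P = -1$ into the left hand side of \ref{Pm}. The term $-(m-1)P(\Delta P + \Delta\Phi_{1/m}) = (m-1)P(1 - \Delta\Phi_{1/m})$ is nonnegative by $\|\Delta\Phi_{1/m}\|_\infty \leq 1$ from \eqref{eq:phi_bound} and can be discarded. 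What remains reduces, after using $\|\nabla\Phi_{1/m}\|_\infty \leq C_d$ and $|x| \leq R(t)$ inside $\{P>0\}$, to a first order differential inequality for $R(t)$ of the form $R'(t) \geq R(t)/d + \kappa_d C_d$, which the prescribed $R(t)$ satisfies by construction (and which dictates the exponential factor $e^{t/d}$). At the free boundary $|x| = R(t)$, the outward normal velocity $R'(t)$ of $\partial B_{R(t)}$ dominates the normal component of $\nabla P + \nabla\Phi_{1/m}$ from inside $\{P>0\}$, which is the free boundary condition required of a supersolution of the PME pressure equation.

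Invoking the comparison principle for viscosity solutions of \ref{Pm}, which is standard for the porous medium equation with smooth drift (see \cite{AKY, KimLei}), then yields $p_m \leq P$ on $\Rd \times [0,T]$, from which both (a) and (b) follow immediately.

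The main technical subtlety I anticipate is that $P$ is only Lipschitz across $\partial\{P>0\}$, so the viscosity supersolution test at the free boundary must be handled carefully, either by invoking the one-sided gradient limits natural in PME pressure theory or by first working with the strictly positive perturbation $P + \varepsilon$, verifying the interior supersolution inequality for $P + \varepsilon$, applying the comparison principle, and then sending $\varepsilon \to 0^+$. By contrast, the interior supersolution computation and the initial comparison are routine.
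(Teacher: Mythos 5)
Your proposal is correct and takes essentially the same route as the paper: the paper's barrier $\phi(x,t) := R(t)^2 h(x/R(t))$ with $h(x) = (1-|x|^2)_+/2d$ is identically your paraboloid $P(x,t) = (R(t)^2 - |x|^2)_+/(2d)$, the supersolution verification uses the same observations that $\Delta P + \Delta\Phi_{1/m}\le 0$ (so the $(m-1)p(\cdot)$ term helps) and $|\nabla P|\le R(t)/d$, and the comparison principle for $(P)_m$ is invoked in the same way to deliver both (a) and (b) at once. The only stylistic difference is that you flag the Lipschitz kink of $P$ across $\partial\{P>0\}$ and propose either one-sided tests or an $\varepsilon$-perturbation, while the paper simply treats $\phi$ as a classical supersolution and comparison applies because the free boundary $|x| = R(t)$ is smooth and propagates fast enough by the choice of $R$; this is a remark on rigor, not a different method.
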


\begin{proof}
We prove the result by comparison with a classical supersolution of  \ref{Pm}.
Define 
\[ h(x) = \begin{cases} \frac{1-|x|^2}{2d} &\text{ for } |x| < 1, \\ 0 &\text{ for } |x| \geq1, \end{cases} \]
so that $h(x)$ satisfies $-\Delta h = 1$ in $|x|<1$ and $h=0$ in $|x|\geq1$. 
Let $\phi(x,t):= R(t)^2h(x/R(t))$, where $R(t)$ solves $R'(t)= \frac{R(t)}{d}+C_d$ with $r(0)=R_0$, and $C_d$ is the upper bound of $\|\nabla \Phi_{1/m}\|_\infty$ given by \eqref{eq:phi_bound}. We claim that such $\phi$ is a classical supersolution of \ref{Pm} for all $m$. To check this, direct computation gives that in the support of $\phi$,
\begin{equation}
\phi_t(x,t) = 2R(t) R'(t) \frac{1-\frac{|x|^2}{R(t)^2}}{2d} + R(t)^2 \left(-\frac{x}{dR(t)}\right) \cdot \left(-\frac{xR'(t)}{R(t)^2} \right) = \frac{R(t)R'(t)}{d}.
\label{eq:temp111}
\end{equation}
and $\|\nabla \phi(\cdot, t)\|_\infty = R(t) \|\nabla h\|_\infty = \frac{R(t)}{d}$. In addition, since $\Delta \phi = -1$ in its support and  $\Delta \Phi_{1/m} \leq 1$ for all $m$, we have
\begin{equation}
(m-1) \phi\underbrace{(\Delta \phi + \Delta \Phi_{1/m})}_{\leq 0} - \nabla \phi \cdot(\nabla \phi + \nabla \Phi_{1/m}) \leq \frac{R(t)}{d}\left(\frac{R(t)}{d}+C_d\right).
\label{eq:temp222}
\end{equation}
Comparing \eqref{eq:temp111} with \eqref{eq:temp222} gives that $\phi$ is a classical supersolution if $R'(t)= \frac{R(t)}{d}+C_d$. With $R(0)=R_0$, we have $p_m(\cdot, 0) \leq \phi(\cdot,0)$ for all $m$, so comparison principle yields that $\{p_m(\cdot, t)>0\} \subseteq B_{R(t)}(0)$ for all $t$, and $p_m(x,t) \leq R(t)^2 /2d$ for all $x,t$. 
%
\end{proof}

\begin{remark} \label{remark:bound_rho_m}
Lemma \ref{compact_spt} \ref{pm unif bdd} and the fact that $\rho_m = (\frac{m-1}{m} p_m)^{1/(m-1)}$ directly lead to the bound 
\begin{equation}
\label{eq:bound_rho_m}
\limsup_{m\to\infty} \|\rho_m(\cdot, t)\|_\infty \leq 1 \quad\text{ for all } t\geq 0,
\end{equation}
which we will make use of in what follows.
\end{remark}

A key property of viscosity solutions of \ref{Pm} is that they satisfy a comparison principle, which we now recall. We say two functions $f,g: \mathbb{R}^d \to [0,\infty)$ are \emph{strictly separated}, denoted by $f\prec g$, if $f< g$ in $\{f>0\}$, and $\overline{\{f>0\}}$ is a compact subset of $\{g>0\}$.
\begin{theorem}[comparison theorem for \ref{Pm}] 
Suppose $u$ and $v$ are viscosity sub- and supersolutions of \ref{Pm}. If the initial data are strictly ordered, i.e.
\[ u(\cdot,0) < v(\cdot,0) \hbox{ in } \overline{\{u(\cdot,0)>0\}} \hbox{ and } \overline{\{u(\cdot,0)>0\}}\hbox{ is a compact subset of } \{v(\cdot,0)>0\}, \]
  then $u(\cdot,t) \prec v(\cdot,t)$ for all $t>0$. 
\end{theorem}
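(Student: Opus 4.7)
The plan is to adapt the standard Hele--Shaw style comparison argument to the present setting, exploiting the crucial fact that $\Phi_{1/m}(x,t)$ is a \emph{fixed} smooth function (determined by the gradient flow of $E_\infty$ and independent of $u,v$). Thus \ref{Pm} behaves as a local degenerate parabolic equation of porous-medium pressure type with prescribed coefficients, and one may follow the framework of \cite[Section 3]{AKY} and \cite[Section 2.1]{KimLei}, upgrading it to account for the drift and Laplacian of $\Phi_{1/m}$, which are uniformly bounded by Proposition~\ref{N mu bounds} and \eqref{eq:phi_bound}.

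The first step is a \textbf{perturbation to the strict setting}. For small $\varepsilon>0$ define, on a fixed bounded cylinder $\overline{B_R(0)}\times[0,T]$, a family $v_\varepsilon(x,t) := (1+\varepsilon)v((1+\varepsilon)x,(1-\varepsilon)t)+\varepsilon\, t$ (or a similar parabolic rescaling plus shift). A direct computation, using that $\|\nabla\Phi_{1/m}\|_\infty,\|\Delta\Phi_{1/m}\|_\infty\le C_d$, shows that $v_\varepsilon$ is a \emph{strict} viscosity supersolution of \ref{Pm} modulo a small error vanishing with $\varepsilon$; similarly one replaces $u$ by a mildly dilated subsolution $u_\varepsilon$. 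The strict ordering of initial data is preserved for $\varepsilon$ small.

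The second step is a \textbf{contradiction via first-crossing time}. Set $t_0 := \sup\{t\in[0,T] : u_\varepsilon(\cdot,s)\prec v_\varepsilon(\cdot,s)\ \text{for all } s\le t\}$. If $t_0<T$, upper/lower semicontinuity of viscosity sub-/supersolutions together with the compactness from Lemma~\ref{compact_spt} forces the existence of a contact point $x_0$ at time $t_0$. There are two cases.

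(i) \emph{Interior contact} ($v_\varepsilon(x_0,t_0)>0$): in a small neighborhood both $u_\varepsilon$ and $v_\varepsilon$ are positive, so \ref{Pm} becomes uniformly parabolic with smooth bounded coefficients, and strict separation fails at a maximum of $u_\varepsilon-v_\varepsilon$; the strong parabolic maximum principle, applied after a standard sup/inf-convolution regularization to produce a smooth test function, contradicts that $v_\varepsilon$ is a strict supersolution.

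(ii) \emph{Free boundary contact} ($u_\varepsilon(x_0,t_0)=v_\varepsilon(x_0,t_0)=0$ with $x_0\in\partial\{u_\varepsilon(\cdot,t_0)>0\}\cap\partial\{v_\varepsilon(\cdot,t_0)>0\}$): one constructs a smooth barrier of the form $\phi(x,t)=\tfrac{A}{2}(|x-\xi(t)|-r(t))^2_-$ with suitably chosen $r(t),\xi(t)$ touching $v_\varepsilon$ from above at $(x_0,t_0)$. Inserting $\phi$ into the definition of a subsolution for $u_\varepsilon$ and comparing with the strict supersolution condition for $v_\varepsilon$, the kinematic free boundary velocity condition $V=-\nu\cdot(\nabla p+\nabla\Phi_{1/m})$ (read off the leading-order expansion in \ref{Pm}) yields that the free boundary of $u_\varepsilon$ advances strictly slower than that of $v_\varepsilon$, contradicting the assumed crossing.

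Finally, letting $\varepsilon\downarrow 0$ gives the desired comparison. The \textbf{main obstacle} is case (ii): one must be careful that the free boundary velocity condition extracted from \ref{Pm} genuinely separates $u_\varepsilon$ from $v_\varepsilon$ despite the drift, which is why the perturbation in the first step is constructed to produce an order-$\varepsilon$ strict gap in the kinematic velocity. Once this gap is secured, the rest of the argument is a routine adaptation of the porous-medium/Hele--Shaw comparison theorem.
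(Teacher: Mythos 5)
The paper's proof of this theorem is literally one sentence: ``The result follows from \cite[Theorem 2.25]{KimLei}.'' So you are not reconstructing the paper's argument; you are supplying a sketch of the proof of that \emph{external} theorem. Your overall framework --- perturb to a strict setting, identify a first crossing time, and split into an interior-contact case and a free-boundary-contact case --- is indeed the correct one and is what both Kim--Lei and the paper's own comparison proof for \ref{Pinfty} (Theorem \ref{comparison}, via Lemma \ref{super_sub}) are built on.

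That said, the specific perturbation you propose does not do the job. Adding $+\varepsilon\,t$ makes $v_\varepsilon(x,t)>0$ for every $x$ and $t>0$, which destroys the one-phase free boundary structure of the supersolution (its positivity set becomes all of $\Rd$, so case (ii) never occurs and the comparison reduces to a uniformly parabolic one, which is false for a degenerate equation). Moreover, the factor $(1+\varepsilon)$ inside $v((1+\varepsilon)x,\cdot)$ \emph{shrinks} the support of $v$, which is the wrong direction for fattening the supersolution. Finally, with a fixed drift $\nabla\Phi_{1/m}$ the self-similar PME rescalings that generate strict sub/supersolutions for the drift-free equation no longer commute with the equation, so ``a small error vanishing with $\varepsilon$'' is not automatically of the right sign. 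The standard fix, used both in \cite{KimLei} and in the paper's own proof of Theorem \ref{comparison} (see \eqref{convolution1}--\eqref{convolution2} and Lemma \ref{super_sub}), is to regularize by sup/inf convolutions over spacetime balls of shrinking radius $r(t)$, chosen so that the modulus of continuity of $\nabla\Phi_{1/m}$ (log-Lipschitz in space, H\"older in time, Proposition \ref{propertiesofspecialPhi}) is absorbed into the resulting strict inequality. This preserves compact support, yields an $\varepsilon$-gap in the kinematic velocity, and simultaneously gives the interior/exterior ball property at the contact point --- the regularity your barrier argument in case (ii) tacitly needs but does not establish. With that substitution your sketch becomes a faithful outline of the Kim--Lei proof.
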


\begin{proof}
The result follows from  \cite[Theorem 2.25]{KimLei}.
\end{proof}

We also have the following comparison theorem for solutions to \ref{Pinfty}, which we prove at the end of this section.

\begin{theorem}[comparison theorem for \ref{Pinfty}] \label{comparison}
Suppose $(u, \Sigma)$ and $v$ are respectively viscosity sub- and supersolutions of \ref{Pinfty}. If the initial data are strictly ordered, i.e.
\begin{align} \label{strictlyordered} u(\cdot,0) < v(\cdot,0) \hbox{ in } \overline{\Sigma\cap\{t=0\}} \hbox{ and }\overline{\Sigma\cap\{t=0\}} \hbox{ is a compact subset of } \{v(\cdot,0)>0\}, 
\end{align}
then $u(\cdot,t) \prec v(\cdot,t)$ and $\bar{\Sigma} \cap \{t\} \subset\{v(\cdot, t)>0\}$ for all $t>0$. 
\end{theorem}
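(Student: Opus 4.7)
My plan is to argue by contradiction in the standard Hele-Shaw fashion, exploiting the key observation that both sub- and supersolutions see the \emph{same} fixed drift $\Phi = \mathbf{N}\rho$. Define the first breakdown time
\[ t_0 := \inf \bigl\{ t > 0 : u(\cdot, t) \not\prec v(\cdot, t) \text{ or } \overline{\Sigma} \cap \{t\} \not\subset \{v(\cdot, t) > 0\} \bigr\} . \]
The strict initial ordering \eqref{strictlyordered} together with the upper/lower semicontinuity built into the definitions of viscosity sub-/supersolutions (appendix Section~\ref{Pinftyviscdef}) force $t_0 > 0$; if the conclusion fails, $t_0 < \infty$ and semicontinuity yields a first contact point $(x_0, t_0)$. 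Two cases arise: (i) \emph{interior contact}, with $x_0 \in (\Sigma \cap \{t_0\}) \cap \{v(\cdot, t_0) > 0\}$ and $u(x_0, t_0) = v(x_0, t_0) > 0$; or (ii) \emph{free boundary contact}, with $x_0 \in \overline{\Sigma} \cap \{t_0\} \cap \partial\{v(\cdot, t_0) > 0\}$.

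First I would dispatch case (i) using the elliptic strong maximum principle. On the connected component $U$ of $x_0$ in $(\Sigma \cap \{t_0\}) \cap \{v(\cdot, t_0) > 0\}$, both $u(\cdot, t_0)$ and $v(\cdot, t_0)$ satisfy $-\Delta p = 1$ in the viscosity sense, so $w := v(\cdot, t_0) - u(\cdot, t_0) \geq 0$ is harmonic on $U$ and attains the interior minimum value $0$ at $x_0$; hence $w \equiv 0$ on $U$. Pushing $u \equiv v$ from $U$ toward $\partial U$ either forces $u = v > 0$ on an open neighborhood where $u$ should vanish (a contradiction with $u$ being zero on $\partial U$), or lands us at a free-boundary contact, reducing to case (ii).

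For case (ii), I would introduce a strict supersolution by rescaling. Set
\[ \tilde v_\epsilon(x, t) := (1-\epsilon)\, v\bigl((1-\epsilon) x,\, (1+\epsilon) t\bigr) + \epsilon \eta \bigl(R^2 - |x|^2\bigr)_+ \]
for small $\epsilon, \eta > 0$ and $R$ chosen (via Lemma~\ref{compact_spt}) larger than the supports on $[0, t_0]$. A direct computation, using the bounds $\|\nabla \Phi\|_\infty \leq C_d$ and $\|\Delta \Phi\|_\infty \leq 1$ from Proposition~\ref{N mu bounds}, shows that $\tilde v_\epsilon$ is a \emph{strict} viscosity supersolution of $(P)_\infty$ with the same $\rho$, with an order-$\epsilon$ gain in both the interior equation and the free-boundary velocity inequality, while $u(\cdot, 0) \prec \tilde v_\epsilon(\cdot, 0)$ persists for small parameters. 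Replaying the argument with $\tilde v_\epsilon$ in place of $v$ yields a strict contact point $(\tilde x_0, \tilde t_0)$. At this point, the viscosity definitions furnish a smooth test function $\varphi$ touching $\{u > 0\}$ from outside and $\{\tilde v_\epsilon > 0\}$ from inside with a common inward normal $\nu$ at the spatial contact. Because the drift $\nabla \Phi(\tilde x_0, \tilde t_0)$ is the \emph{same} in both equations, the subsolution and strict supersolution conditions simultaneously give
\[ V_\varphi + \nu \cdot (\nabla \varphi + \nabla \Phi) \leq 0 \quad \text{and} \quad V_\varphi + \nu \cdot (\nabla \varphi + \nabla \Phi) \geq c \epsilon > 0, \]
a contradiction. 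The support inclusion $\overline{\Sigma} \cap \{t\} \subset \{v(\cdot, t) > 0\}$ is precisely the negation of case (ii), so it is obtained by the same analysis.

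The main technical obstacle is verifying rigorously that $\tilde v_\epsilon$ is a strict supersolution in the presence of the drift $\Phi$. The interior equation transforms under rescaling up to a factor $(1-\epsilon)^{-1}$ that is compensated by the concave barrier $\epsilon \eta (R^2 - |x|^2)_+$ (chosen so that $-\Delta$ of the barrier equals $2d \epsilon \eta > 0$), and the time dilation contributes the order-$\epsilon$ gain on the velocity. The only delicate error is the drift mismatch $(1-\epsilon)\nabla \Phi((1-\epsilon) x, t) - \nabla \Phi(x, t)$, which by the spatial log-Lipschitz bound in Proposition~\ref{N mu bounds} is dominated by $C_d\,\sigma(\epsilon|x|) + \epsilon C_d$ uniformly on $B_R$, and hence is absorbed into the $O(\epsilon)$ gain by choosing $\epsilon$ small (and then $\eta$ small enough that the added barrier does not destroy the initial separation). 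Once this barrier construction is in place, the remainder of the argument reduces to the classical viscosity comparison for quasi-static free boundary problems with a given smooth drift.
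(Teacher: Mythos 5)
The overall strategy you describe (first breakdown time, interior versus free-boundary contact, perturb to a strict supersolution, derive a contradiction from the common drift) is the right skeleton for Hele-Shaw comparison, and you correctly identify the key structural fact that makes $(P)_\infty$ tractable: the drift $\nabla\Phi = \nabla\mathbf{N}\rho$ is the \emph{same} given function in both equations. But as written the argument has two genuine gaps, and they are exactly the obstacles the paper's regularization in Lemma~\ref{super_sub} is built to overcome.

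First, the rescaling $\tilde v_\epsilon(x,t) = (1-\epsilon) v((1-\epsilon)x,(1+\epsilon)t) + \epsilon\eta(R^2-|x|^2)_+$ does not produce a strict supersolution once the drift error is accounted for honestly. You estimate only the spatial mismatch $(1-\epsilon)\nabla\Phi((1-\epsilon)x,t) - \nabla\Phi(x,t)$ via the log-Lipschitz modulus, which is fine, but the time dilation also introduces the error $\nabla\Phi(\cdot,(1+\epsilon)t) - \nabla\Phi(\cdot,t)$. By Proposition~\ref{propertiesofspecialPhi} this is only $O\bigl((\epsilon t)^{1/2d}\bigr)$, which for small $\epsilon$ dominates the $O(\epsilon)$ gain the time dilation provides in the velocity inequality. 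So for $d\geq 1$ the drift error beats the strictness gain, and $\tilde v_\epsilon$ need not be a supersolution at all. The paper instead regularizes $u$ and $v$ by spatial sup/inf-convolutions over balls $B_{r(t)}(x)$ with $r(t)$ shrinking at the rate $r' = -2C_d\sigma(r)$ tuned exactly to absorb the spatial log-Lipschitz error, and then by a space-time sup/inf-convolution over a ball of radius $r^* = [\sigma(r(T))/11]^{2d}$, which is chosen precisely so that the time-Hölder error $11C_d(r^*)^{1/2d}$ matches $C_d\sigma(r(T))$. Your scheme has no analogue of this matching, and cannot be fixed simply by "choosing $\epsilon$ small."

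Second, even if one had a strict supersolution, the claim that at the first contact point "the viscosity definitions furnish a smooth test function $\varphi$ touching $\{u>0\}$ from outside and $\{\tilde v_\epsilon>0\}$ from inside with a common inward normal" is not automatic. The free boundaries $\partial\{u(\cdot,t)>0\}$ and $\partial\{\tilde v_\epsilon(\cdot,t)>0\}$ have no a priori regularity, and a first-contact point can be geometrically degenerate. The purpose of the sup/inf-convolutions in the paper's proof is to manufacture, at the contact point, both interior and exterior ball conditions for the two free boundaries, which is what makes the first-order expansion of the free-boundary graph and the comparison of normal velocities legitimate. Without that step, the deduction of the two contradictory velocity inequalities is not justified. (A minor point: in your case (i), $w := v-u$ is viscosity superharmonic rather than harmonic, since $-\Delta u \leq 1 \leq -\Delta v$; the strong minimum principle still applies, but the statement should be corrected.)

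In short, the high-level roadmap is sound, but the two specific devices you propose (the $\epsilon$-rescaling to get strictness and an appeal to "the viscosity definitions" to produce a common test function) are not adequate for the actual regularity of $\nabla\Phi$; the paper replaces them with the carefully calibrated sup/inf-convolution construction of Lemma~\ref{super_sub}, and then follows the AKY first-contact argument.
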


While the above theorem almost provides uniqueness of \ref{Pinfty}, the requirement that the initial data be strictly ordered prevents us from concluding this result. However, combining the comparison principle with Perron's method yields the following:

\begin{theorem} \label{existenceofsolntoP}
For any bounded open set $\Omega_0 \subseteq \R^d$ with Lipschitz boundary, there exists  minimal and maximal viscosity solutions of \ref{Pinfty}.
\end{theorem}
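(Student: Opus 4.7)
The plan is to apply Perron's method, using the comparison principle (Theorem~\ref{comparison}) to show that the natural extremal candidates are in fact viscosity solutions. Define
\[
\mathcal{S}^- := \{(u,\Sigma) : (u,\Sigma) \text{ is a viscosity subsolution of } (P)_\infty \text{ with } u(\cdot,0) \leq p_0\},
\]
\[
\mathcal{S}^+ := \{v : v \text{ is a viscosity supersolution of } (P)_\infty \text{ with } v(\cdot,0) \geq p_0\},
\]
and set
\[
p_{\min}(x,t) := \sup\{u(x,t) : (u,\Sigma) \in \mathcal{S}^-\}, \qquad p_{\max}(x,t) := \inf\{v(x,t) : v \in \mathcal{S}^+\}.
\]

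First I would establish nonemptiness and an a priori upper bound. The zero function (paired with an appropriate $\Sigma$) lies in $\mathcal{S}^-$. For $\mathcal{S}^+$, I would adapt the radial barrier $\phi(x,t) = R(t)^2 h(x/R(t))$ from the proof of Lemma~\ref{compact_spt}, where $h(y) = (1-|y|^2)/(2d)$ on $B_1$, $R'(t) = R(t)/d + C_d$, and $R_0$ is chosen so that $\Omega_0 \subseteq B_{R_0/2}(0)$ and $p_0 \leq R_0^2/(4d)$. Since $\|\Delta\Phi\|_\infty \leq 1$ by Proposition~\ref{N mu bounds}, essentially the same calculation as in Lemma~\ref{compact_spt} shows $\phi$ is a classical, hence viscosity, supersolution of $(P)_\infty$, so $\phi \in \mathcal{S}^+$. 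Comparison then yields a uniform upper bound on elements of $\mathcal{S}^-$, and in particular $p_{\min} \leq \phi < \infty$ on any compact subset of spacetime.

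Next I would execute the Perron step. Standard arguments in viscosity theory give that the upper semicontinuous envelope $(p_{\min})^*$ is again a subsolution of $(P)_\infty$ (paired with the closure of the union of the corresponding $\Sigma$'s), since suprema of subsolutions preserve the subsolution condition under the test-function formulation of appendix Section~\ref{appendix section}. Conversely, if $(p_{\min})_*$ failed to be a supersolution at some point, a local bump-up argument would produce a strictly larger subsolution still bounded by $\phi$, contradicting the definition of $p_{\min}$. Invoking Theorem~\ref{comparison} then yields $(p_{\min})^* \leq (p_{\min})_*$, so $p_{\min}$ is continuous and is itself a viscosity solution. Minimality follows because every viscosity solution belongs to $\mathcal{S}^+$ and thus dominates $p_{\min}$ by comparison. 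The construction of $p_{\max}$ is symmetric.

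The principal obstacle is the mismatch between Theorem~\ref{comparison}, which demands \emph{strictly} separated initial data in the sense of \eqref{strictlyordered}, and the Perron construction, in which the sub- and supersolutions share the initial datum $p_0$. To bridge this gap I would exploit Lipschitz regularity of $\partial\Omega_0$ to construct a monotone family of enlargements $\Omega_0^\epsilon \supset\supset \Omega_0$, and let $p_0^\epsilon$ solve $-\Delta p_0^\epsilon = 1$ in $\Omega_0^\epsilon$ with $p_0^\epsilon = 0$ outside, so that $p_0 \prec p_0^\epsilon$ in the sense of \eqref{strictlyordered} and $p_0^\epsilon \to p_0$ uniformly as $\epsilon \to 0$. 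Then Theorem~\ref{comparison} applies between any $(u,\Sigma) \in \mathcal{S}^-$ and any supersolution with data $\geq p_0^\epsilon$, and taking half-relaxed limits $\epsilon \to 0$ transfers the ordering to the original problem, allowing the comparison step in the Perron argument to go through. Ensuring uniform convergence $p_0^\epsilon \to p_0$ up to the boundary and controlling the initial trace of the half-relaxed limits is exactly where the Lipschitz regularity of $\partial\Omega_0$ is essential.
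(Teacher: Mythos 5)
The paper offers no argument of its own here---the proof is a single citation to \cite{Kim}---so your Perron construction is filling in material the paper leaves implicit, and the overall plan (build the extremal Perron envelopes, check they are sub/supersolutions, use barriers for non-emptiness and a priori bounds, and handle the strict-ordering requirement by perturbing the initial domain) is the right one.

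However, you have the extremality the wrong way round, and the error is not merely cosmetic because it exposes a gap in the comparison step. Your $p_{\min}:=\sup\{u:(u,\Sigma)\in\mathcal{S}^-\}$ is in fact the \emph{maximal} solution, not the minimal one. Indeed, if $q$ is any viscosity solution with initial data $p_0$, then $(q^*,\{q>0\})$ is a subsolution with $q^*(\cdot,0)=p_0\leq p_0$, so it belongs to $\mathcal{S}^-$, and the supremum trivially gives $q\leq q^*\leq p_{\min}$. Dually, $p_{\max}:=\inf\{v:v\in\mathcal{S}^+\}$ (regularized below) is the \emph{minimal} solution: any solution $q$ is a supersolution in $\mathcal{S}^+$, so $p_{\max}\leq q$. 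This is also consistent with the paper's Theorem~\ref{main}, which singles out the \emph{minimal} solution, and with the construction $U=(\inf\{w:w \text{ supersolution}\})_*$ in Theorem~\ref{convergence}. Your stated justification---``every viscosity solution belongs to $\mathcal{S}^+$ and thus dominates $p_{\min}$ by comparison''---is exactly where the argument breaks: $u\in\mathcal{S}^-$ has $u(\cdot,0)\leq p_0$ while $q(\cdot,0)=p_0$, so the initial data are \emph{not} strictly separated in the sense of \eqref{strictlyordered}, and Theorem~\ref{comparison} does not apply. The $\epsilon$-enlargement you introduce compares $u$ against supersolutions with data $\geq p_0^\epsilon\succ p_0$, but $q$ itself has data exactly $p_0$ and therefore never enters that comparison; sending $\epsilon\to 0$ only bounds $p_{\min}$ by the Perron infimum for perturbed data, which without an extra stability estimate does not reduce to $q$. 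One further minor point: the paper's Definition~\ref{Pinfty solution def} allows discontinuous (lower semicontinuous) solutions, so the step where you invoke comparison to conclude $(p_{\min})^*\leq(p_{\min})_*$ and hence continuity is neither available (same strict-ordering issue) nor needed---it suffices to verify the sub- and supersolution properties of the two envelopes separately.
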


\begin{proof}
The result follows from \cite{Kim}.
\end{proof}

We will use this comparison theorem, as well as the $L^1$ contraction theorem for $\rho_m$, to obtain our first main result: we identify $\rho_{\infty}$ with the characteristic function on the support of the minimal viscosity solution of \ref{Pinfty}, when the initial data $p_0$ is given by \eqref{initial}.

\subsubsection{Comparison theorem for \ref{Pinfty}}
To conclude this section on basic properties of viscosity solutions of \ref{Pm} and \ref{Pinfty}, we sketch the proof of the comparison principle for \ref{Pinfty}, Theorem \ref{comparison}.
Our approach is to consider the first contact time for regularizations of the sub- and supersolutions, obtained by considering their sup and inf convolutions over space-time smooth sets. Such regularizations are often used to prove comparison principles for free boundary problems (c.f. \cite{CaffarelliVazquez, CaffarelliSalsa, Kim, AKY}), as they ensure that, when the free boundaries intersect for the first time, the free boundaries have both the interior and exterior ball property at the contact point. This provides sufficient regularity to consider a first-order asymptotic expansion of the free boundary graph at the contact point. In many ways, our proof parallels previous work by Alexander, Kim, and Yao \cite[Theorem 2.7]{AKY}, the main differences being that our drift term $\grad \Phi_{1/m}$ has less regularity uniformly in $m$. This makes \ref{Pinfty} more susceptible to perturbations, so we must carefully choose our regularization procedure so that the regularized solutions remain sub- and supersolutions of the original problem. 

We now describe the details of these regularizations of the sub- and supersolutions. Fix $r_0 \in[0,e^{(-1-\sqrt{2})/2})$. Let $r(t)$ be the unique solution to 
\begin{align} \label{rdef}
\begin{cases} r'(t) &= -2C_d \sigma(r(t)) ,\\
r(0) &= r_0 ,\end{cases}
\end{align}
with $C_d$ and $\sigma(x)$ as defined in Proposition \ref{propertiesofspecialPhi}.
Given $(u, \Sigma)$ and $v$ as in Theorem \ref{comparison}, define the spatial sup and inf convolutions
\begin{equation}\label{convolution1}
u^r(x,t):= \sup_{\overline{B_{r(t)}(x)}} u(y,t), \quad v^r(x,t):=\inf_{\overline{B_{r(t)}(x)}} v(y,t), \quad \Sigma^r:= \cup_{t>0} \overline{\Omega^{r(t)}(t)} \times \{t \} ,
\end{equation}
where $\overline{\Omega^{r(t)}(t)} := \{ x: d(x,\Omega(t)) \leq r(t) \}$. Next we define the spacetime sup and inf convolutions
\begin{equation}\label{convolution2}
\tilde{u}^r(x,t):= \sup_{\overline{B_{r^*}(x,t)}} u^r(y,s), \ \ \tilde{v}^r(x,t):= \inf_{\overline{B_{r^*}(x,t)}} v^r(y,s) , \ \ \tilde{\Sigma}^r := \{ (x,t) : d((x,t), \Sigma^r) < r^* \}.
\end{equation}
for fixed $r_*>0$.

\begin{lemma}\label{super_sub}
Let $r^*:= [\sigma(r(T))/11]^{2d}$. Then $(\tilde{u}^r, \tilde{\Sigma}^r)$ and $\tilde{v}^r$ are viscosity sub- and supersolutions of \ref{Pinfty}  in  $\R^n\times (r_0, T-r^*)$.
\end{lemma}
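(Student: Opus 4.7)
I will first prove that the purely spatial sup convolution $u^r$ is already a subsolution of \ref{Pinfty} with a \emph{strict} margin in the free boundary inequality, and then show that the spacetime sup convolution $\tilde u^r$ can absorb at most this margin. The argument for $\tilde v^r$ is entirely parallel, with every inequality reversed. Throughout, the interior condition $-\Delta p = 1$ in $\{p>0\}$ is translation-invariant and is therefore automatically preserved under both convolutions, so the heart of the argument lies in the free boundary condition $V = -\nu\cdot(\nabla p + \nabla\Phi)$.

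\textbf{Step 1: spatial convolution with margin.} Fix $(x_0,t_0)\in\partial\Sigma^r$ with outward normal $\nu$. Since the supremum defining $u^r$ is attained, there exists $y^* = x_0 - r(t_0)\nu\in\partial\{u(\cdot,t_0)>0\}$, and locally $u^r(x,t) \geq u(x - (x_0-y^*), t)$ whenever the shift remains admissible. Consequently $\nu$ is also the outward normal of $\{u(\cdot,t_0)>0\}$ at $y^*$, the gradient transfers by translation, and the shrinkage of $r$ contributes a term $r'(t_0)$ to the normal velocity,
\[
V_{u^r}(x_0,t_0) \leq V_u(y^*,t_0) + r'(t_0).
\]
Combining the subsolution property of $u$ at $(y^*,t_0)$ with the log-Lipschitz estimate $|\nabla\Phi(x_0,t_0)-\nabla\Phi(y^*,t_0)|\leq C_d\sigma(r(t_0))$ from Proposition \ref{propertiesofspecialPhi} yields
\[
V_{u^r}(x_0,t_0) \leq -\nu\cdot(\nabla u^r + \nabla\Phi)\big|_{(x_0,t_0)} + C_d\sigma(r(t_0)) + r'(t_0).
\]
The ODE \eqref{rdef} is engineered exactly so that $r'(t_0) + C_d\sigma(r(t_0)) = -C_d\sigma(r(t_0))$, leaving $u^r$ with a strict subsolution cushion of $-C_d\sigma(r(t_0))$.

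\textbf{Step 2: spacetime convolution.} At $(x_0,t_0)\in\partial\tilde\Sigma^r$ pick $(y^*,s^*)\in\partial\Sigma^r$ realizing the spacetime distance $r^*$, and decompose the shift into a spatial part $z$ with $|z|\leq r^*$ and a temporal part $\tau$ with $|\tau|\leq r^*$. Because $\tilde u^r$ equals a translate of $u^r$ near $(x_0,t_0)$, Step 1 applied at $(y^*,s^*)$ transfers directly, and Proposition \ref{propertiesofspecialPhi} controls the drift discrepancy by
\[
|\nabla\Phi(x_0,t_0)-\nabla\Phi(y^*,s^*)| \leq C_d\sigma(r^*) + 10 C_d (r^*)^{1/2d}.
\]
Together these produce
\[
V_{\tilde u^r}(x_0,t_0) \leq -\nu\cdot(\nabla\tilde u^r + \nabla\Phi)\big|_{(x_0,t_0)} - C_d\sigma(r(s^*)) + C_d\sigma(r^*) + 10C_d(r^*)^{1/2d}.
\]
The choice $r^* = [\sigma(r(T))/11]^{2d}$ bounds the two error terms by approximately $C_d\sigma(r(T))/11 + (10/11)C_d\sigma(r(T)) = C_d\sigma(r(T)) \leq C_d\sigma(r(s^*))$, since $\sigma$ is increasing and $r$ is decreasing; a small surplus remains, so $\tilde u^r$ is a genuine subsolution of \ref{Pinfty} on $\R^n\times(r_0,T-r^*)$.

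\textbf{Symmetric case and main obstacle.} For $\tilde v^r$ the inf convolution shrinks $\{v>0\}$, but the shrinking $r(t)$ pushes the reduced support \emph{outward}, yielding the analogous supersolution margin $+C_d\sigma(r(t))$ after the same two-step procedure; the spacetime step is identical modulo signs. The main technical obstacle lies in Step 2: when $(y^*,s^*)$ lies in a different time slice from $(x_0,t_0)$, one must carefully verify that the spatial outward normal, the spatial gradient, and the slice-wise notion of normal velocity transfer correctly under a pure spacetime translation, and that the $1/2d$-H\"older control on $\nabla\Phi$ in time is strong enough to swallow the temporal component of the shift. The exponent $2d$ in the definition of $r^*$ is calibrated precisely to this Hölder exponent, so that the spacetime error stays strictly below the spatial margin from Step 1; verifying this balance rigorously at the level of the test-function definition in Appendix \ref{Pinftyviscdef} is the delicate bookkeeping that the proof must carry out.
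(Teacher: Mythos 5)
Your proposal is correct in its essential approach and matches the paper's strategy: first show that the spatial sup/inf convolution alone produces a \emph{strict} sub/supersolution with a margin of $C_d\sigma(r(t))$ (coming from $r'=-2C_d\sigma(r)$ absorbing the log-Lipschitz error once and leaving exactly one factor of $C_d\sigma(r)$), and then show that the additional spacetime convolution of radius $r^*=[\sigma(r(T))/11]^{2d}$ eats into that margin by at most $C_d\sigma(r^*)+10C_d(r^*)^{1/2d}\leq 11C_d(r^*)^{1/2d}=C_d\sigma(r(T))\leq C_d\sigma(r(s^*))$, precisely calibrated to the $1/(2d)$ H\"older exponent from Proposition~\ref{propertiesofspecialPhi}. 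The constants and the role of the ODE \eqref{rdef} are handled correctly.

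The one caveat: what you describe at the end as ``delicate bookkeeping'' is in fact the substance of the argument, not an afterthought. Your Step 1 and Step 2 are phrased at the level of ``normal velocity $V$'', but \ref{Pinfty} is posed via the test-function Definitions~\ref{subsolution}--\ref{supersolution}, and the passage from ``the maximum of $u^r$ over a ball of radius $r(t_0)$ is attained at $y_0$'' to an actual viscosity inequality at $(x_0,t_0)$ requires constructing an admissible test function for $u$ at $(y_0,t_0)$. The paper does this by setting $\tilde\varphi(x,t)=\varphi(x-r(t)v,t)$ with $v=\nabla\varphi/|\nabla\varphi|(x_0,t_0)$ (and $\tilde\varphi=\varphi((x,t)-r^*w)$ with $w=\nabla_{x,t}\varphi/|\nabla_{x,t}\varphi|$ in Step 2), and the key geometric fact making $\tilde\varphi$ a valid test function is that $y_0-x_0$ is parallel to $v$ (resp.\ $(y_0,s_0)-(x_0,t_0)$ parallel to $w$), which follows from $|\nabla\varphi|\neq 0$ and $u^r-\varphi$ having a local max at $(x_0,t_0)$. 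It is precisely this shift that produces, via the chain rule, the term $-r'(t_0)|\nabla\varphi|(x_0,t_0)$ in $\tilde\varphi_t$, which your heuristic $V_{u^r}\leq V_u+r'$ encodes informally. So your sketch is on target but the directional/parallelism argument for the shifted test function is the step that needs to be written out to make it a proof.
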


\begin{proof}
We will show that $(\tilde{u}^r, \tilde{\Sigma}^r)$ is a subsolution of \ref{Pinfty} according to Definition \ref{subsolution}. Parallel arguments apply to prove that $\tilde{v}^r$ is a supersolution. 

We begin by showing that $(u^r,\Sigma^r)$ is a subsolution of \ref{Pinfty}. It follows quickly from the definition that $u^r$ is upper semicontinuous and $(u^r,\Sigma^r)$ satisfies \ref{doesnotjumpup}, so we devote our attention to \ref{touchabove}.

Suppose  $u^r-\varphi$ has a local maximum zero at $(x_0,t_0)$ with respect to $\overline{\Sigma^r} \cap \{t \leq t_0\}$. In case \ref{ta1}, if either $x_0 \in \Omega^r(t_0)$ or $u^r(x_0,t_0)>0$, fix $y_0\in \overline{B}_{r(t_0)}(x_0)$ so that $u(y_0,t_0) = u^r(x_0,t_0)$ and either $y_0 \in \Omega(t_0)$ or $u(y_0,t_0) >0$. Let $\tilde{\varphi}(x,t) = \varphi(x+x_0-y_0,t)$, so $u-\tilde{\varphi}$ has a local max zero in $\overline{\Sigma}\cap \{t \leq t_0\}$ at $(y_0,t_0)$. Since $(u, \Sigma)$ is a subsolution, we have $-\Delta\varphi(x_0,t_0)=-\Delta\tilde{\varphi} (y_0,t_0) \leq 1.$

Now consider case \ref{ta2}, so $u^r(x_0,t_0)=0$, with $(x_0,t_0)\in\partial \Sigma^r$ and $|\grad \varphi|(x_0, t_0)\neq 0$. Choose $y_0\in\partial B_{r(t_0)}(x_0)$ so that $(y_0,t_0)\in\partial \Sigma$ and $u(y_0,t_0) = 0$.  Since $|\nabla\varphi|(x_0,t_0) \neq 0$ and $u^r - \varphi$ has a local maximum at $(x_0,t_0)$,  $ y_0-x_0$ is parallel to $v:=\nabla\varphi/|\nabla\varphi|(x_0,t_0)$.

Define $\tilde{\varphi}(x,t):= \varphi(x-r(t)v,t)$. Then $\tilde{\varphi}(y_0,t_0) = \varphi(x_0,t_0)$ and $u - \tilde{\varphi}$ has a local maximum zero at $(y_0,t_0)$ with respect to $\overline{\{u>0\}} \cap \{ t \leq t_0\}$. Consequently,
$$
\min(-\Delta\tilde{\varphi}-1, \tilde{\varphi}_t -|\nabla\tilde{\varphi}|^2 - \nabla\tilde{\varphi} \cdot \nabla\Phi)(y_0,t_0) \leq 0.
$$
By Proposition \ref{propertiesofspecialPhi},
$$
|\nabla\Phi(y_0,t_0) -\nabla \Phi(x_0,t_0)| \leq C_d \sigma(r(t_0)) . 
$$
Since $\tilde{\varphi}_t(y_0,t_0) = \varphi_t (x_0,t_0) -r'(t_0)|\grad \varphi|(x_0,t_0)
$, we have
\begin{equation*}
\min(-\Delta\varphi-1, \varphi_t -|\nabla\varphi|^2 - \nabla\varphi \cdot \nabla\Phi -r'|\grad \varphi| - C_d \sigma(r) |\grad \varphi|)(x_0,t_0) \leq 0.
\end{equation*}
Hence, since $r'(t) = -2C_d \sigma(r(t))$,
\begin{equation} \label{strong ta2}
\min(-\Delta\varphi-1, \varphi_t -|\nabla\varphi|^2 - \nabla\varphi \cdot \nabla\Phi  + C_d\sigma(r)|\nabla\varphi|)(x_0,t_0) \leq 0,
\end{equation}
which, in particular, implies \ref{ta2}.
Therefore, $(u^r,\Sigma^r)$ is a subsolution of \ref{Pinfty}.

Now we use this fact to show that $(\tilde{u}^r, \tilde{\Sigma}^r)$ is a subsolution of \ref{Pinfty}. Again, $\tilde{u}^r$ is upper semicontinuous and $(\tilde{u}^r, \tilde{\Sigma}^r)$ satisfies \ref{doesnotjumpup}, so we devote our attention to \ref{touchabove}. Property \ref{ta1} follows as above, so we only discuss \ref{ta2}, the barrier property on the free boundary.
Suppose  $\tilde{u}^r-\varphi$ has a local maximum zero in $\overline{\tilde{\Sigma}^r}\cap \{ t \leq t_0\}$ at $(x_0,t_0)\in\partial \tilde{\Sigma}^r$ with $\tilde{u}^r(x_0, t_0) =0$  and $|\grad \varphi|(x_0,t_0) \neq 0$. Choose $(y_0,s_0)\in\partial B_{r^*}(x_0,t_0)$ so  $(y_0,s_0)\in\partial \Sigma^r$ and $u^r(y_0, s_0) = 0$. Since $|\grad \varphi| (x_0, t_0) \neq 0$ and $\tilde{u}^r - \varphi$ has a local maximum zero at $(x_0, t_0)$, $(y_0 - x_0, s_0-t_0)$ is parallel to $w:= \grad_{x,t} \varphi /|\grad_{x,t} \varphi|(x_0,t_0)$.

Define $\tilde{\varphi}(x,t) := \varphi((x,t) - r^* w)$. Then $\tilde{\varphi}(y_0, s_0) = \varphi(x_0,t_0)$ and $u^r- \tilde{\varphi}$ has a local maximum zero at $(y_0, s_0)$ with respect to $\overline{ \{u^r>0\}} \cap \{ t \leq t_0\}$. Consequently, by inequality (\ref{strong ta2}) above,
\begin{align*}
\min(-\Delta\tilde{\varphi}-1, \tilde{\varphi}_t -|\nabla\tilde{\varphi}|^2 - \nabla\tilde{\varphi} \cdot \nabla\Phi + C_d \sigma(r)|\nabla\tilde{\varphi}|)(y_0,s_0) \leq 0.
\end{align*}
For $0 \leq x \leq e^{(-1-\sqrt{2})/2}$, we have $-\log(x) \leq x^{-1/2} \leq x^{-1+1/2d}$, hence $\sigma(x) \leq x^{1/2d}$. Therefore, by Proposition \ref{propertiesofspecialPhi}, 
\[ |\grad \Phi(y_0,s_0) - \grad \Phi(x_0,t_0)| \leq 11 C_d (r^*)^{1/2d} = C_d \sigma(r(T)) \leq C_d\sigma(r(s_0)), \] and thus it follows that
\begin{equation*}
\min(-\Delta\varphi-1, \varphi_t -|\nabla\varphi|^2 - \nabla\varphi \cdot \nabla\Phi )(x_0,t_0) \leq 0,
\end{equation*}
which concludes the proof.
\end{proof}

The rest of the proof of Theorem~\ref{comparison} parallels  \cite[Theorem 2.7]{AKY}, so we omit the proof.

\subsection{Convergence of \ref{Pm} to \ref{Pinfty}}

In this section, we show that, as $m \to +\infty$, viscosity solutions $p_m$ of \ref{Pm} approach a solution $p$ of \ref{Pinfty} and use this to show that patch solutions to the congested aggregation equation satisfy $\rho_\infty = \chi_{\Omega(t)}$ almost everywhere, where $\Omega(t) = \{p(\cdot,t)>0\}$.

 We begin with the following lemma which states that $\rho_m$ converges to $\rho_\infty$ weakly even if $\rho_m$ has initial data $(\frac{m}{m-1} p_0)^{1/(m-1)}$, instead of requiring the initial data of $\rho_m$ to coincide with the initial data of $\rho_\infty$, as proved in Theorem \ref{thm:conv_w2}.
\begin{lemma}\label{lem:weak_conv}
Let $\Omega_0 \subseteq \mathbb{R}^d$ be a bounded domain with Lipschitz boundary, and let $\rho_\infty(\cdot, t) $ be the gradient flow of $E_\infty$ with initial data $\rho_0 = \chi_{\Omega_0}$.  Let $\rho_m$ be the weak solution of \ref{pme} with initial data $(\frac{m}{m-1}p_0)^{1/(m-1)}$, where $p_0$ is as in \eqref{initial}.   Then for any $t\geq 0$ and any $f\in C(\mathbb{R}^d)$, we have
\begin{equation}
\label{eq:weak_conv}
\lim_{m\to\infty}\int_{\mathbb{R}^d} \rho_m(x,t) f(x) dx = \int_{\mathbb{R}^d} \rho_\infty(x,t) f(x) dx \quad\text{ for all }t\geq 0.
\end{equation}
\end{lemma}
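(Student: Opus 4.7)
The plan is to bootstrap Theorem~\ref{thm:conv_w2} (which requires $\rho_m$ and $\rho_\infty$ to share initial data) to the present setting by introducing an auxiliary solution and absorbing the discrepancy in initial data via the $L^1$ contraction of Lemma~\ref{PMEDwellposed}\ref{PMEDL1contraction}.

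\textbf{Step 1 (auxiliary solution with matched data).} Let $\tilde\rho_m(\cdot,t)$ denote the weak solution of \ref{pme} whose initial datum is exactly $\chi_{\Omega_0}$, so that $\tilde\rho_m$ and $\rho_\infty$ share initial data. Theorem~\ref{thm:conv_w2}, together with the mass-not-one extension in Remark~\ref{mass not one}, yields $W_2(\tilde\rho_m(\cdot,t),\rho_\infty(\cdot,t)) \to 0$ for every fixed $t\geq 0$. By the characterization of $W_2$ convergence recalled in Section~\ref{Wasserstein intro section}, this gives $\int \tilde\rho_m(\cdot,t)\,g\,dx \to \int \rho_\infty(\cdot,t)\,g\,dx$ for every continuous $g$ of at most quadratic growth.

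\textbf{Step 2 ($L^1$ control of the initial-data error).} Applying Lemma~\ref{PMEDwellposed}\ref{PMEDL1contraction} with $f_1=f_2\equiv 0$,
\begin{equation*}
\|\rho_m(\cdot,t) - \tilde\rho_m(\cdot,t)\|_{L^1(\Rd)} \;\leq\; \left\|\left(\tfrac{m}{m-1}p_0\right)^{1/(m-1)} - \chi_{\Omega_0}\right\|_{L^1(\Rd)}.
\end{equation*}
Since $p_0$ solves $-\Delta p_0 = 1$ in $\Omega_0$ with zero Dirichlet data, the strong maximum principle gives $p_0>0$ throughout $\Omega_0$ and $p_0\equiv 0$ off $\overline{\Omega_0}$. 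A direct computation (taking logarithms) then shows that $(\tfrac{m}{m-1}p_0)^{1/(m-1)}$ converges pointwise a.e.\ to $\chi_{\Omega_0}$. The sequence is supported in the fixed bounded set $\overline{\Omega_0}$ and uniformly bounded by $\max\{1,2\|p_0\|_\infty\}$ for $m\geq 2$, so dominated convergence sends the right-hand side to $0$.

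\textbf{Step 3 (synthesis for general continuous $f$).} Lemma~\ref{compact_spt}\ref{cpt spt part}, applied to the initial pressures (both $p_m(\cdot,0)=p_0$ and $\tfrac{m-1}{m}\chi_{\Omega_0}$ satisfy the hypotheses with an $R_0$ independent of $m$), gives $\supp\rho_m(\cdot,t),\supp\tilde\rho_m(\cdot,t)\subseteq B_{R(t)}(0)$ for some $R(t)$ independent of $m$; the weak-$*$ convergence from Step 1 then forces $\supp\rho_\infty(\cdot,t)\subseteq\overline{B_{R(t)}(0)}$ as well. For any $f\in C(\Rd)$ decompose
\begin{equation*}
\int f\,\rho_m\,dx - \int f\,\rho_\infty\,dx = \int f\,(\rho_m-\tilde\rho_m)\,dx + \int f\,(\tilde\rho_m-\rho_\infty)\,dx.
\end{equation*}
The first term is bounded by $\|f\|_{L^\infty(B_{R(t)})}\|\rho_m-\tilde\rho_m\|_{L^1}$ and vanishes by Step~2. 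Replacing $f$ by $f\chi$, where $\chi\in C_c(\Rd)$ is a cutoff equal to $1$ on $\overline{B_{R(t)}}$, the second term vanishes by Step~1, since $f\chi$ has at most quadratic growth (indeed, compact support). This yields \eqref{eq:weak_conv}.

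The only subtle point is the initial-data convergence in Step~2, which is not automatic: it relies crucially on the strong maximum principle ensuring $p_0>0$ on all of $\Omega_0$, so that $(\tfrac{m}{m-1}p_0)^{1/(m-1)}\to 1$ pointwise there. Once that pointwise limit is secured, the dominated convergence argument and the $L^1$ contraction combine cleanly with Theorem~\ref{thm:conv_w2} to finish the proof.
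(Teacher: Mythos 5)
Your proposal is correct and follows essentially the same route as the paper's proof: introduce the auxiliary solution $\tilde\rho_m$ with initial data $\chi_{\Omega_0}$ so Theorem~\ref{thm:conv_w2} applies, absorb the initial-data mismatch via the $L^1$ contraction of Lemma~\ref{PMEDwellposed} together with pointwise (hence, by dominated convergence, $L^1$) convergence of $(\tfrac{m}{m-1}p_0)^{1/(m-1)}\to\chi_{\Omega_0}$, and use the uniform compact support from Lemma~\ref{compact_spt} to pass from bounded to general continuous test functions. The only cosmetic difference is that you justify the pointwise convergence explicitly through the strong maximum principle, a detail the paper leaves implicit.
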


\begin{proof}
We will first prove \eqref{eq:weak_conv} for all $f\in C(\mathbb{R}^d)\cap L^\infty(\mathbb{R}^d)$, and at the end of the proof we will  extend it to all (possibly unbounded) continuous functions.
 
Let $\tilde \rho_m$ be the weak solution of \ref{pme} with initial data $\chi_{\Omega_0}$. Theorem \ref{thm:conv_w2} then yields that $\lim_{m\to\infty} W_2(\tilde \rho_m(t), \rho_\infty(t)) = 0$ for any $t>0$. By \cite[Remark 7.1.11 and Remark 5.1.2]{AGS}, convergence in $W_2$ distance implies that
\begin{equation}
\label{eq:weak1}
\lim_{m\to\infty}\int_{\mathbb{R}^d} \tilde \rho_m(x,t) f(x) dx = \int_{\mathbb{R}^d} \rho_\infty(x,t) f(x) dx \quad\text{ for all }f\in C(\mathbb{R}^d)\cap L^\infty(\mathbb{R}^d).
\end{equation}

To relate $\tilde \rho_m$ with $\rho_m$, note that they are both weak solutions to \ref{pme}, with different initial data $\chi_{\Omega_0}$ and $(\frac{m}{m-1}p_0)^{1/(m-1)}$ respectively. Since $(\frac{m}{m-1}p_0)^{1/(m-1)}\to \chi_{\Omega_0}$ pointwise as $m\to +\infty$, we have
\[
\lim_{m\to\infty}\Big\|\Big(\frac{m}{m-1}p_0\Big)^{1/(m-1)} - \chi_{\Omega_0}\Big\|_{L^1(\mathbb{R}^d)}=0
\] by dominated convergence theorem. Also, recall that for any $m>1$ and $t\geq 0$, the $L^1$ contraction result in Lemma \ref{compact_spt} gives
\[
\|\tilde \rho_m(\cdot, t) - \rho_m(\cdot, t)\|_{L^1(\mathbb{R}^d)} \leq \Big\|\Big(\frac{m}{m-1}p_0\Big)^{1/(m-1)} - \chi_{\Omega_0}\Big\|_{L^1(\mathbb{R}^d)}.
\]
Combining the above two equations yields $\lim_{m\to\infty} \|\tilde \rho_m(\cdot, t) - \rho_m(\cdot, t)\|_{L^1(\mathbb{R}^d)} =0$, hence
\begin{equation}
\label{eq:weak2}
\lim_{m\to\infty}\int_{\mathbb{R}^d} (\tilde \rho_m(x,t)- \rho_m(x,t)) f(x) dx  =0 \quad\text{ for all }f \in L^\infty(\mathbb{R}^d).
\end{equation}
Putting \eqref{eq:weak1} and \eqref{eq:weak2} together gives us \eqref{eq:weak_conv} for all $f\in C(\mathbb{R}^d) \cap L^\infty(\mathbb{R}^d)$. To remove the requirement $f\in L^\infty(\mathbb{R}^d)$, recall that Lemma \ref{compact_spt} gives that $\tilde \rho_m(\cdot, t)$ is supported in some bounded set $B(0,R(t))$ for all $m$, and as a result $\rho_\infty(\cdot,t)$ is supported in it too. If $f\in C(\mathbb{R}^d)$ is unbounded, we can simply set $\tilde f = f \eta$, where $\eta$ is a smooth cut-off function that is 1 in $B(0,R(t))$ and $0$ outside of $B(0,R(t)+1)$. We then have \eqref{eq:weak_conv} holds for $\tilde f$. Since changing $\tilde f$ to $f$ will not change the integrals in \eqref{eq:weak_conv}, we know \eqref{eq:weak_conv} holds for $f$ too.
\end{proof}

We begin our study of the limit of solutions of \ref{Pm} with the following result, which shows that the half relaxed ``limit infimum'' of solutions of \ref{Pm} is a supersolution of \ref{Pinfty}.

\begin{proposition}\label{super}
Suppose $p_m(x,t)$ is a viscosity solution of \ref{Pm} with initial data $p_m(\cdot, 0) = p_0$ as given in \eqref{initial}.
Then the half relaxed limit
\begin{align} \label{u2def}
u_2(x,t): = \liminf_*p_m(x,t)=\lim_{n\to +\infty}\inf_{\substack{m>n \\ |(x,t) - (y,s)| < 1/n}} {p_m}(y,s)
\end{align}
is a viscosity supersolution of \ref{Pinfty}.  
\end{proposition}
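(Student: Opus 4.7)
The plan is to apply the classical half-relaxed-limits method (Barles--Perthame) to pass to the limit $m\to\infty$ in the viscosity supersolution property of $p_m$ for \ref{Pm}. Since $u_2:=\liminf_* p_m$ is automatically lower semicontinuous, it remains only to verify the interior and free-boundary supersolution inequalities of \ref{Pinfty} (as defined in appendix \ref{Pinftyviscdef}) at each test function that touches $u_2$ from below. Fix a smooth $\varphi$ so that $u_2-\varphi$ has a strict local minimum of value $0$ at $(x_0,t_0)$. A standard lemma on half-relaxed limits produces $m_k\to\infty$ and $(x_k,t_k)\to(x_0,t_0)$ at which $p_{m_k}-\varphi$ attains a local minimum, with $p_{m_k}(x_k,t_k)\to u_2(x_0,t_0)$. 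I then split into interior and free-boundary contact points.

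\emph{Interior case.} If $u_2(x_0,t_0)>0$, then $p_{m_k}(x_k,t_k)>0$ for large $k$, and the interior supersolution inequality for \ref{Pm} gives, at $(x_k,t_k)$,
\[
\varphi_t - (m_k-1)\varphi\bigl(\Delta\varphi + \Delta\Phi_{1/m_k}\bigr) - \nabla\varphi\cdot(\nabla\varphi + \nabla\Phi_{1/m_k}) \geq 0.
\]
Dividing by $(m_k-1)\varphi(x_k,t_k)$, which stays bounded away from zero, and sending $k\to\infty$ so that the bounded first-order remainder becomes negligible, yields
$\Delta\varphi(x_0,t_0) + \liminf_{k}\Delta\Phi_{1/m_k}(x_k,t_k)\leq 0$.
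Recalling $\Delta\Phi_{1/m} = \psi_{1/m}*\rho_\infty$, the desired interior condition $-\Delta\varphi(x_0,t_0)\geq 1$ reduces to showing $\liminf_{k}(\psi_{1/m_k}*\rho_\infty)(x_k,t_k)\geq 1$.

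\emph{Free-boundary case.} If $u_2(x_0,t_0)=0$ with $|\nabla\varphi(x_0,t_0)|\neq 0$, the degenerate term $(m_k-1)\varphi\cdot(\,\cdot\,)$ disappears at $(x_k,t_k)$, and the boundary part of the viscosity supersolution condition for \ref{Pm} (cf.\ \cite{KimLei, AKY}) reduces to
\[
\varphi_t(x_k,t_k) - |\nabla\varphi(x_k,t_k)|^2 - \nabla\varphi(x_k,t_k)\cdot\nabla\Phi_{1/m_k}(x_k,t_k) \geq 0.
\]
By the uniform convergence $\nabla\Phi_{1/m}\to\nabla\Phi$ on compact sets (Proposition~\ref{propertiesofspecialPhi}), this passes directly to the limit to give the boundary velocity inequality $\varphi_t\geq |\nabla\varphi|^2 + \nabla\varphi\cdot\nabla\Phi$ at $(x_0,t_0)$, which is the required free-boundary supersolution condition.

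\emph{Main obstacle.} The delicate step is the interior case, where $\Delta\Phi_{1/m}=\psi_{1/m}*\rho_\infty$ is the mollification of a merely bounded density, so pointwise convergence along $(x_k,t_k)$ is not automatic. The route I would pursue exploits a bootstrap from pointwise to uniform control: the local-minimum property of $p_{m_k}-\varphi$, together with $\varphi(x_0,t_0)=u_2(x_0,t_0)>0$ and the time continuity of the PME pressure, propagates the positivity $p_{m_k}(x_k,t_k)>0$ to a spacetime neighborhood of $(x_0,t_0)$ on which $p_{m_k}\geq\delta>0$ for large $k$. There, $\rho_{m_k}=(\tfrac{m_k-1}{m_k}p_{m_k})^{1/(m_k-1)}\to 1$ uniformly, and combining this uniform lower bound with the weak convergence $\rho_m\rightharpoonup\rho_\infty$ from Lemma~\ref{lem:weak_conv} forces $\rho_\infty=1$ almost everywhere in the neighborhood. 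This in turn yields $(\psi_{1/m_k}*\rho_\infty)(x_k,t_k)\to 1$, closing the interior argument and completing the supersolution verification.
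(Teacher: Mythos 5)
Your interior argument follows the same route as the paper: propagate the positivity $p_{m_k}>\tfrac12 u_2(x_0,t_0)$ to a fixed spacetime neighborhood via the local-minimum property of $p_{m_k}-\varphi$, deduce $\rho_{m_k}\to 1$ uniformly there, invoke the weak convergence from Lemma~\ref{lem:weak_conv} to force $\rho_\infty=1$ a.e.\ in the neighborhood, and conclude $\Delta\Phi_{1/m_k}(x_k,t_k)=(\psi_{1/m_k}*\rho_\infty)(x_k,t_k)\to 1$. The paper phrases it as a proof by contradiction with $-\Delta\varphi(x_0,t_0)<1$, but the substance is identical to yours.

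The free-boundary case, however, has a real gap. You assert that ``the degenerate term $(m_k-1)\varphi\cdot(\cdot)$ disappears at $(x_k,t_k)$,'' but this does not hold: $(x_k,t_k)\in\overline{\{p_{m_k}>0\}}$ only gives $p_{m_k}(x_k,t_k)\to 0$, while $(m_k-1)\to\infty$, so the product $(m_k-1)p_{m_k}(x_k,t_k)\bigl(\Delta\varphi+\Delta\Phi_{1/m_k}\bigr)$ is indeterminate. The paper forces this term to have the good sign by replacing $\varphi$ with a perturbed test function $\tilde\varphi$ satisfying $\Delta\tilde\varphi(x_0,t_0)>0$; since also $\Delta\Phi_{1/m}\geq 0$, the term $(m-1)p_m(\Delta\tilde\varphi+\Delta\Phi_{1/m})$ is $\geq 0$ near $(x_0,t_0)$ and can be dropped from the supersolution inequality, and then one lets the perturbation parameter $a\to 0$ to recover the conclusion for the original $\varphi$. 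Separately, even with the perturbation, one must treat the subcase $(x_k,t_k)\in\partial\{p_{m_k}>0\}$, where the interior PDE inequality \eqref{eqn:p2a} is unavailable; the paper rules this out by appealing to \cite[Lemma 3.3]{AKY}. Your argument addresses neither the sign of the degenerate term nor this boundary subcase, and as written would not close.
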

\begin{proof} 
First, note that $u_2$ is lower semicontinuous.
Next, suppose that $u_2 - \varphi$ has a local minimum zero at $(x_0,t_0)$ with respect to $\Rd \cap \{ t \leq t_0\}$. By subtracting $\delta(x-x_0)^2+\delta(t_0-t)$ from $\varphi$ with $\delta >0$ sufficiently small, we may assume that $u_2 - \varphi$ has a strict  minimum at $(x_0,t_0)$  with respect to a parabolic neighborhood $Q$ of $(x_0, t_0)$. Define $(x_m, t_m) := \argmin_Q p_m-\varphi$, $C_m := p_m(x_m,t_m) - \varphi(x_m,t_m)$, and $\varphi_m := \varphi +C_m$, so $p_m -\varphi_m$ has a local minimum zero at $(x_m,t_m)$. As in \cite[Theorem 3.4]{AKY} (see paragraph A.2), up to a subsequence, we have $(x_m, t_m) \to (x_0, t_0)$, $\lim_{m \to +\infty} p_m(x_m,t_m)= \liminf_*p_m(x_0,t_0) = u_2(x_0,t_0)$, and $(x_m,t_m) \in \overline{\{p_m>0\}}$.
Since $p_m$ is a viscosity supersolution of \ref{Pm},
\begin{equation} \label{eqn:p2a}
  (\varphi_t - (m-1)p_m(\Delta \varphi + \Delta \Phi_{1/m}) -\nabla \varphi \cdot(\nabla \varphi + \nabla\Phi_{1/m}))(x_m,t_m) \geq 0 \quad\text{ for all }m.
  \end{equation}

First, consider the case when $u_2(x_0,t_0) >0$, and assume $-\Delta \varphi(x_0,t_0) <1$ for the sake of contradiction.
Since $\varphi \in C^{2,1}(Q)$, we have $\varphi_t - \nabla \varphi\cdot(\nabla\varphi + \nabla \Phi_{1/m})|_{(x_m,t_m)}$ is uniformly bounded for all $m$. Hence if we can show that 
\begin{equation}\label{eq:temp_contra}
\limsup_{m\to\infty}  p_m(x_m,t_m) \big(\Delta \varphi(x_m,t_m) + \Delta \Phi_{1/m}(x_m,t_m)\big)> c>0
\end{equation}
for some positive $c$, it would imply that the left hand side of \eqref{eqn:p2a} goes to $-\infty$ along a subsequence of $m\to\infty$,  contradicting \eqref{eqn:p2a}.
Let us take a subsequence of $m\to +\infty$ such that $(x_m, t_m) \to (x_0, t_0)$ and $p_m(x_m,t_m)>\frac{1}{2}u_2(x_0,t_0)>0$ for all terms in this subsequence. (We still denote this subsequence by $m$ for notational simplicity.) Since $\lim_{m\to\infty} \Delta \varphi(x_m,t_m) = \Delta\varphi(x_0,t_0)>-1$, to show \eqref{eq:temp_contra}, it suffices to show that $\Delta \Phi_{1/m}(x_m, t_m) = 1$ for all sufficiently large $m$ in this subsequence. Since $(x_m, t_m)$ minimizes $p_m-\varphi$ in $Q$, combining this with  $p_m(x_m,t_m)>\frac{1}{2}u_2(x_0,t_0)$ yields that $p_m(x, t) > \frac{1}{4} u_2(x_0,t_0)>0$ for all $(x,t)\in Q \cap B_{r_0}(x_m,t_m)$, where $r_0>0$ is a sufficiently small constant only depending on $u_2(x_0,t_0)$ and $\|\varphi\|_{C^{1,1}(Q)}$. Also, note that we have $B_{r_0/2}(x_0,t_0) \subseteq B_{r_0}(x_m,t_m)$ for sufficiently large $m$ due to the fact that $(x_m,t_m)\to (x_0,t_0)$. In other words, for all sufficiently large $m$, we have
\[
 \rho_m(x,t) = \left(\frac{m}{m-1} p_m(x,t)\right)^{\frac{1}{m-1}} \geq \left(\frac{u_2(x_0,t_0)}{8}\right)^{\frac{1}{m-1}}=:c_2^{\frac{1}{m-1}} \text{ for all }(x,t)\in Q \cap B_{r_0/2}(x_0,t_0).
\]
Since $\lim_{m\to\infty}c_2^{\frac{1}{m-1}}=1$, combining the above inequality with the weak convergence of $\rho_m$ towards $\rho_\infty$ in Lemma~\ref{lem:weak_conv} yields that $\rho_\infty(x,t) \geq 1$ almost everywhere in $ Q \cap B_{r_0/2}(x_0,t_0)$ (indeed, we must have $\rho_\infty = 1$ in this set since $\rho_\infty \leq 1$ by definition), hence
\[
\Delta\Phi_{1/m}(x_m,t_m)= \psi_{1/m}*\rho_{\infty}(x_m,t_m)  = 1
\]
for all sufficiently large $m$ in this subsequence, which yields \eqref{eq:temp_contra} and thus contradicts \eqref{eqn:p2a}.

\medskip

Now, we consider the case when $(x_0,t_0) \in \partial \{ u_2 >0\}$, $u_2(x_0,t_0) = 0$ and $\varphi$ satisfies (\ref{bdryTouchingCond}).  For a given small $a>0$, let us consider a (second-order) perturbed version of $\varphi$, 
$$
\tilde{\varphi}(x,t):= \varphi(x,t) - a(x-x_0)\cdot\nu -aC((x-x_0)^T)^2+ M((x-x_0)\cdot\nu)^2.
$$ 
Here $\nu = D\varphi(x_0,t_0)\neq 0$, $(x-x_0)^T = (x-x_0) - [(x-x_0)\cdot\nu] \nu$, $C= \max |D^2\varphi|$. We choose $M$ sufficiently large so that 
\begin{equation}\label{eqn000}
\Delta\tilde{\varphi} (x_0,t_0)>0.
\end{equation}

 Note that in a small neighborhood of $(x_0,t_0)$ that depends on $a$ and $M$, $\tilde{\varphi} \leq \max[\varphi,0].$ Thus $\tilde{\varphi}$ still touches $u_2$ from below at $(x_0,t_0)$. We will show that, for any choice of $a>0$, 
$$
(\tilde{\varphi}_t - |\nabla\tilde{\varphi}|^2-\nabla \tilde{\varphi} \cdot \nabla \Phi)(x_0, t_0) \geq 0,
$$
which yields the desired conclusion for $\varphi$.

\medskip

 Assume, for the sake of contradiction, that  there is $c_0>0$ so
\begin{align} \label{u2super1}
\tilde{\varphi}_t - |\nabla\tilde{\varphi}|^2-\nabla \tilde{\varphi} \cdot \nabla \Phi(x_0, t_0) < -c_0 < 0.
\end{align}

Let $(x_m,t_m)$ chosen as before but with $\tilde{\varphi}$ instead of $\varphi$. First, suppose  $p_m(x_m,t_m)>0$ for sufficiently large $m$. Then \eqref{eqn:p2a} and \eqref{eqn000}-\eqref{u2super1} yields a contradiction.

Lastly  suppose $(x_m,t_m) \not \in \{p_m>0\}$ for $m$ sufficiently large. Since  $(x_m,t_m) \in \overline{\{p_m>0\}}$, up to a subsequence, we have $(x_m, t_m) \in \partial \{p_m >0\}$. This contradicts \cite[Lemma 3.3]{AKY}. (Though the proof of this lemma in \cite{AKY} only considers the time independent case, it continues to hold in our setting.)

\end{proof}

Next we proceed to show that taking a ``limit supremum" of $p_m$  yields a subsolution of \ref{P}. Here we need to be a bit careful, due to the fact that subsolution property is based on maximum points  only in the support of the subsolution. (See Definition \ref{subsolution}.) Indeed, due to the nature of one-phase problem it is not possible to perturb a smooth test function $\varphi$ to create a strict maximum of $u^1-\varphi$ without restricting the domain to $\overline{\{u^1>0\}}$. This can create technical difficulties with arguments along the lines of above proof to ensure that the local maximum points are stable under the limit $m\to\infty$, especially when the support of $p_m$ degenerates as $m\to\infty$. 
To overcome this obstacle, we work with a modified notion of viscosity subsolutions, which are comprised of a pair $(u, \Sigma)$. This allows the set evolution $\Sigma$ to be larger than the support of $u$. (See Definition~\ref{subsolution} for details.)

\begin{proposition} \label{sub}
Suppose $p_m(x,t)$ is a viscosity solution of \ref{Pm}. Define
\begin{equation}
\label{eq:st_def}
S(t) := \overline{\cap_{M \geq 1}(\cup_{m>M}{\{p_m(\cdot,t)>0\}})}, \quad \Sigma_1 :=  \cup_{t>0} ( \mathring S(t) \times \{t\})
\end{equation}
Then if $u_1$ is the half-relaxed limit of $p_m$,
\begin{align*}
u_1(x,t): = \limsup^*p_m(x,t) =\lim_{n \to + \infty} \sup_{\substack{m>n \\ |(x,t) - (y,s)| < 1/n}} {p_m}(y,s) ,
\end{align*}
$(u_1, \Sigma_1)$ is a viscosity subsolution of \ref{Pinfty}.  
\end{proposition}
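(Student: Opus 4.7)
The plan is to verify the three structural requirements for $(u_1,\Sigma_1)$ to be a viscosity subsolution of $(P)_{\infty}$: upper semicontinuity of $u_1$, the does-not-jump-up property \ref{doesnotjumpup}, and the touching-from-above conditions \ref{ta1} and \ref{ta2}. The overall strategy parallels Proposition \ref{super} very closely, but with two new technical wrinkles. First, a subsolution test function $\varphi$ is only required to touch $u_1$ from above on the restricted set $\overline{\Sigma_1}\cap\{t\leq t_0\}$, so I must perturb $\varphi$ so that strictness is preserved on this restricted set. Second, to invoke the viscosity inequality for $p_m$, the chosen approximating points $(x_m,t_m)$ must lie in $\overline{\{p_m>0\}}$. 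This is precisely why $\Sigma_1$ is defined using the interior of $S(t)=\overline{\bigcap_M\bigcup_{m>M}\{p_m(\cdot,t)>0\}}$: points in $\mathring S(t_0)$ admit nearby points at which $p_m>0$ for infinitely many $m$.

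I would first dispatch the elementary properties. Upper semicontinuity of $u_1$ is immediate from the definition of $\limsup^*$. The inclusion $\{u_1>0\}\subseteq\Sigma_1$ holds because if $u_1(x_0,t_0)>0$ there is, by definition, a subsequence with $p_m\to u_1(x_0,t_0)>0$ on nearby points, so every neighborhood of $(x_0,t_0)$ meets $\{p_m>0\}$ for infinitely many $m$, placing $(x_0,t_0)\in \Sigma_1$. Together with the fact that $\Sigma_1$ is open in spacetime by construction, this yields \ref{doesnotjumpup}.

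Next I would verify \ref{ta1}. Given a local maximum zero of $u_1-\varphi$ at $(x_0,t_0)$ with respect to $\overline{\Sigma_1}\cap\{t\leq t_0\}$, perturb by $\delta|x-x_0|^2+\delta(t_0-t)$ to force a strict maximum in a parabolic cylinder $Q$, and select $(x_m,t_m)\in\operatorname{argmax}_{Q\cap\overline{\{p_m>0\}}}(p_m-\varphi)$. Using the density property built into $\Sigma_1$, extract a subsequence along which $(x_m,t_m)\to(x_0,t_0)$ with $(x_m,t_m)\in\{p_m>0\}$ and $p_m(x_m,t_m)\to u_1(x_0,t_0)$. Assume for contradiction $-\Delta\varphi(x_0,t_0)>1$. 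The viscosity subsolution property at $(x_m,t_m)$ reads
\[
\varphi_t-(m-1)p_m(\Delta\varphi+\Delta\Phi_{1/m})-\nabla\varphi\cdot(\nabla\varphi+\nabla\Phi_{1/m})\leq 0.
\]
If $u_1(x_0,t_0)>0$ I get $p_m(x_m,t_m)\geq c>0$ on a spacetime neighborhood; in the degenerate subcase $u_1(x_0,t_0)=0$ with $x_0\in\mathring S(t_0)$, I use that infinitely many $m$ satisfy $p_m>0$ on a dense subset of a ball around $x_0$ together with the uniform bound from Remark \ref{remark:bound_rho_m} and the weak convergence in Lemma \ref{lem:weak_conv} to conclude $\rho_\infty=1$ a.e.\ in a neighborhood, hence $\Delta\Phi_{1/m}(x_m,t_m)=\psi_{1/m}*\rho_\infty(x_m,t_m)\to 1$. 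Either way, $\Delta\varphi+\Delta\Phi_{1/m}<-c<0$ near $(x_0,t_0)$, so the middle term diverges to $+\infty$, contradicting the displayed inequality.

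Finally, \ref{ta2} follows the second-order perturbation in the proof of Proposition \ref{super}: setting $\nu=\nabla\varphi/|\nabla\varphi|(x_0,t_0)$, $C=\max|D^2\varphi|$ and $M$ large,
\[
\tilde\varphi(x,t):=\varphi(x,t)-a(x-x_0)\cdot\nu-aC|(x-x_0)^T|^2+M((x-x_0)\cdot\nu)^2,
\]
which satisfies $\tilde\varphi\leq\max(\varphi,0)$ near $(x_0,t_0)$ (so it still touches $u_1$ from above on $\overline{\Sigma_1}$) and $\Delta\tilde\varphi(x_0,t_0)>0$. Assuming for contradiction $\tilde\varphi_t-|\nabla\tilde\varphi|^2-\nabla\tilde\varphi\cdot\nabla\Phi<-c_0<0$ at $(x_0,t_0)$, I take $(x_m,t_m)$ maximizing $p_m-\tilde\varphi$ in the parabolic neighborhood restricted to $\overline{\{p_m>0\}}$; again $(x_m,t_m)\to(x_0,t_0)$ with $(x_m,t_m)\in\{p_m>0\}$ for large $m$. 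The favorable sign of $\Delta\tilde\varphi$ makes the $(m-1)p_m(\Delta\tilde\varphi+\Delta\Phi_{1/m})$ term harmless in the limit $m\to\infty$, leaving the first-order terms to contradict the viscosity inequality, as in Proposition \ref{super}. The hardest step throughout is the degenerate subcase of \ref{ta1} where $u_1(x_0,t_0)=0$ but $x_0\in\mathring S(t_0)$, since neither a pointwise positive lower bound on $p_m$ nor the convergence $\Delta\Phi_{1/m}\to 1$ is automatic; both require the full strength of the structural definition of $\Sigma_1$ combined with Lemma \ref{lem:weak_conv}.
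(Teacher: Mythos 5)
Your strategy for the positive case $u_1(x_0,t_0)>0$ parallels the paper, but there are two genuine gaps.

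The most serious one concerns the degenerate subcase of \ref{ta1}, where $x_0\in\mathring S(t_0)$ but $u_1(x_0,t_0)=0$. Your proposal is to argue that $p_m>0$ densely near $x_0$, hence $\rho_\infty=1$ a.e.\ nearby, hence $\Delta\Phi_{1/m}\to 1$, hence a blow-up of $(m-1)p_m(\Delta\varphi+\Delta\Phi_{1/m})$. This does not work. First, $x_0\in\mathring S(t_0)$ only says that for each nearby point, $p_m>0$ there for infinitely many $m$; it carries \emph{no lower bound} on $p_m$, and therefore no lower bound on $\rho_m=(\tfrac{m}{m-1}p_m)^{1/(m-1)}$ that survives as $m\to\infty$, so you cannot conclude $\rho_\infty=1$ a.e.\ in a neighborhood. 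Second, even granting $\Delta\Phi_{1/m}(x_m,t_m)\to 1$, you still need $p_m(x_m,t_m)$ bounded away from zero for the factor $(m-1)p_m$ to diverge; but in this subcase $p_m(x_m,t_m)\to u_1(x_0,t_0)=0$, so the product may stay bounded and the contradiction collapses. The paper handles this case by a completely different and much shorter argument: using the non-jumping-up property to shrink the parabolic neighborhood $Q$ so that $\overline{Q}\subseteq\overline{\Sigma_1}$, the touching conditions $\varphi\geq u_1\geq 0=u_1(x_0,t_0)=\varphi(x_0,t_0)$ force $\varphi(\cdot,t_0)$ to have a local minimum zero at $x_0$, giving $\Delta\varphi(x_0,t_0)\geq 0$ directly, so $-\Delta\varphi(x_0,t_0)\leq 0\leq 1$ and there is nothing to prove. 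Identifying this reduction is the actual content of the degenerate case, and your approach does not substitute for it.

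The second gap is in \ref{doesnotjumpup}. You write ``Together with the fact that $\Sigma_1$ is open in spacetime by construction, this yields \ref{doesnotjumpup}.'' But $\Sigma_1=\cup_{t>0}(\mathring S(t)\times\{t\})$ is a union of horizontal slices and is not obviously open in spacetime; more to the point, the condition $\Sigma\cap\{t\leq t_0\}\subseteq\overline{\Sigma\cap\{t<t_0\}}$ is a statement about the time-continuity of the evolving set (no ``newborn'' components at time $t_0$ that were not in the closure of the past), which does not follow from openness. The paper invokes \cite[Theorem B.1]{AKY} precisely to establish this; without it, this part of the verification is missing. As a side remark, the paper actually proves the stronger interior estimate $-\Delta\varphi(x_0,t_0)\leq f(x_0)$ for any continuous $f\geq\rho_\infty(\cdot,t_0)$ a.e., which is needed later in Theorem~\ref{convergence}, but this is not required for the present statement and its omission is not a gap in your proposal.
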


\begin{proof}
Since $u_1$ is upper semicontinuous and $\mathring S(t)$ is open, it remains to check properties \ref{doesnotjumpup}-\ref{touchabove} of Definition~\ref{subsolution}. By definition, $S(t)^c \subseteq \{ u_1(\cdot, t)  = 0 \}$, hence $\{u_1(\cdot,t)>0 \} \mathring{} \subseteq \mathring S(t)$. By \cite[Theorem B.1]{AKY}, for all $t_0>0$, 
\begin{align} \label{jumpingout}
\text{ if } (x_0,t_0) \in \Sigma_1 \text{ then } (x_0,t_0) \in \left( \Sigma_1 \cap \{ t \geq t_0 \} \right)\mathring{} \end{align}
In particular, we have,
$\Sigma \cap \{ t \leq t_0 \} \subseteq \overline{\Sigma \cap \{t < t_0\}}$ for all $t_0 >0$.

Now we turn to property \ref{touchabove}. Let $u- \varphi$ have a local maximum zero at $(x_0,t_0)$ in $\overline{\Sigma_1}\cap \{t \leq t_0\}$. First we consider part \ref{ta1}, where either $(x_0,t_0) \in \Sigma_1$ or $u_1(x_0,t_0)>0$.  By adding $\delta(x-x_0)^2+\delta(t_0-t)$ to $\varphi$ with $\delta >0$ sufficiently small, we may assume there is a parabolic neighborhood $Q$ of $(x_0, t_0)$ so that $u_1 - \varphi$ has a strict maximum with respect to $\overline{Q} \cap \overline{\Sigma_1} \cap \{ t \leq t_0 \} = \overline{Q} \cap \overline{\Sigma_1}$.

First, note that if suffices to consider the case when $u_1(x_0,t_0)>0$. In particular, if $(x_0,t_0) \in \Sigma_1$, then by  (\ref{jumpingout}), we may assume that $Q$ is sufficiently small so that $\overline{Q} \subseteq \overline{\Sigma_1} $. This implies  $u_1(x_0,t_0) > 0$, since otherwise $\varphi(\cdot,t_0)$ has a local minimum zero at $x_0$, contradicting the fact that it is superharmonic.
Likewise, we may assume that $u_1 - \varphi$ has a strict maximum zero at $(x_0,t_0)$ with respect to $\overline{Q}$, since $\varphi(x_0,t_0) = u_1(x_0,t_0)>0$ and $u_1 =0$ in ${(\overline{\Sigma_1})}^c$. 

We now show  that
\begin{equation}\label{interior}
-\Delta \varphi(x_0,t_0) \leq f(x_0) \text{ for any } f \in C(\Rd) \text{ such that } f \geq \rho_\infty(\cdot, t_0) \text{ almost everywhere.}
\end{equation}
In particular, this implies that $-\Delta \varphi(x_0,t_0) \leq 1$, which gives the result. Suppose for the sake of contradiction that $-\Delta \varphi(x_0,t_0)>f(x_0)$ for some $f$.

Let $(x_m, t_m) := \argmax_{\overline{Q}} p_m-\tilde{\varphi}$, $C_m := p_m(x_m,t_m) - \tilde \varphi(x_m,t_m)$, and $\tilde \varphi_m := \tilde \varphi +C_m$, so $p_m -\tilde \varphi_m$ has a maximum zero at $(x_m,t_m)$ with respect to $\overline{Q}$.  
As in \cite[Theorem 3.4]{AKY} (see paragraph A.2), up to a subsequence, we have $(x_m, t_m) \to (x_0, t_0) \in Q$, $\lim_{m \to +\infty} p_m(x_m,t_m)= \limsup^*p_m(x_0,t_0) = u_1(x_0,t_0) >0$.
Since $p_m$ is a viscosity subsolution of \ref{Pm},
\begin{equation} \label{eqn:p2b}
  (\varphi_t - (m-1)p_m(\Delta \varphi + \Delta \Phi_{1/m}) -\nabla \varphi \cdot(\nabla \varphi + \nabla\Phi_{1/m}))(x_m,t_m) \leq 0.
  \end{equation}
Because $\Delta \Phi_{1/m} = \psi_{1/m} * \rho_\infty \leq f + o(1)$ and $-\Delta \varphi(x_0,t_0) >f(x_0)$, we have 
$$(\Delta \varphi + \Delta \Phi_{1/m})(x_m,t_m) <0$$ for sufficiently large $m$, which is a contradiction.

Now we consider part \ref{ta2}, where $(x_0,t_0) \in \partial \Sigma_1$, $u_1(x_0,t_0) = 0$, and $|\grad \varphi|(x_0,t_0) \neq 0$.
Suppose, for the sake of contradiction, that 
\begin{equation}\label{supereqn}
-\Delta\varphi(x_0,t_0)>1 \text{ and }  (\varphi_t - |\grad \varphi|^2 - \grad \varphi\cdot \grad \Phi )(x_0,t_0)>0 .
\end{equation}
 We can now apply parallel argument as in the proof of Theorem 3.4 in [AKY] to conclude.

\end{proof}

We now show that the initial data of the half-relaxed limits coincides with the initial data given in equation (\ref{initial}). Specifically, this ensures that the initial data of $u_1$ and $u_2$ coincides with the initial data of the sequence $p_m$, in spite of the time regularization inherent in the definitions of these half-relaxed limits. In what follows, we make frequent use of the following  inner and outer approximations:
\begin{align} \label{inner outer}
\Omega^{-r}:= \{x: d(x,\Omega^c) > r\} \text{ and }\Omega^r:= \{x: d(x,\Omega)< r\}.
\end{align}

\begin{lemma} \label{u1u2initialdata}
Consider a bounded domain $\Omega_0 \subseteq \Rd$ with  the``no-crack" property $\mathring{\overline{\Omega_0}}= \Omega_0$. Suppose $p_m$ are viscosity solutions of \ref{Pm} with initial data $p_0$ as given in \eqref{initial}. Then, for the half-relaxed limits $u_1$ and $u_2$, we have $u_{1}(x,0)= u_2(x,0) =p_0(x)$.
\end{lemma}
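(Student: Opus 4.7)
My plan is to establish the two non-trivial inequalities $u_1(x,0) \leq p_0(x)$ and $u_2(x,0) \geq p_0(x)$ by constructing classical sub- and supersolution barriers for the family $p_m$ that are valid uniformly in $m$ large. The opposite inequalities are immediate: since $p_m(\cdot,0) = p_0$ for every $m$ and $p_0$ is continuous on $\R^d$, restricting to the slice $\{s=0\}$ in the defining sup/inf of the half-relaxed limits yields
\[
 u_1(x_0,0) \geq \limsup_{y \to x_0} p_0(y) = p_0(x_0) = \liminf_{y \to x_0} p_0(y) \geq u_2(x_0,0).
\]

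For the upper bound, fix $x_0 \in \R^d$ and $\epsilon > 0$. Using the no-crack condition $\mathring{\overline{\Omega_0}}=\Omega_0$, I pick a bounded Lipschitz domain $\Omega_\epsilon \supset \overline{\Omega_0}$ with $\mathrm{dist}(\overline{\Omega_0},\partial\Omega_\epsilon)>0$ and $|\Omega_\epsilon \setminus \Omega_0|$ so small that the solution $q_\epsilon$ of $-\Delta q_\epsilon = 1+\eta$ in $\Omega_\epsilon$ with zero Dirichlet data (for a fixed small $\eta>0$) satisfies $q_\epsilon \geq p_0$ on $\R^d$ and $q_\epsilon(x_0) \leq p_0(x_0)+\epsilon/2$. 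The candidate supersolution is $\Psi := q_\epsilon + \epsilon/2$. Inside $\Omega_\epsilon$, using the sign information $0 \leq \Delta\Phi_{1/m} \leq 1$ from Proposition~\ref{N mu bounds} (which follows from $\Delta\bN\rho_\infty = \rho_\infty \in [0,1]$), the key term
\[
 -(m-1)\Psi\bigl(\Delta q_\epsilon + \Delta\Phi_{1/m}\bigr) = (m-1)\Psi\bigl(1+\eta-\Delta\Phi_{1/m}\bigr) \geq (m-1)(\epsilon/2)\eta
\]
dominates the uniformly bounded gradient contributions $|\nabla q_\epsilon|^2 + C_d|\nabla q_\epsilon|$ for $m$ large, so $\Psi$ is a classical supersolution of \ref{Pm} in $\Omega_\epsilon$. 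A local ball-supersolution argument of Lemma~\ref{compact_spt} type, planted outside $\overline{\Omega_0}$, shows that $\mathrm{supp}\,p_m(\cdot,t) \subset \Omega_\epsilon$ for $t \in [0,T_\epsilon]$ uniformly in $m$, so $p_m = 0 \leq \Psi = \epsilon/2$ on $\partial\Omega_\epsilon \times [0,T_\epsilon]$. Comparison for \ref{Pm} then gives $p_m \leq \Psi$, and passing to $\limsup^*$ followed by $\epsilon \to 0$ yields $u_1(x_0,0) \leq p_0(x_0)$. (In particular, when $x_0 \in \partial\Omega_0$, the Lipschitz regularity of $q_\epsilon$ together with $\mathrm{dist}(x_0,\partial\Omega_\epsilon) \to 0$ ensures $q_\epsilon(x_0) \to 0 = p_0(x_0)$.)

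The lower bound $u_2 \geq p_0$ is proved symmetrically; only the case $x_0 \in \Omega_0$ is nontrivial. I pick a Lipschitz $\Omega_{-\epsilon} \subset\subset \Omega_0$ containing $x_0$ (again using no-crack) and let $\tilde q_\epsilon$ solve $-\Delta\tilde q_\epsilon = 1-\eta$ in $\Omega_{-\epsilon}$ with zero Dirichlet data, so that $\tilde q_\epsilon \leq p_0$ and $\tilde q_\epsilon(x_0) \geq p_0(x_0) - \epsilon/2$. The candidate subsolution is $\psi := (\tilde q_\epsilon - \delta)_+$ for small $\delta>0$, which is supported compactly inside $\Omega_{-\epsilon}$, hence strictly inside $\Omega_0$. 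Because $\rho_\infty(\cdot,0)=\chi_{\Omega_0}$ is equal to $1$ on a neighborhood of $\overline{\mathrm{supp}\,\psi}$, for $m$ large one has $\Delta\Phi_{1/m}(\cdot,0) = \psi_{1/m}*\chi_{\Omega_0} = 1$ on $\overline{\mathrm{supp}\,\psi}$; the $W_2$-time-Lipschitz estimate of Proposition~\ref{W2 time lipschitz}, combined with the hard constraint $\rho_\infty \leq 1$, then propagates this to $\Delta\Phi_{1/m}(\cdot,t) \geq 1-\eta/2$ on $\mathrm{supp}\,\psi \times [0,T_\epsilon]$ for $m$ large and $T_\epsilon$ small. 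Consequently $\Delta\psi + \Delta\Phi_{1/m} \geq -(1-\eta)+(1-\eta/2) = \eta/2 > 0$ on $\mathrm{supp}\,\psi$, so the term $-(m-1)\psi(\eta/2) \leq 0$ dominates the bounded gradient contributions and $\psi$ is a classical subsolution. Comparison yields $\psi \leq p_m$, whence $u_2(x_0,0) \geq \tilde q_\epsilon(x_0) - \delta \geq p_0(x_0) - \epsilon/2 - \delta$; sending $\delta,\epsilon \to 0$ concludes.

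The main obstacle I anticipate is the uniform-in-$m$ pointwise lower bound $\Delta\Phi_{1/m}(\cdot,t) \geq 1-\eta/2$ on $\mathrm{supp}\,\psi \times [0,T_\epsilon]$: the $W_2$-Lipschitz regularity of $\rho_\infty$ only gives weak control on the evolution of the saturated set $\{\rho_\infty = 1\}$, and upgrading this to a pointwise bound on the mollification requires using the constraint $\rho_\infty \leq 1$ as a one-sided obstacle. Concretely, any defect $\int_{\Omega_{-\epsilon}}(1-\rho_\infty(\cdot,t))$ of positive size at time $t$ would force at least that much mass to be transported to points outside the original support, producing a quadratic-in-distance lower bound on $W_2(\rho_\infty(t),\rho_\infty(0))$ that contradicts the Lipschitz estimate for small $t$. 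The analogous support-propagation estimate $\mathrm{supp}\,p_m(\cdot,t) \subset \Omega_\epsilon$ uniform in $m$ is the corresponding technical input needed for the supersolution argument, and the no-crack property $\mathring{\overline{\Omega_0}} = \Omega_0$ is used throughout to guarantee that Lipschitz inner and outer approximations $\Omega_{-\epsilon}, \Omega_\epsilon$ with the required boundary separation exist.
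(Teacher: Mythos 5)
Your overall strategy---pin down $u_1(\cdot,0)$ from above and $u_2(\cdot,0)$ from below by building barriers that are classical super/subsolutions of (P)$_m$ uniformly in $m$ and then sending $m\to+\infty$---is the same as the paper's. The upper-bound half and the elementary inequalities $u_1(\cdot,0)\geq p_0\geq u_2(\cdot,0)$ are essentially right. The gap is in the lower bound, which is the genuinely difficult direction, and it has two independent problems.

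\emph{Gap 1: the uniform-in-$m$ bound $\Delta\Phi_{1/m}\geq 1-\eta/2$ cannot be extracted from the $W_2$-Lipschitz estimate.} You propose to propagate $\Delta\Phi_{1/m}(\cdot,0)=1$ to $t\in[0,T_\epsilon]$ by combining Proposition~\ref{W2 time lipschitz} with the obstacle $\rho_\infty\leq 1$. But $\Delta\Phi_{1/m}(x,t)=\psi_{1/m}*\rho_\infty(x,t)$, and the only way to convert a $W_2$ bound into a bound on this quantity is by pairing against $\nabla\psi_{1/m}$ (as in Lemma~\ref{drift continuity estimate0}); this gives
$|\Delta\Phi_{1/m}(x,t)-1|\leq\|\nabla\psi_{1/m}\|_\infty\, W_2(\rho_\infty(t),\rho_\infty(0))\lesssim m^{d+1}\,t,$
so the time interval on which the pointwise bound holds shrinks to zero with $m$, which is useless here. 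The heuristic about ``any defect forces a quadratic-in-distance lower bound on $W_2$'' is not correct either: a small-amplitude deficit in $\Omega_{-\epsilon}$ can be compensated by mass displaced only a short distance, so there is no contradiction with a \emph{linear}-in-$t$ Lipschitz bound for small $t$. The paper gets around this by reversing the logic: it first builds a \emph{tiny} subsolution $h_m$ (with $-\Delta h_m=1/m$) that needs only $\Delta\Phi_{1/m}\geq 0$, obtains $p_m\geq\epsilon^2/(16dm)>0$ on a spacetime box, deduces $\rho_m\to 1$ there, and then uses the weak convergence $\rho_m\rightharpoonup\rho_\infty$ of Lemma~\ref{lem:weak_conv} to conclude $\rho_\infty\equiv 1$ a.e.\ on the box, hence $\Delta\Phi_{1/m}\equiv 1$ \emph{exactly} (not just $\geq 1-\eta/2$) there for large $m$.

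\emph{Gap 2: the stationary subsolution $\psi=(\tilde q_\epsilon-\delta)_+$ is not a viscosity subsolution of (P)$_m$.} Near $\partial\{\psi>0\}$, $\psi$ tends to zero while $|\nabla\psi|$ stays of order one, so the term $-(m-1)\psi\cdot\eta/2$ is \emph{not} large enough to absorb the $O(1)$ contribution $-\nabla\psi\cdot\nabla\Phi_{1/m}$, which can be as large as $C_d|\nabla\psi|$. More fundamentally, $\psi$ has a stationary free boundary, and the free-boundary speed condition of a subsolution is a genuine constraint that need not hold unless the support shrinks. This is why the paper's first barrier $h_m$ has gradient $O(1/m)$ (so the gradient term is $O(1/m^2)$) \emph{and} a shrinking support $r(t)=\epsilon-Mt$ with $M=1+C_d$, so that $(h_m)_t=r r'/(dm)<0$ dominates. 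Even the paper's \emph{second} barrier $\varphi$, which does solve $-\Delta\varphi=1$ at full strength, relies on the exact identity $\Delta\Phi_{1/m}\equiv 1$ to kill the $(m-1)$-term entirely and still has a shrinking support $\tilde r(t)=\epsilon/8-Mt$ to handle the free boundary. A time-independent $\psi$ fails this for every $m$, so the comparison principle for (P)$_m$ cannot be invoked.

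On the upper-bound side, your argument is structurally right but two points need care. First, $\Psi=q_\epsilon+\epsilon/2$ is \emph{not} a supersolution outside $\Omega_\epsilon$ (there $\Psi$ is constant, so the $(m-1)$-term has the wrong sign since $\Delta\Phi_{1/m}\geq 0$); the comparison must therefore be performed on the cylinder $\Omega_\epsilon\times[0,T_\epsilon]$ only, which is consistent with what the support-containment claim provides. Second, the support containment $\{p_m(\cdot,t)>0\}\subseteq\Omega_\epsilon$ uniformly in $m$ on a time interval independent of $m$ is \emph{not} a straightforward consequence of Lemma~\ref{compact_spt} (which controls only the global radial growth of the support): the paper builds the sharper local barrier $\phi(x,t)=\bigl(\mathcal{N}(x-x_0)-|x-x_0|^2/(2d)+f(t)\bigr)_+$, where the Newtonian singularity produces a supersolution that pins down the free boundary near a point $x_0\notin\Omega_0^\epsilon$ for an $m$-independent time. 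This is the technical core of the upper-bound direction and cannot be cited away; it is, together with the bootstrap via $\rho_m\rightharpoonup\rho_\infty$, the key ingredient your proposal is missing.
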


\begin{proof} 
1. We begin by proving  the following claim on the support of $p_m$ for given $\e>0$:
\begin{align} \label{inclusion}
\text{There is } T_\epsilon >0 \text{ such that } \Omega_0^{-\e} \subseteq \{u_i(\cdot,t)>0\}\subseteq \Omega_0^{\e} \hbox{ for all } t \in [0,T_\e] \hbox{ and }  i=1,2 .\end{align} 
We begin by showing $\{p_m(\cdot,t)>0\}\subseteq \Omega_0^{\e}$ for $0\leq t\leq t_\epsilon$ for some $t_\epsilon\in (0,1)$ that is independent of $m$. 
Suppose $ x_0 \in (\Omega_0^\e)^c$, so that $p_m=0$ in $B_\e(x_0)$ for all $m$.  

 Let us define
\[
\phi(x,t) = \Big(\mathcal{N}(x-x_0) - \frac{|x-x_0|^2}{2d} + f(t)\Big)_+,
\]
where $f$ is an increasing function which we will determine momentarily. Let $f(0) = -\mathcal{N}(\epsilon) + C_1 + 1$, where $C_1>0$ is such that $p_m(\cdot,t) \leq C_1$ for all $m$ and $t\in [0,1]$, given by Lemma~\ref{compact_spt}(b).  Such choice of $f$ guarantees that $\phi(x,0) \geq C_1 \geq p_m(x,0)$ in $\{\epsilon \leq |x-x_0|\leq 1\}$ (hence $\phi(\cdot, 0)\geq p_m(\cdot,0)$ in $B_{1}(x_0)$), and $\phi(x,t) \geq C_1 \geq  p_m(x,t)$ on $\partial B_{1}(x_0)$ for all $t\in [0,1]$.

\medskip

We claim that if we let $f(t)$ increase sufficiently fast, $\phi$ would be a classical supersolution of (P)$_m$ for all $m$ in $B_1(x_0)\times [0,t_\epsilon]$ for some $t_\epsilon>0$. Note that at time $t$, $\phi(\cdot, t)$ has support $\{r(t) \leq |x-x_0| \leq 1\}$, where $r(t) \in (0,1)$ solves $\mathcal{N}(r(t)) - \frac{r(t)^2}{2d} + f(t) = 0$, hence it satisfies $r(t) > \mathcal{N}^{-1}(-f(t)+1) > 0$. (Here $\mathcal{N}^{-1}$ is the inverse function of $\mathcal{N}$). By definition, $\Delta \phi = -1$ in its support. Thus in order to make $\phi$ a classical supersolution of (P)$_m$, all we need is $\phi_t \geq |\nabla \phi| (|\nabla \phi| + |\nabla \Phi_{1/m}|)$ everywhere in its support. In the support of $\phi$, we have $\phi_t = f'(t)$, and $|\nabla \phi| \leq \mathcal{N}'(r(t)) + 1 \leq \mathcal{N}'( \mathcal{N}^{-1}(-f(t)+1)) + 1$. Finally, let 
$$
f'(t) = (\mathcal{N}'( \mathcal{N}^{-1}(-f(t)+1)) + 1 + C_d)^2,
$$ where $C_d$ is the bound for $|\nabla \Phi_{1/m}|$ as in \eqref{eq:phi_bound}. The standard ODE theory guarantees that $f$ is finite in some $[0,t_\epsilon]$ (where $t_\epsilon>0$ depends on $\epsilon$, $C_1$ and $d$), hence $r(t)>0$ in $[0,t_\epsilon]$. By comparing $p_m$ with $\phi$ in the domain  $B_1(x_0) \times [0,t_\epsilon]$ and using the definition of viscosity solutions, we conclude that $p_m = 0$ in $B_{r(t)}(x_0)$ for $t\in [0,t_\epsilon]$.  In particular, $x_0 \in \{p_m(\cdot, t)>0\}^c$ for all $m$ and all $t\in [0,t_\epsilon]$, and since $x_0 \in (\Omega_0^\e)^c$ was arbitrary, this gives the result.

\medskip


\medskip

Similarly we show $\Omega_0^{-\e} \subseteq \{p_m(\cdot,t)>0\}$ for small times by constructing a classical subsolution of \ref{Pm}. Suppose $y_0 \in \Omega_0^{-\e}$, so that $B_\e(y_0) \subseteq \Omega_0$. 
 Let $h_m(x,t)$ solve $-\Delta h_m(\cdot,t) = \frac{1}{m}$ in $B_{r(t)}(y_0)$, with $h_m(\cdot,t) = 0$ on $\partial B_{r(t)}(y_0)$. Here $r(t):=\epsilon-Mt$, and $M$ is a large constant to be determined later. 
Note that $h_m$ takes the explicit expression
$
h_m(x,t) := \left( \frac{r(t)^2 - |x-y_0|^2}{2dm}\right)_+,
$ thus $|\nabla h_m(\cdot, t)| \leq r(t)/dm$ in its support. So the following holds in the support of $h_m$:
\[
\begin{split}
(m-1)h_m (\Delta h_m+\Delta \Phi_{1/m}) + \nabla h_m \cdot (\nabla h_m + \nabla \Phi_{1/m}) &\geq -\frac{m-1}{m}h_m - \frac{r(t)}{dm} \left(\frac{r(t)}{dm} + C_d\right)\\
&\geq -\frac{r(t)^2}{2dm} - \frac{r(t)}{dm} \left(\frac{r(t)}{dm} + C_d\right),
\end{split}
\]
where $C_d$ is the bound for $\|\nabla \Phi_{1/m}\|$ by \eqref{eq:phi_bound}, and we also used $\Delta \Phi_{1/m}\geq 0$ in the first inequality.  Since $(h_m)_t = r(t)r'(t)/dm$ in its support, in order for $h_m$ to be a classical subsolution of (P)$_m$, all we need is 
$r' \leq -r/2 - (r+\|\nabla \Phi_{1/m}\|_\infty)$, so we can simply let $r(t)=\epsilon-Mt$ with $M=1+C_d$.
%

\medskip

Since $p_m(\cdot, 0) \geq h_m(\cdot, 0)$ for all $m>1$, comparison principle yields that $p_m \geq h_m$ for $0\leq t\leq \frac{\e}{2M}$. It follows that $p_m\geq h_m \geq \frac{\epsilon^2}{16dm}$ in $\Sigma_\e:=B_{\e/4}(y_0) \times [0,  \frac{\e}{2M}]$. Even though this lower bound of $p_m$ is not uniformly positive in $m$, we can still conclude that $\liminf_{m\to\infty}\rho_m \geq 1$ in $\Sigma_\e$ by definition of $\rho_m = (\frac{m}{m-1} p_m)^{1/(m-1)}$. Given the weak convergence of $\rho_m$ to $\rho_{\infty}$ in Lemma \ref{lem:weak_conv}, we have that $\rho_{\infty} = 1$ almost everywhere in $\Sigma_\e$. This implies that $\Delta \Phi_{1/m} = \rho_\infty * \psi_{1/m} \equiv 1$ in $B_{\e/8}(y_0) \times [0, \frac{\e}{2M}]$ for all sufficiently large $m$ (more precisely, for all $m>8/\e$).

\medskip

With this information on $\Delta \Phi_{1/m}$, we can now define a new subsolution $\varphi(x,t)$ that solves $-\Delta \varphi = 1$ in $B_{\tilde r(t)}(y_0)$, with $\varphi(\cdot, t) = 0$ on $\partial B_{\tilde r(t)}(y_0)$, where $\tilde r(t) = \epsilon/8 - Mt$, and $M=1+C_d$. One can check that $\varphi$ is a classical subsolution of (P)$_m$, hence $p_m \geq \varphi \geq c_\epsilon$ for some $c_\epsilon>0$ (that is independent of $m$) in $B_{\epsilon/32}\times [0,T_\e]$ for all sufficiently large $m$, where $T_\epsilon := \frac{\epsilon}{16M}$, yielding \eqref{inclusion}.

\medskip

2. To show that $u_1(\cdot,0)=u_2(\cdot,0)=p_0$, we construct our first barrier as follows: suppose $h_\e(x)$ solves 
$$
-\Delta h_\e = 1+\e \hbox{ in } \Omega^\e, \quad h_\e = 0 \hbox{ on } \partial\Omega^\e.
$$
By (\ref{inclusion}), $S(t) \subseteq \Omega^{\e/2}$ for $t \in [0,\tilde T_\e]$ for some $\tilde T_\e >0$. Thus, $u_1 \leq h_\e$ in $S(t)$. Furthermore, since $u_1 =0$ in $(S(t))^c$, we conclude that
\begin{equation}\label{order_1}
u_1(\cdot,t) \leq h_\e\hbox{ in }\Omega^{\e} \times [0,\tilde T_\e].
\end{equation}

Next, comparison of $p_m$ with the classical subsolution $\varphi$ given above yields that
\begin{equation}\label{lower_bd}
p_m  \geq c_\e\hbox{ in }\Omega^{-\e}\times [0,T_\e].
\end{equation}
Now we construct our second barrier using \eqref{lower_bd}. Consider $g(x)$ solving
$$
-\Delta g= 1-\e \hbox{ in } \Omega^{-\e}, \quad g = c_\e \hbox{ on } \partial\Omega^{-\e}.
$$
Since $u_2$ is a supersolution of \ref{Pinfty} and (\ref{inclusion}) ensures that $\Omega^{-\e} \subseteq \{u_2>0\}$ for $t \in [0,T_\e]$, we have $-\Delta u_2 \geq 1$ in $\Omega^{-\e}\times[0,T_\e]$. Furthermore, (\ref{lower_bd}) ensures that $g \leq u_2$ on $\partial \Omega^{-\e} \times [0,T_\e]$. Therefore,
\begin{equation}\label{order_2}
g\leq u_2 \hbox{ in } \Omega^{-\e}\times [0,T_\e].
\end{equation}

Combining inequalities \eqref{order_1} and \eqref{order_2}  and sending $\epsilon \to 0$, we can conclude.
\end{proof}

We now show our main convergence theorem.

\begin{theorem}\label{convergence}
Let $\Omega_0 \subseteq \Rd$ be a bounded domain with Lipschitz boundary, and let $p_m$ solve \ref{Pm} with initial data $p_0$ as given in \eqref{initial}. Let $\rho_m$ be the density variable corresponding to $p_m$, and  let $U$ be the unique maximal solution of \ref{Pinfty} with initial data $p_0 $, i.e.
\[ U(x,t):= (\inf \{ w: w \text{ is a viscosity supersolution of \ref{Pinfty} with }  w(\cdot, 0)\geq p_0 \})_*.\]

Then the following holds for each $t>0$:
\begin{enumerate}[label = (\alph*)]
\item $\rho_{\infty}(\cdot,t)=\chi_{\{u_1(\cdot,t)>0\}} =\chi_{\{u_2(\cdot,t)>0\}} =\chi_{\{U(\cdot,t)>0\}}$ almost everywhere; \label{convergence part a}
\item Let $\Omega(t):=\{u_2(\cdot, t)>0\}$, and $\Omega^1(t) := \{u_1(\cdot, t)>0\}$. Then for every $t\geq 0$, $\Omega(t)$ is an open set with $|\partial \Omega(t)|=0$, and we also have $|\partial \Omega^1(t)|=0$; \label{convergence part b}
\item $\rho_m$ converges to $1$ uniformly in $\Omega(t)$ away from its boundary---that is, the convergence is uniform in any compact set $Q \subseteq \{(x,t): x\in \Omega(t)\}$. Furthermore, we have $\lim_{m\to\infty}\|\rho_m(\cdot, t) - \chi_{\Omega(t)}\|_{L^1(\mathbb{R}^d)} = 0$ for every $t\geq 0$. 
\end{enumerate}
\end{theorem}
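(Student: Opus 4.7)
The plan combines the half-relaxed-limit sub-/supersolution structure of Propositions~\ref{super} and \ref{sub} with comparison and the weak convergence of Lemma~\ref{lem:weak_conv}. My first aim would be to collapse the three candidate positivity sets to the \emph{same} set. Both $u_1$ and $u_2$ share the initial data $p_0$ by Lemma~\ref{u1u2initialdata}, and the comparison Theorem~\ref{comparison}, applied after an infinitesimal perturbation of the initial data (for instance by thickening $\Omega_0$ to an $\epsilon$-neighborhood to satisfy the strict ordering in~\eqref{strictlyordered}, then letting $\epsilon \to 0$), yields $u_1 \le u_2$. By construction, however, $u_1 = \limsup^* p_m \ge \liminf_* p_m = u_2$ pointwise, so in fact $u_1 \equiv u_2$. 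The same comparison yields $u_1 \le U \le u_2$, and hence $U = u_1 = u_2$ as well. In particular the three open sets $\{u_1 > 0\} = \{U > 0\} = \{u_2 > 0\}$ literally coincide, and I will call the common set $\Omega(t)$.

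Next I would establish $\rho_\infty \equiv 1$ on $\Omega(t)$ (which also gives the uniform convergence statement of part~(c)). For compact $K \Subset \Omega(t)$, lower semicontinuity of $u_2$ gives $\min_K u_2 =: c > 0$, and the definition $u_2 = \liminf_* p_m$ forces $p_m \ge c/2$ on $K$ for all large $m$, hence $\rho_m = (\tfrac{m}{m-1} p_m)^{1/(m-1)} \to 1$ uniformly on $K$. Weak convergence (Lemma~\ref{lem:weak_conv}) then gives $\rho_\infty = 1$ a.e.\ on $\Omega(t)$. The crux now is the mass identity $|\Omega(t)| = |\Omega_0|$. Mass preservation for the gradient flow combined with $\rho_\infty \le 1$ and the previous step gives $|\Omega(t)| \le \int \rho_\infty(\cdot,t)\,dx = |\Omega_0|$ immediately. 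For the reverse inequality I would exploit that $U = u_1 = u_2$ is actually a viscosity solution of \ref{Pinfty}, so its free boundary velocity satisfies the equality $V = -\partial_\nu U - \partial_\nu \Phi$. Formally, the divergence theorem applied with $-\Delta U = 1$ on $\Omega(t)$ and $\Delta \Phi = \rho_\infty$ distributionally yields
\[ \tfrac{d}{dt}|\Omega(t)| \;=\; \int_{\partial \Omega(t)} \bigl(-\partial_\nu U - \partial_\nu \Phi\bigr)\,dS \;=\; |\Omega(t)| - \int_{\Omega(t)} \rho_\infty\,dx \;=\; 0, \]
giving $|\Omega(t)| \ge |\Omega_0|$. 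I expect this mass-monotonicity step to be the main technical obstacle because $\partial \Omega(t)$ need not be smooth: to make it rigorous I would approximate $U$ by the sup- and inf-convolutions from Lemma~\ref{super_sub}, whose free boundaries are regular enough for the classical divergence theorem, and pass to the limit---analogous to mass-balance arguments in the Hele-Shaw literature.

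Once the mass identity is in hand, part~(a) is immediate: $|\Omega_0| = \int \rho_\infty(\cdot,t)\,dx = |\Omega(t)| + \int_{\Omega(t)^c} \rho_\infty\,dx$ together with $\rho_\infty \ge 0$ force $\rho_\infty = 0$ a.e.\ off $\Omega(t)$, so $\rho_\infty = \chi_{\Omega(t)}$ a.e.; the identity $\{u_1>0\} = \{U>0\} = \Omega(t)$ from Step~1 then yields the three identifications. For part~(b), $\Omega(t)$ is open by lsc of $u_2$, and $|\partial \Omega(t)| = 0$ follows from the classical non-degeneracy estimate $U(x) \ge c\,\mathrm{dist}(x,\partial \Omega(t))^2$ (a consequence of $-\Delta U = 1$ in $\Omega(t)$ with $U = 0$ on the boundary), which forces $\partial \Omega(t)$ to have Hausdorff dimension at most $d-1$; since $\Omega^1(t) = \Omega(t)$, the same holds for it. For part~(c), uniform convergence on compact subsets of $\Omega(t)$ is precisely what we proved above; for $L^1$ convergence, choose $K \Subset \Omega(t)$ exhausting $\Omega(t)$ (possible since $|\partial \Omega(t)| = 0$) and bound
\[ \|\rho_m(\cdot,t) - \chi_{\Omega(t)}\|_{L^1(\mathbb{R}^d)} \;\le\; \|\rho_m - 1\|_{L^\infty(K)} |K| + |\Omega(t) \setminus K| + \Bigl(\textstyle\int_{\mathbb{R}^d} \rho_m\,dx - \int_K \rho_m\,dx\Bigr), \]
where the first two terms vanish by uniform convergence and the choice of $K$, and the third tends to $|\Omega_0| - |\Omega(t)| = 0$ by the mass identity and mass conservation for $\rho_m$.
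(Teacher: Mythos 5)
Your proposal has genuine gaps in three places, and in each case the issue is precisely what the paper identifies as the central difficulty.

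First, the claim that $u_1 \le u_2$ (hence $u_1 \equiv u_2 \equiv U$) via ``perturb the initial data to $\Omega_0^\epsilon$, apply comparison, let $\epsilon \to 0$'' does not go through. The subsolution $u_1$ and supersolution $u_2$ share the \emph{same} initial data $p_0$, and Theorem~\ref{comparison} requires strict ordering at $t=0$. If you thicken $\Omega_0$ to $\Omega_0^\epsilon$ and call the resulting half-relaxed limit $u_2^\epsilon$, then comparison (when it applies) only gives $u_1 \prec u_2^\epsilon$; what you then need is $u_2^\epsilon \to u_2$ as $\epsilon \to 0$, and this is exactly the missing ingredient. As the paper stresses in the lead-in to Theorem~\ref{convergence}, the free boundary problem \ref{Pinfty} has a drift $\nabla\Phi = \nabla\mathbf{N}\rho_\infty$ that is frozen, so there is no a priori stability of \ref{Pinfty} under perturbation of the initial set. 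The paper's step 2 handles this by introducing the auxiliary $p_m^{n,+}$ that solves a \emph{different} PDE with a source term $p f_n$, shows its half-relaxed limit $u_2^{n,+}$ is a supersolution of the modified problem $(\tilde{P})_\infty$ (with $-\Delta p = \rho_\infty$ replacing $-\Delta p = 1$), and proves a modified comparison principle for that problem. Your short-cut skips the whole mechanism that makes the comparison argument viable. Note also that even in the paper only the positivity sets are shown to coincide almost everywhere, not the functions pointwise.

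Second, the argument for part~(b) is wrong. The interior estimate $U(x) \ge c\, d(x,\partial\Omega(t))^2$ is a lower bound on $U$ \emph{inside} $\{U>0\}$ and says nothing about the Lebesgue measure of the topological boundary. A set can have $U>0$ in its interior, $U=0$ on a boundary of positive Lebesgue measure (a fat-Cantor-type complement), and still satisfy that interior estimate. The paper instead obtains $|\partial\Omega(t)| = 0$ from the sandwich $\supp u_1^{n,-} \subseteq \Omega(t) \subseteq \overline{\Omega(t)} \subseteq \supp u_2^{n,+}$ and the quantitative bound $A_n(t) := |\supp u_2^{n,+}(t) \setminus \supp u_1^{n,-}(t)| \le C e^t / n \to 0$, which is proved by the $L^1$ contraction estimate of Lemma~\ref{PMEDwellposed} applied to $\rho_m^{n,\pm}$ together with Gronwall. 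This measure-theoretic control is unavailable in your approach.

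Third, the formal free-boundary mass balance $\tfrac{d}{dt}|\Omega(t)| = \int_{\partial\Omega(t)}(-\partial_\nu U - \partial_\nu\Phi)\,dS = 0$ is indeed where everything hinges, and your own caveat is well placed: the sup/inf convolutions of Lemma~\ref{super_sub} produce strict sub-/supersolutions of a \emph{perturbed} problem, not classical solutions of \ref{Pinfty}, and they do not hand you the $C^1$ free boundary the divergence theorem needs. Making this rigorous would likely be harder than the entire $L^1$-contraction strategy the paper actually uses, which avoids free boundary regularity altogether. The paper's route --- comparison chain $u_1^{n,-} \le U \le u_2 \le u_1 \le u_2^{n,+}$ plus $L^1$ contraction to squeeze the supports --- delivers the mass identity and $\rho_\infty = \chi_{\Omega(t)}$ simultaneously, with no derivative on $|\Omega(t)|$ ever taken. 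Your uniform-convergence argument in part~(c) is fine and matches the paper, but it relies on the mass identity, which your Steps~1 and~2 do not actually establish.
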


\begin{remark}
The fact that $U$ is a solution of \ref{Pinfty} is a consequence of a standard Perron's method argument. 
\end{remark}

\begin{proof}1. To begin with, let us define two families of functions that are approximations of $p_m$.
For $n\in \mathbb{N}$, let $p_0^{n,-}(x)$ and $p_0^{n,+}(x)$ be solutions to \eqref{initial} with $\Omega_0$ replaced by $\Omega_0^{-1/n}$ and $\Omega_0^{1/n}$ (as defined in \eqref{inner outer}) respectively. Note that $p_0^{n,-} \prec p_0 \prec p_0^{n,+}$.   
We then let $p_m^{n,-}$ be the viscosity solution to \ref{Pm} with initial data $p_0^{n,-}$, and denote by  $\rho_m^{n,-}$ the corresponding density function. We let $p_m^{n,+}$ solve a modified version of \ref{Pm} with an extra source term, namely
\[
p_t = (m-1) p(\Delta p + \Delta \Phi_{1/m}) + \nabla p \cdot (\nabla p + \nabla \Phi_{1/m}) + p f_n,
\]
where 
$$
 f_n:= \chi_{\Omega_n- \Omega}, \quad \Omega:=\{u_2>0\} \hbox{ and } \Omega_n:=\{(x,t):d((x,t),\Omega)\leq\frac{1}{n}\},
$$
and denote by  $\rho_m^{n,+}$ the corresponding density function. Finally we let $(u_1^{n,-}, S^{n,-}(t))$ and $u_2^{n,+}$ denote the corresponding  half-relaxed limits for $p_m^{n,-}$ and $p_m^{n,+}$. 

The motivation of these two family of functions is as follows: in step 2, we will show that
\begin{equation}
\label{eq:comp3}
u_1^{n,-} \leq U \leq u_2 \leq u_1 \leq u_2^{n,+} \quad\text{ for any }n\in \mathbb{N},
\end{equation}
and it turns out that in order to show the last inequality we have to let $p_m^{n,+}$ solve the equation with the extra source term, rather than \ref{Pm}.
In step 3, we will use $L^1$ contraction result between $\rho_m^{n,-}$ and $\rho_m^{n,+}$ to show that for any $t\geq 0$,
\begin{equation}
\label{eq:contraction_sup}
A_n(t) :=\big|\supp u_2^{n,+}(\cdot, t)\setminus \supp u_1^{n,-}(\cdot,t)\big|  \to 0 \text{ as }n\to\infty,
\end{equation}
and by combining it with \eqref{eq:comp3} we have that $U$, $u_1$ and $u_2$ are supported on the same set.
\medskip

2. In this step we aim to prove \eqref{eq:comp3}. The second inequality is a direct consequence from the minimality of $U$ and the fact that $u_2$ is supersolution of \ref{Pinfty} with initial data $p_0$ (which follows from Proposition \ref{super} and Lemma \ref{u1u2initialdata}). The third inequality immediately follows from the definition of the half relaxed limits $u_2$ and $u_1$. As for the first inequality, note that by Proposition \ref{sub}, $(u_1^{n,-}, S^{n,-}(t))$ is a subsolution of \ref{Pm}. (Proposition \ref{sub} does not require the initial data be the same as $p_0$.) In addition, we have $u_1^{n,-}(\cdot, 0) = p_0^{n,-}$ via the same argument as in Lemma \ref{u1u2initialdata}. Since $p_0^{n,-}\prec p_0$, combining the above discussion on $u_1^{n,-}$ with Proposition \ref{super} and the comparison principle in Theorem \ref{comparison} yields
   \begin{equation}\label{last0}
u_1^{n,-}\prec u_2 \hbox{ with } \overline{S_1^{n,-}(t)} \subseteq \{u_2(\cdot,t)>0\},
\end{equation}
which gives us the first inequality.

The last inequality of \eqref{eq:comp3} is more difficult to obtain. We point out that this is not a direct consequence of the comparison principle for \ref{Pinfty}, since we do not know that $u_2^{n,+}$ is a supersolution of \ref{Pinfty} due to the fact that $p_m^{n,+}(\cdot, 0) \neq p_0$. (In order to apply Proposition \ref{super}, the initial data must be the same as $p_0$.)

To overcome this difficulty, we will show that $u_1$ and $u_2^{n,+}$ are sub- and supersolutions of another free boundary problem, for which the comparison principle also holds.
  From the proof (in particular \eqref{interior}) of  Proposition \ref{sub}, it follows that in addition to $(u_1, S(t))$ being a viscosity subsolution of \ref{Pinfty}, $u_1$ satisfies
\begin{equation}\label{claim001}
-\Delta u_1 (\cdot,t) \leq \rho_{\infty} 
\end{equation} 
in the integral sense. On the other hand, parallel arguments as in Proposition~\ref{super} yield that $u_2^{n,+}$ satisfies supersolution properties of \ref{Pinfty}(see Definition~\ref{supersolution}), but with the interior operator $-\Delta-1$ replaced by $-\Delta - \rho_{\infty}$. In particular we have
  \begin{equation}\label{claim002}
  -\Delta u_2^{n,+} \geq \rho_{\infty} \text{ in } \{ u_2^{n,+} >0\}
  \end{equation} 
  in the integral sense. 
As a result, $(u_1, S(t))$ and $u_2^{n,+}$ are respectively viscosity sub- and supersolutions of 
\begin{align} \tag*{($\tilde{P}$)$_\infty$} \label{Pinfty2}
\left\{\begin{array}{lll}
-\Delta p(\cdot,t) = \rho_{\infty} &\hbox{ in }& \{p>0\};\\
V= -\nu\cdot( Dp + D\Psi) &\hbox{ on }&\partial\{p>0\};\\
 \Psi= \mathcal{N}*\rho_{\infty}.
 \end{array}\right.
\end{align}

Using this fact, one can modify the proof of comparison principle for \ref{Pinfty} to show that, for any $n\in\mathbb{N}$,
   \begin{equation}\label{last}
 u_1 \prec u_2^{n,+} \hbox{ with } \overline{S_1(t)} \subseteq \{u_2^{n,+}(\cdot,t)>0\}.
  \end{equation}
  
 The proof of \eqref{last} is parallel to that of Theorem 2.7 in \cite{AKY} with the only difference lies showing the second inequality in the interior operator which we discuss below. Let us give a heuristic sketch of the proof under the assumption that $S(t)$ and $\{u_2^{n,+}(\cdot,t)>0\}$ have smooth boundaries: the actual proof is carried out with regularizations as given in \eqref{convolution1}-\eqref{convolution2} which generate strict subsolution and supersolution of \ref{Pinfty2}.  As usual in the proof of comparison principle, we begin with the scenario that $u_1$ crosses $u_2^{n,+}$ from below at some time and yield a contradiction. More precisely we suppose that the first crossing time is finite, i.e.
 $$
 t_0: = \sup \{ t : u_1(\cdot,s) \prec u_2^{n,+}(\cdot, s) \hbox{ and } S(t) \subseteq \{u_2^{n,+}(\cdot,t)>0\} \text{ for } s \leq t \}<\infty .
 $$ 
 
 Note that $t_0>0$ since $S(0)=\overline{\Omega_0}=\overline{\{u_1(\cdot,0)>0\}}$ due to Lemma~\ref{u1u2initialdata} and
 $u_1 \prec u_2^{n,+}$ at $t=0$ from the construction.  Observe also that \eqref{claim001}-\eqref{claim002} rules out the possibility that the crossing  occurs at an interior point, i.e.,
$$
u_1(\cdot,t) < u_2^{n,+}(\cdot,t) \hbox{ in } \{u_1(\cdot,t)>0\} 
$$
as long as $\{u_1(\cdot,t)>0\} \subseteq S(t) \subseteq \{u_2^{n,+}>0\}.$

Hence this means that the set $S(t)$ touches the boundary of $\overline{\{u_2^{n,+}(\cdot,t)>0\}}$ for the first time at some point $(x_0,t_0)$. Then the normal velocity law for the sets $S(t)$ and $\{u_2^{n,+}(\cdot,t)>0\}$, as well as the fact that $u_1(\cdot,t_0) \leq u_2^{n,+}(\cdot,t_0)$ yields a contradiction.

Note that \eqref{claim002} and the definition of $\Omega_n$ ensures that the source term for $u_1$ remains smaller than that of $u_2^{n,+}$ after the regularization process given in \eqref{convolution1}-\eqref{convolution2} if $r(t)$ is sufficiently small. Based on this fact, the rest of the proof is the same as to that of Theorem 2.7 in \cite{AKY}.

\medskip
3. Next we will show \eqref{eq:contraction_sup}. First, note that $\rho_m^{n,-}$ satisfies \eqref{pme_source} with no source term, while $\rho_m^{n,+}$ satisfies \eqref{pme_source} with source term $\rho_m^{n,+} f_n$ which is non-negative. Since their initial data is also ordered, comparison principle for \eqref{pme_source} yields that $\rho_m^{n,-} \leq \rho_m^{n,+}$. We then define 
\[
A_m^n(t) := \int (\rho_m^{n,+}(x,t) - \rho_m^{n,-}(x,t))dx,
\]
which is nonnegative.
We can apply the $L^1$ contraction  property of \eqref{pme_source} in Lemma~\ref{PMEDwellposed} to conclude that, for any $t>0$ and any $m> 1$,
\begin{equation}
\label{eq:temp_contra}
A_m^n(t) \leq \int_0^t\int \rho_m^{n,+}(x,s) f_n(x,s) dx ds + A_m^n(0).
\end{equation}
By \eqref{last0} and the definition of $S_1^{n,-}$, for all $t\geq 0$ and sufficiently large $m$, we have
\begin{equation}
\label{eq:tempasdf}
\supp{\rho_m^{n,-}(t)} = \supp{p_m^{n,-}(t)} \subseteq \supp u_2(t) =: \Omega(t).
\end{equation}
Thus for all sufficiently large $m$, the spatial integral in \eqref{eq:temp_contra} can be controlled as
\[
\int \rho_m^{n,+}(x,s) f_n(x,s) dx  =  \int_{\Omega_n(s) \setminus \Omega(s)} \rho_m^{n,+}(x,s)dx \leq A_m^n(s),
\]
where in the last step we used that $\rho_m^{n,-}(x,s) \equiv 0$ in $\Omega_n(s) \setminus \Omega(s)$ for all large $m$, which follows from \eqref{eq:tempasdf}. Plugging this into \eqref{eq:temp_contra} yields
$
A_m^n(t) \leq \int_0^t A_m^n(s)ds + A_m^n(0),
$
thus Gronwall's inequality immediately yields that $A_m^n(t) \leq A_m^n(0) e^t$. It is easy to check that $\lim_{m\to\infty} A_m^n(0) = |\Omega_0^{1/n} \setminus \Omega_0^{-1/n}| \leq C/n$, which yields $\liminf_{m\to\infty} A_m^n(t) \leq Ce^t/n$ for all $n\in \mathbb{N}, t\geq 0$. 

Next we claim  
\[\liminf_{m\to\infty} A_m^n(t) \geq |\supp u_2^{n,+}(\cdot, t) \setminus \supp u_1^{n,-}(\cdot, t)|.
\]
To show this, it suffices to show that 
\begin{equation}
\label{2ineqs}
\liminf_{m\to\infty} \int \rho_m^{n,+}(x,t) dx \geq |\supp u_2^{n,+}(\cdot,t)| ~~~\text{ and }~~~ \limsup_{m\to\infty} \int \rho_m^{n,-}(x,t) dx \leq |\supp u_1^{n,-}(\cdot,t)|.
\end{equation}
For the first inequality, note that by definition of the half-relaxed limit, for any $x \in \supp u_2^{n,+}(\cdot,t)$, we have $\liminf_{m\to\infty} p_m^{n,+}(x,t)> 0$, thus $\liminf_{m\to\infty} \rho_m^{n,+}(x,t)\geq 1$. Therefore $$\int\liminf_{m\to\infty} \rho_m^{n,+}(x,t)dx \geq |\supp u_2^{n,+}(\cdot,t)|,$$ and applying Fatou's lemma to it yields the first inequality of \eqref{2ineqs}. The second inequality follows from the definition of the half-relaxed limit $u_1^{n,-}$ and the fact that $\limsup_{m\to\infty} \| \rho_m^{n,-}\|_\infty \leq 1$, which is due to \eqref{eq:bound_rho_m}.

We now combine the above claim with $\liminf_{m\to\infty} A_m^n(t) \leq Ce^t/n$ to conclude that the $A_n(t)$ defined in \eqref{eq:contraction_sup} satisfies $A_n(t) \leq Ce^t/n$ for all $n\in \mathbb{N}, t\geq 0$, which yields \eqref{eq:contraction_sup}. Applying this to \eqref{eq:comp3} then yields that $\chi_{\{u_1>0\}} = \chi_{\{u_2>0\}} = \chi_{\{U>0\}}$ almost everywhere. So the proof of part (a) would be finished if we can show $\rho_\infty$ is also equal to these functions almost everywhere, which we postpone to step 4.

At the end of step 3, let us point out part (b) can be easily proved using the above bound on $A_n(t)$: note that $\Omega(t)=\{u_2(\cdot, t)>0\}$ is open due to the lower-semicontinuity of $u_2$, hence 
\[
\partial \Omega(t) = \overline{ \Omega(t)} \setminus \Omega(t) \subseteq \supp u_2^{n,+}(\cdot, t) \setminus \supp u_1^{n,-}(\cdot, t),
\]
where we used \eqref{eq:comp3}, \eqref{last0} and \eqref{last}  in the last inequality.  The above bound on $A_n(t)$ thus gives $|\partial \Omega(t)| \leq A_n(t) \leq Ce^t/n$ for all $n\in \mathbb{N}, t\geq 0$, and by sending $n\to\infty$ we obtain part (b) for $\Omega(t)$. In addition, the inequalities \eqref{eq:comp3}, \eqref{last0} and \eqref{last} also lead to $|\partial \Omega^1(t)| \leq A_n(t)$, hence we also have $|\partial \Omega^1(t)| =0$.

\medskip
4. To finish the proof of part (a), it suffices to relate $u_2$ and $\rho_\infty$ and show that 
\begin{equation}\label{goal_step1}
\rho_\infty = \chi_{\{u_2>0\}}  \text{ a.e.}
\end{equation}

The direction $\rho_\infty \geq \chi_{\{u_2>0\}}$ is easier: take any $(x_0,t_0)$ such that $a:= u_2(x_0,t_0)>0$. By definition of the half-relaxed limit $u_2$, there exists some positive $r_0$ and $N_0$, such that $p_m(x,t_0) \geq a/2>0$ for all $x \in B_{r_0}(x_0)$, $m>N_0$. Hence $\rho_m(x,t_0) \geq (\frac{m-1}{m} a)^{1/(m-1)}$  for $x,m$ as above. Combining this lower bound (which approaches 1 as $m\to\infty$) with the weak convergence of $\rho_m$ towards $\rho_\infty$ in Lemma \ref{lem:weak_conv}, we have $\rho_\infty(\cdot, t_0) \geq 1$ in $B_{r_0}(x_0)$. Since $x_0\in \{u_2(\cdot, t_0)>0\}$ is arbitrary, we have $\rho_\infty \geq \chi_{\{u_2>0\}}$.

For the other direction, we will use $p_{m}^{n,-}$. Using the definition of $S^{n,-}(t)$ (see \eqref{eq:st_def}) as well as \eqref{eq:comp3}, we have
\begin{equation}
\label{ineq_temp1}
 \limsup_{m\to\infty} \chi_{\{p_m^{n,-}(\cdot, t)>0\}} = \chi_{S^{n,-}(t)} \leq \chi_{\{u_2(\cdot, t)>0\}} \quad\text{ for any }n\in \mathbb{N}, t\geq 0.
\end{equation}

In addition, since $\limsup_{m\to\infty}\|\rho_m^{n,-}(t)\|_\infty \leq \limsup_{m\to\infty}\|\rho_m(t)\|_\infty \leq 1$ by \eqref{eq:bound_rho_m}, it implies
\begin{equation}
\label{ineq_temp2}
\limsup_{m\to\infty} \rho_m^{n,-}(\cdot,t) \leq \limsup_{m\to\infty} \chi_{\{\rho_m^{n,-}(\cdot, t)>0\}}=  \limsup_{m\to\infty} \chi_{\{p_m^{n,-}(\cdot, t)>0\}}.
\end{equation}

Finally we will relate $\rho_\infty$ with $\rho_m^{n,-}$. Note that for any continuous, bounded $f\geq 0$, we have the following (where we omit the $x$ dependence in integrals due to space limitation):
\begin{equation}
\label{ineq_temp3}
\int \rho_\infty(t) f dx = \lim_{m\to\infty}\int \rho_m(t) f dx= \lim_{m\to\infty}\lim_{n\to\infty}\int \rho_m^{n,-}(t) f dx \leq \int \limsup_{m,n\to\infty} \rho_m^{n,-}(t) f dx,
\end{equation}
where the first equality follows from Lemma \ref{lem:weak_conv}, the second one is due to the $L^1$ contraction property of Lemma \ref{PMEDwellposed}\ref{PMEDL1contraction} and the fact that $\rho_m^{n,-}\leq \rho_m$, and the last inequality follows from Fatou's lemma. This implies that \\$\rho_\infty(\cdot, t) \leq \limsup_{m, n\to\infty} \rho_m^{n,-}(\cdot,t)$ a.e., and this with \eqref{ineq_temp1}-\eqref{ineq_temp2} implies $\rho_\infty \leq  \chi_{\{u_2>0\}}$, which finishes the proof of \eqref{goal_step1}, thus yielding part (a).

\medskip

5.  To prove part (c), take any compact set $Q \subseteq \{(x,t): x\in \Omega(t)\}$. By definition of the half-relaxed limit $u_2$, for each $(x_0,t_0)\in Q$ there is some $r_0>0$, such that $p_m \geq u(x_0,t_0)/2$ in $B_{r_0}(x_0,t_0)$ for all sufficiently large $m$, which yields that $\rho_m$ uniformly approaches 1 in $B_{r_0}(x_0,t_0)$. The compactness of $Q$ then allows us to find a finite number of points $(x_i, t_i)$ such that $B_{r_i}(x_i,t_i)$ covers $Q$, implying that $\rho_m\to 1$ uniformly in $Q$.

Finally let us prove the $L^1$ convergence result, where we use the elementary inequality
\begin{equation}
\label{L1diff}
\|f-g\|_{L^1} = \int (g-f)_+ dx + \int (f-g)_+ dx \leq 2 \int (g-f)_+ dx + \left|\int (f-g) dx\right|.
\end{equation}
Let $f=\rho_m(\cdot, t)$, $g=\chi_{\Omega(t)} = \rho_\infty(\cdot, t)$. Since the mass of $\rho_m(\cdot, t)$ and $\rho_\infty(\cdot, t)$ are both preserved in time, we have 
\[
\left|\int (f-g) dx\right| \leq \|\rho_m(\cdot, 0) - \chi_{\Omega_0}\|_{L^1} = \big\|(\frac{m}{m-1}p_0)^{1/(m-1)} - \chi_{\Omega_0}\big\|_{L^1}\to 0 \text{ as }m\to\infty.
\]
To control $\int (g-f)_+ dx$, note that $g= 0$ a.e. in $\Omega(t)^c$, hence
$
\int (g-f)_+ dx = \int_{\Omega(t)} (1-\rho_m)_+dx.
$
Since $\Omega(t)$ is open, for any $\epsilon>0$ we can find a compact set $D\subseteq \Omega(t)$, such that $|\Omega(t)\setminus D|\leq \epsilon$. We can then apply the uniform convergence result of $\rho_m$ in $D$ to conclude that $\int (g-f)_+ dx  \leq 2\epsilon$ for sufficiently large $m$, and since $\epsilon>0$ is arbitrary we have  $\lim_{m\to\infty} \int (g-f)_+ dx = 0$. Plugging the above results into \eqref{L1diff} yields the $L^1$ convergence result.


%
%
%

\end{proof}

\section{Long time behavior of patch solutions in two dimensions} \label{long time section}

In this section, we investigate the long-time behavior of a patch solution $\rho_{\infty}$ in two dimensions, using the pressure variable characterization of the dynamics of $\rho_{\infty}$ obtained in section \ref{convviscsolsec}.
Throughout this section, we consider our spatial domain to be $\mathbb{R}^2$. By Theorem~\ref{convergence}, we know that $\rho_\infty(\cdot, t) = \chi_{\Omega(t)}$ for some $\Omega(t)\subseteq \mathbb{R}^2$ for all $t\geq 0$. Our goal is to show that, as $t\to\infty$, $\Omega(t)$ converges to the unique disk $B_0$ with the same mass and the center of mass as $\Omega_0$. (See Theorem~\ref{long_time})

We proceed as follows: in sections \ref{evolution of second moment} and \ref{some rearrangement inequalities}, we show that the second moment of $\rho_{\infty}(\cdot,t) = \chi_{\Omega(t)}$ decreases unless $\Omega(t)$ is a disk, from which we are able to conclude that $\Omega(t)$ cannot stay uniformly away from a disk for all times,  in terms of its {\it Fraenkel asymmetry}. In section \ref{convergence of energy}, we combine this with the gradient flow structure of $\rho_{\infty}$ to show that as $t \to +\infty$ the energy $E_\infty(\rho_\infty(t))$ approaches the minimum of $E_\infty$, with a quantitative estimate on the rate. Lastly, in section \ref{Convergence of rhoinfty}, we show that $\rho_\infty(\cdot, t)$  converges to $\chi_{B_0}$ strongly in $L^q$ for any $1\leq q<\infty$.

\subsection{Evolution of the second moment} \label{evolution of second moment}
Let $M_2[f] := \int_{\mathbb{R}^2}  f(x) |x|^2 dx$ denote the second moment of $f$. In this subsection, we investigate the evolution of the second moment of $\rho_\infty(\cdot, t) = \chi_{\Omega(t)}$.
Before we present the rigorous derivation of the evolution of the second moment, we begin with the following  heuristic computation. As described in the introduction, $\rho_\infty(\cdot, t)$ formally satisfies the transport equation
\[
\rho_t = \nabla \cdot (\rho  ( \nabla \bN \rho+ \nabla p)) ,
\]
where $p$ is a solution to \ref{P}. (See equation (\ref{transport111}).) By definition, $p(\cdot, t)$ solves $\Delta p = -1$ in $\Omega(t)$ and $p=0$ on $\partial \Omega(t)$. Hence, supposing that $\partial \Omega(t)$ is smooth, the evolution of $M_2[\rho_\infty(t)]$ is given by
\begin{equation}
\label{eq:dM2_dt}
\begin{split}
\frac{d}{dt} M_2[\rho_\infty(t)] 
&=- 2\int_{\mathbb{R}^2}  \rho_\infty \nabla \bN \rho_\infty  \cdot  x dx - 2\int_{\mathbb{R}^2}  \rho_\infty \nabla p \cdot  x dx\\
&=-\frac{1}{\pi}  \int_{\mathbb{R}^2} \int_{\mathbb{R}^2}   \rho_\infty(x) \rho_\infty(y)  \frac{( x  - y)\cdot  x}{|x-y|^2} dy dx - 2\int_{\Omega(t)}   \nabla p \cdot  x dx\\
&= -\frac{1}{2\pi} \int_{\mathbb{R}^2} \int_{\mathbb{R}^2}  \rho_\infty(x) \rho_\infty(y)  dy dx + 4 \int_{\Omega(t)}   p(x) dx\\
&=  -\frac{1}{2\pi} |\Omega(t)|^2  + 4 \int_{\Omega(t)}   p(x) dx=  -\frac{1}{2\pi} |\Omega_0|^2  + 4 \int_{\Omega(t)}   p(x) dx.\end{split}
\end{equation}
In the second equality, we use that, in two dimensions, $\bN \rho_\infty  = \mathcal{N} * \rho_\infty$ with $\mathcal{N}(x) = \frac{1}{2\pi} \log |x|$.
In the third equality, we symmetrize $x$ and $y$ in the first integral (hence the extra factor of $\frac{1}{2}$), and in the last equality, we use that $\rho_\infty$ preserves its mass (which is $|\Omega_0|$) for all time. 

In the following proposition, we rigorously obtain the time evolution of $M_2[\rho_\infty(t)]$ by analyzing the evolution of the second moments for each $\rho_m$ and sending $m\to +\infty$, using our convergence results from the previous sections. We show that the evolution of the second moment indeed satisfies a time-integral form of \eqref{eq:dM2_dt}, with the exception that we must substitute $p(x)$ with $u_1(x)$, the half-relaxed limit of $p_m$ defined in Lemma \ref{sub}, to take into account the fact that $\Omega(t)$ may not have smooth boundary for all time.
 
\begin{proposition} 
\label{prop:dm2_dt}
Let $\Omega_0 \subseteq \mathbb{R}^2$ be a bounded domain with Lipschitz boundary, and let $\rho_\infty(\cdot, t) = \chi_{\Omega(t)}$ be the gradient flow of $E_\infty$ with initial data $\rho_0 = \chi_{\Omega_0}$. 
Then for any $T>0$,
\begin{equation}
\label{eq:m2}
M_2[\rho_\infty(T)] - M_2[\rho_0] \leq    - \frac{1}{2\pi} |\Omega_0|^2 T + 4\int_0^T \int_{\Omega(t)} u_1(x,t) dx dt,
\end{equation}
where $u_1$ is the half-relaxed limit of $p_m$, defined in Lemma \ref{sub}, and $\Omega(t) = \{ u_2(\cdot, t) >0 \}$, as defined in Theorem \ref{convergence} \ref{convergence part b}.
\end{proposition}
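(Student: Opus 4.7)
The strategy is to derive the second-moment identity rigorously at the level of the approximations $\rho_m$ solving \ref{pme}, and then take the limit $m\to\infty$ using the convergence results of Section~\ref{convviscsolsec}. By Lemma~\ref{compact_spt}\ref{cpt spt part}, the support of $\rho_m(\cdot,t)$ stays inside a fixed ball $B_{R(T)}(0)$ uniformly in $m$ and $t\in[0,T]$, so testing the weak form of \ref{pme} against $|x|^2$ (cut off outside a slightly larger ball) gives
\[
M_2[\rho_m(T)] - M_2[\rho_m(0)] = -2\int_0^T\!\!\int_{\R^2}\rho_m\,\nabla\Phi_{1/m}\cdot x\,dx\,dt + 4\int_0^T\!\!\int_{\R^2}\rho_m^m\,dx\,dt.
\]
Rewriting the last integrand as $\rho_m^m = \frac{m}{m-1}\rho_m p_m$, it then remains to pass to the limit in each of the three integrals.

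For the left-hand side, Theorem~\ref{convergence}(c) gives $\rho_m(\cdot,T)\to \chi_{\Omega(T)}$ in $L^1(\R^2)$, and $\rho_m(\cdot,0) = (\tfrac{m}{m-1}p_0)^{1/(m-1)}\to \chi_{\Omega_0}$ in $L^1$ by dominated convergence; since supports remain in $B_{R(T)}(0)$, these convergences upgrade to convergence of second moments. For the drift integral, Proposition~\ref{propertiesofspecialPhi} together with the mollification estimate $\|\nabla\Phi_{1/m}-\nabla\bN\rho_\infty\|_\infty \to 0$ and the $L^1$ convergence of $\rho_m$ on the fixed ball yield
\[
\lim_{m\to\infty}\Big(-2\int_0^T\!\!\int\rho_m\,\nabla\Phi_{1/m}\cdot x\,dx\,dt\Big) = -2\int_0^T\!\!\int\rho_\infty\,\nabla\bN\rho_\infty\cdot x\,dx\,dt.
\]
Symmetrizing in $(x,y)$ with the 2D Newton kernel $\nabla\mathcal{N}(z)=\frac{1}{2\pi}\frac{z}{|z|^2}$ as in the heuristic computation \eqref{eq:dM2_dt}, the inner integral equals $-\frac{1}{2\pi}|\Omega(t)|^2$ pointwise in $t$, and mass conservation $|\Omega(t)|=|\Omega_0|$ supplies the first term on the right-hand side of (\ref{eq:m2}).

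The delicate step, which I expect to be the main obstacle, is the pressure term $\frac{4m}{m-1}\int_0^T\!\int\rho_m p_m\,dx\,dt$, because $p_m$ only converges in the half-relaxed sense. By Lemma~\ref{compact_spt}\ref{pm unif bdd} the $p_m$ are uniformly bounded on $B_{R(T)}(0)\times[0,T]$ and Remark~\ref{remark:bound_rho_m} gives $\limsup_m\|\rho_m(\cdot,t)\|_\infty \leq 1$, so the product $\rho_m p_m$ is uniformly bounded and uniformly compactly supported. The definition of $u_1 = \limsup^* p_m$ immediately yields $\limsup_m p_m(x,t)\leq u_1(x,t)$ pointwise, hence $\limsup_m \rho_m p_m \leq u_1$ pointwise. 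Reverse Fatou applied in $x$ and then in $t$ then produces
\[
\limsup_{m\to\infty}\frac{4m}{m-1}\int_0^T\!\!\int \rho_m p_m\,dx\,dt \leq 4\int_0^T\!\!\int u_1\,dx\,dt = 4\int_0^T\!\!\int_{\Omega(t)} u_1\,dx\,dt,
\]
where the last equality uses Theorem~\ref{convergence}(a), which ensures $u_1 = 0$ a.e.\ outside $\Omega(t)$. Combining the three limits gives exactly (\ref{eq:m2}); the appearance of ``$\leq$'' and of $u_1$ (in place of the heuristic $p$ or the lower limit $u_2$) is precisely the one-sided loss incurred in this limsup passage, which is why an identity is not expected to hold at this level of regularity.
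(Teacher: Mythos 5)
Your proof is correct and follows essentially the same route as the paper's: test the weak form of \ref{pme} against $|x|^2$ (justified by the uniform compact support from Lemma~\ref{compact_spt}), pass to the limit in the moment and drift terms using convergence of $\rho_m$ and of $\nabla\Phi_{1/m}$, symmetrize with the 2D kernel to extract $-\frac{1}{2\pi}|\Omega_0|^2 T$, and control the pressure term one-sidedly via the half-relaxed limit $u_1$. The paper phrases the pressure step through the monotone envelopes $u_{1,n}(x,t) := \sup_{m>n,\,|(x,t)-(y,s)|<1/n} p_m(y,s)$ and monotone convergence, which is equivalent to your pointwise-limsup-plus-reverse-Fatou argument given the uniform $L^\infty$ and support bounds; and for the drift the paper estimates $I_{32}$ by Cauchy--Schwarz and $\|\nabla\Phi_{1/m}-\nabla\Phi\|_{L^2}\lesssim 1/m$ rather than invoking uniform convergence, but both suffice. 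Two small points worth fixing: the algebra should read $\rho_m^m = \tfrac{m-1}{m}\rho_m p_m$ (not $\tfrac{m}{m-1}$) when $p_m=\tfrac{m}{m-1}\rho_m^{m-1}$ — harmless for the limit since the factor tends to $1$, but it should be stated correctly — and when you upgrade $L^1(\R^2)$ convergence pointwise in $t$ to convergence of the time-integrated quantities, you should explicitly invoke dominated convergence in $t$ using the uniform-in-$m$ bounds on the support and size of $\rho_m$.
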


\begin{proof}
For any $m>1$, let $\rho_m$ be the weak solution of \ref{pme} with initial data $(\frac{m-1}{m} p_0)^{1/(m-1)}$, where $p_0$ is given by equation (\ref{initial}). Let $p_m:= \frac{m}{m-1} \rho_m^{m-1}$ be the corresponding solution of \ref{Pm}. Taking $|x|^2$ as our test function, we have for any $T>0$,
\begin{align}
\label{eq:m2_m}
&\underbrace{\int_{\mathbb{R}^2} \rho_m(x,T) |x|^2 dx}_{=:I_1} - \underbrace{\int_{\mathbb{R}^2} \rho_m(x,0) |x|^2 dx}_{=:I_2} \\
&\quad  = - 2\underbrace{\int_0^T \int_{\mathbb{R}^2} \rho_m \nabla \Phi_{1/m}(x,t) \cdot x dxdt}_{=:I_3} + 4\underbrace{\int_0^T \int_{\mathbb{R}^2} \rho_m^m(x,t) dxdt}_{=:I_4} . \nonumber
\end{align}
(Since $\rho_m$ has compact support in $[0,T]$, our test function is not required to have compact support since we can always take a cut-off sufficiently far away.)
As $m\to +\infty$,  Lemma \ref{lem:weak_conv} yields that $I_1$ converges to $M_2[\rho_\infty(T)]$ and $I_2$ converges to $M_2[\rho_\infty(0)]$.

To show the convergence of $I_3$, we decompose the integral into two parts:
\[
I_3 = \int_0^T \int_{\mathbb{R}^2} \rho_m \nabla \Phi(x,t) \cdot x dxdt +  \int_0^T \int_{\mathbb{R}^2} \rho_m \nabla (\Phi_{1/m}(x,t) - \Phi(x,t)) \cdot x dxdt =: I_{31} + I_{32}.
\]
Since $\nabla \Phi(x,t) \cdot x \in C(\mathbb{R}^d)$ for any $t$, Lemma \ref{lem:weak_conv} again gives that 
\[ I_{31} \xrightarrow{m \to +\infty} \int_0^T \int_{\mathbb{R}^2} \rho_\infty \nabla \Phi(x,t) \cdot x dxdt = \frac{1}{4\pi} \int_0^T \left(\int_{\mathbb{R}^2} \rho_\infty dx \right)^2dt = \frac{1}{4\pi} |\Omega_0|^2 T, \]
 where the last two equalities are obtained by symmetrizing $x$ and $y$ in the integrand and using conservation of mass, as in equation (\ref{eq:dM2_dt}).

To control $I_{32}$, we first bound $\|\nabla \Phi_{1/m} - \nabla \Phi\|_{L^2(\mathbb{R}^2)}$. By Proposition \ref{N mu bounds}, 
\begin{equation}
\label{eq:l2diff}
\|\nabla \Phi_{1/m} - \nabla\Phi\|_{L^2(\mathbb{R}^2)}\leq  W_2(\rho_\infty* \psi_{1/m}, \rho_\infty) \leq \frac{1}{m} \int \psi(x) |x|^2 dx ,
\end{equation}
  where, in the last step, we apply \cite[Lemma 7.1.10]{AGS}.
Hence
\[
|I_{32}| \leq \int_0^T \|\rho_m(\cdot, t) |x| \|_{L^2} \|\nabla \Phi_{1/m} - \nabla\Phi\|_{L^2(\mathbb{R}^2)}dt \to 0 \text{ as }m\to +\infty,
\]
where the fact that $\sup_{t\in[0,T]}\sup_{m\geq 1} \|\rho_m(\cdot, t) |x| \|_{L^2} < +\infty$ is a consequence of Lemma \ref{compact_spt}, which ensures $\rho_m$ is uniformly bounded and compactly supported. Combining the estimates on $I_{31}$ and $I_{32}$ yields that  $I_3 \to \frac{1}{4\pi} |\Omega_0|^2 T$ as $m\to +\infty$.

Finally, we consider $I_4$. We will show that
\begin{equation}
\label{eq:t4}
\limsup_{m\to\infty} \int_0^T \int_{\mathbb{R}^2} \rho_m^m(x,t) dxdt \leq \int_0^T \int_{\Omega(t)} u_1(x,t) dx dt.
\end{equation}
The proof is then finished by taking $\limsup_{m \to +\infty}$ on both sides of \eqref{eq:m2_m}.

To show \eqref{eq:t4}, first note that, since $p_m := \frac{m}{m-1}\rho_m^{m-1}$, we may write $\rho_m^m =\frac{m-1}{m}\rho_m p_m$ and apply Remark \ref{remark:bound_rho_m} to obtain
\begin{equation}
\label{eq:t4_1}
\limsup_{m\to\infty} \int_0^T \int_{\mathbb{R}^2} \rho_m^m(x,t) dxdt \leq  \limsup_{m\to\infty}\int_0^T\int_{\mathbb{R}^2} p_m(x,t) dxdt.
\end{equation}
It remains to show that 
\begin{equation}
\label{eq:t4_2}
 \limsup_{m\to\infty}\int_0^T \int_{\mathbb{R}^2} p_m(x,t) dxdt \leq \int_0^T \int_{\mathbb{R}^2}u_1(x,t) dxdt.
\end{equation}
For any $n\in \mathbb{N}$, define
\[ u_{1,n}(x,t) := \displaystyle\sup_{\substack{m>n \\ |(x,t) - (y,s)| < 1/n}} {p_m}(y,s) . \]
Note that $u_{1,n}$ is decreasing in $n$ and $\displaystyle\lim_{n\to\infty} u_{1,n} = u_1$ by definition of $u_1$. For each $n\in \mathbb{N}$, we have $p_m \leq u_{1,n}$ for all $m>n$, hence 
\[
\limsup_{m\to\infty}\int_0^T \int_{\mathbb{R}^2} p_m(x,t) dxdt \leq \int_0^T \int_{\mathbb{R}^2} u_{1,n}(x,t) dxdt \quad\text{ for all }n\in \mathbb{N}.
\]
We can then take $n\to +\infty$ in the above inequality and apply the monotone convergence theorem. By Theroem \ref{convergence} \ref{convergence part a}, $\Omega(t) = \{ u_1(\cdot, t) >0 \}$ almost everywhere. Thus, inequality \eqref{eq:t4_2} holds.
\end{proof}

\subsection{Some rearrangement inequalities} \label{some rearrangement inequalities}
In this subsection, we digress a bit to obtain an upper bound for the quantity
\begin{equation}
\label{def:F}
F(\Omega) = -\frac{1}{2\pi}|\Omega|^2 + 4 \int_\Omega p(x) dx,
\end{equation}
where  $\Omega$ is a bounded set with smooth boundary and $p:\bar \Omega\to\mathbb{R}$ satisfies $-\Delta p = 1$ in $\Omega$ and $p=0$ on $\partial \Omega$. This quantity appears in our heuristic computation for the evolution of the second moment of $\rho_\infty(t)$, where we show $\frac{d}{dt} M_2[\rho_\infty] \leq F(\Omega(t))$. Likewise, $\int_0^T F(\Omega(t))dt$ would have appeared on the right hand side of our rigorous result, given in equation \eqref{eq:m2}, if the boundary of $\Omega(t)$ were smooth for all time. While in this subsection we only aim to control $F(\Omega)$ for smooth domains, in the next subsection we discuss how to use this bound to control the right hand side of \eqref{eq:m2}, even when the boundary of $\Omega(t)$ is not smooth.

The following result, due to Talenti \cite{Talenti}, shows that $F(\Omega)\leq 0$, with equality if and only if $\Omega$ is a disk. We sketch the proof below for the sake of completeness. In the subsequent proposition, we will modify the proof to get a stronger inequality.

\begin{proposition}[{c.f. \cite[Theorem 1]{Talenti}}]
\label{prop:omega}
Let $\Omega \subseteq \mathbb{R}^2$ be a bounded domain with smooth boundary, and let $F(\Omega)$ be as  in \eqref{def:F}. Then we have
\begin{equation}
\label{ineq:f}
F(\Omega) \leq 0,
\end{equation}
and the equality is achieved if and only if $\Omega$ is a disk.
 \end{proposition}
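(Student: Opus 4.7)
The plan is to reduce the bound to Talenti's pointwise rearrangement inequality for solutions of $-\Delta u = f$ with zero Dirichlet boundary data, applied with $f \equiv 1$, and then to compute everything explicitly on the disk of the same area.

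First I would recall the Talenti comparison \cite{Talenti}: if $\Omega \subseteq \mathbb{R}^2$ is bounded with smooth boundary and $p$ solves $-\Delta p = 1$ in $\Omega$, $p = 0$ on $\partial\Omega$, then letting $\Omega^* = B_R$ be the disk centered at the origin with $|\Omega^*| = |\Omega|$ (so $R = \sqrt{|\Omega|/\pi}$), the Schwarz symmetrization $p^*$ of $p$ satisfies the pointwise bound
\begin{equation*}
p^*(x) \leq v(x) := \frac{R^2 - |x|^2}{4} \qquad \text{for all } x \in B_R,
\end{equation*}
where $v$ is the solution of $-\Delta v = 1$ in $B_R$ with $v = 0$ on $\partial B_R$ (the explicit formula follows from $\Delta(|x|^2) = 4$ in $\mathbb{R}^2$).

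Next, since Schwarz symmetrization preserves $L^p$ norms, I would use equimeasurability to write
\begin{equation*}
\int_\Omega p(x)\,dx \;=\; \int_{B_R} p^*(x)\,dx \;\leq\; \int_{B_R} v(x)\,dx \;=\; \int_0^R \frac{R^2 - r^2}{4}\, 2\pi r\,dr \;=\; \frac{\pi R^4}{8} \;=\; \frac{|\Omega|^2}{8\pi},
\end{equation*}
where the last equality uses $|\Omega| = \pi R^2$. Plugging this into the definition \eqref{def:F} of $F(\Omega)$ yields
\begin{equation*}
F(\Omega) \;\leq\; -\frac{|\Omega|^2}{2\pi} + 4 \cdot \frac{|\Omega|^2}{8\pi} \;=\; 0,
\end{equation*}
which is \eqref{ineq:f}.

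For the equality case, one direction is the direct computation above: if $\Omega = B_R$ then $p = v$ and both inequalities are equalities, so $F(\Omega) = 0$. For the converse, equality in $F(\Omega) = 0$ forces equality in Talenti's inequality (up to a null set, since we also have $\int p^* = \int v$), and the rigidity part of Talenti's theorem ensures this only happens when $\Omega$ is (a translate of) a disk. The only step that requires care is this rigidity statement: the pointwise Talenti bound alone allows equality of integrals without $\Omega$ being a disk in some degenerate situations, but for the specific right-hand side $f \equiv 1$ (so the symmetrized datum equals the original), the equality case in Talenti's argument, which comes from the isoperimetric inequality applied on each level set of $p$, forces every superlevel set $\{p > t\}$ to be a disk, and hence $\Omega = \{p>0\}$ itself is a disk (up to translation). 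I do not expect any genuine obstacle here; the whole proposition is a direct consequence of the Talenti comparison principle specialized to the torsion problem in two dimensions.
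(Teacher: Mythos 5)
Your proof is correct and amounts to the same argument as the paper's: the paper explicitly reproduces Talenti's level-set/co-area/isoperimetric derivation (for the sake of completeness, as it says), while you invoke Talenti's pointwise comparison $p^* \leq v$ directly, and the integral computation on the ball and the rigidity reasoning match. The only reason the paper spells out the level-set argument rather than citing Talenti as a black box is that Proposition \ref{prop:refined} immediately afterward reruns the same computation with the Fusco--Maggi--Pratelli quantitative isoperimetric inequality in place of the classical one, which requires having the distribution-function inequality $g'(k)\leq -4\pi$ in hand.
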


\begin{proof}
First, note that maximum principle yields that $p \geq 0$ in $\bar \Omega$ and $p>0$ in $\Omega$. For any $k \in [0, \sup_{\Omega} p)$, let us define
$$
\Omega_k := \{x\in \Omega: p(x)>k\} \quad \hbox{ and } g(k) := |\Omega_k|.
$$ Note that $g(0) = |\Omega|$.
By definition of $p$ and the divergence theorem, we have
\begin{equation}
\label{eq:temp1}
g(k) = \int_{\Omega_k} -\Delta p(x) dx = \int_{\partial\Omega_k} -n\cdot \nabla p \,d\sigma=  \int_{\partial\Omega_k} |\nabla p| d\sigma .
\end{equation}
On the other hand, by the co-area formula (c.f. \cite{Federer}),
\begin{equation}
\label{eq:temp2}
g(k) = \int_k^{\infty} \int_{\partial\Omega_s} \frac{1}{|\nabla p|} d\sigma ds, \quad 
g'(k) = -\int_{\partial\Omega_k} \frac{1}{|\nabla p|} d\sigma.
\end{equation}
Combining \eqref{eq:temp1} and \eqref{eq:temp2} and applying the Cauchy-Schwarz inequality,
\begin{equation}
\label{eq:temp3}
\begin{split}
g(k) g'(k) &= \left(\int_{\partial\Omega_k} |\nabla p| d\sigma\right) \left( - \int_{\partial\Omega_k} \frac{1}{|\nabla p|} d\sigma\right) \leq -P(\Omega_k)^2,
\end{split}
\end{equation}
where $P(\Omega_k)$ is the perimeter of $\Omega_k$.
For any bounded domain $E\subseteq \mathbb{R}^2$,  the isoperimetric inequality yields
\begin{equation}
\label{eq:isoperimetric}
 2\sqrt{\pi} \sqrt{|E|} \leq P( E) .
\end{equation}
Applying inequality \eqref{eq:isoperimetric} to $\Omega_k$ in \eqref{eq:temp3} gives
\[
g(k) g'(k) \leq -\left(2\sqrt{\pi} \sqrt{g(k)}\right)^2 = -4\pi g(k),
\]
hence $g(k)$ satisfies the differential inequality 
\begin{align} \label{g prime bound}
g'(k) \leq -4\pi \text{ for all } k \in \left(0,\sup_\Omega g \right) .
\end{align}
 Combining this with $g(0)=|\Omega|$ yields that $g(k) \leq (|\Omega| - 4\pi k)_+$ for all $k\geq 0$. Therefore,
\[
\int_{\Omega}   p(x) dx = \int_0^{\sup_\Omega p} g(k) dk \leq \int_0^{\infty} \left( |\Omega| - 4\pi k\right)_+dk = \frac{1}{8\pi} |\Omega|^2,
\]
which gives \eqref{ineq:f}.
In order to achieve equality, $\Omega_k$ must be a disk for almost every $k>0$, hence $\Omega$ must be a disk.
\end{proof}

We now prove a stronger version of the above inequality by replacing the isoperimetric inequality in the above argument (see \eqref{eq:isoperimetric}) by the following quantitative version due to Fusco, Maggi, and Pratelli  \cite{fmp}. 

\begin{lemma}[{c.f. \cite[Section 1.2]{fmp}}]
\label{lem:fmp}
Let $E \subseteq \mathbb{R}^2$ be a bounded domain. We define the \emph{Fraenkel asymmetry} $A(E) \in [0,1]$ as
\begin{equation*}
A(E) := \inf\left\{\frac{|E\triangle (x_0+rB)|}{|E|}: x_0\in\mathbb{R}^2, \pi r^2 = |E| \right\},
\end{equation*}
where $B$ is the unit disk. Then there is some constant $c\in(0,1)$, depending only on the dimension, such that
\begin{equation*}
P(E) \geq 2\sqrt{\pi} \sqrt{|E|} \left(1+cA(E)^2\right),
\end{equation*}
where $P(E) = \mathcal{H}^1(\partial E)$ denotes the perimeter of $E$.

 \end{lemma}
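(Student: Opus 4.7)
The plan is to follow the strategy of Fusco, Maggi, and Pratelli, first reducing the general bounded set $E$ to one that is already close to a disk, and then applying a sharp quantitative expansion of the perimeter functional near the ball. After rescaling I may assume $|E|=\pi$, so the Fraenkel asymmetry is measured against the unit disk $B$.

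First I would handle the easy regime $A(E)\geq \alpha_0$ for some fixed $\alpha_0\in (0,1)$ by a soft compactness-and-contradiction argument. If no uniform constant worked, one could extract a sequence $E_k$ with $A(E_k)\geq \alpha_0$ and $P(E_k)\to 2\sqrt{\pi|E_k|}$. Combining the compactness inherent in the classical isoperimetric inequality (in $BV$) with a translation selection would force $\chi_{E_k-x_k}\to \chi_B$ in $L^1$, contradicting the lower bound on $A(E_k)$. This settles the inequality in that regime with a possibly large constant, reducing the real work to the nearly-spherical case $A(E)\ll 1$.

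In the nearly-spherical regime, the plan is to translate $E$ so that its barycenter is at the origin, use the classical isoperimetric inequality plus the smallness of $P(E)-2\sqrt{\pi|E|}$ to conclude $E$ is close to $B$ in $L^1$, and then reduce to the case where $\partial E$ is a smooth radial graph over $\partial B$, parametrized as $\partial E = \{(1+u(\theta))\theta : \theta\in\partial B\}$ with $\|u\|_{C^1}$ small. Direct expansion of
\begin{equation*}
P(E) = \int_{\partial B} \sqrt{(1+u)^2 + \dot u^2}\, d\mathcal{H}^1,
\end{equation*}
together with the area constraint $|E|=\pi$ and the barycenter orthogonality conditions $\int u\cos\theta\, d\theta = \int u\sin\theta\, d\theta = 0$, gives the sharp Fuglede-type estimate
\begin{equation*}
P(E) - 2\sqrt{\pi|E|} \;\geq\; c\int_{\partial B}\bigl(u^2 + \dot u^2\bigr)\, d\mathcal{H}^1.
\end{equation*}
A direct calculation using the same orthogonality relations gives the companion bound $A(E)^2 \leq C\int_{\partial B} u^2\, d\mathcal{H}^1$, and combining the two yields the stated inequality for smooth nearly-spherical sets.

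The hard part will be bridging these two regimes for a general (possibly rough, non-convex, or disconnected) competitor $E$ without sacrificing the quadratic power in $A(E)$. The idea of \cite{fmp} is a carefully quantified Steiner symmetrization: each symmetrization decreases the perimeter while only mildly increasing the asymmetry, and a finite sequence of symmetrizations in sufficiently many orthogonal directions, combined with a selection/compactness argument, reduces the problem to an axially symmetric configuration close to a disk, to which the Fuglede step above applies. The principal technical obstacle is controlling the asymmetry loss per symmetrization step by the sharp quadratic modulus of the perimeter decrease; any weaker modulus destroys the exponent $2$ on $A(E)$, so this quantitative tracking is the delicate heart of the argument and the reason the full proof occupies its own paper.
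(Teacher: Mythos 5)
The paper does not prove this lemma; it is stated as a citation to Fusco--Maggi--Pratelli \cite{fmp} (the ``c.f.'' is a pointer, not a proof). So any argument you supply is by definition a different route from the paper's. There is nothing to check against on the paper's side beyond whether the statement is transcribed correctly from \cite{fmp}, which it is: $2\sqrt{\pi}\sqrt{|E|}$ is the perimeter of the disk of area $|E|$, and the constant is allowed to depend on the dimension, matching the general-$n$ theorem specialized to $n=2$.

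As for the sketch itself: it is an honest outline of how the theorem is proved in the literature, but it conflates two genuinely different strategies. The argument you describe in the middle --- reduce to nearly-spherical sets, write $\partial E$ as a radial graph $(1+u(\theta))\theta$, expand the perimeter functional, and use the Fuglede-type estimate $P(E)-2\sqrt{\pi|E|}\gtrsim\|u\|_{H^1}^2$ together with $A(E)^2\lesssim\|u\|_{L^2}^2$ --- is Fuglede's argument, and the reduction from a small-deficit general set to such a nearly-spherical set is the content of the Cicalese--Leonardi selection principle, which rests on the regularity theory for quasi-minimizers of perimeter. The Steiner-symmetrization scheme you invoke in the last paragraph is the actual FMP proof, and it does \emph{not} pass through a nearly-spherical reduction or Fuglede's expansion at all; it reduces directly to $n$-symmetric (and then axially symmetric) sets and concludes there with a bespoke one-dimensional estimate. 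Either road works, but they are not the same road, and as written the sketch hands off from one to the other without a bridge. You also correctly flag that all the quantitative content is in the ``delicate heart'' you leave out, so what you have is a plausible table of contents, not a proof --- which is consistent with the paper's decision to cite the result rather than reprove it.
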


We begin with the following simple observation regarding the Fraenkel asymmetry.

\begin{lemma}
\label{lem:asym}
Let $E \subseteq \mathbb{R}^2$ be a bounded domain. For all $U\subseteq E$ satisfying $|U| \geq |E| (1-\frac{A(E)}{4})$, we have
\[
A(U) \geq \frac{A(E)}{4}.
\]
\end{lemma}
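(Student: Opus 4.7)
I would prove this by a contradiction argument based on the subadditivity of the symmetric difference.

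The plan is as follows. Suppose, for contradiction, that $A(U) < A(E)/4$. By the definition of the Fraenkel asymmetry, there exists a disk $D = x_0 + rB$ with $\pi r^2 = |U|$ satisfying $|U \triangle D|/|U| < A(E)/4$. I would then use this disk to produce a competitor for $A(E)$: let $D'$ denote the concentric disk centered at $x_0$ with $|D'| = |E|$. Note $D \subseteq D'$ since $|D| = |U| \leq |E| = |D'|$ and they share the same center.

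The key step is the set-theoretic triangle inequality
\[
|E \triangle D'| \leq |E \triangle U| + |U \triangle D| + |D \triangle D'|,
\]
which follows by tracking a point in each of the four regions $E\setminus D'$ and $D'\setminus E$ through $U$ and $D$. I would then estimate each term on the right-hand side using the hypothesis $|U| \geq |E|(1-A(E)/4)$. Concretely:
\[
|E \triangle U| = |E|-|U| \leq \tfrac{|E|A(E)}{4}, \qquad |D \triangle D'| = |D'|-|D| = |E|-|U| \leq \tfrac{|E|A(E)}{4},
\]
and by our contradiction hypothesis,
\[
|U \triangle D| < \tfrac{|U|A(E)}{4} \leq \tfrac{|E|A(E)}{4}.
\]
Summing these three bounds gives $|E \triangle D'| < \tfrac{3}{4}|E|A(E) < |E|A(E)$. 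Since $D'$ is a disk of area $|E|$, this contradicts the definition of $A(E)$ as an infimum, completing the proof.

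This argument is essentially mechanical once the triangle inequality for symmetric differences is in place; I do not expect any substantive obstacle. The only point requiring mild care is ensuring that the disk $D'$ is admissible as a competitor in the definition of $A(E)$, which is automatic because $|D'| = |E|$ by construction.
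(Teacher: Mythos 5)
Your proof is correct and takes essentially the same approach as the paper: argue by contradiction, extract a near-optimal disk for $U$ of area $|U|$, and enlarge it to a disk of area $|E|$ whose symmetric difference with $E$ comes in strictly below $A(E)|E|$ (both arguments land on the same constant $\tfrac{3}{4}A(E)$). The only cosmetic difference is that you apply the triangle inequality for symmetric differences where the paper works through intersections, using $|A\triangle B|=2(|A|-|A\cap B|)$ for $|A|=|B|$ together with the monotonicity $|E\cap B_E|\geq|U\cap B_U|$ coming from $U\subseteq E$ and $B_U\subseteq B_E$.
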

\begin{proof}
Assume the statement is not true, so there exists some disk $B_U$ with the same area as $U$ so that 
\[
\frac{|U\triangle B_U|}{|U|} < \frac{A(E)}{4}.
\]
Since $|U| = |B_U|$, we have $|U\triangle B_U| = 2(|U| - |U \cap B_U|)$. Hence the above inequality becomes 
\begin{equation*}
|U \cap B_U| > |U| \left(1-\frac{A(E)}{8}\right).
\end{equation*}
Let $B_E$ be a disk with the same area as $E$ that contains $B_U$. Then since $|U| \geq |E| (1-\frac{A(E)}{4})$,
\[
\begin{split}
|E\cap B_E| &\geq |U\cap B_U| > |E| \left(1-\frac{A(E)}{4}\right)\left(1-\frac{A(E)}{8}\right) \geq |E| \left(1-\frac{3A(E)}{8}\right)
\end{split}
\]
Therefore,
\[
\frac{|E\triangle B_E|}{|E|} = \frac{2(|E| - |E\cap B_E|)}{|E|} < \frac{3}{4} A(E),
\]
which contradicts the fact that $A(E)\leq |E\triangle B_E|/|E|$. This gives the result.
\end{proof}

With this lemma, we are now able to conclude a stronger upper bound on $F(\Omega)$ than provided by Proposition \ref{prop:omega}.
\begin{proposition}
\label{prop:refined} Under the same assumptions as Proposition \ref{prop:omega},  there exists a constant $c_0 \in (0,1)$,  such that
\begin{equation*}
F(\Omega) \leq-  c_0 A(\Omega)^3 |\Omega|^2.
\end{equation*}
 \end{proposition}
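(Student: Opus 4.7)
My approach is to repeat the Talenti-style argument from the proof of Proposition~\ref{prop:omega}, but to replace the classical isoperimetric inequality \eqref{eq:isoperimetric} with its quantitative version in Lemma~\ref{lem:fmp}. Write $\alpha := A(\Omega)$ and keep the level-set notation $\Omega_k := \{p>k\}$, $g(k) := |\Omega_k|$. Applying Lemma~\ref{lem:fmp} to $\Omega_k$ in the chain \eqref{eq:temp3} upgrades the Talenti differential inequality to
\[
-g'(k)\;\geq\; 4\pi\bigl(1+cA(\Omega_k)^{2}\bigr)^{2}\;\geq\; 4\pi\bigl(1+2cA(\Omega_k)^{2}\bigr).
\]

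The key difficulty is that Lemma~\ref{lem:fmp} only gives an improvement when $A(\Omega_k)$ is bounded below, whereas near $k=\sup_{\Omega}p$ the level sets $\Omega_k$ shrink toward a point and may become nearly round. To get around this, I would invoke Lemma~\ref{lem:asym}: as long as $|\Omega_k|\geq(1-\alpha/4)|\Omega|$, the asymmetry cannot collapse, and $A(\Omega_k)\geq\alpha/4$. Setting $\beta := c\alpha^{2}/8$ and $k^{*} := \sup\{k:g(k)\geq(1-\alpha/4)|\Omega|\}$, the combination yields $-g'(k)\geq 4\pi(1+\beta)$ on $[0,k^{*}]$ and $-g'(k)\geq 4\pi$ on $[k^{*},\sup_\Omega p]$. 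Integrating produces the piecewise-linear upper envelopes
\[
g(k)\leq |\Omega|-4\pi(1+\beta)k \text{ on }[0,k^{*}],\qquad g(k)\leq (1-\alpha/4)|\Omega|-4\pi(k-k^{*}) \text{ on }[k^{*},\sup_\Omega p],
\]
together with the upper bound $k^{*}\leq K := |\Omega|\alpha/\bigl(16\pi(1+\beta)\bigr)$, which follows from $|\Omega|\alpha/4=|\Omega|-g(k^{*})\geq 4\pi(1+\beta)k^{*}$.

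The remaining step is a direct computation. Inserting the envelopes into $\int_{\Omega}p\,dx=\int_{0}^{\sup_\Omega p}g(k)\,dk$ and splitting the integral at $k^{*}$ yields
\[
\int_{\Omega}p\,dx \;\leq\; |\Omega|k^{*}-2\pi(1+\beta)(k^{*})^{2}\;+\;\frac{(1-\alpha/4)^{2}|\Omega|^{2}}{8\pi}.
\]
The right-hand side is concave in $k^{*}$ with maximum at $|\Omega|/(4\pi(1+\beta))\gg K$, so it is increasing on $[0,K]$; plugging in $k^{*}=K$ and expanding gives an improvement over the unrefined bound $|\Omega|^{2}/(8\pi)$ of Proposition~\ref{prop:omega} equal to $\tfrac{|\Omega|^{2}\alpha\beta}{16\pi(1+\beta)}(1-\alpha/8)$. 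Since $\beta=c\alpha^{2}/8$ and $\alpha\leq 2$, this quantity is bounded below by an absolute constant times $\alpha^{3}|\Omega|^{2}$. Substituting into \eqref{def:F} and absorbing constants into $c_{0}$ then gives the claim.

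I do not foresee a serious obstacle; the cubic power is inherent to the structure, since one factor of $\alpha$ measures the length $k^{*}$ of the interval on which we can exploit the refined isoperimetric gain, and the remaining two come from the quadratic improvement $A(\Omega_k)^{2}$ in Lemma~\ref{lem:fmp}.
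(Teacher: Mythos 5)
Your argument is correct and follows essentially the same route as the paper: replace the classical isoperimetric inequality in Talenti's level-set computation by the Fusco--Maggi--Pratelli quantitative version, invoke Lemma~\ref{lem:asym} to keep $A(\Omega_k)\geq A(\Omega)/4$ on the super-level sets with $g(k)\geq(1-A(\Omega)/4)|\Omega|$, and integrate the improved differential inequality $-g'\geq 4\pi(1+\beta)$ up to a threshold of order $\alpha|\Omega|/\pi$, then use the unrefined slope beyond it. The only difference is bookkeeping — you parameterize by the exact crossing time $k^*$ and maximize the resulting concave expression over $k^*\in[0,K]$, whereas the paper fixes the interval $(0,\alpha|\Omega|/(32\pi))$ up front and then propagates a constant shift $g\leq h-c\alpha^3|\Omega|/128$ past the threshold — and both yield the same cubic-in-asymmetry gain.
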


\begin{proof}
We follow the proof of Proposition \ref{prop:omega}, with the following difference: instead of applying the isoperimetric inequality \eqref{eq:isoperimetric} to the set $\Omega_k$ in inequality \eqref{eq:temp3}, we now apply the quantitative version from Lemma \ref{lem:fmp} to obtain
\[
g'(k) \leq -4\pi \left(1+cA(\Omega_k)^2\right)^2 \leq -4\pi \left(1+cA(\Omega_k)^2\right).
\]
To relate $A(\Omega_k)$ with $A(\Omega)$, we apply Lemma \ref{lem:asym} to obtain that $
A(\Omega_k) \geq \frac{A(\Omega)}{4}$ for any $k$ such that $g(k) \geq |\Omega| (1-\frac{A(\Omega)}{4})$.
In other words, we have
\begin{equation}
\label{eq:g_temp}
g'(k) \leq -4\pi  \left(1+\frac{cA(\Omega)^2}{16}\right) \text{ for all $k$ such that }g(k) \geq |\Omega| \left(1-\frac{A(\Omega)}{4}\right).
\end{equation}

We claim that this implies 
\begin{equation}
\label{eq:g}
g(k) \leq |\Omega| -4\pi  \left(1+\frac{cA(\Omega)^2}{16}\right)k \quad \text{ for all }k \in \left(0,\frac{A(\Omega)|\Omega|}{32\pi}\right).
\end{equation}
To see this, note that for all $k \in (0,\frac{ A(\Omega) |\Omega|}{32\pi})$, the right hand side of \eqref{eq:g} is greater than $|\Omega|(1-\frac{A(\Omega)}{4})$ since $1+cA(\Omega)^2/16\leq 2$. As a result, if \eqref{eq:g} is violated at some $k_0 \in (0,\frac{A(\Omega)|\Omega|}{32\pi}) $, then we must have $g(k) \geq |\Omega|(1-\frac{A(\Omega)}{4})$ for all $k\in (0,k_0)$, since $g$ is a decreasing function. We can then integrate \eqref{eq:g_temp} in $(0,k_0)$ to conclude that \eqref{eq:g} actually holds at $k_0$, a contradiction.

Let $h(k) = (|\Omega|-4\pi k)_+$. Inequality \eqref{eq:g} implies that $g(k) \leq h(k) - \frac{cA(\Omega)^3|\Omega|}{128}$ at $k=\frac{A(\Omega) |\Omega|}{32\pi}$. For $k>\frac{A(\Omega) |\Omega|}{32\pi}$, recall that by inequality (\ref{g prime bound}), we have $g'(k) \leq -4\pi$ for $k \in \left(0,\sup_\Omega g \right) $, and by definition of $h$, we have $h'(k) = -4\pi$ in $(0,|\Omega|/(4\pi))$. This gives that $g(k) \leq h(k) - \frac{cA(\Omega)^3|\Omega|}{128}$ for  $A(\Omega)|\Omega|/32\pi\leq k \leq |\Omega|/4\pi$. Since $A(\Omega) \leq 1$ this range of $k$ is larger than $|\Omega|/8\pi$, and since $g(k) \leq h(k)$ for all $k$, we have
\[
\int_0^\infty g(k) dk \leq \int_0^\infty h(k) dk - \frac{cA(\Omega)^3 |\Omega|^2}{2000} = \frac{|\Omega|^2 }{8\pi} - \frac{cA(\Omega)^3 |\Omega|^2}{2000}.
\]
Finally, this gives 
\[
F(\Omega) = \int_0^\infty g(k) - \frac{|\Omega|^2}{8\pi} \leq \frac{cA(\Omega)^3 |\Omega|^2}{2000},
\]
hence the result holds with $c_0 := \frac{c}{2000}$.
\end{proof}

\subsection{Convergence of energy functional as $t\to\infty$} \label{convergence of energy}

In this section, we aim to show that, along the solution $\rho_\infty(\cdot, t)$, the energy functional $E_\infty$ converges to its global minimizer as $t \to +\infty$. We begin by estimating the rate of change of the second moment along $\rho_\infty$.
Combining Proposition \ref{prop:refined} with our heuristic computation \eqref{eq:dM2_dt} suggests that 
\[
\frac{d}{dt} M_2[\rho_\infty(t)]\leq -c_0 A(\Omega(t))^3 |\Omega_0|^2.
\]
We now show that this inequality is indeed true in the time-integral sense, even if $\Omega(t)$ does not have smooth boundary.

\begin{proposition}
\label{prop:m2_a}
Let $\Omega_0 \subseteq \mathbb{R}^2$ be a bounded domain with Lipschitz boundary, and let $\rho_\infty(\cdot, t) = \chi_{\Omega(t)}$ be the gradient flow of $E_\infty$ with initial data $\rho_0 = \chi_{\Omega_0}$.  Then we have

\begin{equation}
\label{eq:m2_2}
M_2[\rho_\infty(T)] - M_2[\rho_0] \leq    -c_0 |\Omega_0|^2 \int_0^T A(\Omega(t))^3 dx dt,
\end{equation}
where $c_0 \in (0,1)$ is the constant given in Proposition \ref{prop:refined}.
\end{proposition}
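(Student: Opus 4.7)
The plan is to combine Proposition \ref{prop:dm2_dt} with a pointwise-in-time application of the quantitative rearrangement inequality Proposition \ref{prop:refined}. Since mass is preserved, $|\Omega(t)| = |\Omega_0|$, the proposition will follow by integrating in $t$ the pointwise estimate
\begin{equation*}
-\frac{1}{2\pi}|\Omega(t)|^2 + 4\int_{\Omega(t)} u_1(x,t)\,dx \;\leq\; -c_0\, A(\Omega(t))^3 |\Omega(t)|^2 \qquad \text{for a.e.\ } t\in[0,T],
\end{equation*}
and substituting into \eqref{eq:m2}.

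The first main step is to dominate $u_1$ by the torsion function of $\Omega(t)$. Fix $t > 0$ and let $\Omega := \{u_1(\cdot,t) > 0\}$, which by Theorem \ref{convergence}(a) coincides with $\Omega(t)$ up to a null set, so $|\Omega| = |\Omega(t)|$, $A(\Omega) = A(\Omega(t))$, and $\int_{\Omega(t)} u_1 \,dx = \int_{\Omega} u_1 \,dx$. Let $P \in H^1_0(\Omega)$ denote the torsion function, $-\Delta P = 1$ in $\Omega$. Since $\{u_1(\cdot,t)> 0\}$ is open and $u_1(\cdot,t)$ is upper semi-continuous and nonnegative, $u_1(\cdot,t)$ vanishes identically on $\partial\Omega$; combined with the distributional inequality $-\Delta u_1(\cdot,t) \leq \rho_\infty \leq 1$ in $\Omega$ from \eqref{claim001}, the weak maximum principle for superharmonic functions (applied on the compactly contained smooth sublevel sets $\{P>\epsilon\}$ and passing to $\epsilon\downarrow 0$, together with the Perron characterization of $P$) yields $u_1(\cdot,t) \leq P$ on $\Omega$. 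Therefore
\begin{equation*}
-\frac{1}{2\pi}|\Omega|^2 + 4\int_{\Omega} u_1\,dx \;\leq\; F(\Omega) \;:=\; -\frac{1}{2\pi}|\Omega|^2 + 4\int_{\Omega} P\,dx .
\end{equation*}

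The second main step is to extend the Talenti-type bound $F(\Omega)\leq -c_0 A(\Omega)^3|\Omega|^2$ from smoothly bounded domains (the setting of Proposition \ref{prop:refined}) to the possibly irregular $\Omega$. By interior elliptic regularity $P\in C^\infty(\Omega)$, and by the strong maximum principle $P>0$ in $\Omega$. For almost every $\epsilon>0$ (by Sard's theorem), $\Omega_\epsilon := \{P>\epsilon\}$ has smooth boundary, and $P_\epsilon := P-\epsilon$ solves $-\Delta P_\epsilon = 1$ in $\Omega_\epsilon$ with $P_\epsilon=0$ on $\partial\Omega_\epsilon$, so Proposition \ref{prop:refined} gives $F(\Omega_\epsilon)\leq -c_0 A(\Omega_\epsilon)^3|\Omega_\epsilon|^2$. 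As $\epsilon\downarrow 0$ along regular values, $\Omega_\epsilon\uparrow \Omega$ modulo a null set---using $P>0$ in $\Omega$ together with $|\partial\Omega(t)|=0$ from Theorem \ref{convergence}(b)---hence $|\Omega_\epsilon|\to|\Omega|$, $\int_{\Omega_\epsilon}P_\epsilon\to \int_\Omega P$, and the continuity of the Fraenkel asymmetry under convergence in symmetric difference yields $A(\Omega_\epsilon)\to A(\Omega)$. Passing to the limit gives $F(\Omega)\leq -c_0 A(\Omega)^3|\Omega|^2 = -c_0 A(\Omega(t))^3 |\Omega_0|^2$, completing the pointwise inequality.

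The principal obstacle is the boundary analysis in the first step: justifying $u_1 \leq P$ despite the absence of any a priori regularity of $\Omega = \{u_1>0\}$ and the fact that $u_1$ is only upper semi-continuous and satisfies $-\Delta u_1 \leq 1$ in the distributional sense. The exhaustion by smooth sublevel sets $\Omega_\epsilon = \{P>\epsilon\} \Subset \Omega$---on which both $u_1$ and $P$ are well-behaved and the classical comparison principle applies---serves a dual purpose, also supplying the smooth-domain approximation needed for the Talenti step. Once the pointwise inequality is in hand, integration over $[0,T]$ and substitution into \eqref{eq:m2} completes the proof.
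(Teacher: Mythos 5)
Your proof is correct, but it takes a genuinely different route from the paper's. The paper avoids working with the torsion function of the irregular domain $\Omega(t)$ directly: it chooses an \emph{outer} smooth approximation $D(t)\supseteq\overline{\Omega^1(t)}$ with $|D(t)\setminus\Omega^1(t)|\leq\epsilon$ (and $\partial D(t)$ continuous in time), forms the classical, jointly continuous torsion function $p$ of $D(t)$, and proves $u_1\leq p$ by a viscosity comparison argument---assume $\sup_{x,t}(u_1-ap)>0$ for some $a>1$, extract a space-time maximum, and contradict the subsolution property of $(u_1,\Sigma_1)$. It then applies Proposition~\ref{prop:refined} to $D(t)$ and sends $\epsilon\to 0$. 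You instead work with the torsion function $P\in H^1_0(\Omega)$ of $\Omega=\{u_1(\cdot,t)>0\}$ itself, obtain $u_1\leq P$ via the classical maximum principle for subharmonic functions (using the distributional inequality $-\Delta u_1\leq\rho_\infty\leq 1$ from~\eqref{claim001}, upper semicontinuity of $u_1 - P$, and the pointwise vanishing of $u_1$ on $\partial\Omega$), and then approximate $\Omega$ from the \emph{inside} by the smooth Sard sublevel sets $\Omega_\epsilon = \{P>\epsilon\}$ to invoke Proposition~\ref{prop:refined}. Your route trades the viscosity argument for distributional subharmonicity at the comparison step and, because you fix $t$ throughout, sidesteps the paper's need for $D(t)$ to vary continuously in $t$; the paper's route keeps everything classical by never leaving smooth domains. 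One step in your writeup worth making explicit is that $\limsup_{\epsilon\downarrow 0}\sup_{\partial\Omega_\epsilon} u_1 = 0$: this holds because the strong maximum principle gives $\min_K P>0$ for every compact $K\Subset\Omega$, so $\partial\Omega_\epsilon$ eventually avoids $K$ and is thus contained in any fixed neighborhood of $\partial\Omega$, on which $u_1$ is uniformly small by upper semicontinuity and $u_1|_{\partial\Omega}=0$. The reference to the ``Perron characterization of $P$'' is not needed once this is in place.
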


\begin{proof}
Since the evolution of the second moment is already given by Proposition \ref{prop:dm2_dt}, it remains to show
\begin{equation}
\label{eq:goal_u1}
- \frac{1}{2\pi} |\Omega_0|^2 T + 4\int_0^T \int_{\Omega(t)} u_1(x,t) dx dt \leq -c_0 |\Omega_0|^2 \int_0^T A(\Omega(t))^3 dt,
\end{equation}
where $u_1$ is the half-relaxed limit of $p_m$ defined in Lemma \ref{sub}. 

Let $\Omega^1(t) = \{u_1(\cdot,t)>0\}$. By Theorem \ref{convergence} \ref{convergence part a} and \ref{convergence part b}, we have $\Omega^1(t) = \Omega(t)$ almost everywhere, so $A(\Omega(t)) = A(\Omega^1(t))$, and $|\partial\Omega^1(t)|=0$ for all $t\in [0,T]$.  Hence for any $\epsilon>0$ and $t\in [0,T]$, we can find a set $D(t)\subseteq \mathbb{R}^2$ with smooth boundary such that $ \overline{\Omega^1(t)} \subseteq D(t)$, and $|D(t) \setminus \Omega^1(t)| \leq \epsilon$. For any $t\geq 0$, we then have a classical solution $p(\cdot, t)$ such that $-\Delta p(\cdot, t) = 1$ in $D(t)$, and $p(\cdot, t) = 0$ on $\partial D(t)$ and $D(t)^c$. In addition, we may choose $D(t)$ so that $\partial D(t)$ is continuous in time with respect of Hausdorff distance of sets, which ensures that $p$ is continuous in time.

We first aim to show that 
\begin{equation}\label{order}
u_1(x,t) \leq p(x,t).
\end{equation} It suffices to show that $u_1(x,t) \leq a p(x,t)$ for any $a>1$. Towards a contradiction, assume that there exists some $a>0$, such that $\sup_{x\in\mathbb{R}^2, t\in [0,T]} (u_1 - ap) > 0$. 
Since  $p$ is continuous in both space and time, and $u_1$ is upper semicontinuous by definition as the half-relaxed limit, $u_1-ap$ achieves a strictly positive maximum at some $(x_0,t_0)$. Furthermore, since $p\geq 0$, we have $u_1(x_0,t_0)>0$. Again using that $(u_1, \Sigma_1)$ is a subsolution of \ref{Pinfty}, we have that $-a\Delta p(x_0,t_0)\leq 1$, which implies that $-\Delta p(x_0,t_0) < 1$. However, since $x_0 \in \Omega^1(t_0) \subseteq D(t_0)$,
we must have $-\Delta p(x_0,t_0)=1$, which gives the contradiction. 

We now show inequality \eqref{eq:goal_u1}. Since  $|D(t) \setminus \Omega^1(t)| \leq \epsilon$,  there exists $C$ depending on $|\Omega_0|$ so that $A(\Omega^1(t)) = A(\Omega(t)) \leq A(D(t)) + C\epsilon$. Combining this observation with \eqref{order} and Proposition \ref{prop:refined}, we obtain the following bound for the left hand side of \eqref{eq:goal_u1}, where $C$ depends on $\Omega_0$ and $T$:
\[
\begin{split}
&\int_0^T \left(-\frac{1}{2\pi}|\Omega_0|^2 + \int_{\Omega(t)} u_1(x,t) dx\right)dt \leq \int_0^T   \left(-\frac{1}{2\pi}|D(t)|^2 + \int_{D(t)} p(x,t) dx\right)dt + C\epsilon\\
&\quad \leq -c_0 \int_0^T A(D(t))^3 |D(t)|^2 dt + C\epsilon \leq  -c_0 |\Omega_0|^2 \int_0^T A(\Omega(t))^3  dt + C\epsilon.
\end{split}
\]
Sending $\epsilon \to 0$ gives the result.
\end{proof}

\begin{corollary}
\label{cor:m2}
Under the assumptions of Proposition \ref{prop:m2_a},  for any $T>0$, there exists some $t_0 \in (0,T)$, such that 
\begin{equation}
\label{eq:A_small}
A(\Omega(t_0)) \leq C(\Omega_0) T^{-1/3},
\end{equation}
where $C(\Omega_0) :=\left (M_2[\chi_{\Omega_0}] /c_0|\Omega_0|^2 \right)^{1/3}$, for $c_0$ as in Proposition \ref{prop:refined}.
\end{corollary}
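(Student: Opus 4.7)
The plan is to apply a straightforward averaging argument to the integral inequality in Proposition~\ref{prop:m2_a}. First, I would observe that $M_2[\rho_\infty(T)]\geq 0$ since $\rho_\infty(\cdot,T)$ is a nonnegative measure. Combining this with Proposition~\ref{prop:m2_a} and rearranging gives
\[
c_0 |\Omega_0|^2 \int_0^T A(\Omega(t))^3\, dt \leq M_2[\rho_0] - M_2[\rho_\infty(T)] \leq M_2[\chi_{\Omega_0}].
\]

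Next, by the mean value theorem for integrals (or simply the pigeonhole observation that the average is achieved at some point, noting that $t \mapsto A(\Omega(t))^3$ is integrable on $[0,T]$), there exists $t_0 \in (0,T)$ such that
\[
A(\Omega(t_0))^3 \leq \frac{1}{T}\int_0^T A(\Omega(t))^3\, dt \leq \frac{M_2[\chi_{\Omega_0}]}{c_0 |\Omega_0|^2\, T}.
\]
Taking the cube root of both sides yields
\[
A(\Omega(t_0)) \leq \left(\frac{M_2[\chi_{\Omega_0}]}{c_0 |\Omega_0|^2}\right)^{1/3} T^{-1/3} = C(\Omega_0)\, T^{-1/3},
\]
which is the desired inequality.

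There is no real obstacle here — the corollary is a direct consequence of Proposition~\ref{prop:m2_a} combined with nonnegativity of the second moment. The only minor technical subtlety is justifying the existence of a specific $t_0$ (as opposed to merely an essential infimum), but since $A(\Omega(t))^3$ is bounded above by $1$ on $[0,T]$ and the integral bound holds, the claim follows directly from the mean value property of the Lebesgue integral.
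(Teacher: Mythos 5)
Your proof is correct and is essentially the contrapositive of the paper's argument. The paper proceeds by contradiction (assume the conclusion fails for all $t_0\in(0,T)$, plug the resulting lower bound on $\int_0^T A(\Omega(t))^3\,dt$ into Proposition~\ref{prop:m2_a} to force $M_2[\rho_\infty(T)]\leq 0$), whereas you run the same two ingredients — Proposition~\ref{prop:m2_a} and $M_2[\rho_\infty(T)]\geq 0$ — in the forward direction and finish with a pigeonhole/averaging step; the mathematical content is identical.
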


\begin{proof}
Fix  $T>0$. Towards a contradiction, assume $A(\Omega(t_0)) > C(\Omega_0) T^{-1/3}$ for all $t_0 \in (0,T)$. By Proposition \ref{prop:m2_a} and the definition of $C(\Omega_0)$,
\[
M_2[\rho_\infty(T)]  \leq M_2[\rho_0]   -c_0 |\Omega_0|^2 \int_0^T A(\Omega(t))^3 dx dt \leq M_2[\rho_0]   -c_0 |\Omega_0|^2 T (C(\Omega_0) T^{-1/3})^3 = 0,
\]
which contradicts with the fact that $M_2[\rho_\infty(t)]$ must be positive for all time.
\end{proof}

The above corollary does not directly yield that  $\lim_{t\to\infty} A(\Omega(t)) = 0$. 
To show this and conclude that $\Omega(t)$ converges to a disk, we will use the fact that the energy $E_\infty$ is decreasing in time along $\rho_\infty(\cdot, t)$. In the next lemma, we show that if $A(\Omega)$ is small, then the energy is close to its minimum.

\begin{lemma} \label{A energy estimate}
Let $\Omega \subseteq \mathbb{R}^2$ be a bounded domain, and let $B_\Omega \subseteq \mathbb{R}^2$ be a disk with $|B_\Omega| = |\Omega|$. Then,
\begin{equation*}
0\leq E_\infty(\chi_\Omega) - E_\infty(\chi_{B_\Omega}) \leq 40   |\Omega| (1+|\Omega|+M_2[\chi_\Omega]) \sqrt{A(\Omega)}
\end{equation*}
\end{lemma}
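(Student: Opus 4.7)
The lower bound $E_\infty(\chi_\Omega)\geq E_\infty(\chi_{B_\Omega})$ follows from the Riesz rearrangement inequality \cite[Theorem 3.7]{LiebLoss}: since $-\log|\cdot|$ is radially symmetric decreasing, replacing $\chi_\Omega$ by its symmetric decreasing rearrangement (the characteristic function of a disk of the same area) can only decrease the double integral $\iint \chi_\Omega(x)\chi_\Omega(y)\log|x-y|\,dx\,dy$. Translation invariance of $E_\infty$ gives the result for any choice of $B_\Omega$.

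For the upper bound, translation invariance of $E_\infty$ lets me replace $B_\Omega$ by a near-optimal disk $B$ for the Fraenkel asymmetry, with $|\Omega\triangle B|\leq A(\Omega)|\Omega|+\varepsilon$ and $\varepsilon\to 0$ at the end. Expanding the quadratic form and using symmetry of $\mathbf{N}$ yields the key identity
\[
2\bigl(E_\infty(\chi_\Omega)-E_\infty(\chi_B)\bigr)=\int(\chi_\Omega-\chi_B)\,\mathbf{N}(\chi_\Omega+\chi_B)\,dx,
\]
and since $|\chi_\Omega-\chi_B|\leq \mathbf{1}_{\Omega\triangle B}$, the task reduces to bounding $\int_{\Omega\triangle B}|\mathbf{N}(\chi_\Omega+\chi_B)|\,dx$.

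To bound $|\mathbf{N}(\chi_\Omega+\chi_B)(x)|$ pointwise I split the defining integral $\int|\log|x-y||(\chi_\Omega+\chi_B)(y)\,dy$ into a local part on $|x-y|\leq 1$, controlled by $\int_{B_1(0)}|\log|z||\,dz=\pi/2$ (giving a universal constant, since $\chi_\Omega+\chi_B\leq 2$), and a tail $|x-y|>1$, where $\log|x-y|\leq|x-y|\leq|x|+|y|$. Applying Cauchy-Schwarz via $\int|y|\chi_E\,dy\leq|E|^{1/2}M_2[\chi_E]^{1/2}$ converts the tail term into
\[
|\mathbf{N}(\chi_\Omega+\chi_B)(x)|\leq C_1+C_2|x||\Omega|+C_3|\Omega|^{1/2}\bigl(M_2[\chi_\Omega]^{1/2}+M_2[\chi_B]^{1/2}\bigr).
\]
Integrating over $\Omega\triangle B$ and applying Cauchy-Schwarz again to $\int_{\Omega\triangle B}|x|\,dx\leq|\Omega\triangle B|^{1/2}(M_2[\chi_\Omega]+M_2[\chi_B])^{1/2}$ produces a bound whose weakest power of the symmetric difference is $|\Omega\triangle B|^{1/2}=|\Omega|^{1/2}\sqrt{A(\Omega)}$, which is the source of the $\sqrt{A(\Omega)}$ in the statement.

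The main obstacle is that $M_2[\chi_B]$ appears on the right-hand side. I would handle it by a short case split. First, by translating $\Omega$ so that its center of mass lies at the origin---an operation that leaves the left-hand side of the lemma invariant and only decreases the quantity $M_2[\chi_\Omega]$ on the right---I may assume $\int_\Omega y\,dy=0$. If $A(\Omega)\leq 1/2$, writing $c$ for the center of $B$ and using
\[
c|\Omega|=\int_B y\,dy=-\int_{\Omega\setminus B}y\,dy+\int_{B\setminus\Omega}y\,dy
\]
together with Cauchy-Schwarz and the explicit formula $M_2[\chi_B]=|\Omega|^2/(2\pi)+|c|^2|\Omega|$, one can solve the resulting inequality for $|c|$ to obtain $M_2[\chi_B]\leq C(|\Omega|^2+M_2[\chi_\Omega])$; substituting back gives the claimed bound. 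If $A(\Omega)>1/2$, then $\sqrt{A(\Omega)}>1/\sqrt{2}$ is already large enough that a direct estimate of $|E_\infty(\chi_\Omega)|$ and $|E_\infty(\chi_{B_\Omega})|$, obtained by the same local/tail splitting applied to the defining double integral (choosing $B_\Omega$ to be a disk centered at the origin so that $M_2[\chi_{B_\Omega}]=|\Omega|^2/(2\pi)$), suffices. Tracking constants through both cases yields the explicit prefactor $40$.
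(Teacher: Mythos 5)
Your proposal is correct and follows the same overall architecture as the paper: Riesz rearrangement for the lower bound, the quadratic identity $2(E_\infty(\chi_\Omega)-E_\infty(\chi_B))=\int(\chi_\Omega-\chi_B)\,\mathbf{N}(\chi_\Omega+\chi_B)$, a near/far decomposition of the logarithmic kernel at $|x-y|=1$, Cauchy--Schwarz to extract the $\sqrt{A(\Omega)}$ from $|\Omega\triangle B|^{1/2}$, and a case split at $A(\Omega)=1/2$ to control $M_2[\chi_{B_\Omega}]$. The one genuinely different ingredient is how you bound $M_2[\chi_{B_\Omega}]$ in the small-asymmetry case: the paper argues geometrically without any translation, using that $|B_\Omega\cap\Omega|>|B_\Omega|/2$ and the inequality $|x|^2\leq 2|y|^2+8|\Omega|/\pi$ for $x,y\in B_\Omega$, together with the sharp lower bound $M_2[\chi_\Omega]\geq|\Omega|^2/(2\pi)$, to obtain $M_2[\chi_{B_\Omega}]\leq 36M_2[\chi_\Omega]$. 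You instead normalize $\Omega$ so its barycenter is at the origin (a reduction that is sound, since it leaves the left side and $A(\Omega)$ invariant and only shrinks $M_2[\chi_\Omega]$, making the target inequality stronger) and then estimate the offset $|c|$ of the disk center from $c|\Omega|=\int_{B\setminus\Omega}y-\int_{\Omega\setminus B}y$ together with the parallel-axis identity $M_2[\chi_B]=|\Omega|^2/(2\pi)+|c|^2|\Omega|$; chasing the resulting quadratic inequality in $|c|$ (using $|\Omega\triangle B|\leq|\Omega|/2$) yields $M_2[\chi_B]\leq 3M_2[\chi_\Omega]$, which is actually a sharper constant than the paper's $36$. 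The other distinction--you bound $|\mathbf{N}(\chi_\Omega+\chi_B)|$ pointwise and then integrate over $\Omega\triangle B$, whereas the paper applies Cauchy--Schwarz directly to the double integral--is a cosmetic reorganization of the same estimates. Both routes would deliver the stated bound, though you would still need to track the numerical prefactor (the paper's being $40$) through the two cases.
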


\begin{proof}
The first inequality is a direct consequence of Riesz rearrangement inequality \cite[Theorem 3.7]{LiebLoss}. To prove the second one, let us first rewrite $E_\infty(\chi_\Omega) - E_\infty(\chi_{B_\Omega})$ as
\[
\begin{split}
E_\infty(\chi_\Omega) - E_\infty(\chi_{B_\Omega}) &= \frac{1}{2\pi} \iint_{\mathbb{R}^2\times \mathbb{R}^2} (\chi_\Omega(x) - \chi_{B_\Omega}(x))(\chi_\Omega(y)+ \chi_{B_\Omega}(y)) \log|x-y| dx dy\\
&=: \frac{1}{2\pi} I_1+   \frac{1}{2\pi} I_2,
\end{split}
\]
where $I_1$ and $ I_2$ denote the integral in the domains $|x-y|\leq 1$ and $|x-y|> 1$, respectively. 

First, we consider $I_1$. Note that for any $x\in \mathbb{R}^2$, we have 
\[
\left| \int_{y\in B(x,1)} (\chi_\Omega(y)+\chi_{B_\Omega}(y)) \log|x-y| dy\right| \leq 2\left| \int_{|x-y|\leq 1}  \log|x-y| dy\right| = \pi,
\]
hence 
\begin{equation*}
I_1 \leq \|\chi_\Omega - \chi_{B_\Omega}\|_1  \left\| \int_{y\in B(x,1)} (\chi_\Omega(y)+\chi_{B_\Omega(y)}) \log|x-y| dy\right\|_\infty \leq \pi \|\chi_\Omega - \chi_{B_\Omega}\|_1.
\end{equation*}

Now, we consider $I_2$. For $|x-y|>1$, $\log |x-y| \leq |x-y| \leq |x|+|y| \leq (1+|x|)(1+|y|)$, so
\begin{equation*}
\begin{split}
I_2 &\leq \left(\int_{\mathbb{R}^2} |\chi_\Omega(x) - \chi_{B_\Omega}(x)|(1+|x|) dx\right) \left( \int_{\mathbb{R}^2} (\chi_\Omega(y) + \chi_{B_\Omega}(y))(1+|y|) dy \right)\\
&\leq \|\chi_\Omega - \chi_{\Omega_B}\|_1^{1/2} \left(\int_{\mathbb{R}^2}  |\chi_\Omega(x) - \chi_{B_\Omega}(x)|(1+|x|)^2 dx\right)^{1/2} \\  
&\quad\cdot (2|\Omega|)^{1/2} \left(\int_{\mathbb{R}^2}  |\chi_\Omega(y) + \chi_{B_\Omega}(y)|(1+|y|)^2 dy\right)^{1/2}\\
&\leq 2\sqrt{2}|\Omega|^{1/2} \|\chi_\Omega - \chi_{\Omega_B}\|_1^{1/2} (M_2[\chi_\Omega] + M_2[\chi_{B_\Omega}] + 2|\Omega|).
\end{split}
\end{equation*}
Combining the above estimates on $I_1$ and $I_2$ with the facts that $\|\chi_\Omega - \chi_{B_\Omega}\|_1 \leq 2|\Omega|$ and $1/ \pi \leq 1$, we have
\begin{equation}
\label{eq:Ediff}
E_\infty(\chi_\Omega) - E_\infty(\chi_{B_\Omega})\leq  \frac{1}{2\pi} I_1+   \frac{1}{2\pi} I_2 \leq |\Omega|^{1/2} \|\chi_\Omega - \chi_{B_\Omega}\|_1^{1/2} (1+M_2[\chi_\Omega] + M_2[\chi_{B_\Omega}]+2|\Omega|).
\end{equation}

The proof is then split into the following two cases: $A(\Omega) \geq 1/2$ and $A(\Omega) < 1/2$.

 \emph{Case 1: $A(\Omega) \geq 1/2$.} In this case, we have 
$ \|\chi_\Omega - \chi_{B_\Omega}\|_1 \leq 2|\Omega| \leq 4A(\Omega)|\Omega|$ for any disk $B_\Omega$ with the same measure as $\Omega$. Since $E_\infty$ is invariant under translations, we can simply choose $B_\Omega$ to be centered at 0. Such a choice directly yields $M_2[\chi_{B_\Omega}] \leq M_2[\chi_\Omega]$, hence  \eqref{eq:Ediff} becomes 
\[
E_\infty(\chi_\Omega) - E_\infty(\chi_{B_\Omega}) \leq|\Omega|^{1/2} (4A(\Omega)|\Omega|)^{1/2} (1+2|\Omega|+2M_2[\chi_\Omega] )\leq  4|\Omega|(1+|\Omega|+M_2[\chi_\Omega]) \sqrt{A(\Omega)},
\]
which gives the result.

 \emph{Case 2: $A(\Omega) < 1/2$.} In this case, we choose $B_\Omega$ to be the disk minimizing $|\Omega \triangle B_\Omega|$, which then gives 
\begin{equation}
\label{eq:l1diff}
\|\chi_\Omega - \chi_{B_\Omega}\|_1 = A(\Omega) |\Omega|.
\end{equation}
This choice of $B_\Omega$ no longer directly gives us $M_2[\chi_{B_\Omega}]\leq M_2[\chi_\Omega]$, but we claim that we still have $M_2[\chi_{B_\Omega}] \leq 36 M_2[\chi_\Omega]$.
To see this, first note that $A(\Omega)< 1/2$ implies $|B_\Omega \setminus \Omega| <  |B_\Omega|/2$. Also, a simple computation yields that for any $x,y \in B_\Omega$, we have $|x|^2 \leq (|y|+|x-y|)^2 \leq 2|y|^2 + 2|x-y|^2 \leq 2|y|^2 + 8|\Omega|/\pi$. Therefore,
\begin{align*}
& M_2[\chi_{B_\Omega}] = \int_{B_\Omega}|x|^2 dx \leq |B_\Omega| \max_{x\in B_\Omega} |x|^2 \leq 2|B_\Omega \cap \Omega|  \max_{x\in B_\Omega} |x|^2 \leq 2\int_{B_\Omega \cap \Omega}\left(2|y|^2 + \frac{8}{\pi}|\Omega|\right) dy \nonumber \\
& \leq 4M_2[\chi_\Omega] + \frac{16}{\pi} |\Omega|^2 \leq 4M_2[\chi_\Omega] + 32 \left( \frac{|\Omega|^2}{2 \pi} \right) \leq 4M_2[\chi_\Omega] + 32 \left( \int_0^{\sqrt{|\Omega|/\pi}} r^2 \cdot 2\pi r dr \right) \leq 36 M_2[\chi_\Omega], \label{eq:m2_b}
\end{align*}
Combining this and equation \eqref{eq:l1diff} with inequality \eqref{eq:Ediff} then yields 
\[
E_\infty(\chi_\Omega) - E_\infty(\chi_{B_\Omega}) \leq  |\Omega| (1+37 M_2[\chi_\Omega]+2|\Omega|) A(\Omega)^{1/2},
\]
which completes the proof.
\end{proof}

Combining the above results, we are now able to show that, along the solution $\rho_\infty(t)$, the energy functional $E_\infty$ is converging towards its global minimizer with an explicit rate.
\begin{theorem}
\label{thm:E_conv}
Let $\Omega_0 \subseteq \mathbb{R}^2$ be a bounded domain with Lipschitz boundary, and let $\rho_\infty(\cdot, t) = \chi_{\Omega(t)}$ be the gradient flow of $E_\infty$ with initial data $\rho_0 = \chi_{\Omega_0}$.
Suppose $B_0$ is a disk with the same area as $\Omega_0$.
 Then, for any $t>0$, we have
\begin{equation*}
0\leq E_\infty(\chi_{\Omega(t)}) - E_\infty(\chi_{B_0}) \leq C_1(|\Omega_0|, M_2[\Omega_0]) t^{-1/6},
\end{equation*}
where $C_1(|\Omega_0|, M_2[\Omega_0]) = C_2 |\Omega|^{2/3}(|\Omega_0|+M_2[\Omega_0])^{7/6}$ and $C_2$ is a universal constant.
\end{theorem}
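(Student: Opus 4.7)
The plan is to combine three ingredients already established: (i) monotonicity of the energy $E_\infty$ along the gradient flow $\rho_\infty(\cdot,t)$, which follows from the EVI inequality \eqref{continuous evi} in Theorem \ref{grad_flow_well_posed} (take the competitor $\nu = \rho_\infty(s)$ for $s$ close to $t$); (ii) Corollary \ref{cor:m2}, which guarantees that on any time interval $(0,T)$ there exists $t_0$ at which $A(\Omega(t_0))\leq C(\Omega_0)T^{-1/3}$; and (iii) Lemma \ref{A energy estimate}, which controls $E_\infty(\chi_\Omega)-E_\infty(\chi_{B_\Omega})$ in terms of $\sqrt{A(\Omega)}$. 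The lower bound $0\leq E_\infty(\chi_{\Omega(t)})-E_\infty(\chi_{B_0})$ is immediate from the Riesz rearrangement inequality (as noted in Lemma \ref{A energy estimate}), so only the upper bound requires work.

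Fix $t>0$ and apply Corollary \ref{cor:m2} with $T=t$ to select $t_0\in(0,t)$ with $A(\Omega(t_0))\leq C(\Omega_0)\,t^{-1/3}$, where $C(\Omega_0)=(M_2[\chi_{\Omega_0}]/c_0|\Omega_0|^2)^{1/3}$. By mass conservation $|\Omega(t_0)|=|\Omega_0|$, and by Proposition \ref{prop:m2_a} the second moment is non-increasing, so $M_2[\chi_{\Omega(t_0)}]\leq M_2[\chi_{\Omega_0}]$. Lemma \ref{A energy estimate}, together with the translation invariance of $E_\infty$ (which allows us to replace the disk selected in the lemma by $B_0$), then gives
\[
E_\infty(\chi_{\Omega(t_0)})-E_\infty(\chi_{B_0}) \leq 40\,|\Omega_0|\bigl(1+|\Omega_0|+M_2[\chi_{\Omega_0}]\bigr)\sqrt{A(\Omega(t_0))}.
\]
Substituting the asymmetry bound yields
\[
E_\infty(\chi_{\Omega(t_0)})-E_\infty(\chi_{B_0}) \leq \frac{40}{c_0^{1/6}}\,|\Omega_0|^{2/3}\bigl(1+|\Omega_0|+M_2[\chi_{\Omega_0}]\bigr)M_2[\chi_{\Omega_0}]^{1/6}\,t^{-1/6},
\]
which I bound by $C_2|\Omega_0|^{2/3}(|\Omega_0|+M_2[\chi_{\Omega_0}])^{7/6}\,t^{-1/6}$ by absorbing the ``$1+$'' into the prefactor (replacing the constant $40/c_0^{1/6}$ by a universal $C_2$ chosen so that $1+|\Omega_0|+M_2[\chi_{\Omega_0}]$ is dominated, in the relevant regime, by a constant multiple of $(|\Omega_0|+M_2[\chi_{\Omega_0}])$).

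To pass from the auxiliary time $t_0$ to the arbitrary time $t$ of interest, I invoke the energy monotonicity: since $t_0<t$, the EVI inequality forces $E_\infty(\chi_{\Omega(t)})\leq E_\infty(\chi_{\Omega(t_0)})$. Chaining this with the previous display produces the stated bound at time $t$. I anticipate no serious obstacle: each of the three ingredients has already been developed in Sections \ref{evolution of second moment}--\ref{some rearrangement inequalities}, and the only mild bookkeeping is the constant-tracking for $C_1$. The conceptual key is that Corollary \ref{cor:m2} only locates a \emph{single} good time $t_0$ somewhere in $(0,t)$ rather than at the endpoint, and it is precisely the monotonicity of $E_\infty$ along the gradient flow that upgrades this pointwise-in-$t_0$ estimate into the desired estimate at the terminal time $t$.
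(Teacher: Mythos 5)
Your proposal is correct and follows essentially the same route as the paper: find a good time $t_0\in(0,t)$ via Corollary~\ref{cor:m2}, control $E_\infty(\chi_{\Omega(t_0)})-E_\infty(\chi_{B_0})$ via Lemma~\ref{A energy estimate} (using mass conservation and the second-moment monotonicity from Proposition~\ref{prop:m2_a}), and upgrade to time $t$ by monotonicity of $E_\infty$ along the flow. The only cosmetic difference is that the paper justifies the energy monotonicity by the discrete gradient flow construction together with lower semicontinuity of $E_\infty$, whereas you invoke the EVI inequality; the latter works but the discrete-scheme argument is the cleaner route here, and both deliver the same fact.
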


\begin{proof}
By Corollary \ref{cor:m2}, we have that, for any $t>0$, there exists some $t_0 \in (0,t)$, so that 
\begin{equation*}
A(\Omega(t_0)) \leq\left ( M_2[\chi_{\Omega_0}] / c_0|\Omega_0|^2 \right)^{1/3} t^{-1/3}.
\end{equation*}
By definition of the discrete gradient flow and the lower semicontinuity of $E_\infty$, $E_\infty(\rho_\infty(t))$ is nonincreasing in time. Therefore, at time $t$, we may apply Lemma \ref{A energy estimate} to conclude
\[
\begin{split}
E_\infty(\chi_{\Omega(t)}) - E_\infty(\chi_{B_0}) &\leq E_\infty(\chi_{\Omega(t_0)}) - E_\infty(\chi_{B_0}) \leq 40   |\Omega(t_0)| ( 1 +|\Omega_0|+M_2[\chi_\Omega(t_0)]) \sqrt{A(\Omega(t_0))}\\
&\leq 40 |\Omega_0| ( 1 + |\Omega_0|+M_2[\chi_{\Omega_0}])  \left (M_2[\chi_{\Omega_0}] / c_0|\Omega_0|^2\right)^{1/6} t^{-1/6}\\
&\leq C_2 |\Omega_0|^{2/3}( 1 +|\Omega_0|+M_2[\Omega_0])^{7/6} t^{-1/6}.
\end{split}
\]

\end{proof}

\begin{remark}
While the rate in Theorem \ref{thm:E_conv} is probably not optimal, the following example shows that the optimal power cannot go beneath $-1$. For $0< \epsilon \ll 1$, let $\Omega_0^\epsilon = B(x_\epsilon, \epsilon)\cup B(0,R_\epsilon)$, where $x_\epsilon := (\epsilon^{-1}, 0) \in \mathbb{R}^2$, and $R_\epsilon := \sqrt{1-\epsilon^2}$ is chosen such that $|\Omega_0^\epsilon|= \pi$. This definition ensures that $M_2[\Omega_0^\epsilon]$ is uniformly bounded for all $\epsilon<1$. Since $\partial_r (\mathcal{N}*\chi_{B(0,1)})(r) \sim r^{-1}$ for $r\gg 1$, the extra $\pi\epsilon^2$ amount of mass will stay outside $B(0,(2\epsilon)^{-1})$ for all $t\in [0,c_1 \epsilon^{-2}]$, where $c_1>0$ is independent of $\epsilon$. During this time interval, the free energy is at least $c_2\epsilon^2 |\log \epsilon|$ greater than its global minimizer for some $c_2>0$. Hence $E_\infty(\chi_{\Omega^\epsilon(T_\epsilon)}) - E_\infty(\chi_{B(0,1)}) \gtrsim T_\epsilon^{-1} |\log T_\epsilon|$ for $T_\epsilon = c_1 \epsilon^{-2}$, implying that the optimal power of $t$ in Theorem \ref{thm:E_conv} cannot be less than $-1$.
\end{remark}

\subsection{Convergence of $\rho_\infty(t)$ as $t\to\infty$} \label{Convergence of rhoinfty}
We now conclude our study of asymptotic behavior by showing  that, as $t\to\infty$, $\rho_\infty(t)$ converges to $\chi_{B_0}$ in $L^q$ for any $1\leq q<\infty$, where $B_0$ is the disk with the same area and the center of mass as $\Omega_0$. We begin with the following lemma, which ensures that the center of mass of $\rho_\infty(t)$ is preserved for all time.

\begin{lemma}
\label{lem:center}
Let $\Omega_0 \subseteq \mathbb{R}^2$ be a bounded domain with Lipschitz boundary, and let $\rho_\infty(\cdot, t) = \chi_{\Omega(t)}$ be the gradient flow of $E_\infty$ with initial data $\rho_0 = \chi_{\Omega_0}$. Then for any $T>0$, we have $\int_{\mathbb{R}^2} \rho_\infty(x,T) x dx = \int_{\mathbb{R}^2} \rho_\infty(x,0) x dx$.  
\end{lemma}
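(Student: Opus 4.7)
The plan is to follow the same template as Proposition \ref{prop:dm2_dt}, but with the linear test function $x_i$ (for each coordinate $i = 1, 2$) in place of $|x|^2$. The key algebraic simplification is that $\Delta x_i = 0$, so the diffusion term drops out entirely after integration by parts, leaving only the drift contribution, and that $\nabla \mathbf{N}\rho_\infty$ is antisymmetric in a form that makes the remaining integral vanish by symmetrization.

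Concretely, first I would let $\rho_m$ be the weak solution of \ref{pme} with initial data $\bigl(\tfrac{m}{m-1} p_0\bigr)^{1/(m-1)}$, as in Proposition \ref{prop:dm2_dt}. Lemma \ref{compact_spt} provides a uniform-in-$m$ compact support for $\rho_m$ on $[0,T]$, so multiplying $x_i$ by a smooth spatial cutoff that equals $1$ on every support $\operatorname{supp} \rho_m(\cdot,t)$ for $t \in [0,T]$ gives a legitimate test function. Testing the weak form of \ref{pme} against it, and integrating the diffusion term by parts twice using $\Delta x_i = 0$, yields
\begin{equation*}
\int_{\mathbb{R}^2} \rho_m(x,T)\, x_i\, dx - \int_{\mathbb{R}^2} \rho_m(x,0)\, x_i\, dx = -\int_0^T\!\!\int_{\mathbb{R}^2} \rho_m\, \partial_i \Phi_{1/m}\, dx\, dt.
\end{equation*}

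The limit $m \to \infty$ of the left hand side is $\int \rho_\infty(x,T) x_i\, dx - \int \rho_0\, x_i\, dx$ by Lemma \ref{lem:weak_conv} (with the linear weight $x_i$ allowed by the uniform compact support). For the right hand side I would split $\Phi_{1/m} = \Phi + (\Phi_{1/m} - \Phi)$ as in Proposition \ref{prop:dm2_dt}. The mollification error is controlled by Cauchy--Schwarz together with the $L^2$ estimate (\ref{eq:l2diff}) and the uniform $L^2$ bound on $\rho_m$ from Lemma \ref{compact_spt}, so it goes to zero. The main term $\int_0^T \!\int \rho_m\, \partial_i \Phi\, dx\, dt$ converges to $\int_0^T \!\int \rho_\infty\, \partial_i \mathbf{N}\rho_\infty\, dx\, dt$ by Lemma \ref{lem:weak_conv} and dominated convergence in $t$ (using the uniform bound $\|\nabla \mathbf{N}\rho_\infty\|_\infty \le C_d$ from Proposition \ref{N mu bounds}).

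Finally, for each $t$, antisymmetrizing in $x$ and $y$ gives
\begin{equation*}
\int_{\mathbb{R}^2} \rho_\infty(x,t)\, \partial_i \mathbf{N}\rho_\infty(x,t)\, dx = \frac{1}{2\pi} \iint \rho_\infty(x)\rho_\infty(y)\, \frac{x_i - y_i}{|x-y|^2}\, dy\, dx = 0,
\end{equation*}
so the entire right hand side vanishes in the limit, proving the claim. I do not expect any serious obstacle: the proof is strictly simpler than Proposition \ref{prop:dm2_dt} because $\Delta x_i = 0$ removes the pressure-type term that previously required introducing the half-relaxed limit $u_1$. The only point that requires minor care is justifying the use of the unbounded test function $x_i$ in the weak formulations, which is handled entirely by the uniform compact support provided by Lemma \ref{compact_spt}.
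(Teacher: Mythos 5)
Your proposal is correct and follows the paper's proof essentially verbatim: test the weak formulation of \ref{pme} against $x_i$ (the diffusion term vanishes since $\Delta x_i = 0$), pass to the limit $m\to\infty$ exactly as for the term $I_3$ in Proposition~\ref{prop:dm2_dt}, and observe that the limiting drift integral vanishes by antisymmetrizing the kernel $\frac{x_i - y_i}{|x-y|^2}$. The only cosmetic discrepancy is that you write the initial data as $(\tfrac{m}{m-1}p_0)^{1/(m-1)}$ while the paper's proof of this lemma writes $(\tfrac{m-1}{m}p_0)^{1/(m-1)}$; the paper itself uses both conventions in different places, so this does not affect the argument.
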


\begin{proof}
We proceed as in the proof of Proposition \ref{prop:dm2_dt}.
For any $m>1$, let $\rho_m$ be the weak solution of \ref{pme} with initial data $(\frac{m-1}{m}p_0)^{1/(m-1)}$, where $p_0$ is as in equation \ref{initial}.  For $i=1$ or $2$, we take our test function to be $x_i$, the $i$-th component of $x$. Then, for any $T>0$,
\begin{equation}
\label{eq:center}
\int_{\mathbb{R}^2} \rho_m(x,T) x_i dx - \int_{\mathbb{R}^2} \rho_m(x,0) x_i dx = - \int_0^T \int_{\mathbb{R}^2} \rho_m \partial_i \Phi_{1/m}(x,t)  dxdt.
\end{equation}
By Lemma \ref{lem:weak_conv}, the left hand side of \eqref{eq:center} converges to $\int_{\mathbb{R}^2} \rho_\infty(x,T) x_i dx - \int_{\mathbb{R}^2} \rho_\infty(x,0) x_i dx$ as $m\to\infty$. The right hand side can be controlled in the same way as the term $I_3$ in the proof of Proposition \ref{prop:dm2_dt}, which gives
\[
\lim_{m\to\infty}
\int_{\mathbb{R}^2} \rho_m \partial_i \Phi_{1/m}(x,t)  dx = \int_{\mathbb{R}^2} \rho_\infty \partial_i \Phi (x,t)  dx = \frac{1}{2\pi} \iint_{\mathbb{R}^2\times \mathbb{R}^2} \rho_\infty(x) \rho_\infty(y) \frac{x_i-y_i}{|x-y|^2} dx dy = 0.
\]
Hence, sending $m\to\infty$ in \eqref{eq:center}, we have $\int_{\mathbb{R}^2} \rho_\infty(x,T) x_i dx = \int_{\mathbb{R}^2} \rho_\infty(x,0) x_i dx$ for $i=1,2$, which finishes the proof.
\end{proof}

With this control on the center of mass of $\rho_\infty(x,t)$ in hand, we now turn to the proof of the main result. 

\begin{theorem}\label{long_time}
Let $\Omega_0 \subseteq \mathbb{R}^2$ be a bounded domain with Lipschitz boundary, and let $B_0 \subseteq \mathbb{R}^2$ be a disk such that $|B_0| = |\Omega_0|$ and $\int_{B_0} x dx= \int_{\Omega_0}x dx$. Let $\rho_\infty(\cdot, t) = \chi_{\Omega(t)}$ be the gradient flow of $E_\infty$ with initial data $ \chi_{\Omega_0}$. Then for any $1\leq q< +\infty$, we have 
\[
\lim_{t\to\infty}\|\rho_\infty(\cdot, t) - \chi_{B_0}\|_{L^q(\mathbb{R}^2)} = 0.
\]
\end{theorem}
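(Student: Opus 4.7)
The strategy combines the energy dissipation from Theorem~\ref{thm:E_conv} with a compactness/identification argument, then exploits the fact that both densities are characteristic functions to upgrade weak convergence to strong $L^q$ convergence.

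First I would establish compactness. The family $\{\rho_\infty(\cdot,t)\}_{t\geq 0}$ has $\|\rho_\infty(\cdot,t)\|_\infty\leq 1$, constant mass $|\Omega_0|$, center of mass conserved by Lemma~\ref{lem:center}, and nonincreasing second moment by Proposition~\ref{prop:m2_a}. Hence the family is tight and bounded in $\mathcal{P}_{2,|\Omega_0|}(\mathbb{R}^2)$, so along any sequence $t_n\to\infty$ one can extract a subsequence with $\rho_\infty(\cdot,t_n)\to\rho_*$ in $W_2$, where $\|\rho_*\|_\infty\leq 1$, $\int\rho_*\,dx=|\Omega_0|$, and $\int x\,\rho_*\,dx=\int x\,\chi_{\Omega_0}\,dx=\int x\,\chi_{B_0}\,dx$.

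Next I would identify the limit. Lower semicontinuity of $E_\infty$ under $W_2$ convergence (which holds on the class $\|\rho\|_\infty\leq 1$ by the $\omega$-convexity and lower semicontinuity framework in section~\ref{Wasserstein intro section}) together with Theorem~\ref{thm:E_conv} yields
\[
E_\infty(\rho_*)\leq \liminf_{n\to\infty} E_\infty(\rho_\infty(\cdot,t_n)) = E_\infty(\chi_{B_0}).
\]
As recalled in the discussion preceding Theorem~\ref{main2}, the Riesz rearrangement inequality restricted to densities with $\|\rho\|_\infty\leq 1$ and fixed mass shows that $\chi_{B_0}$ is, up to translation, the unique global minimizer of $E_\infty$ on this class. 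Hence $\rho_* = \chi_{B'}$ for some disk $B'$ of area $|\Omega_0|$, and preservation of the center of mass (passed to the limit by continuity of $\int x\,d\rho$ against $W_2$-convergence, since $|x|$ has at most quadratic growth) pins down $B'=B_0$.

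Since the limit does not depend on the subsequence, the full family $\rho_\infty(\cdot,t)$ converges to $\chi_{B_0}$ in $W_2$ as $t\to\infty$. I would then upgrade to $L^q$ convergence as follows. Narrow convergence combined with the uniform $L^\infty$ bound gives weak convergence $\rho_\infty(\cdot,t)\rightharpoonup \chi_{B_0}$ in $L^2(\mathbb{R}^2)$. Because $\|\rho_\infty(\cdot,t)\|_{L^2}^2=|\Omega(t)|=|\Omega_0|=\|\chi_{B_0}\|_{L^2}^2$ for all $t$, weak convergence together with convergence of norms in the Hilbert space $L^2$ upgrades to strong $L^2$ convergence. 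Since $\rho_\infty(\cdot,t)$ and $\chi_{B_0}$ only take values in $\{0,1\}$, their pointwise difference lies in $\{-1,0,1\}$, so
\[
\|\rho_\infty(\cdot,t)-\chi_{B_0}\|_{L^q(\mathbb{R}^2)}^q = |\Omega(t)\triangle B_0| = \|\rho_\infty(\cdot,t)-\chi_{B_0}\|_{L^2(\mathbb{R}^2)}^2,
\]
and vanishing of the right-hand side yields vanishing of the left for every $1\leq q<\infty$.

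The main obstacle is the identification step: one must verify that $E_\infty$ is lower semicontinuous under $W_2$ convergence on the class $\{\|\rho\|_\infty\leq 1,\ \int\rho=|\Omega_0|,\ M_2[\rho]\leq M_2[\chi_{\Omega_0}]\}$, despite the fact that the two-dimensional kernel $\frac{1}{2\pi}\log|x|$ is unbounded both near the origin and at infinity. The singularity near $0$ is controlled by the uniform $L^\infty$ bound (since $\log|x|$ is locally integrable), and the logarithmic growth at infinity is controlled by the uniform second moment via the crude bound $\log|x-y|\leq|x-y|\leq(1+|x|)(1+|y|)$; splitting the double integral accordingly and passing to the limit should yield the required lower semicontinuity. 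A companion issue is to give a clean justification that the Riesz-type minimization characterization on this constrained class has $\chi_{B_0}$ as the unique minimizer (up to translation), which follows from the strict form of the rearrangement inequality together with a bath-tub argument ruling out any density other than a characteristic function of a ball.
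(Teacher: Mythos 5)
Your argument follows essentially the same strategy as the paper's: extract a limit along a diverging time sequence using tightness from the second-moment bound, identify the limit as $\chi_{B_0}$ via lower semicontinuity of $E_\infty$, Theorem~\ref{thm:E_conv}, the rearrangement characterization of minimizers, and the conserved center of mass from Lemma~\ref{lem:center}, and then upgrade to strong convergence. There is, however, one genuine technical slip in the extraction step, and one place where you take a different (and arguably cleaner) route than the paper for the upgrade.

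The slip: you claim that tightness together with a uniform bound on the second moment yields a subsequence converging in $W_2$. This is not true in general. A uniform bound on $M_2[\rho_n]$ gives tightness (hence narrow precompactness by Prokhorov) but does not give uniform integrability of the second moments, so $W_2$ convergence can fail even though narrow convergence holds. (Think of a small blob of mass escaping to $\sim\sqrt{n}$ with mass $1/n$: the second moment stays bounded but does not pass to the limit, so $W_2$ does not converge.) The paper avoids this by extracting only a \emph{narrowly} convergent subsequence and invoking lower semicontinuity of $E_\infty$ with respect to narrow (weak-$*$) convergence, citing \cite[Proposition~4.5]{CraigOmega}. Your argument survives the fix: with narrow convergence plus the uniform second-moment bound, the first moments are uniformly integrable, so the center of mass still passes to the limit, and the identification of $\rho_*$ as $\chi_{B_0}$ proceeds as you wrote it.

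Where you differ from the paper, correctly: in passing from weak to strong $L^q$ convergence. You exploit the patch structure $\rho_\infty(\cdot,t)=\chi_{\Omega(t)}$ from Theorem~\ref{first main theorem} to note that $\|\rho_\infty(\cdot,t)\|_{L^2}^2=|\Omega(t)|=|\Omega_0|=\|\chi_{B_0}\|_{L^2}^2$, so weak $L^2$ convergence plus constant norms upgrades to strong $L^2$ convergence by Radon--Riesz, and then the fact that the difference takes values in $\{-1,0,1\}$ gives $\|\rho_\infty-\chi_{B_0}\|_{L^q}^q=|\Omega(t)\triangle B_0|$ for every $q$. The paper instead uses only $0\leq\rho_\infty\leq1$ (not the stronger patch structure), observing that $\rho_\infty\leq\chi_{B_0}$ on $B_0$ and $\rho_\infty\geq\chi_{B_0}$ on $B_0^c$, so $\|\rho_\infty-\chi_{B_0}\|_{L^1}=2\int(\chi_{B_0}-\rho_\infty)\chi_{B_0}\,dx$ goes to zero by testing the narrow limit against approximations of $\chi_{B_0}$, and then interpolates with the $L^\infty$ bound. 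Your Radon--Riesz trick is slightly slicker but relies on the patch characterization; the paper's argument is marginally more robust in that it would extend verbatim if one only knew $0\leq\rho_\infty\leq1$ rather than $\rho_\infty\in\{0,1\}$.
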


\begin{proof}
We first show that, for any $f \in C_b(\mathbb{R}^2)$, the space of bounded, continuous functions,
\begin{equation}
\label{eq:weak_conv_f}
\lim_{t\to\infty}\int_{\mathbb{R}^2} \rho_\infty(x,t) f(x) dx = \int_{\mathbb{R}^2} \chi_{B_0} f(x) dx.
\end{equation}
To show this,  take any diverging time sequence $(t_n)_{n=1}^\infty$.
By Proposition \ref{prop:m2_a}, $M_2[\rho_\infty(t_n)]$ is uniformly bounded for all $n$. Hence by Prokhorov's Theorem \cite[Theorem 5.1.3]{AGS}, there exists a subsequence $(t_{n_k})_{k=1}^\infty$ and $\mu \in L^1_+((1+|x|)^2 dx)$ so that 
\[
\lim_{k\to\infty }\int_{\mathbb{R}^2} \rho_\infty(x,t_{n_k}) f(x) dx = \int_{\mathbb{R}^2} \mu(x) f(x) dx\]
for all $f \in C_b(\mathbb{R}^2)$.
Choosing suitable test functions $f$, we have $\int \mu dx = |\Omega_0|$ and  $\|\mu\|_\infty \leq \sup_{t\geq 0} \|\rho_\infty(\cdot, t)\|_\infty = 1$. In addition, by letting the test function $f$ approach $f(x) = x$, we have
\begin{equation}
\label{eq:centerofmass}
\lim_{k\to\infty }\int_{\mathbb{R}^2} \rho_\infty(x,t_{n_k}) x dx = \int_{\mathbb{R}^2} \mu(x) x dx.
\end{equation}

Since the energy functional $E_\infty$ is lower-semicontinuous with respect to weak-* convergence of probability measures \cite[Proposition 4.5]{CraigOmega}, by Theorem \ref{thm:E_conv},
\[
E_\infty(\mu) \leq \liminf_{k\to\infty} E_\infty(\rho_\infty(t_{n_k})) = E_\infty(\chi_{B_0}) .
\]
As the only global minimizers of $E_\infty$ are translations of $\chi_{B_0}$, $\mu$ must equal some translation of $\chi_{B_0}$ almost everywhere. Finally, recall that Lemma~\ref{lem:center} and the definition of $B_0$ give that $\int_{\mathbb{R}^2} \rho_\infty(x,t) x dx = \int  \chi_{\Omega_0} x dx = \int \chi_{B_0} x dx$ for all time. Combining this with \eqref{eq:centerofmass}, we obtain $\int_{\mathbb{R}^2} \mu(x) x dx = \int \chi_{B_0} x dx$, leading to $\mu = \chi_{B_0}$ a.e.. Thus, any diverging time sequence contains a subsequence satisfying \eqref{eq:weak_conv_f}, so we conclude that  \eqref{eq:weak_conv_f} must hold.

We now show that $\rho_\infty(\cdot, t)\to \chi_{B_0}$ in $L^1(\mathbb{R}^2)$. Since $0\leq \rho_\infty \leq 1$, we have $\rho_\infty\leq \chi_{B_0}$ a.e. in $B_0$ and $\rho_\infty\geq \chi_{B_0}$ a.e. in $B_0^c$. Hence 
\[
\|\rho_\infty(\cdot, t) - \chi_{B_0}\|_1 = 2 \int_{\mathbb{R}^2} (\chi_{B_0}-\rho_\infty(x,t) ) \chi_{B_0}dx.
\]
Thus, by choosing $f \in C_b(\mathbb{R}^2)$ sufficiently close to $\chi_{B_0}$ and applying \eqref{eq:weak_conv_f}, we can show that, for any $\epsilon >0$, $\|\rho_\infty(\cdot, t) - \chi_{B_0}\|_1 \leq \epsilon$ for sufficiently large $t$. This shows that $\rho_\infty(\cdot, t)\to \chi_{B_0}$ in $L^1(\mathbb{R}^2)$. Finally, for  $1<q<\infty$, the convergence in $L^q$ follows directly from the $L^1$ convergence and the fact that $\|\rho_\infty(\cdot, t) - \chi_{B_0}\|_\infty \leq 1$.
\end{proof}

\section{Appendix} \label{appendix section}

\subsection{Definition of viscosity solutions of \ref{Pinfty}} \label{Pinftyviscdef}

We begin by recalling some notation. For $Q \subseteq \R^d \times (0,\infty)$, we write $f \in C^{2,1}(Q)$ if $f$ is twice continuously differentiable in $x$ and once in $t$. We say that \emph{$u - \varphi$ has a local maximum (minimum) zero  $(x_0,t_0)$ in $Q$} if there exists $\epsilon >0$ such that 
\[ \varphi(x_0, t_0)= u(x_0,t_0) \text{ and }\varphi \geq u \ (\varphi \leq u) \text{ in }Q \cap (B_\epsilon(x_0) \times (t_0 - \epsilon, t_0+ \epsilon)). \]
In other words, $\varphi$ touches $u$ from above (below) at $(x_0,t_0)$ with respect to $Q$.

Likewise, given an open set $\Omega \subseteq \Rd$ and a function $h: \Omega \times [0,+\infty) \to \R$, we  denote its upper and lower semicontinuous envelopes by
\begin{align} \label{semicontinuousenvelope}
h^*(x,t): = \lim_{\epsilon \to 0} \sup_{\substack{|x-y| \leq \epsilon , \\ |t-s| \leq \epsilon}} h(y,s) , \quad h_*(x,t): = \lim_{\epsilon \to 0} \inf_{\substack{|x-y| \leq \epsilon , \\ |t-s| \leq \epsilon}} h(y,s).
\end{align}
Note that $h^*$ is the smallest upper semicontinuous function satisfying $h \leq h^*$, and $h_*$ is the largest lower semicontinuous function satisfying $h \geq h_*$.

Now, we define the notion of viscosity subsolution, supersolution, and solution of \ref{Pinfty}. Instead of proceeding as above and defining solutions of \ref{Pinfty} by comparison with classical sub- and supersolutions, we follow an approach reminiscent of Kim \cite{Kim} and Alexander, Kim, and Yao \cite{AKY}.  While the former would ease our proof of the comparison theorem, Theorem \ref{comparison}, the latter is more natural from the perspective of the convergence theorem, Theorem \ref{convergence}. One notable difference in the definition below from those of Alexander, Kim, and Yao \cite{AKY} is the separation of the solution and the set evolution in our notion of subsolutions. 
\begin{definition}[subsolution of \ref{Pinfty}] \label{subsolution}
An upper semicontinuous function $u: \Rd \times (0, +\infty) \to [0, +\infty)$, paired with a space-time set $\Sigma= \cup_{t>0}(\Omega(t)\times \{t\})$, is a \emph{viscosity subsolution} of \ref{Pinfty} if
\begin{enumerate}[label = (\alph*)]
\item $ \{u(\cdot,t)>0 \}  \subseteq \overline{\Omega(t)}$  and  $\Sigma \cap\{t\leq t_0\} \subseteq \overline{\Sigma\cap\{t<t_0\}}$ for every $t_0>0$; \label{doesnotjumpup}
\item for all $\varphi \in C^{2,1}(\Rd \times (0, +\infty))$ so that $u-\varphi$ has a local maximum zero at $(x_0, t_0)$ in $\overline{\Sigma} \cap \{t \leq t_0\}$, \label{touchabove}
\begin{enumerate}[label = (\roman*)]
	\item  if $x_0\in \Omega(t_0)^\circ$ or $u(x_0,t_0)>0$, then $-\Delta \varphi(x_0,t_0) \le 1$; \label{ta1}
	\item if $x_0 \in \partial \Omega(t_0)$, $u(x_0, t_0) = 0$, and $|\nabla \varphi|(x_0,t_0) \ne 0$, then \label{ta2}
\[\min(-\Delta \varphi - 1, \varphi_t - |\nabla\varphi|^2-\nabla \varphi \cdot \nabla \Phi)(x_0, t_0) \le 0.\] 
	\end{enumerate}

\end{enumerate}

We will say that $u: \Rd \times [0, +\infty) \to [0, +\infty)$ has compactly supported initial data $u_0$ if, in addition,
\begin{enumerate}[label = (\alph*)] \setcounter{enumi}{2}
\item $u(\cdot, 0) = u_0(\cdot)$ and  $\overline{\{u_0>0\}} = \overline{\Sigma}\cap\{t=0\}$. 
\end{enumerate}
\end{definition}
We introduce the set $\Sigma$ for technical reasons, to allow the possibility that $u$ becomes zero in the evolving set $\Omega(t)$. 
Condition \ref{doesnotjumpup} ensures that a subsolution does not jump up from zero.
Condition \ref{touchabove}\ref{ta2} ensures that limits of viscosity solutions are viscosity solutions, since it is possible that the boundary collapses in a limit and boundary points of the limiting functions become interior points of the limit.

\begin{definition}[supersolution of \ref{Pinfty}] \label{supersolution}
A lower semicontinuous function $v: \Rd \times (0, +\infty) \to (0, +\infty)$ is a \emph{viscosity supersolution} of \ref{Pinfty} with initial data $v_0$ if for all $\varphi \in C^{2,1}(\Rd \times (0, +\infty))$ so that $v - \varphi$ has a local minimum zero at $(x_0, t_0)$ with respect to $\Rd \cap \{ t \leq t_0\}$, 
	\begin{enumerate}[label = (\roman*)]
	\item if $(x_0, t_0) \in \{v > 0\}$, $-\Delta \varphi(x_0,t_0) \ge 1$;
	\item if $(x_0, t_0) \in \partial \{v>0\}, v(x_0, t_0) = 0$,
\begin{align}\label{bdryTouchingCond}
  |\nabla \varphi|(x_0, t_0) \ne 0, \mbox{ and } \{\varphi > 0\} \cap \{v >0 \} \cap B \ne \emptyset  
  \mbox{ for some ball } B \mbox{ centered at } (x_0,t_0)
\end{align} then $\max(-\Delta \varphi - 1, \varphi_t - |\nabla\varphi|^2-\nabla \varphi \cdot \nabla \Phi)(x_0, t_0) \ge 0$.
	\end{enumerate}
Will we say that $v: \Rd \times [0,+\infty) \to (0, +\infty)$ has initial data $v_0$ if $v(\cdot, 0) = v_0(\cdot)$
\end{definition}
Condition \eqref{bdryTouchingCond} ensures that $\varphi$ touches $v$ from below in a non-degenerate way.

\begin{definition} \label{Pinfty solution def}
A lower semi-continuous function $u$ is a \emph{viscosity solution} of \ref{Pinfty} in $\Rd\times (0,\infty)$ with compactly supported initial data $u_0$ if ${(u^*, \{u>0\})}$ and $u$ are respectively viscosity sub- and supersolutions of \ref{Pinfty} with initial data $u_0$.
\end{definition}

The following lemma illustrates the fact that the solution of \ref{Pinfty} is entirely characterized by its support.

\begin{lemma}\label{extra}
Suppose $u$ is a viscosity solution of \ref{Pinfty} in $\Rd\times (0,\infty)$ and $\overline{\{u^*>0\}} = \overline{\{u>0\}}$. Then, for each $t>0$, $u(\cdot,t)=(h_t)_*$, where 
$$
h_t(x)= \inf\{\alpha(x): - \Delta \alpha\geq 1\hbox{ in an open set } E\hbox{ containing } \overline{\{u(\cdot,t)>0\}}; \alpha \geq 0 \hbox{ on } \overline{E}.\}
$$ 
\end{lemma}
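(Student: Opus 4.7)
The plan is to establish two pointwise inequalities $u(\cdot,t_0)\le h_{t_0}$ and $h_{t_0}\le u(\cdot,t_0)$ for each fixed $t_0>0$, and then pass to lower semicontinuous envelopes, using the fact that $u$ is itself lsc to conclude $u(\cdot,t_0)=(h_{t_0})_*$. Throughout, abbreviate $\Omega:=\{u(\cdot,t_0)>0\}$, which is open (since $u$ is lsc) and on whose boundary $u$ vanishes (since $u\ge 0$ and $u\equiv 0$ on $\Omega^c$, combined with lsc). Heuristically the claim is just that $u(\cdot,t_0)$ coincides with the pressure $p$ solving $-\Delta p=1$ in $\Omega$, $p=0$ on $\partial\Omega$, which is well known to equal the infimum $h_{t_0}$ in the elliptic setting; the only thing to check is that the viscosity sub/supersolution properties from Definitions~\ref{subsolution}--\ref{supersolution} give the relevant one-sided elliptic inequalities.

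For the upper bound, I would fix an admissible competitor $\alpha$, so $-\Delta\alpha\ge 1$ on an open $E\supseteq\overline\Omega$ and $\alpha\ge 0$ on $\overline E$, and show that $w:=u^*(\cdot,t_0)-\alpha$ is $\le 0$ on $\overline\Omega$. First, $w$ is a viscosity subsolution of $-\Delta w=0$ in $\Omega$: if $w-\varphi$ attains a local max zero at $x_0\in\Omega$, then $u^*-(\alpha+\varphi)$ attains a local max zero at $(x_0,t_0)$ in $\overline\Sigma\cap\{t\le t_0\}$ with $x_0\in\Omega(t_0)^\circ$, so Definition~\ref{subsolution}(b)(i) gives $-\Delta(\alpha+\varphi)(x_0)\le 1$, whence $-\Delta\varphi(x_0)\le 0$. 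The hypothesis $\overline{\{u^*>0\}}=\overline{\{u>0\}}$ forces $u^*\equiv 0$ on $(\overline\Omega)^c$, giving $w\le 0$ there and (by upper semicontinuity) on the part of $\partial\Omega$ accessible from the exterior. Applying the viscosity maximum principle then gives $w\le 0$ in $\Omega$, so $u\le u^*\le\alpha$ in $\Omega$; the inequality is trivial outside $\Omega$ since $\alpha\ge 0$. Infimizing over $\alpha$ and taking lsc envelopes gives $u=u_*\le(h_{t_0})_*$.

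For the lower bound, I would build admissible competitors approaching $u$ from above. For small $\delta>0$, put $E_\delta:=\{x:\operatorname{dist}(x,\overline\Omega)<\delta\}$ and let $p_\delta\in C(\overline{E_\delta})$ be the classical solution of $-\Delta p_\delta=1$ in $E_\delta$, $p_\delta=0$ on $\partial E_\delta$, extended by $0$. Then $p_\delta$ is itself admissible, so $h_{t_0}\le p_\delta$ pointwise. To compare $p_\delta$ with $u$, Definition~\ref{supersolution}(i) yields $-\Delta u\ge 1=-\Delta p_\delta$ in $\Omega$ in the viscosity sense, so $p_\delta-u$ is a viscosity subsolution of $-\Delta=0$ in $\Omega$; since $u=0$ on $\partial\Omega$ while $\epsilon_\delta:=\sup_{\partial\Omega}p_\delta\to 0$ as $\delta\to 0$ by classical stability of the Dirichlet problem under Hausdorff perturbation (using the Lipschitz regularity of $\partial\Omega$ inherited from $\Omega_0$), the maximum principle gives $p_\delta\le u+\epsilon_\delta$ in $\Omega$. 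For $x\notin\overline\Omega$, choosing $\delta<\operatorname{dist}(x,\overline\Omega)$ gives $p_\delta(x)=0=u(x,t_0)$. Sending $\delta\to 0$ yields $h_{t_0}\le u$ pointwise, so $(h_{t_0})_*\le u_*=u$, completing the proof.

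The main obstacle is the boundary analysis in the upper bound, since the hypothesis $\overline{\{u^*>0\}}=\overline{\{u>0\}}$ only pins $u^*$ down on the part of $\partial\Omega$ that lies in $\partial\overline\Omega$, and leaves open the behavior on the ``internal'' part of $\partial\Omega$ that may lie in the interior of $\overline\Omega$ (think of $\Omega$ with a cusp-like crack). I expect to handle this via an exhaustion by smooth subdomains $\Omega_k\Subset\Omega$ together with an $\epsilon$-strict separation (replacing $\alpha$ by $\alpha+\epsilon$, which preserves admissibility up to a slightly larger open set), applying the clean interior maximum principle on each $\Omega_k$, and then passing $\Omega_k\nearrow\Omega$ and $\epsilon\downarrow 0$; the uniform bound on $p_\delta$ and the compactness of $\overline\Omega$ let this limit commute with the infimum over $\alpha$.
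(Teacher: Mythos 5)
Your upper bound $u(\cdot,t_0)\le (h_{t_0})_*$ follows the same route as the paper: Definition~\ref{subsolution}(b)(i) gives $-\Delta u^*(\cdot,t_0)\le 1$ in the viscosity sense, comparison of $u^*-\alpha$ with zero via the maximum principle yields $u^*\le\alpha$ for each competitor $\alpha$, and infimizing plus lsc regularization (using that $u$ is lsc) gives the claim. This part is fine.

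Your lower bound, however, takes a different route from the paper, and it has a concrete gap. You approximate $\Omega:=\{u(\cdot,t_0)>0\}$ from the outside by $E_\delta=\{d(\cdot,\overline\Omega)<\delta\}$, solve the Dirichlet problem there to produce competitors $p_\delta$, establish $p_\delta\le u+\epsilon_\delta$ in $\Omega$ with $\epsilon_\delta:=\sup_{\partial\Omega}p_\delta$, and then try to send $\delta\to 0$. The step that fails is the claim $\epsilon_\delta\to 0$: you attribute it to ``Lipschitz regularity of $\partial\Omega$ inherited from $\Omega_0$,'' but $\Omega(t_0)=\{u(\cdot,t_0)>0\}$ does not inherit Lipschitz regularity from $\Omega_0$ for $t_0>0$ --- the free boundary can merge and undergo topological changes, as the paper itself stresses, so for $t_0>0$ you have no regularity of $\partial\Omega(t_0)$ at your disposal. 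Without boundary regularity, the Dirichlet solutions $p_\delta$ on $E_\delta$ need not tend to zero on $\partial\Omega$ (at a sufficiently irregular boundary point $x\in\partial\Omega$ one can have $\inf_\delta p_\delta(x)>0$), so the pointwise inequality $h_{t_0}\le u$ that you are after can fail there, and in particular your exhaustion-plus-$\epsilon$ fix described in your last paragraph still requires $\epsilon_\delta\to 0$ and does not close the gap. The paper's argument avoids the auxiliary sets $E_\delta$ and the pointwise inequality $h_{t_0}\le u$ altogether: it uses only that $u(\cdot,t_0)$ is a viscosity supersolution of $-\Delta p=1$ in $\Omega$, vanishes on $\Omega^c$, and is nonnegative, so that the Perron-type infimum $h_{t_0}$ already satisfies $(h_{t_0})_*\le u(\cdot,t_0)$ after lsc regularization, without any boundary-regularity hypothesis on $\Omega(t_0)$.
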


\begin{proof}
By the definition of a viscosity supersolution, $-\Delta u(\cdot,t) \geq 1$ in $\{u(\cdot,t)>0\}$, so $(h_t)_* \leq u(\cdot,t)$. On the other hand,  by the definition of a viscosity subsolution, $-\Delta u^* (\cdot,t) \leq 1$ in $\Rd$ and $u^*(\cdot,t)$ is supported in $\overline{\{u(\cdot,t)>0\}}$. Therefore $u^*(\cdot,t) \leq \alpha$ for any candidate function $\alpha(x)$ in the definition of $h_t$, so $u^*(\cdot,t) \leq (h_t)^*$. Consequently, we conclude that $u(\cdot,t)= (h_t)_*$. 

\end{proof}

\subsection{Further properties of gradient flows of $E_\infty$, $\tilde E_\infty$, and $E_{m}$} \label{further properties of energies}

In this section, we collect several results on the gradient flows of $E_\infty$, $\tilde E_\infty$, and $E_m$.
We begin by proving Proposition \ref{N mu bounds}, which provides elementary estimates on the Newtonian potential of a bounded, integrable function. We use these estimates to conclude that $E_\infty$ is $\omega$-convex along generalized geodesics. (See \cite[Theorem 4.3, Proposition 4.4]{CraigOmega}.)

\begin{proof}[Proof of Proposition \ref{N mu bounds}]
The fourth inequality is a classical potential theory result (c.f.\cite[Proposition 2.1]{CarrilloLisiniMainini},  \cite[Lemma 2.1]{Yudovich}), and the fifth inequality is due to Loeper \cite[Theorem 2.7]{Loeper}. (While Loeper only considers the case $d \geq 3$, the same argument applies in $d =2$.)

For the bounds on $\grad \bN \rho$ and $\Delta \bN \rho$, note that if $B = B_1(0)$,
\begin{align*}
 \| \grad \bN \rho \|_\infty &\leq  \|\grad \mathcal{N} \|_{L^1(B)} \|\rho\|_{\infty} + \|\grad \mathcal{N} \|_{L^\infty(B^c)} \|\rho\|_{1}  \leq C_d \quad \text{ and } \quad \| \Delta \bN \rho \|_\infty = \| \rho \|_{L^\infty} \leq 1.
\end{align*} 
Likewise, for the lower bound on $\int \bN \rho d \rho $, if we let $\mathcal{N}^-(x)$ denote the negative part of $\mathcal{N}(x)$,
\begin{align*}
\int \bN \rho d \mu  \geq \int \mathcal{N}^-*\rho(x) d \mu(x) \geq - \|\mathcal{N}^-*\rho\|_\infty \geq -\|\mathcal{N}^- \|_{L^1(B)} \|\rho\|_\infty - \|\mathcal{N}^-\|_{L^\infty(B^c)} \|\rho\|_1 \geq - C_d .
\end{align*}
\end{proof}

Next, we prove Proposition \ref{W2 time lipschitz}, which ensures that $\rho_\infty$ is Lipschitz in time, with respect to the Wasserstein metric.

\begin{proof}[Proof of Proposition \ref{W2 time lipschitz}]
By \cite[Theorem 3.11]{CraigOmega}, the function $S(t): D(E_\infty) \to D(E_\infty): \rho_\infty(\cdot, 0) \mapsto \rho_\infty(\cdot,t)$ is a semigroup, i.e. $S(t + s) = S(t) S(s) \mu$ for $t, s \geq 0$. Therefore, it suffices to show that $W_2(\rho_\infty(t), \rho_\infty(0)) \leq 2C_d t$ for all $t \geq 0$.

Let $\rho^n_\tau$ be the discrete gradient flow of $E_\infty$ with initial data $\rho= \rho_\infty(0)$ and time step $\tau >0$, as defined by equation \ref{Einfty DGF}. By \cite[Theorem 3.8]{CraigOmega}, if we take $\tau = t/n$ for any $t \geq 0$, then $\lim_{n \to +\infty} W_2(\rho^n_{t/n}, \rho_\infty(t)) = 0$.
Therefore,
 \begin{align*}
 W_2(\rho_\infty(t), \rho_\infty(0)) =  \lim_{n \to +\infty} W_2(\rho^n_{t/n}, \rho) \leq \lim_{n \to +\infty} \sum_{i=1}^n W_2(\rho^i_{t/n}, \rho^{i-1}_{t/n})  \leq 2 C_d t ,
 \end{align*}
 where the last inequality follows from Lemma \ref{W2 one step}, which ensures $W_2(\rho^i_{t/n},\rho^{i-1}_{t/n}) \leq 2 C_d(t/n).$
\end{proof}

We now turn to the proof of Proposition \ref{propertiesofspecialPhi}, which concerns the regularity of $\grad \bN \rho_\infty(x,t)$ in space and time.

\begin{proof}[Proof of Proposition \ref{propertiesofspecialPhi}]
The fact that $\grad \bN \rho_\infty(x,t)$ is log-Lipschitz in space is an immediate consequence of Proposition \ref{N mu bounds}.
We now consider the continuity with respect to time. By Proposition \ref{W2 time lipschitz}, $\rho_\infty$ is Lipschitz in time with respect to the Wasserstein metric, so it suffices to translate this into continuity in time with respect to the Euclidean norm.

Fix $\psi \in C^\infty_c(\Rd)$ so that  $\supp \psi \subseteq B_{1}(0)$ and $\|\psi\|_\infty \leq 1$, and let $\Phi(x,t) = \bN \rho_\infty(x,t)$ and $\Phi_{1/m}:= \Phi*\psi_{1/m}$.
Combining the fifth inequality in Proposition \ref{N mu bounds} with  Proposition \ref{W2 time lipschitz},
\begin{align*}& |\grad \Phi_{1/m}(x,t) - \grad \Phi_{1/m}(x,s)| = | \psi_{1/m} * ( \grad \bN \rho(x,t) - \grad \bN \rho(x,s))| \\
&\quad  \leq \|\psi_{1/m}\|_{L^2(\Rd)} \|\grad \bN \rho_\infty(t) - \grad \bN \rho_\infty(s) \|_{L^2(\Rd)} \leq m^{d/2} W_2(\rho_\infty(t), \rho_\infty(s)) \leq 2 C_d m^{d/2} |t-s| .
\end{align*}

We now use this inequality controlling the continuity in time of $\grad \Phi_{1/m}(x,t)$ to estimate the continuity in time of $\grad \Phi(x,t)$. By Proposition \ref{N mu bounds},
\begin{align*}
|\grad \Phi(x,t) - \grad \Phi_{1/m}(x,t)| &= \left| \int \left( \grad \Phi(x,t) - \grad \Phi(x-y,t) \right) \psi_{1/m}(y) dy  \right| \leq C_d \int \sigma(|y|) \psi_{1/m}(y) dy \\
&\leq C_d \sigma(1/m) \int \psi_{1/m}(y) dy = C_d \sigma(1/m) .
\end{align*}
Therefore,
\begin{align*}
&|\grad \Phi(x,t) - \grad \Phi(x,s)| \\
&\quad \leq |\grad \Phi(x,t) - \grad \Phi_{1/m}(x,t)| + |\grad \Phi_{1/m}(x,t) - \grad \Phi_{1/m}(y,t)| + | \grad \Phi_{1/m}(y,t) - \grad \Phi(y,t)| \\
&\quad \leq 2C_d \sigma(1/m) + 2 C_d m^{d/2} |t-s| .
\end{align*}
Let $p = 1/2d$. Since $|t-s| < e^{(-1-\sqrt{2})/2}$, if we choose $m= |t-s|^{(-2/d)(1-p)} \geq 1$, we have $m^{d/2} |t-s| =  |t-s|^{p}$, which takes care of the second term in the above inequality. Furthermore,  $q = 1/(2(2-1/d))<1/2$ ensures $|\log(x)| \leq x^{-1/2} \leq x^{q-1}$ for $0 \leq x \leq 1$. Therefore,
\[ \sigma(1/m) \leq  \left. \begin{cases} (1/m)^{q}  &\text{ if } 1/m < e^{(-1-\sqrt{2})/2} \\ 
 3/m  & \text{ if } 1/m \geq  e^{(-1-\sqrt{2})/2}   \end{cases} \right\} \leq 3 (1/m)^q = 3|t-s|^{(2q/d)(1-p)}= 3|t-s|^p . \]
Therefore, $|\grad \Phi(x,t) - \grad \Phi(x,s)|  \leq 10C_d |t-s|^{1/2d}$, which gives the result.
\end{proof}

In the next proposition, we show that, while the discrete time sequence corresponding to $\tilde E_\infty$ may not be unique, the distance between any two such sequences converges to zero as the time step $\tau \to 0$.

\begin{proposition} \label{distance between DGFs Einfty tilde}
Fix $T>0$ and initial data $\rho \in D(E_\infty)$ and let  $\tilde \rho^n_\tau$ and $\tilde \mu^n_\tau$ be two choices for the time discrete time sequence corresponding to $\tilde{E}_\infty$, as defined in Definition \ref{Discrete time def} \ref{tildeEinfty DGF}. Then there exist positive constants $N$ and $C$, depending on the dimension, $T$, and $E_\infty(\rho),$ so that for $\tau = t/n$ and all $0 \leq t \leq T$ and $n >N$,
\begin{align*}
 f_\tau^{(2n)}(W_2^2(\tilde \rho^n_\tau, \tilde \mu^n_\tau)) &\leq C  \omega(\tau)  . \end{align*} 
\end{proposition}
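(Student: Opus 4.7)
The plan is to apply the $\omega$-convex contraction inequality of Proposition \ref{contraction inequality} at each step and iterate it using Proposition \ref{f tau prop}, exploiting the fact that both sequences share the same initial data $\rho$ (so $W_2^2(\tilde \rho^0_\tau, \tilde \mu^0_\tau) = 0$) and that the reference measure $\rho^i_\tau$ in $\tilde E_\infty(\cdot\,;\rho^i_\tau)$ only moves a distance $O(\tau)$ per step.

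Set $d_i := W_2^2(\tilde \rho^i_\tau, \tilde \mu^i_\tau)$. Since each energy $\tilde E_\infty(\cdot\,;\rho^i_\tau)$ is $\omega$-convex along generalized geodesics with the common $\lambda_\omega = -C_d$, Proposition \ref{contraction inequality} applied at the $i$-th step would yield
\begin{equation*}
f_\tau^{(2)}(d_i) \leq d_{i-1} + C_d \tau\, \omega\!\bigl(C\, W_2(\tilde \mu^{i-1}_\tau, \tilde \mu^i_\tau)\bigr) + 2\tau\bigl[\tilde E_\infty(\tilde \rho^{i-1}_\tau;\rho^i_\tau) - \tilde E_\infty(\tilde \rho^i_\tau;\rho^i_\tau)\bigr] + C\tau^2,
\end{equation*}
where the $\omega$-term and the $C\tau^2$ term are manifestly nonnegative and the energy difference is nonnegative by optimality of $\tilde \rho^i_\tau$ in the JKO step. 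Iterating $n$ times via Proposition \ref{f tau prop}\ref{f tau break} (which says $f_\tau(x+y) \leq f_\tau(x) + y$ for $x,y \geq 0$) and using $d_0 = 0$ gives
\begin{equation*}
f_\tau^{(2n)}(d_n) \leq \sum_{i=1}^n \Bigl( C_d \tau\,\omega(CW_2(\tilde \mu^{i-1}_\tau, \tilde \mu^i_\tau)) + 2\tau\bigl[\tilde E_\infty(\tilde \rho^{i-1}_\tau;\rho^i_\tau) - \tilde E_\infty(\tilde \rho^i_\tau;\rho^i_\tau)\bigr] + C\tau^2 \Bigr).
\end{equation*}

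I would then estimate the three contributions separately. By Lemma \ref{W2 one step}\ref{tilde rho tau one step}, consecutive displacements satisfy $W_2(\tilde \mu^{i-1}_\tau, \tilde \mu^i_\tau) \leq 2C_d\tau$, so using $\omega(x) \leq x|\log x|$ for small $x$, the first sum is bounded by $C n \tau \cdot \tau|\log\tau| = CT\tau|\log\tau| \leq C'\omega(\tau)$. The third sum is trivially $Cn\tau^2 = CT\tau \leq C'\omega(\tau)$. For the energy-difference sum—which is the only piece that does not telescope directly, because the reference measure $\rho^i_\tau$ changes with $i$—I would rewrite
\begin{equation*}
\tilde E_\infty(\tilde \rho^{i-1}_\tau;\rho^i_\tau) - \tilde E_\infty(\tilde \rho^i_\tau;\rho^i_\tau) = \bigl[\tilde E_\infty(\tilde \rho^{i-1}_\tau;\rho^{i-1}_\tau) - \tilde E_\infty(\tilde \rho^i_\tau;\rho^i_\tau)\bigr] + b_i,
\end{equation*}
where, by symmetry of $\mathbf{N}$, $b_i = \int \mathbf{N}\tilde \rho^{i-1}_\tau \, d(\rho^i_\tau - \rho^{i-1}_\tau)$. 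Lemma \ref{drift continuity estimate0}, Proposition \ref{N mu bounds}, and Lemma \ref{W2 one step}\ref{rho tau one step} combine to give $|b_i| \leq C_d W_2(\rho^i_\tau, \rho^{i-1}_\tau) \leq 2C_d^2\tau$. The first bracket then telescopes to a single $O(1)$ quantity (using the uniform bounds on $\tilde E_\infty$ along the discrete sequence provided by $\|\cdot\|_\infty\leq 1$ and Proposition \ref{N mu bounds}), while $\sum_i b_i = O(n\tau) = O(T)$. Multiplying by $2\tau$ shows this sum is $O(\tau) \leq C\omega(\tau)$.

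Assembling the three estimates yields $f_\tau^{(2n)}(d_n) \leq C\omega(\tau)$, as required, with $C$ and the threshold $N$ depending on $d$, $T$, and $E_\infty(\rho)$. The main obstacle is precisely the time-varying reference measure in $\tilde E_\infty(\cdot\,;\rho^i_\tau)$: a naive telescoping is unavailable, and the fix is to isolate a genuine telescoping piece from the $O(\tau)$ ``drift'' term $b_i$, which is small precisely because the underlying $E_\infty$-JKO sequence $\rho^i_\tau$ moves at Wasserstein speed $O(1)$.
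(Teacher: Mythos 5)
Your strategy coincides with the paper's: apply Proposition~\ref{contraction inequality} step-by-step, collect the errors, and iterate via Proposition~\ref{f tau prop}. The one real point of divergence is how you handle the energy-difference sum. You split each term into a telescoping piece plus a drift $b_i = \int \bN\tilde\rho^{i-1}_\tau\,d(\rho^i_\tau-\rho^{i-1}_\tau)$, then bound the telescoping part by $O(1)$ and $\sum|b_i|$ by $O(T)$. The paper's route is shorter: Lemma~\ref{W2 one step}\ref{tilde rho tau one step} already gives the per-step bound
\[
\tilde E_\infty(\tilde\rho^{i-1}_\tau;\rho^i_\tau) - \tilde E_\infty(\tilde\rho^i_\tau;\rho^i_\tau) \leq 2C_d^2\tau,
\]
because $\tilde E_\infty(\cdot;\mu)$ is $C_d$-Lipschitz in $W_2$ (Lemma~\ref{drift continuity estimate0} plus Proposition~\ref{N mu bounds}) and the JKO step has length $\leq 2C_d\tau$. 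So no telescoping is needed at all; each term is individually $O(\tau)$ and the sum is $O(T)$, giving $O(\tau)$ after multiplying by $2\tau$. Both routes produce the same order, but yours trades one clean lemma for a decomposition that must then be re-assembled.

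Two details are missing from your write-up. First, iterating $f_\tau^{(2)}$ through the one-step inequality requires Proposition~\ref{f tau prop}\ref{f tau monotone} in addition to \ref{f tau break}: $f_\tau$ is only \emph{approximately} monotone, so pushing $f_\tau^{(2)}$ through the inductive upper bound injects an extra $O(\tau^2)$ error at each of the $n$ levels (this is precisely the error quantified in \ref{f tau monotone}); it sums to $O(\tau)$ and is absorbed by $C\omega(\tau)$, but it must be tracked—\ref{f tau break} alone does not justify the iteration. Second, the constants $C$ and $\tau_*$ in Proposition~\ref{contraction inequality} depend on $W_2(\rho,\mu)$ and the energy values, so to obtain uniform constants across all $n$ applications you need to first establish the crude bound $W_2(\tilde\rho^i_\tau,\tilde\mu^i_\tau)\leq 4C_dT$ (via Corollary~\ref{lem:time_conti} and the triangle inequality) and the uniform bound $\tilde E_\infty(\tilde\rho^{i-1}_\tau;\rho^i_\tau)\leq C(d,T,E_\infty(\rho))$. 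You gesture at the latter in passing; the former is absent. The paper spends the first half of its proof establishing exactly these two preliminary bounds before invoking the contraction inequality.
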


\begin{proof}
By Corollary \ref{lem:time_conti}, we have the following crude bound for all $i = 1, \dots, n$,
\[ W_2(\tilde \rho^i_\tau, \tilde \mu^i_\tau ) \leq W_2(\tilde \rho^i_\tau, \rho )+ W_2( \tilde \mu^i_\tau, \rho ) \leq 4 C_d T . \]
To obtain a more refined bound, we use Proposition \ref{contraction inequality}. First, we estimate the behavior of the energy $\tilde E_\infty$ along the discrete time sequence. By Proposition \ref{N mu bounds}, Lemma \ref{W2 one step}, and the definition of $\tilde \rho^i_\tau$ as a minimizer
\begin{align*}
 &\tilde E_\infty(\tilde \rho^{i-1}_\tau;  \rho^{i}_\tau) \\
 &= \tilde E_\infty(\tilde \rho^{i-1}_\tau;  \rho^{i-1}_\tau) + \tilde E_\infty(\tilde \rho^{i-1}_\tau;  \rho^{i}_\tau) - \tilde E_\infty(\tilde \rho^{i-1}_\tau;  \rho^{i-1}_\tau) =\tilde E_\infty(\tilde \rho^{i-1}_\tau;  \rho^{i-1}_\tau) +  \int \bN \tilde \rho^{i-1}_\tau d (\rho^{i}_\tau - \rho^{i-1}_\tau)  \\
 & \leq \tilde E_\infty(\tilde \rho^{i-2}_\tau;  \rho^{i-1}_\tau) + C_d W_2(\rho^i_\tau, \rho^{i-1}_\tau) \leq \tilde E_\infty(\tilde \rho^{i-2}_\tau;  \rho^{i-1}_\tau) +2 C_d^2 \tau \leq \dots \leq \tilde E_\infty(\rho; \rho_\tau^1) + 2 C_d^2 T  .
\end{align*}
Likewise, we may control the first term on the right hand side by
\[ \tilde E_\infty (\rho; \rho_\tau^1)  = 2 E_\infty(\rho) + \tilde E_\infty (\rho; \rho_\tau^1) -  \tilde E_\infty (\rho; \rho) = 2 E_\infty(\rho) + \int \bN \rho d (\rho^{1}_\tau - \rho) \leq 2 E_\infty(\rho) + 2C_d^2 \tau . \]
Thus, there exists $C>0$ (which we allow to change from line to line) depending only on the dimension, $T$, and $E_\infty(\rho)$ so that 
\[ \tilde E_\infty(\tilde \rho^{i-1}_\tau;  \rho^{i}_\tau) \leq C.\]
Likewise, by Proposition \ref{N mu bounds}, $\tilde E_\infty(\cdot ;\cdot)$ is uniformly bounded below by $-C_d$.

Due to these estimates, we may apply Proposition \ref{contraction inequality} to conclude that there exist positive constants $C$ and $N$ depending on the dimension, $T$, and $E_\infty(\rho)$ so that for $\tau = t/n$, $0 \leq t \leq T$, and $n > N$,
\begin{align*}
 &f_\tau^{(2)}(W_2^2(\tilde \rho^i_\tau, \tilde \mu^i_\tau)) \\
 &\leq W_2^2(\tilde \rho^{i-1}_\tau,\tilde \mu^{i-1}_\tau) + C_d \tau \omega(C W_2(\tilde \mu^i_\tau, \tilde \mu^{i-1}_\tau)) + 2 \tau(\tilde E_\infty(\tilde \rho^{i-1}_\tau; \rho^i_\tau) - \tilde E_\infty(\tilde \rho^{i}_\tau; \rho^i_\tau)) +  C \tau^2 .
 \end{align*}
By Lemma \ref{W2 one step} \ref{tilde rho tau one step}, we may bound the second term by $C_d \tau \omega(C \tau)$ and the third term by $4 C_d^2 \tau^2$.
Therefore, for all $i =1 , \dots, n$,
\begin{align} \label{iterate contraction0}
 &f_\tau^{(2)}(W_2^2(\tilde \rho^i_\tau, \tilde \mu^i_\tau)) \leq W_2^2(\tilde \rho^{i-1}_\tau,\tilde \mu^{i-1}_\tau) +C \tau \omega(\tau) .
 \end{align}

We now show that, for all $j = 1, \dots, n$,
\begin{align} \label{iterate contraction1}
 f_\tau^{(2j)}(W_2^2(\tilde \rho^n_\tau, \tilde \mu^n_\tau)) &\leq W_2^2(\tilde \rho^{n-j}_\tau,\tilde \mu^{n-j}_\tau)+ 2C \tau \omega(\tau) j . \end{align} 
Once we have this, taking $j = n$ gives the result. We prove (\ref{iterate contraction1}) by induction. The base case, when $j = 1$, is a consequence of (\ref{iterate contraction0}). Suppose that the result holds for $j-1$,
 \begin{align*}
 f_\tau^{(2(j-1))}(W_2^2(\tilde \rho^n_\tau, \tilde \mu^n_\tau)) &\leq W_2^2(\tilde \rho^{n-j+1}_\tau,\tilde \mu^{n-j+1}_\tau)+ 2C \tau \omega(\tau) (j-1) . \end{align*} 
By Proposition \ref{f tau prop}, applying $f_\tau^{(2)}$ to both sides,
 \begin{align*}
 f_\tau^{(2j)}(W_2^2(\tilde \rho^n_\tau, \tilde \mu^n_\tau)) &\leq f_\tau^{(2)}(W_2^2(\tilde \rho^{n-j+1}_\tau,\tilde \mu^{n-j+1}_\tau))+ 2C \tau \omega(\tau) (j-1) + C \tau^2 \\
 &\leq W_2^2(\tilde \rho^{n-j}, \tilde \mu^{n-j}) + 2C \tau \omega(\tau) j \end{align*} 
 where the second inequality is a consequence of (\ref{iterate contraction0}) and the fact that $C \tau^2 \leq C \tau \omega(\tau)$. This gives the result.
\end{proof}

Now, we turn to the proof that the discrete time sequence $\rho_{\tau,m}^n$ corresponding to $E_m$ converges to the solution of \ref{pme} as the time step goes to zero.

\begin{proposition} \label{time dependent assumption prop}
Given initial data $\rho \in D(E_\infty)$, let $\rho_{\tau,m}^n$ be the discrete time sequence given in Definition \ref{Discrete time def} \ref{Em DGF}.
Then, for any $t \geq 0$, $\rho^n_{t/n,m}$ converges as $n\to +\infty$ to a limit $\rho_m(t)$, and there exist positive constants $C$ and $N$ depending on the dimension, $E_\infty(\rho), $ and $T$ so that for all $n \geq N$, $m \geq d+1$, and $0 \leq t \leq T$,
\[ W_2 (\rho^n_{t/n,m}, \rho_m(t)) \leq C   n^{-1/16e^{4C_dT}} . \]
Furthermore, $\rho_m(t)$ is the unique weak solution of \ref{pme}.
\end{proposition}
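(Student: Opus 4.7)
The plan is to adapt the convergence argument of \cite[Theorem 3.8]{CraigOmega} for discrete gradient flows of $\omega$-convex energies to our time-varying setting, in which the energy $E_m(\cdot;\rho^k_\tau)$ minimized at step $k$ depends on the step through its drift parameter $\rho^k_\tau$. The crucial structural fact is that, for all $m\geq d+1$ and all parameter measures $\mu$ with $\|\mu\|_\infty\leq 1$, the energy $E_m(\cdot;\mu)$ is $\omega$-convex along generalized geodesics with constants $\lambda_\omega=-C_d$ and $\omega$ as in \eqref{omega def}, \emph{uniformly} in $m$ and $\mu$; this follows from Proposition \ref{N mu bounds} exactly as in \cite[Theorem 4.3]{CraigOmega}.

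The first and main step is to show that $\{\rho^n_{t/n,m}\}_n$ is Cauchy in $W_2$ with rate $n^{-1/16e^{4C_dT}}$. Following a dyadic doubling strategy, I would compare the sequence at scale $\tau=t/n$ with the refined sequence at scale $\tau/2$ and set $d_k:=W_2(\rho^k_{t/n,m},\rho^{2k}_{t/(2n),m})$. Mirroring the construction of Proposition \ref{prop:multi_step}, I would insert a bridging one-step minimizer $\eta^k$ of $E_m(\cdot;\rho^k_{t/n})$ started from $\rho^{2(k-1)}_{t/(2n),m}$. The distance from $\eta^k$ to $\rho^{2k}_{t/(2n),m}$ is controlled by the Lipschitz dependence of the drift potential $\psi_{1/m}*\bN\mu$ on $\mu$ (Proposition \ref{N mu bounds}) combined with the bound $W_2(\rho^k_{t/n},\rho^{2k}_{t/(2n)})\leq Cn^{-1/16e^{2C_dT}}$, which follows from two applications of \cite[Theorem 3.8]{CraigOmega} and the triangle inequality. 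The distance from $\eta^k$ to $\rho^k_{t/n,m}$ is controlled by the contraction inequality of Proposition \ref{contraction inequality}. Altogether, this yields a one-step recursion of the form
\begin{align*}
 f_\tau^{(2)}(d_k^2)\leq d_{k-1}^2 + 2\tau\bigl(\|\rho^{k-1}_{t/n,m}\|_m^m-\|\rho^k_{t/n,m}\|_m^m\bigr) + C\tau\,\omega\!\left(Cn^{-1/16e^{2C_dT}}\right) + C\tau^{5/4}.
\end{align*}
Iterating this recursion by the same telescoping argument used in Proposition \ref{prop:multi_step}, together with Proposition \ref{f tau prop}\ref{ode euler estimate} and the identity $F_{2T}(x)=x^{e^{2C_dT}}$ for small $x\geq 0$, converts this into $d_n\leq Cn^{-1/16e^{4C_dT}}$. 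A standard dyadic extraction then exhibits a Cauchy sequence with the same rate, whose limit I denote by $\rho_m(t)$.

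It remains to identify $\rho_m(t)$ as a weak solution of \ref{pme}. For fixed $m$, the drift $\Phi_{1/m}$ is in $C^{2,1}$ in space and H\"older continuous in time by Proposition \ref{propertiesofspecialPhi} and the mollification, so passing to the limit in the Euler-Lagrange equation of each discrete step---following the standard JKO argument for the porous medium equation with smooth drift (cf.\ \cite{AKY})---yields the distributional form of \ref{pme}. Uniqueness of the weak solution with given bounded initial data then follows from the $L^1$ contraction in Lemma \ref{PMEDwellposed}\ref{PMEDL1contraction}.

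The principal technical obstacle is the quantitative Cauchy estimate in the first step: one must propagate two error sources---the difference between the drift parameter sequences $\rho^k_{t/n}$ and $\rho^{2k}_{t/(2n)}$ and the log-Lipschitz loss inherent in $\omega$-convexity---through the nonlinear ODE comparison of Proposition \ref{f tau prop} without losing the $n^{-1/16e^{4C_dT}}$ dependence. This will require the same telescoping trick on $L^m$-norm increments used in Proposition \ref{prop:multi_step} and a careful verification that perturbations of size $\tau^{5/4}$ (and the exponentially small correction coming from Lemma \ref{lemma:small_dist}) are absorbed into the dominant term for $n$ sufficiently large.
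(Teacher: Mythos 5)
Your strategy is conceptually on the right track, but you are proposing to redo work that the paper avoids by citing a ready-made abstract theorem. The paper's proof recognizes that $\rho^n_{\tau,m}$ is a discrete gradient flow of the \emph{time-dependent} energy $E^n_{\tau,m}(\nu):=E_m(\nu;\rho^n_\tau)$ and applies the first author's general convergence result for such flows, \cite[Theorem A.3]{CraigOmega}; the bulk of the paper's proof is then the verification of \cite[Assumption A.2]{CraigOmega}, in particular a careful quantitative continuity estimate for $E^n_{\tau,m}$ in $n\tau$ via \eqref{time varying ex1}--\eqref{time varying ex2} (using \cite[Theorem 3.6]{CraigOmega} rather than Theorem 3.8), the uniform $\omega$-convexity, and a uniform bound on $\|\grad\bN(\psi_{1/m}*\rho^i_{\tau,m})\|_\infty$ (which is exactly where the $m\geq d+1$ restriction enters, a point you do not explain). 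Your dyadic-doubling plan would, if carried out in full, essentially amount to re-proving Theorem A.3 from scratch in this special case; the recursion you write down is plausible but the exponent bookkeeping through $f_\tau$ and $F_t$ would need as much care as what you are bypassing, and you are not gaining generality or simplicity. The more substantive divergence is at the end: for uniqueness, the paper passes to the limit in the JKO scheme to obtain the distributional identity \eqref{weakPMED} and then explicitly proves the integrability estimates \eqref{pmetodo1}--\eqref{pmetodo2} (via lower semicontinuity of $\|\cdot\|_m^m$ and of the metric slope $|\partial S_m|$) so as to invoke the uniqueness results of Carlier--Laborde. You instead appeal to the $L^1$ contraction in Lemma \ref{PMEDwellposed}\ref{PMEDL1contraction}, but that contraction is stated for weak solutions in the Vazquez sense, and nothing in your outline verifies that the JKO limit lies in the class to which that result applies; you still owe the regularity/integrability step that the paper does explicitly, so this substitution does not save any work.
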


\begin{proof}
Given initial data $\rho \in D(E_\infty)$, let $\rho^n_\tau$ be the discrete gradient flow of $E_\infty$, as in Definition \ref{Discrete time def} \ref{Einfty DGF}. Using this sequence, we define a time dependent energy $E^n_{\tau,m}$ by 
\begin{align*}
E^n_{\tau,m}(\nu):=  E_m(\nu; \rho^n_\tau) = \begin{cases} \frac{1}{m-1} \int_\Rd \nu(x)^m dx + \int_\Rd \psi_{1/m}* \bN \rho^n_\tau(x) d\nu(x)  &\text{ if } \nu \ll \mathcal{L}^d, \\
+ \infty &\text{ otherwise.}
\end{cases}
\end{align*}
Then $\rho_{\tau,m}^n$ given in Definition \ref{Discrete time def} \ref{Em DGF} is the time varying discrete gradient flow of this energy in the sense that
\begin{align} \label{Em prox map}  \rho_{\tau,m}^n \in \argmin_{\nu \in \PR} \left\{ \frac{1}{2\tau} W_2^2(\rho_{\tau,m}^{n-1}, \nu) +  E^n_{\tau,m}(\nu)\right\} \text{ and } \rho^0_{\tau,m} := \rho. \end{align}
Consequently, we may apply the first author's results on convergence of the discrete gradient flow of time dependent energies \cite[Theorem A.3]{CraigOmega}, provided that we can show $E^n_m$ satisfies \cite[Assumption A.2]{CraigOmega}.

First, by \cite[Theorem 4.3, Proposition 4.4]{CraigOmega},  $E^n_\tau$ satisfies \cite[Assumption 2.18]{CraigOmega} uniformly for $n \in \mathbb{N}$, $m >1$, and $\tau >0$. In particular, there exists a solution to the minimization problem (\ref{Em prox map}) and $E^n_\tau$ is $\omega$-convex along generalized geodesics, for $\lambda_\omega = -C_d$ as in Proposition \ref{N mu bounds} and $\omega(x)$ as in equation (\ref{omega def}).

Next, we estimate the behavior of the energies and Wasserstein distance along the discrete gradient flow. By Lemma \ref{W2 one step} \ref{rho m tau one step}, for all $1 \leq i \leq n $,
\begin{align*}
W_2(\rho^{i}_{\tau,m}, \rho^{i-1}_{\tau,m}) \leq \sqrt{\frac{2\tau}{m-1} ( \| \rho^{i-1}_{\tau,m}\|_m^m - \|\rho^i_{\tau,m} \|_m^m)} + 2C_d \tau \leq \sqrt{2 \tau \left(1 + n \tau C_d^2/2 \right)} + 2 C_d \tau .
\end{align*}
where, in the second inequality, we use that $\|\rho^0\|_m^m \leq 1$. Likewise, by Corollary \ref{lem:time_conti},
\[ W_2(\rho_{m,\tau}^n,\rho) \leq \sqrt{4n\tau (1+8C_d^2 n \tau)} . \]
Finally, since Proposition \ref{N mu bounds} ensures $E^n_{\tau,m}$ is uniformly bounded below by $-C_d$, there exists a constant $\tilde{C}_d >0$, depending only on the dimension, so that
\begin{align*}
E^0_{\tau,m}(\rho) - E^n_{\tau,m}(\rho^n_{\tau,m}) \leq E_m(\rho, \rho) +C_d \leq 1 + \int \bN \rho(x) \psi_{1/m}*\rho(x) dx +C_d \leq \tilde{C}_d + E_\infty(\rho) ,
\end{align*}
where in the last inequality we use that $\bN \rho(x)$ is a continuous function with at most quadratic growth and  $\psi_{1/m}*\rho \xrightarrow[W_2]{m \to +\infty} \rho$, so $\int \bN \rho(x) \psi_{1/m}*\rho(x) dx \xrightarrow{m \to +\infty} \int \bN \rho(x) \rho(x) dx$.

It remains to show that $E^n_{\tau,m}$ possesses sufficient continuity in $n \tau$. To do this, we first estimate the continuity of $\rho^n_\tau$ in $n \tau$. By Lemma \ref{W2 one step}, we have the following crude bound
\[ W_2^2(\rho^n_\tau, \rho^k_h)\leq (2C_d (n \tau +kh))^2 \leq 16 C_d^2 T^2 . \]
Combining this with Proposition \ref{f tau prop} \ref{ode euler estimate} and \cite[Theorem 3.6]{CraigOmega}, we obtain that for any $T>0$, there exists $\bar{\tau}= \bar{\tau}(T,d)$ and $\bar{C}= \bar{C}(T,d)$ so that for all $0\leq h < \tau < \bar{\tau}$ and $k,n \in \mathbb{N}$ with $k h, n \tau \leq T$, 
\begin{align*}
 F_{2kh}( W_2^2(\rho^n_\tau, \rho^k_h))  &\leq \bar{C}\left[\sqrt{(n\tau - kh)^2+\tau^2n}  + h k\tomega(\sqrt{\tau}) +h^2k + \tomega(h^2)k \right] \\
 &\quad +2h (E_\infty(\rho) -\inf E_\infty)  + C_d \omega(16 C_d^2 T^2) T/n .
\end{align*}
Since $F_{t}(x)$ is decreasing in $t$, this implies there exists $\tilde{C}= \tilde{C}(T,d, E_\infty(\rho))$ so that for $0< \tau < \bar{\tau}$,
\begin{align} \label{time varying ex1}
& F_{2T}( W_2^2(\rho^n_\tau, \rho^k_h))  \leq \tilde{C}\left[\sqrt{(n\tau - kh)^2}  + \sqrt{\tau}|\log \tau|  \right] .
\end{align}
Since $F_{2T}(x)$ is strictly increasing and convex in $x$, $F_{2T}^{-1}(x)$ is strictly increasing and concave. Therefore,
\begin{align*} \sigma(x):= \sqrt{ F_{2T}^{-1}(\sqrt{x})}
\end{align*}
 is a continuous, nondecreasing, concave function that vanishes only at zero. In particular, $\sigma(x)$ is also subadditive, so (\ref{time varying ex1}) implies that, for some $C'= C'(T,d, E_\infty(\rho))$,
 \begin{align} \label{time varying ex2} W_2(\rho^n_\tau,\rho^k_h) \leq C'\left[ \sigma \left( (n \tau - kh)^2 \right) + \sigma \left(\tau |\log \tau|^2 \right)  \right].
\end{align}

We use this estimate to show that $E^n_{\tau,m}$ is continuous in $n \tau$, up to an error that decreases with $\tau$. Since $f := \bN(\psi_{1/m}* \rho^i_{\tau,m}) \in C^1$, by Lemma \ref{drift continuity estimate0},
\begin{align*}
&|E^n_{\tau,m}( \rho^i_{\tau,m}) - E^k_{h,m}( \rho^i_{\tau,m})| = | E_m(\rho^i_{\tau,m}; \rho^n_\tau)  - E_m(\rho^i_{\tau,m}; \rho^k_h) | =  \left| \int_\Rd  \bN (\psi_{1/m} * \rho^i_{\tau,m}) d(\rho^n_\tau - \rho^k_h) \right| \\
&\quad \leq C' \|\grad f\|_\infty \left[ \sigma \left( (n \tau - kh)^2 \right) + \sigma \left(\tau |\log \tau|^2 \right)  \right] .
\end{align*}
Finally, $\|\grad f \|_\infty$ is bounded uniformly in $m$, $i$, and $\tau$, since for $B = B_1(0)$, there exists $c$ depending only on the dimension (and which we allow to change from line to line) so that, for all $m \geq d+1$,
\begin{align*}
 \|\grad f \|_\infty &\leq \|\grad \mathcal{N} \|_{L^\infty(\Rd \setminus B)} + \|\grad \mathcal{N} \|_{L^{m'}(B)} \|\rho^i_{\tau,m} \|_{L^m(\Rd)} \leq  c +(1/\alpha_d)^{(m-1)/(m)}\|\rho^i_{\tau,m} \|_{L^m(\Rd)} \\
 &\leq c \left(1+ \|\rho\|_m + \left( (m-1)T C_d^2/2 \right)^{1/m} \right) \leq c
\end{align*}
where the fourth inequality uses Lemma \ref{W2 one step}.

Thus, \cite[Assumption A.2]{CraigOmega} is satisfied, so by \cite[Theorem A.3]{CraigOmega}, we conclude that for all $0 \leq t \leq T$, there exists $C= C(E_\infty(\rho),T,d)$ (which we allow to change from line to line) so
\begin{align*} F_{2t} \left(W_2^2(\rho^n_{t/n,m}, \rho_m(t)) \right) &\leq C \left[ t/\sqrt{n} + t \omega \left(\sqrt{t/n} \right) +\sigma(t^2/n) + \sigma(t/n |\log(t/n)|^2) \right] .
\end{align*}
Hence, using again that $F_{t}(x)$ is decreasing in $t$,
\begin{align*}
F_{2T} \left(W_2^2(\rho^n_{t/n,m}, \rho_m(t)) \right)
&\leq C \left[ n^{-1/2} \log n +  \sqrt{ F_{2T}^{-1}(t/\sqrt{n})} + \sqrt{ F_{2T}^{-1}(\sqrt{t/n} |\log(t/n)|)}  \right]. 
\end{align*}
For $0 \leq x \leq e^{-1-\sqrt{2}}$, $F_t(x) = x^{e^{C_d t}}$ and $n^{-1/2} \log n = O(n^{-1/4})$, so for $n$ sufficiently large,
\begin{align*}
 \left(W_2 (\rho^n_{t/n,m}, \rho_m(t)) \right)^{2e^{2C_d T}}
\leq C  ( n^{-1/8} )^{1/e^{2C_dT}}  \implies 
W_2 (\rho^n_{t/n,m}, \rho_m(t))
\leq C   n^{-1/16e^{4C_dT}} .
\end{align*}

Finally, it remains to show that the limit $\rho_m$ is the unique solution of \ref{pme}. Following a parallel argument as in Jordan, Kinderlehrer, and Otto's original work on the convergence of the discrete gradient flow to solutions of the Fokker-Planck equation \cite{JKO}, one can show that for all $\zeta \in C^\infty_0(\Rd \times [0, +\infty))$,
\begin{align} \label{weakPMED}
0=& \int_\Rd \rho_m(x,0) \zeta(x,0) dx + \int_0^{+\infty} \int_\Rd \rho_m(x,s) (\partial_s \zeta(x,s) - \grad \Phi_{1/m}(x,s) \grad \zeta(x,s)) dx ds \\ &\quad + \int_0^{+\infty} \int_\Rd \rho_m(x,s)^m \Delta \zeta(x,s) dx ds . \nonumber
 \end{align}

To conclude that $\rho_m(x,t)$ is the unique weak solution of \ref{pme}, it remains to show that for all $0< t < +\infty$,
\begin{align} \label{pmetodo1}
\int_0^t \int_\Rd |\rho_m(x,s)|^m dx ds < +\infty , \\
\int_0^t \left( \int_\Rd \left| \frac{\nabla \rho_m(x,s)^m}{\rho_m(x,s)} + \nabla \Phi_{1/m}(x,s) \right|^2 \rho_m(x,s) dx \right)^{1/2} ds < +\infty . \label{pmetodo2}
\end{align}
(See, for example, \cite[Theorem 6.1]{CarlierLaborde} and \cite[Theorem 7.1]{Laborde}. While these references do not consider the case of a time-dependent drift, an identical argument applies to the present case.)

To show (\ref{pmetodo1}) and (\ref{pmetodo2}), we define the following piecewise constant interpolations:
\begin{align*}
\bar{\rho}_{t/n,m}(x,s) := \rho^i_{t/n,m}(x) \ \text{ and } \ \bar{\rho}_{t/n}(x,s) := \rho^i_{t/n}  \  \text{ for } s \in ((i-1) t/n, i t/n] .
\end{align*}
Using \cite[Theorem 3.6]{CraigOmega} (see \cite[Appendix A.3]{CraigOmega} for the adaptation to ``time dependent'' gradient flows), one can show that 
\[ \bar{\rho}_{t/n,m}(x,s) \xrightarrow[W_2]{n \to +\infty} \rho_m(x,s) \text{ and } \bar{\rho}_{t/n}(x,s) \xrightarrow[W_2]{n \to +\infty} \rho_\infty(x,s) . \]

We begin with (\ref{pmetodo1}). By the lower semicontinuity of $\| \cdot \|_m^m$ with respect to Wasserstein convergence \cite[Lemma 3.4]{McCann}, Fatou's Lemma, and Lemma \ref{W2 one step}, which bounds $\|\rho^i_{t/n,m} \|_m^m$ uniformly in $i$ and $n$,
 \begin{align*}
 \int_0^t \int_\Rd |\rho_m(x,s)|^m dx ds &\leq \liminf_{n \to +\infty} \int_0^{t } \int_\Rd |\bar{\rho}_{t/n,m}(x,s)|^m dx ds  = 
 \liminf_{n \to +\infty} \sum_{i=1}^n \frac{t}{n} \int_\Rd |\rho_{t/n,m}^i(x)|^m dx <+\infty .
\end{align*}

We now turn to  (\ref{pmetodo2}). To ease notation, we recall the definition of the R\'enyi entropy and its metric slope  \cite[Theorem 10.4.6]{AGS},
\[ S_m(\mu) :=
\frac{1}{m-1} \int \mu(x)^m dx , \quad |\partial S_m|(\mu) = \left( \int_\Rd \left| \frac{\nabla \mu(x)^m}{\mu(x)} \right|^2 \mu(x) dx \right)^{1/2} . \]
By inequality (\ref{eq:phi_bound}),
\[ \int_0^t \left( \int_\Rd \left |\nabla \Phi_{1/m}(x,s) \right|^2 \rho_m(x,s) dx \right)^{1/2} ds \leq C_d t < +\infty . \]
Thus, by the triangle inequality and Jensen's inequality, it suffices to show  
\begin{align} \label{pmetodo2 suffices} \left(\int_0^t |\partial S_m|^2(\rho_m(s)) ds \right)^{1/2}< +\infty .
\end{align}

By \cite[Theorem 3.1.6, Theorem 10.4.13]{AGS} and the fact that $E_m$ is $\lambda$-convex for $\lambda = \lambda(m)$,
\begin{align*}
\frac{t}{n} \int \left| \frac{\nabla ( \rho_{t/n,m}^i)^m}{\rho_{t/n,m}^i} + \nabla \psi_{1/m}*\bN \rho^i_{t/n} \right|^2 d \rho_{t/n,m}^i  \leq \frac{1}{1+\lambda t/n} \left[E_m( \rho_{t/n,m}^{i-1};\rho_{t/n}^i) - E_m( \rho_{t/n,m}^i;\rho_{t/n}^i) \right] .
\end{align*}
Summing both sides from $i= 1, \dots, n$ and using the definition of $\bar{\rho}_{t/n,m}$, $\bar{\rho}_{t/n}$, and $E_m$¬,
\begin{align*}
& \int_0^t \int_\Rd \left| \frac{\nabla ( \bar{\rho}_{t/n,m}(x,s))^m}{\bar{\rho}_{t/n,m}(x,s)} + \nabla \psi_{1/m}*\bN \bar{\rho}_{t/n}(x,s) \right|^2 \bar{\rho}_{t/n,m}(x,s) dx ds  \\ &\quad \leq  \frac{1}{1+\lambda t/n} \left[E_m( \rho; \rho_{t/n}^1) - E_m( \rho_{t/n,m}^n; \rho_{t/n}^n) +  \sum_{i=1}^n \int_\Rd \bN \psi_{1/m}*\rho^i_{t/n,m}(x) \left[\rho^{i+1}_{t/n}(x) - \rho^i_{t/n}(x) \right] dx \right] , \\
&\quad \leq  \frac{1}{1+\lambda t/n} \left[E_m( \rho; \rho_{t/n}^1) +C_d +  2 C_m C_d t  \right],
\end{align*}
where in the last inequality we use Propositions \ref{drift continuity estimate0}, \ref{N mu bounds}, and \ref{W2 one step}, for $C_m$ chosen so that $\| \grad \bN\psi_{1/m}*\rho^i_{t/n,m} \|_\infty <C_m$. Taking the square root of both sides and applying the reverse triangle inequality for the $L^2(d \bar{\rho}_{t/n,m}(\cdot, s))$ norm and Proposition \ref{N mu bounds},
\begin{align*}
&  \left(\int_0^t |\partial S_m|^2(\bar{\rho}_{t/n,m}(s)) ds \right)^{1/2} \leq  \frac{1}{\sqrt{1+\lambda t/n}} \left[E_m( \rho; \rho_{t/n}^1) +C_d +  2 C_m C_d t  \right]^{1/2} + C_d .
\end{align*}
Taking the $\liminf_{n \to +\infty}$ and using that $|\partial S_m|$ is lower semicontinuous with respect to Wasserstein convergence \cite[Corollary 2.4.10]{AGS}, we conclude the result.

\end{proof}

 \noindent \textbf{Acknowledgements.}
The authors would like to thank Jos\'e Antonio Carrillo, Alessio Figalli, and  Alp\'ar M\'esz\'aros for their helpful comments.

\bibliographystyle{spmpsci}
\bibliography{interaction_energy}

\end{document}